\documentclass[11pt,reqno]{amsart}

\usepackage{amsmath}
\usepackage{amsfonts}
\usepackage{amssymb}
\usepackage{amsthm,color}
\usepackage{amsmath,amsthm,amssymb}
\usepackage{graphicx,mathtools}
\usepackage{cite}
\usepackage{mathabx} 
\usepackage{bbm}

\newtheorem{Theorem}{Theorem}[section]
\newtheorem{Definition}[Theorem]{Definition}
\newtheorem{Proposition}[Theorem]{Proposition}
\newtheorem{Lemma}[Theorem]{Lemma}

\newtheorem{Remark}[Theorem]{Remark}

\def\r{\right}

\def\a{\alpha}

\def\r{\rho}

\def\e{\epsilon}
\def\la{\lambda}

\def\o{\omega}

\def\F{{\mathcal F}}

\def\H{{\mathcal H}}
\def\D{{\Delta}}

\def\({\left(}
\def\){\right)}

\def\t{\theta}
\def\g{\gamma}

\def\jpx{\langle x \rangle}

\def\jpl{\langle \lambda x \rangle}

\def\IF{\mathcal{F}^{-1}}

\numberwithin{equation}{section}

\begin{document}

\title[Well-posedness of the Helmholtz equation]{Well-posedness of the Helmholtz equation with rough coefficients}

\author{Peijun Li}
\address{SKLMS, ICMSEC, Academy of Mathematics and Systems Science, Chinese Academy of Sciences, Beijing 100190, China}
\email{lipeijun@lsec.cc.ac.cn}  

\author{Yichun Zhu$^*$}
\address{Academy of Mathematics and Systems Science, Chinese Academy of Sciences, Beijing 100190,
China} 
\email{steven00931002@hotmail.com}

\thanks{* Corresponding author}

\subjclass[2010]{35P25, 35R05, 47A40}

\keywords{Helmholtz equation, rough coefficients, well-posedness, limiting absorption principle, Lippmann--Schwinger equation, resolvent estimates}

\begin{abstract}
We establish the well-posedness of the Helmholtz equation with rough and compactly supported coefficients in $\mathbb R^d$ under sharp regularity assumptions. Using a paraproduct calculus in rescaled weighted Besov spaces, we rigorously define the product between the solution and the coefficient at the lowest regularity level without renormalization. A rescaled Lippmann--Schwinger formulation is shown to be equivalent to the Helmholtz equation with the Sommerfeld radiation condition. We prove existence, uniqueness, and explicit wavenumber dependent resolvent estimates in a general $L^p$ setting, including an $L^2$ theory relevant to scattering amplitudes. The results provide a sharp analytic foundation for wave propagation and scattering in highly irregular media.
\end{abstract}

\maketitle

\section{Introduction}\label{sec:intro}

In this paper, we establish the well-posedness of the Helmholtz equation,
\begin{equation}\label{H}
\Delta u(x) + k^2 u(x) + V_k(x)u(x) = g(x),\quad x\in\mathbb{R}^d, 
\end{equation}
subject to the Sommerfeld radiation condition
\begin{equation}\label{SRC}
\lim_{|x| \to \infty} |x|^{(d-1)/2} \big( \partial_{|x|} u - {\rm i}k u \big) = 0,
\end{equation}
where $d\geq 2$ and $k>0$ denotes the wavenumber. Throughout this paper, we assume that both the coefficient $V_k$ and the source term $g$ are supported in a ball $B(0,R)$ centered at the origin with radius $R$. Both $V_k$ and $g$ are allowed to be rough and their precise regularity assumptions will be specified later. In particular, the coefficient $V_k$ may depend on $k$ and is given by
\[
V_k(x) = k^2 \varepsilon(x) + {\rm i}k \sigma(x) + \varrho(x),
\]
where $\varepsilon$, $\sigma$, and $\varrho$ represent the relative dielectric permittivity, conductivity or absorption, and potential, respectively. We emphasize that the coefficient $V_k$ is regarded as rough whenever at least one of the parameters $\epsilon$, $\sigma$, or $\varrho$ lacks regularity, even if the remaining components are smooth. This formulation provides a unified framework for treating Helmholtz equations with rough media, rough absorption, or rough potentials. 

The investigation of the Helmholtz equation with rough coefficients is motivated by the modeling and analysis of wave propagation in complex and random media, where material parameters may have limited regularity and exhibit rapidly spatial variations. In many realistic applications, including geophysical exploration, nondestructive testing, medical imaging, and acoustic or electromagnetic scattering in composite or heterogeneous materials, the underlying medium cannot be adequately characterized by smooth coefficients. Instead, the dielectric permittivity, conductivity, or potential may be discontinuous, highly oscillatory, or even distributional, reflecting sharp interfaces, fine-scale heterogeneities, or unresolved microstructures. Such features naturally give rise to Helmholtz equations with rough, and possibly wavenumber dependent, coefficients, and pose significant analytical challenges that are not covered by classical smooth coefficient theory.

The Helmholtz equation is closely related to the stationary Schr\"{o}dinger equation; see, for instance, \cite{DZ19}. The corresponding time-dependent Schr\"{o}dinger equation is given by
\[
{\rm i}\partial_t u(t,x) - \big( \Delta + V(x) \big) u(t,x) = F(t,x),
\]
where $V(x)$ denotes the potential and $\Delta + V$ is the Schr\"{o}dinger operator. The analysis of outgoing solutions to this evolution equation, as discussed in \cite[Section~2.1]{DZ19}, leads naturally to the study of the resolvent operator $R_V(k)$, defined as the inverse of $\Delta + k^2 + V$. The existence and boundedness of $R_V(k)$ constitute a fundamental problem in scattering theory and are closely connected to the well-posedness of the Helmholtz equation.

\subsection{Meromorphic continuation and limiting absorption principle}

The study of the free resolvent $(\Delta + k^2)^{-1}$, denoted by $R_0(k)$, is fundamental in the analysis of the Helmholtz equation. Since the symbol $k^2 - |\xi|^2$ of the operator $\Delta + k^2$ vanishes on the sphere $\{\xi\in\mathbb{R}^d: |\xi| = k \}$ in the frequency domain, its inverse is not a well-defined distribution in general. As a consequence, the homogeneous Helmholtz equation admits infinitely many nontrivial solutions unless additional conditions are imposed. For instance, it is well known that radial solutions of the homogeneous Helmholtz equation can be expressed as linear combinations of Bessel functions,
\begin{equation*}
u(x) = C_1 J_m(k|x|) + C_2 Y_m(k|x|),
\end{equation*}
where $J_m$ and $Y_m$ denote the Bessel functions of the first and second kinds with order $m$, respectively, and $C_1, C_2 \in \mathbb{C}$ are constants. The existence of such nontrivial solutions, including bounded or oscillatory ones, is commonly referred to as resonance. This inherent non-uniqueness shows that the Helmholtz equation is not well-posed unless an additional physical condition, such as the Sommerfeld radiation condition, is imposed to single out the physically relevant solution.

From a physical perspective, one expects a unique response of the system once an incident wave is prescribed. In realistic media, this uniqueness is enforced by the presence of dissipation or absorption mechanisms, such as material losses, friction, or radiative damping, which prevent the persistence of nonphysical standing waves. The limiting absorption principle provides a rigorous mathematical framework to incorporate this physical effect into the analysis of the Helmholtz equation. More precisely, the operator $\Delta + k^2$ is regularized by introducing a small attenuation term, leading to the perturbed operator $\Delta + k^2 + {\rm i}\tau$ with $\tau>0$. The free resolvent $R_0(k)$ is then defined as the limit of $(\Delta + k^2 + {\rm i}\tau)^{-1}$ as $\tau \to 0^+$ in an appropriate operator topology. This procedure selects the outgoing solution and restores uniqueness. For general potentials $V$, the resolvent $R_V(k)$ admits a meromorphic continuation in the complex plane, which plays a central role in scattering theory. In particular, the meromorphic extension for $\Im{k}>0$ is called the outgoing resolvent. One of its important applications is the analysis of the long-time behavior of wave equations, leading to resonance expansions of wave fields; see \cite{DZ19}. The meromorphic continuation of the resolvent has been established for bounded potentials in \cite{DZ19} and was later extended to certain classes of unbounded potentials in \cite{LYZ21}.

\subsection{Point spectrum of Schr\"{o}dinger operators}

The well-posedness of the Helmholtz equation subject to the Sommerfeld radiation condition is also closely related to the absence of point spectrum for the associated Schr\"{o}dinger operator $\Delta+V$. The spectral properties of Schr\"{o}dinger operators are known to depend on the behavior of the potential function $V$. The question of identifying classes of potentials for which the Schr\"{o}dinger operator admits no positive eigenvalues originates from physics \cite{S69}. In 1959, Kato proved that if $V$ is bounded and satisfies $\lim_{|x| \to \infty} |x|V(x)=0$, then $\Delta+V$ has no positive eigenvalues \cite{K59}. In the 1970s, Agmon developed a systematic theory in his seminal works \cite{A70,A75}, based on weighted $L^2$-spaces and resolvent estimates for the outgoing resolvent $R_V(k)$, which yield the absence of embedded eigenvalues and singular continuous spectrum. This approach unified and significantly extended earlier results. More recently, Ionescu and Jerison established the absence of positive eigenvalues for potentials that are merely locally integrable and satisfy suitable decay assumptions \cite{IJ02}. We refer to \cite[Section XIII]{RS72} for a comprehensive discussion of this topic.

The well-posedness of the Helmholtz equation can then be established by combining the Fredholm alternative with the unique continuation principle, which in turn follows from the absence of positive eigenvalues for the corresponding Schr\"{o}dinger operator; see \cite{CK98,P00,GS04,E11,DZ19}. The main analytical tools include Agmon-type estimates \cite{A75}, Sobolev inequalities \cite{KRS87,BSSY15}, and Carleman estimates; see, for instance, \cite[Lemma~31.4]{E11} and \cite[Theorem~2.3]{IJ02}. In particular, M. Goldberg and W. Schlag \cite[Theorem 2]{GS04} proved the well-posedness for compactly supported potentials in $L^{3/2}$ in dimension three. More recently, the well-posedness of the Helmholtz equation in the high frequency regime has been studied for compactly supported potentials in $H^{\alpha}$ with any $\alpha>-\frac{1}{2}$; see \cite{LPS08,CHL19,LLM21}. These works rely on the contraction mapping principle, which yields well-posedness only for sufficiently large wavenumbers and typically requires the potential to be independent of the wavenumber.

\subsection{Main results}

The free resolvent operator $(\Delta + k^2)^{-1}$ can be represented by the Green's function $G_k(x,y)$ satisfying the Sommerfeld radiation condition; see, for example, \cite{CK98,DZ19}. Explicitly, the Green's function is given by
\begin{equation}\label{P:G}
G_k(x,y) = \frac{{\rm i}}{4} \left( \frac{k}{2\pi |x-y|} \right)^{\frac{d}{2} - 1}
H^{(1)}_{\frac{d}{2} - 1}\left( k |x-y| \right),
\end{equation}
where $H^{(1)}_{\nu}$ denotes the Hankel function of the first kind of order $\nu$. Using the Green's function representation, the boundary value problem \eqref{H}--\eqref{SRC} can be equivalently reformulated as a Lippmann--Schwinger integral equation; see, for example, \cite{CK98,P00}. Specifically, the solution $u$ satisfies
\[
u(x) + \int_{\mathbb{R}^d} G_k(x-y)V_k(y) u(y)dy= \int_{\mathbb{R}^d} G_k(x-y) g(y) dy,
\]
where the integrals are understood in a suitable weak sense. Formally, this equation may be written as
\begin{equation}\label{eq:H:1}
u + (\Delta + k^2)^{-1}(V_k u) = (\Delta + k^2)^{-1} g,
\end{equation}
which highlights the role of the free resolvent in the analysis of the Helmholtz equation with rough coefficients.

We begin by briefly explaining the heuristic motivation behind the regularity assumptions in our analysis. Suppose that the coefficient $V_k$ belongs to the Besov space $B^{\alpha}_{\infty,\infty}$ and that the solution $u$ lies in $B^{\beta}_{p,q}$. By Bony’s paraproduct decomposition, the product $V_k u$ is well-defined only when $\alpha + \beta > 0$, and its regularity cannot exceed $\min\{\alpha,\beta\}$. On the other hand, the resolvent operator $(\Delta + k^2)^{-1}$ acts as a smoothing operator of order two. Consequently, one expects
\[
(\Delta + k^2)^{-1}(V_k u) \in B^{\min\{\alpha,\beta\}+2}_{p,q}.
\]
Since this term must have the same regularity as $u$ in the Lippmann--Schwinger formulation, a formal consistency argument yields
\[
\beta = \min\{\alpha,\beta\} + 2, \quad \alpha + \beta > 0.
\]
These relations indicate that the regularity of $V_k$ must satisfy $\alpha > -1$. This heuristic calculation suggests that existing results based on such arguments are not optimal and motivates the development of a refined analytical framework. 

Our first main results are stated in the following theorems.

\begin{Theorem}\label{thm:main}
Let  $\eta_0 \in (1/2,1)$, $p_0 \in [1,\infty)$, and $\theta \in (0,1)$ satisfy 
\[
r_0:=2-(d+1)\theta/2-2\eta_0(1-\theta)>0.
\] 
Then, for any $\epsilon>0$, $r \in (0,r_0)$, $p_0' \in (d/(2-2r),\infty)$, and compactly supported distributions $g \in B^{r-2}_{2p_0, 2p_0}$ and $V_k \in B^{-r+\epsilon}_{p_0', p_0'}$, with $1/p_0 +1/p_0'=1$, the boundary value problem \eqref{H}--\eqref{SRC} admits a unique solution $u \in B^{r}_{2p_0, 2p_0}(\langle x\rangle^{-\eta_0})$.
\end{Theorem}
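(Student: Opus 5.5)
We will prove Theorem~\ref{thm:main} through the Lippmann--Schwinger formulation \eqref{eq:H:1}: a Fredholm argument on the weighted space $X:=B^{r}_{2p_0,2p_0}(\langle x\rangle^{-\eta_0})$, combined with uniqueness obtained from the radiation condition and unique continuation. The plan has three analytic ingredients.

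\textbf{Step 1: a weighted resolvent estimate.} We first show that $R_0(k)=(\Delta+k^2)^{-1}$, defined through the outgoing kernel \eqref{P:G}, maps compactly supported distributions in $B^{r-2}_{2p_0,2p_0}$ boundedly into $X$, with an explicit dependence on $k$.

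To do this we split $R_0(k)$ with a Littlewood--Paley decomposition.
\begin{itemize}
\item Away from the sphere $|\xi|=k$ the multiplier $(k^2-|\xi|^2)^{-1}$ is a smooth symbol of order $-2$, so standard weighted multiplier bounds give the gain of two derivatives.
\item Near the sphere we treat the operator as a Fourier-restriction type piece. We interpolate, with parameter $\theta$, between two bounds:
 \begin{itemize}
 \item a Stein--Tomas/Kenig--Ruiz--Sogge type estimate, which costs $(d+1)/2$ in the exponent;
 \item an Agmon-type weighted $L^2$ bound with weight $\langle x\rangle^{-\eta_0}$, $\eta_0>1/2$, which costs $2\eta_0$.
 \end{itemize}
\end{itemize}
The interpolated loss is $(d+1)\theta/2+2\eta_0(1-\theta)$. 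This is exactly where the condition $r_0>0$ enters: the resulting gain $r_0$ must dominate the regularity $r$.

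Compact support of the right-hand side is used to move freely between the weighted and unweighted norms. Rescaling $x\mapsto kx$ (the "rescaled" Besov spaces) turns this into a $k$-explicit bound.

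\textbf{Step 2: the product $V_k u$ and compactness.} Next we define the product by Bony's decomposition,
\[
V_ku=T_{V_k}u+T_uV_k+\Pi(V_k,u).
\]
Because $V_k\in B^{-r+\epsilon}_{p_0',p_0'}$ and $u\in B^{r}_{2p_0,2p_0}$, the total regularity is $\epsilon>0$, so the resonant term $\Pi(V_k,u)$ is well defined without renormalization.

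For the paraproduct terms we use Hölder with $1/(2p_0)+1/p_0'$ together with Besov embeddings. The condition $p_0'>d/(2-2r)$ ensures that the integrability loss is absorbed by the regularity gap, namely $2-r-d/p_0'>r-2+\dots$. The outcome is
\[
V_ku\in B^{r-2+\epsilon'}_{2p_0,2p_0}
\]
for some $\epsilon'>0$, and it is supported in $B(0,R)$.

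Combined with Step 1, the map $K:u\mapsto R_0(k)(V_ku)$ then sends $X$ into a space with extra regularity $\epsilon'$ on the bounded region, plus a tail with stronger decay. Since the weight $\eta_0$ is strictly less than the decay of the outgoing wave,
\[
\frac{d-1}{2}\ \ge\ \frac12\ <\ \eta_0 ,
\]
the compact embedding of weighted Besov spaces (Rellich type, via a gain in both smoothness and weight) shows that $K$ is compact on $X$. Fredholm theory therefore reduces existence to uniqueness.

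\textbf{Step 3: equivalence and uniqueness.} We verify that solutions of \eqref{eq:H:1} in $X$ are exactly the solutions of \eqref{H}--\eqref{SRC}.
\begin{itemize}
\item Outside $B(0,R)$ the solution equals $R_0(k)$ applied to compactly supported data. It is therefore smooth, satisfies Sommerfeld there, and is represented by $G_k$.
\item Conversely, Green's representation yields \eqref{eq:H:1}.
\end{itemize}

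For uniqueness, take $g=0$. Rellich's lemma, via the imaginary part identity $\Im\int \bar u\,V_k u$, applies because the absorption satisfies $\sigma\ge0$ (otherwise the energy identity must absorb the $\Im$ part). It shows that $u$ vanishes outside $B(0,R)$. We then need unique continuation to conclude $u\equiv0$.

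\textbf{The main obstacle.} For a potential of negative regularity $-r+\epsilon$, classical Carleman estimates do not apply, so this unique continuation step is the hardest. We would first try to bootstrap: iterating $u=-R_0(k)(V_ku)$ locally should not raise regularity past $r+\epsilon$, so instead we plan to apply a Carleman estimate in a Besov/$L^p$ form (Jerison--Kenig, Koch--Tataru). In that estimate the rough potential is handled as a form-bounded perturbation, with $\Pi(V_k,u)$ controlled in dual Sobolev norms. We expect the most delicate point to be verifying that this smallness holds on small balls under exactly the threshold $p_0'>d/(2-2r)$.
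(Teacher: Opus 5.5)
\textbf{The gap is in Step~3, and it is not a technicality.} Your imaginary-part identity gives Rellich's conclusion only if $\Im V_k\ge 0$ (e.g.\ $\sigma\ge0$ with $\varepsilon,\varrho$ real). This is not a hypothesis of the theorem, and without it injectivity of $I+K$ is false.

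Here is a counterexample. In $d=3$ take $V_k=c\,\mathbbm{1}_{B(0,1)}$ and $\mu^2=k^2+c$. Set $u=\sin(\mu|x|)/|x|$ for $|x|<1$ and $u=\sin\mu\,e^{{\rm i}k(|x|-1)}/|x|$ for $|x|>1$.
\begin{itemize}
\item This is an outgoing $H^2_{\rm loc}$ solution of \eqref{H} with $g=0$ exactly when $\mu\cos\mu={\rm i}k\sin\mu$.
\item That equation has infinitely many nonzero roots, by Hadamard's theorem applied to the order-$1/2$ entire function $w\mapsto\cos\sqrt w-{\rm i}k\sin\sqrt w/\sqrt w$ with $w=\mu^2$. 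By the energy identity these roots necessarily have $\Im c<0$.
\item This $u$ is $C^{1,1}$ and is $O(|x|^{-1})$ together with its derivatives, so it lies in $B^{r}_{2p_0,2p_0}(\langle x\rangle^{-\eta_0})$. Meanwhile $V_k\in B^{-r+\epsilon}_{p_0',p_0'}$.
\end{itemize}
So the statement needs an extra sign hypothesis. Even with one, your proof lacks its key ingredient: unique continuation for potentials in $B^{-r+\epsilon}_{p_0',p_0'}$ with $r$ up to nearly $1$. You only announce this. The Jerison--Kenig and Koch--Tataru theorems are formulated for potentials and drift coefficients in Lebesgue or Morrey-type classes of functions, and do not apply directly to distributions of order $-r+\epsilon$.

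Steps~1--2, by contrast, are essentially sound at fixed $k$:
\begin{itemize}
\item the far field is $O(|x|^{-(d-1)/2})$ and $\eta_0>1/2$;
\item the embedding $B^{-r+\epsilon}_{(2p_0)',q}\hookrightarrow B^{-r+\epsilon-d/p_0'}_{2p_0,q}$ for compactly supported distributions is exactly where $p_0'>d/(2-2r)$ is used;
\item compactness comes from the support of $V_k$ and the $\epsilon'$-gain, not from comparing $\eta_0$ with $(d-1)/2$.
\end{itemize}
However, these steps never use $r<r_0$ or $\theta$, so your explanation of where $r_0$ enters is not supported by your argument.

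\textbf{The paper's route.} The paper uses neither Fredholm theory, Rellich's lemma nor unique continuation. It rescales $x\mapsto\lambda x$ and conjugates by $e^{\gamma\cdot x}$ with $|\gamma|\in(1,2)$ via Lemma~\ref{exchange}, so that the Faddeev-type operator has $r_{k\lambda,\gamma}\sim1$. It then interpolates Theorems~\ref{thm:F} and~\ref{Hsg} to get the contraction constant $\lambda^{2-(\eta+d/p')\theta-2\eta_0(1-\theta)-r}$ in Lemma~\ref{W2}. That exponent is where $r<r_0$ actually comes from, and existence and uniqueness both follow from Banach's fixed point theorem for small $\lambda$.

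You should not borrow this to close your gap, because the example above is incompatible with it. By the paper's own reduction, $e^{-\gamma\cdot x}\Phi_{R,\lambda}(u)_\lambda\neq0$ would solve \eqref{R:D} with $g=0$, contradicting the contraction. What fails is Lemma~\ref{exchange}:
\begin{itemize}
\item $e^{-\gamma\cdot z}$ times the Faddeev-type kernel is a fundamental solution of $\Delta+k^2$. It differs from the outgoing one by a nonzero real-analytic solution of the homogeneous Helmholtz equation, since the Faddeev-type kernel is tempered while $e^{\gamma\cdot z}G_k$ is not. Hence the two localized operators cannot coincide.
\item Correspondingly, $e^{\gamma\cdot x}\mathcal{H}_{k\lambda,0,\tau}\Phi_{R,\lambda}f$ is in general not tempered for small $\tau$, and cannot be identified with $\mathcal{H}_{k\lambda,\gamma,\tau}(e^{\gamma\cdot x}\Phi_{R,\lambda}f)$.
\end{itemize}
Undoing the scaling, the contraction is a Sylvester--Uhlmann-type estimate with exponential parameter $|\gamma|/\lambda\to\infty$. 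Its fixed point solves the Lippmann--Schwinger equation built on the Faddeev-type fundamental solution, not the outgoing one.

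Once a hypothesis such as $\Im V_k\ge0$ is added, your Fredholm--Rellich route is the natural one, and unique continuation for this rough class is the genuine remaining problem.
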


\begin{Theorem}\label{thm:main0}
Let $\eta_0 \in (1/2, 1)$, and
\[
r\in
\begin{cases}
 ((d+1)(1/2-1/p_0)+ 4\eta_0/p_0,\ 2), & p_0\in [2,\infty),\\[6pt]
((d+1)(1/2-1/p_0')+ 4\eta_0/p_0',\ 2), & p_0\in [1, 2),
\end{cases}
\]
with $1/p_0+1/p_0'=1$. Then, for any $\epsilon>0$, and compactly supported distributions $g \in B^{r-2}_{p_0,p_0}$ and $V_k \in B^{r-2+\e}_{\infty,\infty}$, the boundary value problem \eqref{H}--\eqref{SRC} admits a unique solution $u\in B^{r}_{p_0,p_0}(\jpx^{-\eta_0})$.
\end{Theorem}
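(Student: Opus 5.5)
We will prove Theorem~\ref{thm:main0} by reducing the boundary value problem \eqref{H}--\eqref{SRC} to the Lippmann--Schwinger equation \eqref{eq:H:1}, posed in the weighted space $X:=B^{r}_{p_0,p_0}(\jpx^{-\eta_0})$, and then applying the Fredholm alternative. The argument has four steps: a resolvent estimate, a product estimate, compactness, and uniqueness.

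\textbf{Step 1: free resolvent estimate.} First we establish the mapping property
\[
(\Delta+k^2)^{-1}: B^{r-2}_{p_0,p_0}\ (\text{compactly supported in } B(0,R)) \to B^{r}_{p_0,p_0}(\jpx^{-\eta_0}),
\]
with an explicit bound in $k$. To do this we split the Green's function $G_k$ in \eqref{P:G} by a dyadic (Littlewood--Paley) decomposition in frequency, separating $|\xi|\sim k$ from $|\xi|$ away from $k$.
\begin{itemize}
\item Away from the sphere $|\xi|=k$, the multiplier $(k^2-|\xi|^2)^{-1}$ is a smoothing symbol of order $-2$. It therefore gains two derivatives in every $B_{p,p}$ scale, and the weight is harmless.
\item Near the sphere, the kernel behaves like $|x|^{-(d-1)/2}e^{{\rm i}k|x|}$. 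We handle this piece by interpolating two estimates. The first is an $L^2$ trace/Agmon-type estimate, valid from compactly supported data into $L^2(\jpx^{-\eta})$ for $\eta>1/2$; this is why $\eta_0\in(1/2,1)$. The second is an $L^1\to L^\infty$ estimate coming from the decay of the kernel.
\item Riesz--Thorin interpolation between these two estimates, with weight $\jpx^{-\eta_0}$, costs a loss of derivatives. Heuristically the loss is $(d+1)(1/2-1/p_0)+4\eta_0/p_0$ for $p_0\ge 2$, and the dual expression for $p_0<2$ by duality.
\end{itemize}
The lower bound on $r$ in the statement is exactly the requirement that this loss is absorbed by the available two derivatives. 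This step is the main obstacle: one must track the $p_0$-dependence and the weight exponent carefully near $|\xi|=k$.

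\textbf{Step 2: product estimate.} We show that $u\mapsto V_k u$ is bounded from $X$ to compactly supported distributions in $B^{r-2}_{p_0,p_0}$. Since $V_k\in B^{r-2+\epsilon}_{\infty,\infty}$ and $u\in B^r_{p_0,p_0}$, the sum of regularities is $2r-2+\epsilon$. This is positive because the lower bound on $r$ exceeds $1$ (note $4\eta_0/p_0>2/p_0$). Hence Bony's decomposition gives a well-defined product, with no renormalization, and the product lies in $B^{r-2+\epsilon}_{p_0,p_0}$. Multiplying by a cutoff $\chi\equiv1$ on $B(0,R)$ removes the weight, because $V_k$ has compact support.

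\textbf{Step 3: compactness and Fredholm form.} The extra $\epsilon$ of regularity gained in Step 2, together with compact support, makes the embedding $B^{r-2+\epsilon}_{p_0,p_0}(B_R)\hookrightarrow B^{r-2}_{p_0,p_0}$ compact. Combined with Step 1, the operator $K:=(\Delta+k^2)^{-1}(V_k\,\cdot)$ is therefore compact on $X$. Equation \eqref{eq:H:1} reads $(I+K)u=(\Delta+k^2)^{-1}g$, whose right-hand side lies in $X$ by Step 1. By the Fredholm alternative, existence of a solution follows once we have uniqueness.

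\textbf{Step 4: uniqueness and equivalence with \eqref{H}--\eqref{SRC}.} Suppose $u\in X$ solves the homogeneous equation $u=-Ku$. We argue in three parts.
\begin{itemize}
\item Outside $B(0,R)$, $u$ is a radiating solution of $\Delta u+k^2u=0$, so it satisfies \eqref{SRC}.
\item Inside, we bootstrap the regularity of $u$: iterating the smoothing in Steps 1--2 places $u$ in $H^1_{loc}$. Green's identity then yields $\Im\int_{|x|=\rho}\bar u\,\partial_r u=0$. Here one uses that $\Im V_k=k\sigma\ge0$, assuming the physical sign condition.
\item Rellich's lemma then gives $u=0$ outside $B(0,R)$. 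Unique continuation for $\Delta+k^2+V_k$ with $V_k$ of this regularity gives $u\equiv0$. We would invoke the unique continuation result available for such rough potentials (e.g.\ a Carleman estimate in the style of \cite{IJ02}), after checking its hypotheses.
\end{itemize}
Conversely, any solution of \eqref{H}--\eqref{SRC} in $X$ satisfies \eqref{eq:H:1} via the Green representation. This gives both uniqueness and the equivalence of the two formulations, completing the proof.
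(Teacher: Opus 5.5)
Your route is genuinely different from the paper's, and as written it has two genuine gaps.

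\textbf{Comparison and the gap in Step~1.} The paper uses no Fredholm theory, Rellich lemma or unique continuation. It rescales $x\mapsto\lambda x$ and works with the cut-off function $v=\Phi_{R,\lambda}u$ (Lemma~\ref{equivalent}). It then conjugates by $e^{\gamma\cdot x}$ with $|\gamma|\in(1,2)$, so that $r_{k\lambda,\gamma}\sim1$ (Lemma~\ref{exchange}), and proves a contraction for small $\lambda$ (Lemma~\ref{W3}). The lower bound on $r$ is just positivity of the exponent in $\lambda^{r-\eta(1-\theta)-2\eta_0\theta}$. That exponent comes from interpolating the $L^2$ factor $\lambda^{r-2\eta_0}$ with the $L^1/L^\infty$ factor $\lambda^{r-\eta}$, where $\eta>(d+1)/2$. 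It is not a loss of derivatives, so the heuristic in your Step~1 is wrong, and the estimate itself is never established. Locally the resolvent gains two full derivatives in every $B_{p,p}$ scale (compare Theorem~\ref{Hsg} with $\alpha=0$). What depends on $p_0$ is decay: a radiating field has size $|x|^{-(d-1)/2}$, so it lies in $L^{p_0}(\langle x\rangle^{-\eta_0})$ only if $\eta_0>d/p_0-(d-1)/2$. For $p_0\le 2d/(d+1)$ no $\eta_0<1$ satisfies this. Your mapping into $X$ is therefore false there, already for $V_k=0$ and a generic bump $g$; the same objection applies to the membership asserted in the statement. The fix is to run your Fredholm argument on the compactly supported $v=\chi u$, exactly as in Lemma~\ref{equivalent}. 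Then only local mapping properties of $(\Delta+k^2)^{-1}$ are needed, and these hold for every $p_0$ without loss. Your argument then requires only $r>1$, for the product in Step~2, not the stated threshold.

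\textbf{The gap in Step~4.} You assume $\Im V_k\ge0$, which is not a hypothesis. You also invoke unique continuation for a potential in $B^{r-2+\epsilon}_{\infty,\infty}$ of negative order, which \cite{IJ02} does not cover.

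The unique continuation can be repaired by a gauge transform. Take $w$ with $\Delta w=V_k$ (e.g.\ the Newtonian potential); then $w\in B^{r+\epsilon}_{\infty,\infty}$ locally, so $\nabla w$ is bounded because $r>1$. The function $\tilde u=e^{w}u$ solves $\Delta\tilde u-2\nabla w\cdot\nabla\tilde u+(|\nabla w|^2+k^2)\tilde u=0$, and classical unique continuation applies to that equation.

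The sign condition, however, cannot be dropped: without it the uniqueness assertion is false. Let $u$ be the outgoing radial solution $|x|^{1-d/2}H^{(1)}_{d/2-1}(k|x|)$ for $|x|\ge1$, extend it smoothly and without zeros into $B(0,1)$, and set $V_k:=-(\Delta u+k^2u)/u\in C_c^\infty$. Then $u\neq0$ solves \eqref{H}--\eqref{SRC} with $g=0$, and $u\in B^{r}_{2,2}(\langle x\rangle^{-\eta_0})$ for every $\eta_0>1/2$.

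The paper avoids a sign condition only through Lemma~\ref{exchange}, and that step is not valid. There $g_{\gamma,\tau}=e^{\gamma\cdot x}g_\tau$ grows exponentially once $|\gamma|>\Im(k^2\lambda^2+{\rm i}\tau)^{1/2}$. So in general it is not the tempered solution $\mathcal{H}_{k\lambda,\gamma,\tau}(e^{\gamma\cdot x}\Phi_{R,\lambda}f)$. Concretely, $e^{-\gamma\cdot z}$ times the tempered Faddeev-type kernel is, up to sign, a fundamental solution of $\Delta+k^2\lambda^2$. It differs from the outgoing one by a real-analytic solution of the homogeneous equation on $\mathbb{R}^d$. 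That difference is nonzero, since $e^{\gamma\cdot z}G_{k\lambda}(z)$ is not tempered, so it cannot vanish on $B(0,2R/\lambda)$.

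Your absorption hypothesis (real $\varepsilon,\varrho$ and $\sigma\ge0$) is therefore a necessary correction, not a weakness of the Fredholm route. With it and the two repairs above, your argument should give existence and uniqueness for every $r\in(1,2)$. The weighted membership $u\in B^{r}_{p_0,p_0}(\langle x\rangle^{-\eta_0})$ then holds whenever $\eta_0>d/p_0-(d-1)/2$.
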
 

Let $\eta_0$ be chosen close to $1/2$ and $\theta$ close to $0$. Then Theorem~\ref{thm:main} yields well-posedness of the boundary value problem \eqref{H}--\eqref{SRC} for coefficients $V_k \in B^{-r+\epsilon}_{p_0',p_0'}$ with any $r<1$ and $p_0' \in (d/(2-2r),\infty)$. Moreover, as $\eta_0$ increases, the uniqueness is obtained in a larger weighted Besov space, reflecting weaker spatial decay of the solution. On the other hand, Theorem~\ref{thm:main0} establishes the existence and uniqueness of solutions with higher regularity under the stronger assumption $V_k \in B^{r-2+\epsilon}_{\infty,\infty}$. In this case, the solution belongs to a smoother Besov space than that guaranteed by Theorem~\ref{thm:main}, highlighting the complementary nature of the two results.

One of the main ideas in the proofs of Theorems~\ref{thm:main} and~\ref{thm:main0} is inspired by the pioneering work of \cite{SU87}, where the Faddeev operator was introduced and utilized in the construction of special solutions. In the present paper, we develop this idea further by considering a broader class of operators that includes both the Faddeev operator and the free resolvent operator $(\Delta + k^2)^{-1}$. Specifically, for any $k\in(0,\infty)$ and $\gamma\in\mathbb{R}^d$, we study the family of Faddeev-type resolvent operators
\[
(\Delta + r_{k,\gamma}^2 - 2\gamma\cdot\nabla)^{-1}, \quad r_{k,\gamma}:=(k^2+|\gamma|^2)^{1/2}.
\]
A precise definition of these operators via the limiting absorption principle is given in Theorem~\ref{thm:F}.

Our second main results establish wavenumber explicit estimates for the Faddeev-type operators, which extend Agmon's classical resolvent estimates in several significant directions. We establish Sobolev-type estimates for the associated operators, including an $L^2$-based wavenumber explicit estimate and a more general $L^p$-based wavenumber explicit estimate. We first give an intuitive overview of the results, while the precise statements are provided in Theorems~\ref{thm:F} and~\ref{Hsg}, respectively.

\begin{Theorem}\label{thm:main:0.1}
Let $r \in \mathbb{R}$, $\eta> \tfrac12$, and $s \in [0,2]$. Then the following estimate holds:
\begin{equation*}
\big\|(\Delta + r_{k,\g}^2 - 2 \gamma \cdot \nabla)^{-1} f\big\|_{H^{r+s}(\langle x\rangle^{-\eta})}
\lesssim \frac{(1+r_{k,\gamma})^{s}}{\min\{r_{k,\gamma},1\} r_{k,\gamma}} \|f\|_{H^{r}(\langle x\rangle^{\eta})},
\end{equation*}
where the implicit constant is independent of $k$, $\gamma$, and $f$.
\end{Theorem}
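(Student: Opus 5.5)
Write $P_{k,\gamma}=\Delta + r_{k,\g}^2 - 2\gamma\cdot\nabla$. We first conjugate by the exponential weight:
\[
e^{-\gamma\cdot x}\,\Delta\,\big(e^{\gamma\cdot x}v\big)=\Delta v+2\gamma\cdot\nabla v+|\gamma|^2v .
\]
Replacing $\gamma$ by $-\gamma$ shows that, formally, $P_{k,\gamma}$ is conjugate to $\Delta+k^2$. This identity is only heuristic, because the exponential weights are not bounded on the polynomially weighted spaces. Instead we work directly on the Fourier side. The symbol of $P_{k,\gamma}$ is
\[
p(\xi)=r_{k,\gamma}^2-|\xi|^2-2{\rm i}\gamma\cdot\xi .
\]
Its zero set is $\{|\xi|=r_{k,\gamma},\ \gamma\cdot\xi=0\}$, a sphere of radius $r_{k,\gamma}$ intersected with a hyperplane; it reduces to the full sphere $|\xi|=k$ when $\gamma=0$. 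The resolvent is defined by the limiting absorption principle, as the limit of $(P_{k,\gamma}+{\rm i}\tau)^{-1}$ as $\tau\to0^+$. Since the right-hand side of the estimate is uniform in $\tau$, we prove the bound for $\tau>0$ and pass to the limit.

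\emph{Step 1: reduction to $r=0$.} The Bessel potentials $\langle D\rangle^{r}$ commute with $P_{k,\gamma}$. They are also bounded on $H^{s}(\langle x\rangle^{\pm\eta})$, because commutators with polynomial weights are lower order. Hence it suffices to treat $r=0$.

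\emph{Step 2: splitting in frequency.} We take a smooth cutoff $\chi$ equal to $1$ on $\{|\xi|\le 2r_{k,\gamma}+1\}$ and split $f=\chi(D)f+(1-\chi(D))f$.
\begin{itemize}
\item \emph{High frequencies.} On the support of $1-\chi$ we have $|p(\xi)|\gtrsim |\xi|^2$. Therefore $(1-\chi(D))\,p(D)^{-1}$ is a classical symbol of order $-2$, with bounds uniform in the parameters after rescaling. It maps $L^2(\langle x\rangle^{\eta})$ into $H^{2}(\langle x\rangle^{\eta})\subset H^2(\langle x\rangle^{-\eta})$ with norm $O(1)$. Interpolating with the trivial order-$0$ bound gives the contribution $(1+r_{k,\gamma})^{s}$ times a constant, which is dominated by the right-hand side. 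Strictly, this step needs a factor $\min\{r_{k,\gamma},1\}^{-1} r_{k,\gamma}^{-1}\ge$ const; that holds when $r_{k,\gamma}\lesssim 1$. For large $r_{k,\gamma}$ we instead use $|p|\gtrsim |\xi|^2\gtrsim r_{k,\gamma}^2$ near the cutoff, so high frequencies cost at most $r_{k,\gamma}^{s-2}\le (1+r_{k,\gamma})^s/r_{k,\gamma}$.
\item \emph{Low frequencies.} On $|\xi|\lesssim r_{k,\gamma}+1$, derivatives cost at most $(1+r_{k,\gamma})^{s}$. This reduces the problem to the weighted $L^2$ estimate
\[
\|p(D)^{-1}\chi(D)f\|_{L^2(\langle x\rangle^{-\eta})}\lesssim \frac{1}{\min\{r_{k,\gamma},1\}\,r_{k,\gamma}}\|f\|_{L^2(\langle x\rangle^{\eta})}.
\]
\end{itemize}

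\emph{Step 3: the Agmon-type estimate (main obstacle).} We prove the low-frequency estimate by duality. It is equivalent to the uniform bound
\[
\Big|\int \frac{\hat f(\xi)\overline{\hat h(\xi)}}{p(\xi)+{\rm i}\tau}\,d\xi\Big|\lesssim C(k,\gamma)\,\|\hat f\|_{H^\eta}\|\hat h\|_{H^\eta}.
\]
Here we use $\eta>1/2$: functions in $H^\eta$ have traces on hypersurfaces. We rescale $\xi=r_{k,\gamma}\zeta$, so that the singular set becomes the unit sphere intersected with the hyperplane $\hat\gamma^\perp$, uniformly in $|\gamma|/r_{k,\gamma}\in[0,1)$. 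Near this set we use coordinates $(\rho,t)$ with $\rho=1-|\zeta|^2$ and $t=2\hat\gamma\cdot\zeta\,|\gamma|/r_{k,\gamma}$. The rescaled symbol is then $\rho-{\rm i}t$. For $\gamma=0$ this is exactly Agmon's setting: a principal-value term in $\rho$ plus a delta term, both controlled by the trace theorem in one normal variable. For $\gamma\ne0$ the singularity has codimension two. Since $1/(\rho-{\rm i}t)$ is locally integrable in two dimensions, the estimate becomes easier; the difficulty is that the bound must remain uniform as $|\gamma|/r_{k,\gamma}\to0$, where the problem degenerates to the codimension-one case. To handle this, we treat $t$ as a parameter and apply the one-dimensional trace and principal-value bound in $\rho$ for each fixed $t$. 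Integration in $t$ then gives a bound uniform in $\gamma$. Undoing the scaling produces a factor $r_{k,\gamma}^{-2}$ from the symbol and a factor $r_{k,\gamma}$ from the Jacobian and trace. The resulting behaviour is $1/r_{k,\gamma}$ for $r_{k,\gamma}\ge1$; for $r_{k,\gamma}<1$ the weight scaling $\langle x\rangle$ versus $\langle x/r_{k,\gamma}\rangle$ costs an extra $r_{k,\gamma}^{-1}$. Together these give the factor $(\min\{r_{k,\gamma},1\}\,r_{k,\gamma})^{-1}$. Combined with Step 2, this yields the estimate stated in Theorem~\ref{thm:main:0.1}.
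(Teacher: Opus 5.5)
Your Steps 1 and 2 are sound and parallel the paper: the lifting lemma (Lemma~\ref{lifting}) and the off-shell bound of Lemma~\ref{F1}. Step 3, however, has a genuine gap, and it sits exactly where you locate the difficulty, namely uniformity as $|\gamma|/r_{k,\gamma}\to 0$.

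\emph{The hyperplane slicing fails near the poles.} Slicing by level sets of $t$ means slicing by the hyperplanes $\hat\gamma\cdot\zeta=c$. This only works where your chart $(\rho,t)$ is valid, i.e.\ near the equator $\hat\gamma\cdot\zeta=0$. But $|t|\le 2|\gamma|/r_{k,\gamma}$ on the whole unit sphere, so when $|\gamma|/r_{k,\gamma}$ is small the symbol $\rho-{\rm i}t$ is also small near the poles $\zeta=\pm\hat\gamma$. There the hyperplanes become tangent to the sphere. The slice meets $\{\rho=0\}$ in a sphere of radius $\sqrt{1-c^2}$, on which $|\nabla_y\rho|=2\sqrt{1-c^2}$. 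Consequently the slice-wise delta/principal-value bound carries a factor $(1-c^2)^{-1/2}$. With $\eta$ close to $1/2$ you control the slice norms of $\hat f,\hat h$ essentially only in $L^2_c$, and that cannot absorb an unbounded weight. So ``integration in $t$'' does not give a $\gamma$-uniform bound.

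The paper avoids this by slicing along rays rather than hyperplanes. In polar coordinates, each direction $\omega$ gives the one-dimensional symbol $s^2-r_{k,\gamma}^2+2{\rm i}s\,\gamma\cdot\omega-{\rm i}\tau$, in which the radial variable is always a good coordinate. The paper then splits into two cases.
\begin{itemize}
\item For $|\gamma\cdot\omega|\ge\epsilon_0/4$, the imaginary part bounds the symbol below by $\gtrsim\epsilon_0 r_{k,\gamma}$.
\item For $|\gamma\cdot\omega|<\epsilon_0/4$, it deforms the contour around $s=r_{k,\gamma}$ and computes an explicit residue, bounded using $|r_1-r_2|\gtrsim r_{k,\gamma}$.
\end{itemize}
The remainder is handled by the H\"older-continuity-in-radius estimate of Lemma~\ref{F3}. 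This argument is uniform all the way down to $\gamma=0$.

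\emph{The scaling bookkeeping does not give $1/r_{k,\gamma}$ for $r_{k,\gamma}\ge1$.} Under the dilation $\xi=r_{k,\gamma}\zeta$ the Jacobian factors cancel exactly. The weights become $\langle x/r_{k,\gamma}\rangle^{\pm\eta}$, and for $r_{k,\gamma}\ge1$ each one costs $r_{k,\gamma}^{\eta}$. A unit-scale bound $C_1$ for the rescaled problem therefore yields at best $C_1\,r_{k,\gamma}^{2\eta-2}$, which is worse than $r_{k,\gamma}^{-1}$ because $\eta>1/2$. For $r_{k,\gamma}<1$ the weights scale favourably and you get exactly $C_1 r_{k,\gamma}^{-2}$, with no extra ``weight cost'', so that regime is fine.

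The high-frequency decay has to be obtained at the original scale:
\begin{itemize}
\item localize to a shell of width $\epsilon_0\sim\min\{r_{k,\gamma},1\}$ around $|\xi|=r_{k,\gamma}$;
\item extract $1/(|\xi|+r_{k,\gamma})\sim 1/r_{k,\gamma}$ from $|\xi|^2-r_{k,\gamma}^2=(|\xi|-r_{k,\gamma})(|\xi|+r_{k,\gamma})$;
\item use a trace bound on $\{|\xi|=r_{k,\gamma}\}$ that is uniform in the radius.
\end{itemize}
This is the paper's set-up, and the shell width is what produces $\min\{r_{k,\gamma},1\}^{-1}$. Your phrase ``a factor $r_{k,\gamma}$ from the Jacobian and trace'' hides exactly this step.

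\emph{The limit $\tau\to0^+$.} Finally, ``pass to the limit'' presupposes that the one-sided limit $\tau\to0^+$ exists. A $\tau$-uniform bound does not provide that, and the paper establishes the convergence separately in Theorem~\ref{thm:F}.
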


\begin{Theorem}\label{thm:main:0.2} 
Let $\a \in [0,d/2)$, $p_1,p_2,q \in [1,\infty]$ and $r \in \mathbb{R}$ satisfy $1/p_2 - 1/p_1 = \alpha/d$.  Then, for any $\eta \in ( (d+1)/2 - \alpha, \infty)$, the following estimate holds:
\begin{equation*}
\big\|(\Delta + r_{k,\g}^2 - 2\gamma \cdot \nabla)^{-1} f\big\|_{B^{r}_{p_1,q}(\langle x\rangle^{-\eta})}
\lesssim \max\{ r_{k,\gamma}^{\eta}, r_{k,\gamma}^{-2} \} \|f\|_{B^{r-2}_{p_2,q}(\langle x\rangle^{\eta})},
\end{equation*}
where the implicit constant is independent of $k$, $\gamma$, and $f$.
\end{Theorem}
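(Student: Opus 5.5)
The estimate will follow from a pointwise bound on the kernel of the operator, followed by Littlewood--Paley localization. First I conjugate by exponentials: $e^{\gamma\cdot x}(\Delta+k^2)e^{-\gamma\cdot x}$ is, up to the sign convention for $\gamma$, the operator $\Delta - 2\gamma\cdot\nabla + |\gamma|^2 + k^2$. Its symbol is $-|\xi|^2+r_{k,\gamma}^2-2{\rm i}\gamma\cdot\xi$, which vanishes only on the $(d-2)$-sphere $\{|\xi|=r_{k,\gamma}\}\cap\gamma^\perp$ when $\gamma\neq0$ (and on the full sphere when $\gamma=0$). Following Theorem~\ref{thm:F}, I define the inverse through the limiting absorption principle. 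I then split the symbol with a smooth cutoff $\chi(\xi/r_{k,\gamma})$ supported near $|\xi|\sim r_{k,\gamma}$, writing
\[
m=m\chi(\cdot/r_{k,\gamma})+m(1-\chi(\cdot/r_{k,\gamma})).
\]
The off-resonant part is an honest Fourier multiplier of order $-2$ with symbol bounds uniform after rescaling $\xi\mapsto r_{k,\gamma}\xi$. Standard Besov multiplier theory together with Bernstein/Sobolev embedding $B^{r-2}_{p_2,q}\to B^{r-2-\alpha}_{p_1,q}$ is unnecessary here. Because the weights $\langle x\rangle^{\pm\eta}$ can be commuted with the multiplier at a cost absorbed by the localization, this part gives the bound with a factor $\lesssim r_{k,\gamma}^{-2}$ for large frequencies, and at most a constant times $\max\{1,r_{k,\gamma}^{-2}\}$ near the origin.

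The resonant part carries the main work. Its kernel $K(x)$ is the inverse Fourier transform of a measure-like object concentrated near the sphere of radius $r_{k,\gamma}$. By scaling and stationary phase (via the Hankel asymptotics in \eqref{P:G} when $\gamma=0$, or by directly estimating the oscillatory integral in general), it satisfies a bound of the form
\[
|K(x)|\lesssim r_{k,\gamma}^{d-2}\,\langle r_{k,\gamma}x\rangle^{-(d-1)/2}.
\]
Because the conjugation gives $|e^{\gamma\cdot(x-y)}G_k(x-y)|$-type bounds, I would instead estimate the multiplier kernel directly. For $\gamma\neq0$ the resonant set is lower dimensional, but the decay is at least as good in the weighted sense needed, and I will only use the weaker isotropic bound above. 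Frequency localization to $|\xi|\sim r_{k,\gamma}$ makes all Besov norms at regularity $r$ versus $r-2$ differ by the factor $r_{k,\gamma}^2$, which cancels the prefactor.

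For the weighted $L^{p_2}\to L^{p_1}$ bound of $f\mapsto K*f$, I write
\[
\langle x\rangle^{-\eta}(K*f)=\int \langle x\rangle^{-\eta}K(x-y)\langle y\rangle^{-\eta}\,\big(\langle y\rangle^{\eta}f(y)\big)\,dy .
\]
I then use $|K(x-y)|\lesssim r_{k,\gamma}^{d-2-(d-1)/2}|x-y|^{-(d-1)/2}$ in the far region, together with the local bound $|K(z)|\lesssim r^{d-2}$ near the diagonal. The far region has a fractional-integration structure. I split $|x-y|^{-(d-1)/2}$ into a Riesz potential of order $d-\alpha'$, controlled by Hardy--Littlewood--Sobolev with the gap $1/p_2-1/p_1=\alpha/d$. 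The remaining decay is absorbed by the weights, using Hölder with $\langle\cdot\rangle^{-\eta}\in L^{t}$. This is exactly where $\eta>(d+1)/2-\alpha$ enters: the weights must compensate the missing $(d+1)/2-\alpha$ powers of decay in the kernel (the $+1$ beyond the naive $(d-1)/2$ handles the endpoint Hölder exponents uniformly in $p_1,p_2\in[1,\infty]$). Tracking powers of $r_{k,\gamma}$ through the scaling $x\mapsto r_{k,\gamma}x$ leaves a factor at most $r_{k,\gamma}^{\eta}$, because the weight $\langle x\rangle^{\eta}$ is not scale invariant and $\langle x/r\rangle^{\eta}\le \max\{1,r^{-\eta}\}\langle x\rangle^{\eta}$ or its inverse. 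Combining the two regimes gives $\max\{r_{k,\gamma}^{\eta},r_{k,\gamma}^{-2}\}$.

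Finally, I pass from weighted Lebesgue to weighted Besov norms by applying the estimate to each Littlewood--Paley block $\Delta_j f$ and summing in $\ell^q$. This uses that $\Delta_j$ commutes with the Fourier multiplier and is bounded on the weighted spaces, since the weights are polynomial and the blocks have Schwartz kernels. The main obstacle I expect is obtaining the kernel bound uniformly in the direction and size of $\gamma$, near the degenerate resonant set, with constants independent of $k,\gamma$. I would first try to reduce, via the conjugation identity and the limiting absorption definition of Theorem~\ref{thm:F}, to the explicit Hankel kernel \eqref{P:G} multiplied by $e^{\gamma\cdot x}$, localized in frequency. If that fails, I would fall back on a direct stationary phase argument in coordinates adapted to $\gamma$.
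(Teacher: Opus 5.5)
There is a genuine gap, in two places: the off-resonant step proves something false when $\alpha>0$, and the resonant step rests on a kernel bound you leave unproved.

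\textbf{The off-resonant step.} The estimate as displayed is false when $\alpha>0$. The right-hand side must be $B^{r-2+\alpha}_{p_2,q}(\langle x\rangle^{\eta})$, which is what the paper proves in Theorem~\ref{Hsg}; the display drops the $\alpha$. Take a block $|\xi|\sim 2^{j}$ with $2^{j}\gg r_{k,\gamma}+|\gamma|$. There the operator is convolution with $2^{j(d-2)}\check{\Psi}_j(2^{j}\cdot)$, where $\Psi_j$ converges in $\mathcal{S}$ to $c\,\tilde{\varphi}(\xi)|\xi|^{-2}$ for a fixed bump $\tilde\varphi$ and a constant $c\neq0$. With $1-1/p_0=\alpha/d$, this kernel has $L^{p_0}$ norm $\sim 2^{j(\alpha-2)}$, not $2^{-2j}$.

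Test on $\hat f=\varphi(2^{-j}\cdot)$. This function lives where $\langle x\rangle\sim 1$, so the weights play no role. The left side is $\gtrsim 2^{jr}2^{j(d-2)}2^{-jd/p_1}$, while $\|f\|_{B^{r-2}_{p_2,q}(\langle x\rangle^{\eta})}\lesssim 2^{j(r-2)}2^{jd(1-1/p_2)}$. The ratio grows like $2^{j\alpha}$. So the Bernstein loss you call unnecessary is sharp, and no commutation of weights absorbs it.

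The same omission appears in your resonant bookkeeping. A kernel of height $r_{k,\gamma}^{d-2}$ and width $r_{k,\gamma}^{-1}$ has $L^{p_0}$ norm $r_{k,\gamma}^{\alpha-2}$. The factor $r_{k,\gamma}^{2}$ from the Besov weights therefore leaves $r_{k,\gamma}^{\alpha}$ rather than cancelling, and this is absorbed only if the input has regularity $r-2+\alpha$. For $\alpha=0$ this objection disappears, and your plan reduces to the kernel bound.

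\textbf{The resonant kernel bound.} The bound $|K(x)|\lesssim r_{k,\gamma}^{d-2}\langle r_{k,\gamma}x\rangle^{-(d-1)/2}$, uniform in $\gamma$, is the real content of your argument, and you leave it unproved. Your first route to it fails. With either sign convention, $e^{\gamma\cdot x}G_k(x)$ grows exponentially in a half-space, so it is not the kernel of the tempered multiplier defining $\mathcal{H}^{\pm}_{k,\gamma}$, and it cannot be frequency localized. The two fundamental solutions differ by an exponentially growing homogeneous solution.

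\textbf{How the paper avoids it.} The paper uses no pointwise kernel information. After Littlewood--Paley localization, it uses $\langle x\rangle^{-\eta}\langle x-y\rangle^{-\eta}\lesssim\langle y\rangle^{-\eta}$ and Young's inequality to reduce each block to $\|K_n\langle\cdot\rangle^{-\eta}\|_{L^{p_0}}$. Unlike Hardy--Littlewood--Sobolev, Young's inequality also covers $p_2=1$, $p_1=\infty$ and $\alpha=0$.

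For the boundedly many blocks with $2^{n}\sim r_{k,\gamma}$, the rescaled $K_n$ is $\mathcal{H}^{\pm}_{2^{-n}k,2^{-n}\gamma}$ applied to a Schwartz bump. For the low block it is $\mathcal{H}^{\pm}_{k,\gamma}$ applied to one. Proposition~\ref{P:H:Lp} bounds the $L^{p_0}(\langle x\rangle^{-\eta})$ norm of these by H\"older and the weighted $L^{2}$ estimate of Theorem~\ref{thm:F}. That estimate is uniform in $\gamma$ because its constant depends only on $r_{k,\gamma}$.

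This is exactly where $\eta>(d+1)/2-\alpha$ enters. It is the condition $\langle x\rangle^{-\eta+1/2+\epsilon}\in L^{2p_0/(2-p_0)}$ with $d/p_0=d-\alpha$, and $\alpha<d/2$ ensures $p_0<2$. The non-resonant blocks have denominators bounded away from zero. Their kernels are therefore controlled in $L^{p_0}$ by finitely many derivatives of the symbol, via weighted Plancherel.
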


A key starting point in our analysis of the Faddeev-type operators is the identity
\[
e^{\gamma \cdot x}  (\Delta + r_{k,\g}^2 - 2\gamma \cdot \nabla)f(x) 
= (\Delta + k^2) e^{\gamma \cdot x} f(x), \quad \forall\, f \in \mathcal{D}(\mathbb{R}^d). 
\]
Using this relation together with the transformation $u = e^{\gamma \cdot x} u_{\gamma}$, we can formally rewrite \eqref{H} as
\begin{equation}\label{eq:H:2}
u_{\g}(x) + \big(\Delta  + r_{k,\g}^2 -2 \g \cdot \nabla\big)^{-1}  \big(V_k(x)u_{\g}(x) \big)=g_{k,\g}(x),
\end{equation}
where $g_{k,\g}=\big(\Delta  + r_{k,\g}^2 -2 \g \cdot \nabla\big)^{-1} \big( e^{-\gamma \cdot x}g\big)$. We refer to Lemmas~\ref{equivalent}--\ref{exchange} and the beginning of Section~\ref{sec:rescaled:LSE} for a detailed discussion of the equivalence between \eqref{eq:H:1} and \eqref{eq:H:2}. Establishing this equivalence constitutes one of the main technical challenges in the analysis. At the level of Fourier symbols, the resolvent $(|\xi|^2 - k^2 - |\gamma|^2 + 2{\rm i}\gamma \cdot \xi)^{-1}$ exhibits a pole structure that differs from that of the free resolvent symbol $(|\xi|^2 - k^2)^{-1}$, so the equivalence is not immediate. The limiting absorption principle provides a natural regularization of the singular symbol $(|\xi|^2 - k^2)^{-1}$, and a crucial step is to show that the corresponding regularization of $(|\xi|^2 - k^2 - |\gamma|^2 + 2{\rm i}\gamma \cdot \xi)^{-1}$ remains compatible with this principle after conjugation by the exponential weight $e^{\gamma \cdot x}$. 
In particular, we establish a uniform limiting absorption principle with respect to $\g \in \mathbb{R}^d$ in Theorem~\ref{thm:RF}.

Another key idea in the proofs of Theorem~\ref{thm:main} and Theorem~\ref{thm:main0} is a rescaling argument, motivated by the observation that the limiting absorption principle provides additional integrability for the system. This leads naturally to the question of whether the regularizing effect of the inverse scattering operator $(\Delta + k^2)^{-1}$ can dominate the growth of spatial volume induced by rescaling with a small parameter. To make this argument precise, we introduce appropriately rescaled Besov spaces and analyze their behavior under scaling transformations. This framework allows us to control the rescaled equation and to quantify the balance between regularization and volume growth. For further details, we refer the reader to the proof of Lemma~\ref{W1}.

The paper is organized as follows. In Section~\ref{sec:N}, we introduce the notation and functional framework used throughout the paper, including the definition of rescaled weighted Besov spaces. In Section~\ref{sec:RF}, we define the regularized 
Faddeev-type operator $(\Delta+k^2+|\gamma|^2 - 2\gamma \cdot \nabla + {\rm i} \tau)^{-1}$, where $\tau>0$ is a regularization parameter. Section~\ref{sec:H} is devoted to the definition of the Faddeev-type operator $(\Delta+k^2+|\gamma|^2 - 2\gamma \cdot \nabla)^{-1}$ via the limiting absorption principle as $\tau \to 0$, together with the derivation of wavenumber explicit estimates. In Section~\ref{sec:W}, we prove Theorems~\ref{thm:main} and \ref{thm:main0}. The proofs of auxiliary results, including paraproduct estimates and rescaling properties of the rescaled weighted Besov spaces, are collected in the Appendix.

\section{Notations and preliminaries}\label{sec:N}

Let $\mathcal{S}(\mathbb{R}^d)$ denote the Schwartz space of rapidly decaying smooth functions on $\mathbb{R}^d$, and let $\mathcal{S}'(\mathbb{R}^d)$ be its dual space consisting of tempered distributions. In addition, we denote by $\mathcal{D}(\mathbb{R}^d)$ the space of smooth functions with compact support on $\mathbb{R}^d$, and by $\mathcal{D}'(\mathbb{R}^d)$ the corresponding space of distributions.

Let $\mathbb{N}_0 := \mathbb{Z}^+ \cup \{0\}$. Let $\omega \in \mathbb{S}^{d-1}$ denote a vector on the $(d-1)$-dimensional unit sphere, and write $\xi = r\omega$ for the spherical coordinates of $\xi \in \mathbb{R}^d$. Throughout the paper, we use the notation $(f)_\lambda(x) := f(\lambda x)$ for any $f \in \mathcal{S}'(\mathbb{R}^d)$ and the rescaling parameter $\lambda>0$, and we denote by $p'$ the H\"{o}lder conjugate of $p$ such that $1/p+1/p'=1$.

For any multi-index $\alpha = (\alpha_1,\alpha_2,\ldots,\alpha_d) \in \mathbb{N}_0^d$, we define the differential operator $D^{\alpha}$ by
\[
D^{\alpha} := \frac{\partial^{|\alpha|}}{\partial x_1^{\alpha_1}\partial x_2^{\alpha_2}\cdots\partial x_d^{\alpha_d}},
\]
where $|\alpha| := \alpha_1+\alpha_2+\cdots+\alpha_d$. The Fourier transform of a function $f \in \mathcal{S}(\mathbb{R}^d)$ is defined by
\[
\mathcal{F}(f)(\xi) = \hat{f}(\xi) := \int_{\mathbb{R}^d} e^{-2\pi {\rm i}\xi \cdot x} f(x){\rm d}x,
\]
and the inverse Fourier transform is given by
\[
\mathcal{F}^{-1}(f)(x) = \check{f}(x) := \int_{\mathbb{R}^d} e^{2\pi {\rm i}\xi \cdot x}f(\xi){\rm d}\xi.
\]

We denote by $\mathcal{L}(X,Y)$ the set of all bounded linear mappings from a Banach space $X$ into a Banach space $Y$. An operator $T \in \mathcal{L}(X,Y)$ is said to be the strong limit of a sequence of operators $\{T_n\}_{n\in\mathbb{N}} \subset \mathcal{L}(X,Y)$ if 
\[
\lim_{n \to \infty} \|T_n-T\|_{\mathcal{L}(X,Y)}=0.
\]
The operator $T$ is called the weak limit of $\{T_{n}\}_{n\in \mathbb{N}}$ if 
\[
\lim_{n \to \infty} \|T_n x-Tx\|_{Y}=0,\quad\forall\, x \in X.
\]

\subsection{Rescaled weighted Besov spaces}

A pair of smooth functions $({\chi}, {\varphi})$ is called admissible partition pair if the following properties are satisfied:
\begin{enumerate}
\item
$\chi(\xi)+\sum_{n\geq 0}\varphi(2^{-n} \xi)=1$;
\item
$\text{supp}\,\varphi(2^{-i} \xi) \cap \text{supp}\,\varphi(2^{-j}\xi)=\emptyset $ for $|i-j|\geq 2$;
\item
$\text{supp}\,\varphi(2^{-i} \xi) \cap \text{supp}\,\chi( \xi) = \emptyset$ for all $i \geq 1$.
\end{enumerate}
 According to \cite[Chapter 2]{BCD11}, there exists an admissible partition pair $({\chi}, {\varphi})$ such that 
 \begin{enumerate}
 \item
 $ \text{supp}\,{\chi} \subseteq B(0,4/3)=:\mathcal{B}\ \text{and}\ \chi(\xi)=1\ \text{for all}\ |\xi|\leq 3/4;$
 \item
 $\text{supp}\,{\varphi} \subseteq \{\xi:\ 3/4 \leq |\xi| \leq 8/3\}=:\mathcal{C}.$
 \end{enumerate}

For simplicity, we define $\varphi_n(\cdot) := \varphi(2^{-n}\cdot)$ for $n \ge 0$ and set $\varphi_{-1}(\cdot) := \chi(\cdot)$. Moreover, for $n \in \mathbb{Z}$, we define $\chi_n:= \sum_{j\leq n} \varphi_j= \chi(2^{-n-1} \cdot)$.
In particular, we have $\chi_{-1}(\cdot) = \chi(\cdot)$. Similar notation is adopted for any admissible partition pair.
 
We denote $\langle x\rangle := (1+|x|^2)^{1/2}$. Following \cite[Section~6.1.2]{T06}, a positive function $\rho$ is said to be admissible of type $W(\eta)$, $\eta\geq 0$, if the following conditions hold:
\begin{enumerate}
\item
for every multi-index $N_0 \in \mathbb{N}_0^d$, there exists a constant $C_{N_0}>0$ such that$
|D^{N_0}\rho(x)| \leq C_{N_0} \rho(x)$ for all $x \in \mathbb{R}^d$;
\item
there exist constants $c>0$ and $\eta \ge 0$ such that $
0 < \rho(x) \leq c \rho(y) \langle x-y\rangle^{\eta}$ for all $x,y \in \mathbb{R}^d$.
\end{enumerate}
It is straightforward to verify that $\langle x\rangle^{\eta}$ is admissible for all $\eta \geq 0$. Moreover, the following inequality holds:
\begin{equation}\label{eq:ad:1}
\langle x\rangle^{-\eta} \lesssim \langle y\rangle^{-\eta} \langle x-y\rangle^{\eta}, 
\quad \forall\, \eta \geq 0. 
\end{equation}
Hence, both $\langle x\rangle^{\eta}$ and $\langle x\rangle^{-\eta}$ are admissible weights of type $W(\eta)$.
 
For any admissible weight $\rho$ and $p \in [1,\infty]$, we denote by $L^p(\rho)$ the weighted Lebesgue space consisting of all measurable functions $f$ such that
\[
\|f\|_{L^p(\rho)} :=
\begin{cases}
\displaystyle
\left( \int_{\mathbb{R}^d} |f(x)|^p\rho(x)^p dx \right)^{1/p}, & 1 \leq p < \infty,\\[8pt]
\displaystyle
\operatorname*{ess\,sup}_{x \in \mathbb{R}^d} |f(x)|\rho(x), & p = \infty.
\end{cases}
\]

Next, we introduce the weighted Besov space $B^{r}_{p,q}(\rho)$. Let $f \in \mathcal{S}'(\mathbb{R}^d)$. We define the Littlewood--Paley operators $\Delta_j$ and $S_j$ by
\[
\Delta_j f := \mathcal{F}^{-1}\big(\varphi_j \hat{f}\big),
\quad
S_j f := \sum_{n=-1}^{j-1} \Delta_n f,
\quad j \ge -1.
\]
Let $r \in \mathbb{R}$ and $(p,q) \in [1,\infty]^2$. For any admissible weight $\rho$, the weighted Besov norm $\|\cdot\|_{B^{r}_{p,q}(\rho)}$ is defined by
\[
\|f\|_{B^{r}_{p,q}(\rho)} :=
\begin{cases}
\displaystyle
\left( \sum_{n \ge -1} 2^{nrq}\|\Delta_n f\|_{L^p(\rho)}^q \right)^{1/q}, & 1 \leq q < \infty,\\[8pt]
\displaystyle
\sup_{n \ge -1} 2^{nr}\|\Delta_n f\|_{L^p(\rho)}, & q = \infty.
\end{cases}
\]
In particular, when the weight is trivial, i.e., $\rho \equiv 1$, we write $B^{r}_{p,q} := B^{r}_{p,q}(\rho)$ and $\|f\|_{B^{r}_{p,q}} := \|f\|_{B^{r}_{p,q}(\rho)}$, recovering the classical Besov space.

The following lemma clarifies the relationship between weighted Besov spaces and the classical Besov spaces; see \cite[Theorem~6.5]{T06} for the proof.

\begin{Lemma}\label{Pre1}
Let $r \in \mathbb{R}$ and $p,q \in [1,\infty]$. For any admissible weight $\rho$, the following norm equivalence holds: 
\[
\|f\|_{B^{r}_{p,q}(\rho)} \sim \|\rho f\|_{B^{r}_{p,q}}.
\]
\end{Lemma}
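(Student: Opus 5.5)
The plan is to prove the two inequalities $\|\rho f\|_{B^r_{p,q}} \lesssim \|f\|_{B^r_{p,q}(\rho)}$ and $\|f\|_{B^r_{p,q}(\rho)} \lesssim \|\rho f\|_{B^r_{p,q}}$ separately. Both rest on one commutator-type bound: for each dyadic block,
\[
\|\Delta_j(\rho f)\|_{L^p} \lesssim \sum_{n} \|\rho\,\Delta_n f\|_{L^p}\,\min\{1,2^{-|j-n|N}\}\cdot(\text{polynomial factors}),
\]
together with the analogous bound with $\rho$ replaced by $\rho^{-1}$. The reverse direction follows by symmetry. The key observation is that $\rho^{-1}$ is also admissible of type $W(\eta)$. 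Property (2) for $\rho^{-1}$ is property (2) for $\rho$ with $x,y$ exchanged. Property (1) for $\rho^{-1}$ follows from the Leibniz/Faà di Bruno formula, since every derivative of $\rho^{-1}$ is a sum of terms $\rho^{-1-m}\prod D^{\beta_i}\rho$, each bounded by $C\rho^{-1}$.

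First I write $f=\sum_n \Delta_n f$ and $\Delta_j(\rho f)=\sum_n \Delta_j(\rho\,\Delta_n f)$. The kernel $\check\varphi_j$ of $\Delta_j$ is Schwartz and concentrated at scale $2^{-j}$. Property (2) gives pointwise $\rho(x)\le c\,\rho(y)\langle x-y\rangle^{\eta}$, and the weight $\langle x-y\rangle^\eta$ is absorbed by the kernel decay. Young's inequality then yields $\|\Delta_j g\|_{L^p(\rho)}\lesssim \|\rho g\|_{L^p}$ and, more generally, $\|\Delta_j (\rho g)\|_{L^p}\lesssim\|\rho g\|_{L^p}$. The same argument shows that $\Delta_j$ is bounded on $L^p(\rho)$ uniformly in $j$. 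This handles the near-diagonal interactions $|j-n|\le N_0$.

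For off-diagonal interactions I have to exploit the smoothness of $\rho$ in property (1). When $n\ge j+N_0$, I write $\Delta_n f=\Delta^{M}(\Delta^{-M}\tilde\Delta_n f)$ with $\Delta^{-M}\tilde\Delta_n$ gaining $2^{-2Mn}$. I then integrate by parts onto $\check\varphi_j(x-y)\rho(y)$. Each derivative costs at most $2^{j}$ on the kernel, and costs $O(1)$ on $\rho$ by property (1), times $\rho$. This gives a factor $2^{-2M(n-j)}$. When $n\le j-N_0$, I use that the frequency of $\rho\,\Delta_n f$ near $2^j$ comes only from high frequencies of $\rho$. Concretely, I use the vanishing moments of $\check\varphi_j$ for $j\ge0$ and Taylor-expand $\rho\,\Delta_n f$ at $x$ to order $M$. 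The derivatives are controlled by $\rho\cdot 2^{nM}$ (Bernstein plus property (1)), which gives a factor $2^{-M(j-n)}$. Choosing $M>|r|+1$ makes the resulting matrix $2^{jr}2^{-nr}\min\{1,2^{-M|j-n|}\}$ bounded on $\ell^q$ by Schur's test, which proves the first inequality.

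The main obstacle is making the Taylor/integration-by-parts steps compatible with the weight. The derivatives of $\Delta_n f$ must be measured in $L^p(\rho)$ at a shifted point, and this is only possible because of the polynomial-growth control $\rho(x)\lesssim\rho(y)\langle x-y\rangle^\eta$. I would handle this by always pairing $\langle x-y\rangle^{\eta+M}$ with the Schwartz tail of $\check\varphi_j$, rescaled, which costs only constants independent of $j$. If these details get too heavy, the fallback is to cite \cite[Theorem~6.5]{T06} directly, as the paper does, since the statement is exactly that result.
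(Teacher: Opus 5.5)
Your proof is correct in substance, but it takes a different route from the paper. The paper gives no argument of its own and defers entirely to \cite[Theorem~6.5]{T06}. You instead prove the equivalence directly by Littlewood--Paley almost-orthogonality. The core is the block estimate $\|\Delta_j(\rho\,\Delta_n f)\|_{L^p}\lesssim 2^{-M|j-n|}\|\rho\,\Delta_n f\|_{L^p}$:
near-diagonal terms follow from uniform $L^p$ bounds on $\Delta_j$; the case $n\gg j$ follows by moving $\Delta^M$ onto $\check\varphi_j(x-\cdot)\rho$; the case $n\ll j$ follows from vanishing moments/Taylor plus weighted Bernstein. You then close with Young's inequality on $\ell^q$ for $M>|r|$.

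The citation is shorter and inherits the generality of Triebel's framework. Your route is self-contained and makes the dependence of the constants on $c,\eta,C_{N_0}$ visible. It also uses only the mechanism the paper itself uses in Lemma~\ref{Pre1:n} and Appendix~\ref{p:R1}: absorbing $\langle x-y\rangle^{\eta}$ into the tails of a rescaled Schwartz kernel. One small slip: the function you apply $\Delta^M$ to should be $\Delta^{-M}\tilde\Delta_n\Delta_n f$, not $\Delta^{-M}\tilde\Delta_n f$. Its $2^{-2Mn}$ gain must be measured in $L^p(\rho)$, which again uses property (2).

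One step does not deliver what you claim: the reverse inequality is not the first one with $\rho$ replaced by $\rho^{-1}$. That substitution gives $\|\rho^{-1}h\|_{B^r_{p,q}}\lesssim\|h\|_{B^r_{p,q}(\rho^{-1})}$. What you actually need, with $g=\rho f$, is $\|\rho^{-1}g\|_{B^r_{p,q}(\rho)}\lesssim\|g\|_{B^r_{p,q}}$, i.e.
\[
\|\rho\,\Delta_j(\rho^{-1}\Delta_n g)\|_{L^p}\lesssim 2^{-M|j-n|}\,\|\Delta_n g\|_{L^p},
\]
which carries an outer weight.

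The clean fix is to prove the two-weight block estimate $\|\Delta_j(w\,u_n)\|_{L^p(\sigma)}\lesssim 2^{-M|j-n|}\|u_n\|_{L^p(w\sigma)}$. Here $u_n$ has Fourier support in $2^n\mathcal{C}$ and $w,\sigma$ are admissible; apply it with $(w,\sigma)=(\rho,1)$ and $(\rho^{-1},\rho)$. All your ingredients carry over: property (1) for $\rho^{-1}$, uniform boundedness of $\Delta_j$ on $L^p(\rho)$, and $\rho(x)\lesssim\rho(y)\langle x-y\rangle^{\eta}$ paired with kernel tails. The outer weight is exactly where the shifted-point issue you flag in your last paragraph becomes essential. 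In the first direction the Taylor remainder is measured in unweighted, translation-invariant $L^p$. In the reverse direction you must use $\|D^M h(\cdot+tz)\|_{L^p(\rho)}\lesssim\langle tz\rangle^{\eta}\|D^M h\|_{L^p(\rho)}$.

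Alternatively, obtain the reverse inequality by duality. Write $\langle f,h\rangle=\langle \rho f,\rho^{-1}h\rangle$. Your first inequality, applied to $\rho^{-1}$ with indices $(-r,p',q')$, bounds $\|\rho^{-1}h\|_{B^{-r}_{p',q'}}$ by $\|h\|_{B^{-r}_{p',q'}(\rho^{-1})}$. This route needs the weighted analogue of the duality characterization of the Besov norm.
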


By Lemma~\ref{Pre1} and \cite[Corollary~2.70]{BCD11}, the weighted Besov space $B^{r}_{p,q}(\rho)$ is independent of the particular choice of the partition of unity $(\chi,\varphi)$ for any admissible weight $\rho$. Moreover, weighted Besov spaces share many of the fundamental properties of the classical nonhomogeneous Besov spaces; we refer the reader to \cite[Chapter~2.7]{BCD11} for a comprehensive discussion. 

The following result follows directly from the definition of the weighted Besov space.
\begin{Lemma}\label{Pre00}
Let $\rho_1$ and $\rho_2$ be admissible weights with $\rho_1 \leq c\rho_2$ for some constant $c>0$. For any $p,q \in [1,\infty]$ and $r \in \mathbb{R}$, there holds
\[
\|f\|_{B^{r}_{p,q}(\rho_1)} \leq c \|f\|_{B^{r}_{p,q}(\rho_2)}.
\]
\end{Lemma}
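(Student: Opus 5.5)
The plan is to read the inequality straight off the definition of the weighted Besov norm, using only pointwise monotonicity of the weighted Lebesgue norms. No norm equivalence such as Lemma~\ref{Pre1} is needed, so the constant stays exactly $c$ and not merely some implicit multiple of it.

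First, I compare the weighted $L^p$ norms. Fix $n\ge -1$ and write $h=\Delta_n f$. Since $0<\rho_1(x)\le c\rho_2(x)$ for every $x$, we have $|h(x)|\rho_1(x)\le c\,|h(x)|\rho_2(x)$ pointwise.
\begin{itemize}
\item For $p<\infty$, raise this to the $p$-th power, integrate, and take the $p$-th root. This gives $\|h\|_{L^p(\rho_1)}\le c\|h\|_{L^p(\rho_2)}$.
\item For $p=\infty$, take the essential supremum of the pointwise inequality, which gives the same bound.
\end{itemize}

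Second, I pass to the Besov norm. The Littlewood--Paley blocks $\Delta_n f$ do not depend on the weight, so for each $n$
\[
2^{nr}\|\Delta_n f\|_{L^p(\rho_1)}\le c\,2^{nr}\|\Delta_n f\|_{L^p(\rho_2)}.
\]
If $q<\infty$, take the $\ell^q$ sum over $n\ge -1$; if $q=\infty$, take the supremum over $n\ge -1$. The constant $c$ factors out of the $\ell^q$ norm by homogeneity, which yields $\|f\|_{B^{r}_{p,q}(\rho_1)}\le c\|f\|_{B^{r}_{p,q}(\rho_2)}$.

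This also covers the case where the right-hand side is infinite, in which the inequality is trivial, and it shows that $B^r_{p,q}(\rho_2)\subset B^r_{p,q}(\rho_1)$.

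There is no real obstacle here. The only point needing care is to keep the weight inside the $L^p$ norm, exactly as in the definition, rather than using the equivalent form $\|\rho f\|_{B^r_{p,q}}$. Working with $\rho f$ would require comparing $\rho_1 f$ with $\rho_2 f$ in Besov norm, which pointwise monotonicity does not give, and would lose the sharp constant.
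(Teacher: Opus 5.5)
Your proof is correct and is the same argument the paper intends; the paper only says the lemma ``follows directly from the definition of the weighted Besov space.'' Your blockwise pointwise bound $|\Delta_n f|\rho_1\le c\,|\Delta_n f|\rho_2$, followed by taking the $\ell^q$ norm over $n\ge -1$, is that argument written out in full, and it keeps the exact constant $c$.
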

By a direct application of the monotonicity of $l^p$-spaces \cite[Sections~2.7.1]{T83}, we have the following lemma.
\begin{Lemma}\label{Pre0}
Let $\rho$ be an admissible weight. The following embedding properties hold.
\begin{enumerate}
\item
For any $p \in [1,\infty]$, $1 \leq q_1 \leq q_2 \leq \infty$, and $r \in \mathbb{R}$, 
\[
\|f\|_{B^{r}_{p,q_2}(\rho)} \lesssim \|f\|_{B^{r}_{p,q_1}(\rho)}.
\]
\item
For any $p,q_1,q_2 \in [1,\infty]$ and $-\infty < r_1 < r_2 < \infty$, 
\[
\|f\|_{B^{r_1}_{p,q_1}(\rho)} \lesssim \|f\|_{B^{r_2}_{p, q_2}(\rho)}.
\]
\end{enumerate}
Here the implicit constants above are independent of the admissible weight $\rho$.
\end{Lemma}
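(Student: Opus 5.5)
Both embeddings follow termwise from the definition of the weighted Besov norm, because the weight $\rho$ only enters through the sequence $a_n := \|\Delta_n f\|_{L^p(\rho)}$, $n \ge -1$. We first rewrite
\[
\|f\|_{B^{r}_{p,q}(\rho)} = \big\| (2^{nr} a_n)_{n\ge -1} \big\|_{\ell^{q}},
\]
so each claim reduces to an inequality between sequence norms. Any constant that appears then depends only on $q_1,q_2,r_1,r_2$ and never on $\rho$.

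For part (1), set $b_n := 2^{nr} a_n$. The monotonicity of $\ell^q$ spaces gives $\|b\|_{\ell^{q_2}} \le \|b\|_{\ell^{q_1}}$ whenever $q_1 \le q_2$. To justify it, normalize so that $\|b\|_{\ell^{q_1}} = 1$; then $|b_n| \le 1$ for every $n$, hence $|b_n|^{q_2} \le |b_n|^{q_1}$, and summing yields the claim. The case $q_2 = \infty$ is immediate. The constant here is $1$.

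For part (2), first bound each term by the supremum:
\[
2^{n r_1} a_n = 2^{-n(r_2-r_1)}\, 2^{n r_2} a_n \le 2^{-n(r_2-r_1)} \|f\|_{B^{r_2}_{p,\infty}(\rho)}.
\]
Taking the $\ell^{q_1}$ norm in $n$ gives
\[
\|f\|_{B^{r_1}_{p,q_1}(\rho)} \le \big\|(2^{-n(r_2-r_1)})_{n\ge -1}\big\|_{\ell^{q_1}}\, \|f\|_{B^{r_2}_{p,\infty}(\rho)}.
\]
The first factor is a convergent geometric series because $r_2 > r_1$, so it is a finite constant $C(r_2-r_1,q_1)$. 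Finally, part (1) with $q_2 \le \infty$ gives $\|f\|_{B^{r_2}_{p,\infty}(\rho)} \lesssim \|f\|_{B^{r_2}_{p,q_2}(\rho)}$.

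Neither step poses a real obstacle. The only point that needs care is the claimed independence of $\rho$, and it holds because $\rho$ never leaves the inner $L^p(\rho)$ norm: every manipulation above acts only on the outer sequence in $n$.
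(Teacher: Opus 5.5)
Your proof is correct and follows the paper's approach: the paper simply invokes the monotonicity of $\ell^q$ spaces (citing Triebel), while you carry out that reduction to the sequence $(2^{nr}\|\Delta_n f\|_{L^p(\rho)})_{n\ge -1}$ explicitly. You also supply the geometric-series step needed for part (2), and your constants depend only on $q_1, r_1, r_2$, never on $\rho$.
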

In addition, by Lemma~\ref{Pre1} together with \cite[Sections~2.7.1]{T83}, one can readily establish the following embedding result for weighted Besov spaces.
\begin{Lemma}\label{Pre000}
For any $1 \leq p_1 \leq p_2 \leq \infty$, $q \in [1,\infty]$, and $0 < r_2 \leq r_1 < \infty$ satisfying $
r_1 - d/p_1 = r_2 -d/p_2$, the following continuous embedding holds: 
\[
\|f\|_{B^{r_2}_{p_2,q}(\rho)} \lesssim \|f\|_{B^{r_1}_{p_1,q}(\rho)}.
\]
\end{Lemma}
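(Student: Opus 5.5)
The plan is to reduce the weighted statement to the classical Sobolev-type Besov embedding via Lemma~\ref{Pre1}. Since $\rho$ is admissible, Lemma~\ref{Pre1} gives
\[
\|f\|_{B^{r_2}_{p_2,q}(\rho)} \sim \|\rho f\|_{B^{r_2}_{p_2,q}}, \qquad \|f\|_{B^{r_1}_{p_1,q}(\rho)} \sim \|\rho f\|_{B^{r_1}_{p_1,q}}.
\]
It therefore suffices to prove the unweighted estimate $\|g\|_{B^{r_2}_{p_2,q}} \lesssim \|g\|_{B^{r_1}_{p_1,q}}$ for $g=\rho f$, with the same exponents. Composing the two equivalences with this bound then gives the claim. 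The implicit constant will depend on $\rho$ only through the constants in Lemma~\ref{Pre1}.

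For the unweighted embedding I will argue block by block. Each $\Delta_n g$ has Fourier support in a ball of radius $\lesssim 2^n$: the ball $\mathcal{B}$ for $n=-1$, and the annulus $2^n\mathcal{C}$ for $n\ge 0$. Bernstein's inequality \cite[Lemma~2.1]{BCD11} then gives, for $p_1\le p_2$,
\[
\|\Delta_n g\|_{L^{p_2}} \lesssim 2^{nd(1/p_1-1/p_2)} \|\Delta_n g\|_{L^{p_1}},
\]
with a constant uniform in $n$. Multiplying by $2^{nr_2}$ and using the hypothesis $r_2 = r_1 - d(1/p_1-1/p_2)$, I get $2^{nr_2}\|\Delta_n g\|_{L^{p_2}} \lesssim 2^{nr_1}\|\Delta_n g\|_{L^{p_1}}$. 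Taking the $\ell^q$ norm over $n\ge -1$ yields the embedding with the same $q$. Alternatively, this step can simply be cited from \cite[Section~2.7.1]{T83}, or from \cite[Proposition~2.71]{BCD11}.

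The argument has no real obstacle. The one point needing care is that Lemma~\ref{Pre1} is applied at two different pairs of indices, $(r_1,p_1)$ and $(r_2,p_2)$. This is allowed because Lemma~\ref{Pre1} holds for every $r\in\mathbb{R}$ and every $p,q\in[1,\infty]$. The positivity assumption $r_2>0$ is not needed for the Bernstein argument, which works for all real $r_1,r_2$ with the stated relation; it only restricts the statement to the range where it is used later.
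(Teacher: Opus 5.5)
Your proposal is correct and follows the paper's route. You reduce to the unweighted case via Lemma~\ref{Pre1} and invoke the classical Besov embedding; the paper simply cites \cite[Section~2.7.1]{T83} for the latter, and your Bernstein computation makes that step explicit (your remark that $r_2>0$ is not actually needed is also right).
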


The following lifting property of weighted Besov spaces is proved in \cite[Theorem~6.1.3]{T06}.

\begin{Lemma}\label{lifting}
Let $r,s \in \mathbb{R}$ and $(p,q) \in [1,\infty]^2$. For any admissible weight $\rho$, the following norm equivalence holds: 
\[
\big\|(I-\Delta)^{s/2} f\big\|_{B^{r}_{p,q}(\rho)} \sim \|f\|_{B^{r+s}_{p,q}(\rho)},
\]
where $I$ denotes the identity operator.
\end{Lemma}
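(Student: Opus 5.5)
The lifting property (Lemma~\ref{lifting}) is stated in \cite[Theorem~6.1.3]{T06}, but we can also derive it from the unweighted statement. We reduce to the classical case via Lemma~\ref{Pre1}, and the only real work is a commutator estimate between the weight and the Bessel potential. By Lemma~\ref{Pre1},
\[
\|(I-\Delta)^{s/2}f\|_{B^{r}_{p,q}(\rho)}\sim\|\rho\,(I-\Delta)^{s/2}f\|_{B^r_{p,q}},
\qquad
\|f\|_{B^{r+s}_{p,q}(\rho)}\sim\|\rho f\|_{B^{r+s}_{p,q}}.
\]
In the unweighted space the operator $(I-\Delta)^{s/2}$ is a Fourier multiplier with symbol $\langle 2\pi\xi\rangle^{s}$. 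On each dyadic annulus this symbol equals $2^{js}$ times a uniformly bounded smooth multiplier, so $\|(I-\Delta)^{s/2}h\|_{B^r_{p,q}}\sim\|h\|_{B^{r+s}_{p,q}}$ for all $p,q\in[1,\infty]$ by the standard Bernstein-type multiplier lemma \cite[Lemma~2.2]{BCD11}. It therefore suffices to show
\[
\|\rho\,(I-\Delta)^{s/2}f\|_{B^r_{p,q}}\sim\|(I-\Delta)^{s/2}(\rho f)\|_{B^{r}_{p,q}}.
\]

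Writing $g=\rho f$, this amounts to showing that the conjugated operator $T:=\rho\,(I-\Delta)^{s/2}\rho^{-1}$ maps $B^{r+s}_{p,q}$ boundedly onto $B^r_{p,q}$, with inverse $\rho\,(I-\Delta)^{-s/2}\rho^{-1}$ of the same type. The plan is to show that $T$ is a pseudodifferential operator with symbol in $S^{s}_{1,0}$. First, condition (1) of $W(\eta)$ gives $\rho^{-1}D^{N}\rho\in L^\infty$ for every multi-index $N$, and the same holds for $\rho^{-1}$, since its derivatives are polynomials in the quantities $D^\beta\rho/\rho$. Next, condition (2) shows that $\rho(x)/\rho(y)\lesssim\langle x-y\rangle^{\eta}$ grows at most polynomially. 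The kernel of $(I-\Delta)^{s/2}$ decays faster than any polynomial away from the diagonal, so this growth is absorbed. Expanding $\rho(x)\rho(y)^{-1}$ in Taylor series around $y=x$ then gives the asymptotic symbol
\[
\sigma_T(x,\xi)\sim\sum_{N}\frac{1}{N!}\,\rho(x)\,D_x^{N}\!\big(\rho^{-1}\big)(x)\,\partial_\xi^{N}\langle 2\pi\xi\rangle^{s}
\]
(up to constant factors), whose leading term is $\langle 2\pi\xi\rangle^{s}$ and each of whose later terms lies in $S^{s-|N|}_{1,0}$. Operators of order $m$ with $S^m_{1,0}$ symbols are bounded $B^{t+m}_{p,q}\to B^{t}_{p,q}$ for all $p,q\in[1,\infty]$. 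Applying this to $T$ and to its inverse gives both directions of the equivalence.

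The main obstacle is justifying this symbol calculus rigorously when $\rho$ is not bounded, as with $\rho=\langle x\rangle^{-\eta}$, whose reciprocal grows. The Taylor remainder must be controlled using the polynomial bound in (2) against the rapid off-diagonal decay of the kernel of $(I-\Delta)^{s/2}$; near the diagonal the kernel is singular, which is handled by a smooth cutoff. An alternative route avoids the full calculus: work dyadically, write $\rho\Delta_j(I-\Delta)^{s/2}f=2^{js}\,\rho\,\tilde\Delta_j\Delta_j f$ with $\tilde\Delta_j$ having a kernel uniformly bounded by $2^{jd}\langle 2^j\cdot\rangle^{-M}$, and use (2) to bound
\[
\|\rho\,\tilde\Delta_j h\|_{L^p}\lesssim\|\rho h\|_{L^p}
\]
by Young's inequality with the kernel $2^{jd}\langle 2^jz\rangle^{\eta-M}$. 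This second route proves the equivalence with the weighted norms directly from the definition. I expect to use it, since it needs only condition (2) and the choice $M>\eta+d$.
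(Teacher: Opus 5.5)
Your dyadic route is correct, and it differs from the paper, which gives no argument and simply cites \cite[Theorem~6.1.3]{T06}. There, lifting belongs to Triebel's general theory of weighted spaces, developed alongside the characterization $\|f\|_{B^{r}_{p,q}(\rho)}\sim\|\rho f\|_{B^{r}_{p,q}}$ of Lemma~\ref{Pre1}. You instead work directly from the paper's definition of the weighted norm. You write $\varphi_j(\xi)\langle 2\pi\xi\rangle^{s}=2^{js}\theta_j(2^{-j}\xi)\varphi_j(\xi)$ with $\theta_j(\zeta)=\tilde\varphi(\zeta)(2^{-2j}+4\pi^2|\zeta|^2)^{s/2}$, which is bounded in $\mathcal{S}$ uniformly in $j\ge 0$. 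You then move the weight through the convolution using condition (2) and Young's inequality with $M>\eta+d$; this is the same device the paper uses in Lemma~\ref{Pre1:n}. Your route gives a self-contained proof that needs only condition (2) of $W(\eta)$, with constants depending only on $s$, $\eta$ and $c$. It does not use Lemma~\ref{Pre1} at all, so you can drop both your opening reduction and the conjugated pseudodifferential route, whose remainder control you leave open anyway. Triebel's framework, for its part, places the result in a much broader theory of weighted function spaces and ties it to the $\rho f$ characterization. When you write the proof out, treat the block $j=-1$ separately: its kernel is the fixed Schwartz function $\mathcal{F}^{-1}(\psi\langle 2\pi\xi\rangle^{s})$ with $\psi\equiv 1$ on $\operatorname{supp}\chi$. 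Obtain the reverse inequality by applying the same one-sided bound with $-s$ to $(I-\Delta)^{s/2}f$, using that $(I-\Delta)^{\pm s/2}$ are mutually inverse on $\mathcal{S}'$.
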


We next extend the definition of weighted Besov spaces and introduce the rescaled weighted Besov spaces. Let $r \in \mathbb{R}$ and $(p,q) \in [1,\infty]^2$. For any admissible weight $\rho$ and scaling parameter $\lambda>0$, we define the norm $\|\cdot\|_{B^{r}_{p,q}(\rho,\lambda)}$ by
\[
\|f\|_{B^{r}_{p,q}(\rho,\lambda)} :=
\begin{cases}
\displaystyle
\left( \sum_{n \geq -1} 2^{nrq} \|\Delta_n f\|_{L^p(\rho,\lambda)}^{q} \right)^{1/q}, & 1 \leq q < \infty,\\[8pt]
\displaystyle
\sup_{n \geq -1} 2^{nr}\|\Delta_n f\|_{L^p(\rho,\lambda)}, & q = \infty,
\end{cases}
\]
where $L^{p}(\rho,\lambda) := L^p((\rho)_\lambda)$ denotes the weighted Lebesgue space associated with the rescaled weight $(\rho)_\lambda(x) := \rho(\lambda x)$. It is immediate from the definition that
\[
\|f\|_{B^{r}_{p,q}(\rho,\lambda)} = \|f\|_{B^{r}_{p,q}((\rho)_\lambda)}.
\]

Although the nonhomogeneous Besov spaces do not enjoy the same scaling properties as their homogeneous counterparts (cf. \cite[Chapter~2.3]{BCD11}), the rescaled weighted Besov spaces introduced here still satisfy useful rescaling properties. The proofs are given in Appendix~\ref{p:R1}.

\begin{Lemma}\label{R2:n}
Let $r \in \mathbb{R}$ and $(p,q) \in [1,\infty]^2$. For any $\lambda \in (0,\infty)$, $\eta \in [0,\infty)$, and admissible weight $\rho$ of type $W(\eta)$, the following estimate holds:
\begin{equation*}
\|(f)_\lambda\|_{B^{r}_{p,q}(\rho,\lambda)}\lesssim_{\,r,q,\eta} \lambda^{-d/p}\max\{\lambda^{r},\la^{\eta},1\}
\|f\|_{B^{r}_{p,q}(\rho)},
\end{equation*}
where the implicit constant is independent of $\lambda$ and $\rho$.
\end{Lemma}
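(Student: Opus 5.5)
The plan is to compute $\Delta_n\big((f)_\lambda\big)$ explicitly in terms of Littlewood--Paley pieces of $f$, and then pay separately for the change of variables, the weight and the frequency shift. By the scaling of the Fourier transform, $\widehat{(f)_\lambda}(\xi)=\lambda^{-d}\hat f(\xi/\lambda)$. Hence
\[
\Delta_n (f)_\lambda = \big(\mathcal{F}^{-1}(\varphi_n(\lambda\,\cdot)\hat f)\big)_\lambda .
\]
Substituting $y=\lambda x$ gives
\[
\|\Delta_n (f)_\lambda\|_{L^p((\rho)_\lambda)}=\lambda^{-d/p}\,\|\mathcal{F}^{-1}(\varphi_n(\lambda\cdot)\hat f)\|_{L^p(\rho)}.
\]
This step produces the factor $\lambda^{-d/p}$, and it explains why the rescaled weight $(\rho)_\lambda$ is the natural choice.

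\textbf{Frequency localization.} The multiplier $\varphi_n(\lambda\cdot)$ is supported in the annulus $\lambda^{-1}2^n\mathcal C$. Choose $m$ with $2^m\sim 2^n/\lambda$, i.e. $m=n-\log_2\lambda+O(1)$. Then only the blocks $\Delta_j f$ with $|j-m|\le N_0$ (a fixed $N_0$) contribute. When $2^n/\lambda\lesssim 1$, the contributing blocks are instead the low-frequency ones with $j\le$ some constant.

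So I will write
\[
\mathcal{F}^{-1}(\varphi_n(\lambda\cdot)\hat f)=\sum_{|j-m|\le N_0} K_{n,\lambda}*\Delta_j f,
\qquad K_{n,\lambda}=\mathcal{F}^{-1}\varphi_n(\lambda\cdot),
\]
with the sum replaced by low blocks in the second regime. The key multiplier bound is weighted Young's inequality. Using \eqref{eq:ad:1}-type admissibility, $\rho(x)\le c\rho(y)\langle x-y\rangle^{\eta}$, we get
\[
\|K*g\|_{L^p(\rho)}\lesssim \|\langle\cdot\rangle^{\eta}K\|_{L^1}\|g\|_{L^p(\rho)}.
\]

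\textbf{Kernel estimate.} Here $K_{n,\lambda}(x)=(2^n/\lambda)^d\check\varphi(2^n x/\lambda)$, or $\lambda^{-d}\check\chi(x/\lambda)$ when $n=-1$. Therefore
\[
\|\langle\cdot\rangle^\eta K_{n,\lambda}\|_{L^1}\lesssim \int|\check\varphi(z)|\,\langle \lambda 2^{-n}z\rangle^{\eta}\,dz\lesssim \max\{1,(\lambda 2^{-n})^{\eta}\}\lesssim\max\{1,\lambda^\eta\}.
\]
This is where the factor $\lambda^\eta$ enters, and the bound is uniform in $\rho$ of type $W(\eta)$.

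\textbf{Summation.} Combining, for each $n$,
\[
2^{nr}\|\Delta_n(f)_\lambda\|_{L^p((\rho)_\lambda)}\lesssim \lambda^{-d/p}\max\{1,\lambda^\eta\}\sum_{j\in J_n}2^{nr}\|\Delta_j f\|_{L^p(\rho)}.
\]
In the regime $2^n\gtrsim\lambda$, we have $2^{nr}\sim \lambda^r 2^{jr}$ for the relevant $j$. Each $j$ is hit by $O(1)$ values of $n$, so taking the $\ell^q$ norm gives $\lambda^r\|f\|_{B^r_{p,q}(\rho)}$.

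In the low regime $2^n\lesssim\lambda$ (only nonempty when $\lambda\gtrsim1$), the contributing $j$ are bounded, so $\|\Delta_j f\|\le\|f\|_{B^r_{p,q}(\rho)}$ up to constants depending on $r$. The number of such $n$ is about $\log\lambda$. For $r>0$, $\sum 2^{nrq}$ up to $2^n\sim\lambda$ gives $\lambda^{r}$. For $r\le 0$ the sum needs care, since a $\log\lambda$ loss could appear when $r=0$, $q<\infty$.

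\textbf{Main obstacle.} The obstacle is this low-frequency regime without losing logarithms. The fix I would try: when $2^n\lesssim\lambda$, the support $\lambda^{-1}2^n\mathcal C$ lies inside the ball $\{|\xi|\le 8/3\}$, so the piece is $K_{n,\lambda}*(\Delta_{-1}f+\Delta_0f+\Delta_1 f)$.

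The quantities $\|K_{n,\lambda}*\Delta_{-1}f\|$ can then be summed in $\ell^q$ using the extra decay coming from the annular multiplier. Specifically, $\varphi_n(\lambda\cdot)\hat f$ on the low block is a Littlewood--Paley decomposition of $\Delta_{-1}f$ at scales $2^n/\lambda$. The sum $\big(\sum_n\|K_{n,\lambda}*g\|^q\big)^{1/q}$ is controlled by the $B^{0}_{p,q}$-type square-function bound, absorbing into the constant $\max\{\lambda^r,\lambda^\eta,1\}$. If this fails to be uniform, I would accept the crude bound $\sup$ over $q$ combined with the geometric factor $2^{nr}\le\max\{1,\lambda^r\}$, which suffices whenever $r\neq0$.
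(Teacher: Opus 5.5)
Your skeleton is the paper's: rescale to extract $\lambda^{-d/p}$, then use weighted Young with $\langle x\rangle^{\eta}$. The gap is in the frequency localization of the block $n=-1$ when $\lambda<1$.

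There the multiplier is $\chi(\lambda\xi)$. It is supported in the ball $\{|\xi|\le \frac43\lambda^{-1}\}$, not in an annulus. So $\mathcal{F}^{-1}(\chi(\lambda\cdot)\hat f)$ involves every $\Delta_j f$ with $-1\le j\lesssim \log_2(1/\lambda)$, and neither ``only $|j-m|\le N_0$ contribute'' nor ``$2^{nr}\sim \lambda^r 2^{jr}$'' holds for it. As written, your summation gives $\|(f)_\lambda\|_{B^r_{p,q}(\rho,\lambda)}\lesssim \lambda^{r-d/p}\|f\|_{B^r_{p,q}(\rho)}$ for every $\lambda<1$. This is false when $r>0$: if $\operatorname{supp}\hat f\subset\{|\xi|\le 1/2\}$, then $f=\Delta_{-1}f$ and $(f)_\lambda=\Delta_{-1}(f)_\lambda$, so the ratio of the two norms is exactly $\lambda^{-d/p}$.

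The paper treats this block separately for $\lambda=2^{-N_0}$. Since $\chi(2^{-N_0}\xi)=\sum_{k=-1}^{N_0-1}\varphi_k(\xi)$, one gets $\|\Delta_{-1}(f)_\lambda\|_{L^p(\rho,\lambda)}\le \lambda^{-d/p}\sum_{k=-1}^{N_0-1}\|\Delta_k f\|_{L^p(\rho)}$. H\"older in $k$ against the weights $2^{-kr}$ then gives $\lambda^{-d/p}\max\{1,\lambda^{r}\}\|f\|_{B^r_{p,q}(\rho)}$ for $r\neq 0$. This is where the $1$ in $\max\{\lambda^r,\lambda^\eta,1\}$ comes from when $\lambda<1$. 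With your kernel bound the same argument works for non-dyadic $\lambda$ after inserting $\sum_j\varphi_j$, which would spare the paper's final reduction step.

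The second problem is the weight bookkeeping. You derived the correct factor $\max\{1,(\lambda 2^{-n})^{\eta}\}$, but then replaced it by $\max\{1,\lambda^\eta\}$ and multiplied it by the $\lambda^r$ from the high blocks. For $\lambda>1$ and $r,\eta>0$ this gives $\lambda^{r+\eta}$, which is worse than the claim. The same happens in your low-regime fallback, so ``the crude bound suffices whenever $r\neq 0$'' is not correct. Keep the $n$-dependence instead. The factor is $O(1)$ when $2^n\gtrsim\lambda$. For $2^n\lesssim\lambda$ only $\Delta_{-1}f,\Delta_0 f,\Delta_1 f$ enter, and
\[
\begin{aligned}
\Big(\sum_{-1\le n\lesssim \log_2\lambda}2^{nrq}\|\Delta_n(f)_\lambda\|_{L^p(\rho,\lambda)}^q\Big)^{1/q}
&\lesssim \lambda^{\eta-d/p}\Big(\sum_{-1\le n\lesssim\log_2\lambda}2^{n(r-\eta)q}\Big)^{1/q}\|f\|_{B^r_{p,q}(\rho)}\\
&\lesssim \lambda^{-d/p}\max\{\lambda^r,\lambda^\eta\}\|f\|_{B^r_{p,q}(\rho)}
\end{aligned}
\]
for $r\neq\eta$. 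This is the paper's \eqref{R2:eq:2}, and no square-function argument is needed.

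A logarithmic loss of the kind you anticipated does occur, but at different exponents. It sits at $r=\eta$ (these low blocks, $\lambda>1$, $q<\infty$) and at $r=0$ (the block $n=-1$, $\lambda<1$, $q>1$), and it cannot be argued away. For example, take $f=\phi\log|x|\in B^0_{\infty,\infty}$ with $\phi\in\mathcal{D}(\mathbb{R}^d)$, $\phi\equiv 1$ near $0$, and $\rho\equiv 1$. Then $\Delta_{-1}(f)_\lambda(0)=\log\lambda+O(1)$ as $\lambda\to 0$. The paper's proof has the same blind spot: its bound $\sum_{k\ge -1}2^{-krq/(q-1)}\lesssim 1$ for $r\ge 0$ fails at $r=0$. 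These borderline exponents therefore have to be excluded from the statement rather than rescued by a sharper argument.
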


Lemma \ref{R2:n} implies that, under suitable conditions,
\[
\|(f)_\lambda\|_{B^{r}_{p,q}(\rho,\lambda)}
\lesssim_{\,r,q} \lambda^{r-d/p}\|f\|_{B^{r}_{p,q}(\rho)},
\]
where the exponent $r-d/p$ is usually referred to as the differential dimension. This quantity plays an important role in embedding theorems; see Lemma~\ref{Pre000} above.

\subsection{Interpolations}

Let $X$ and $Y$ be Banach spaces with $Y \subseteq X$. We begin by recalling the definition of real interpolation spaces. For $x \in X$, the $K$-functional associated with the pair $(X,Y)$ is defined by
\[
K(t,x;X,Y) := \inf_{\substack{x = a+b \\ a \in X,\, b \in Y}}
\big( \|a\|_X + t\|b\|_Y \big), \quad t>0.
\]

Let $L^p_{\ast}(0,\infty)$ denote the Lebesgue space on $(0,\infty)$ with respect to the measure $t^{-1} dt$. For any $\theta \in (0,1)$ and $p \in [1,\infty]$, the real interpolation space $(X,Y)_{\theta,p}$ is defined as the set of all $x \in X$ such that
\[
\|x\|_{(X,Y)_{\theta,p}} := \big\| t^{-\theta} K(t,x;X,Y) \big\|_{L^p_{\ast}(0,\infty)} < \infty.
\]
For further properties of real interpolation spaces, we refer the reader to \cite[Chapter~1]{L99}.

The following result on bounded linear operators between real interpolation spaces is fundamental; see \cite[Theorem~1.6]{L99}.

\begin{Lemma}\label{real:interpolation}
Let $(X_1,Y_1)$ and $(X_2,Y_2)$ be real interpolation couples. Suppose that $L \in \mathcal{L}(X_1,X_2) \cap \mathcal{L}(Y_1,Y_2)$. Then, for every $\theta \in (0,1)$ and $p \in [1,\infty]$, the operator $L$ extends to a bounded linear operator $
L \in \mathcal{L}\big((X_1,Y_1)_{\theta,p},\,(X_2,Y_2)_{\theta,p}\big)$, and the following estimate holds 
\[
\|L\|_{\mathcal{L}((X_1,Y_1)_{\theta,p},(X_2,Y_2)_{\theta,p})} \leq (\|L\|_{\mathcal{L}(X_1,X_2)})^{1-\theta} (\|L\|_{\mathcal{L}(Y_1,Y_2)})^\theta.
\]
\end{Lemma}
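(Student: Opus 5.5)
The plan is the standard $K$-functional argument. Since the paper defines real interpolation only for nested pairs $Y\subseteq X$, we have $(X_1,Y_1)_{\theta,p}\subseteq X_1$. So $L$ is already defined on $(X_1,Y_1)_{\theta,p}$ as an element of $\mathcal{L}(X_1,X_2)$, and "extends" just means the restriction is bounded into $(X_2,Y_2)_{\theta,p}$. Set $M_0:=\|L\|_{\mathcal{L}(X_1,X_2)}$ and $M_1:=\|L\|_{\mathcal{L}(Y_1,Y_2)}$. First assume $M_0,M_1>0$.

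\emph{Step 1: pointwise comparison of $K$-functionals.} Fix $x\in(X_1,Y_1)_{\theta,p}$ and $t>0$, and take any decomposition $x=a+b$ with $a\in X_1$, $b\in Y_1$. By linearity $Lx=La+Lb$ with $La\in X_2$ and $Lb\in Y_2$, so this is an admissible decomposition for $K(t,Lx;X_2,Y_2)$. Hence
\[
K(t,Lx;X_2,Y_2)\le M_0\|a\|_{X_1}+tM_1\|b\|_{Y_1}=M_0\big(\|a\|_{X_1}+(tM_1/M_0)\|b\|_{Y_1}\big).
\]
Taking the infimum over decompositions gives
\[
K(t,Lx;X_2,Y_2)\le M_0\,K(tM_1/M_0,x;X_1,Y_1).
\]

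\emph{Step 2: change of variables.} Multiply the Step 1 bound by $t^{-\theta}$ and take the $L^p_\ast(0,\infty)$ norm. Substitute $s=tM_1/M_0$; the measure $t^{-1}dt$ is invariant under dilations, and $t^{-\theta}=(M_1/M_0)^{\theta}s^{-\theta}$. This yields
\[
\|Lx\|_{(X_2,Y_2)_{\theta,p}}\le M_0(M_1/M_0)^{\theta}\|x\|_{(X_1,Y_1)_{\theta,p}}=M_0^{1-\theta}M_1^{\theta}\|x\|_{(X_1,Y_1)_{\theta,p}}.
\]
The case $p=\infty$ is identical, with the supremum in place of the integral. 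In particular $Lx$ lies in $(X_2,Y_2)_{\theta,p}$ and the operator-norm bound follows.

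\emph{Step 3: degenerate cases.} If $M_0=0$ or $M_1=0$, the estimate in Step 1 still holds with $M_0,M_1$ replaced by $\max\{M_0,\varepsilon\}$ and $\max\{M_1,\varepsilon\}$ for any $\varepsilon>0$. Running Step 2 with these values and letting $\varepsilon\to0$ gives the claim; the bound is $0$ when one of the norms vanishes.

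There is no genuine obstacle. The only points requiring care are checking that $La$ and $Lb$ form an admissible decomposition, which is where the hypothesis $L\in\mathcal{L}(Y_1,Y_2)$ is used, and the dilation invariance of $t^{-1}dt$, which is what produces the geometric-mean constant.
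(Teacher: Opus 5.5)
Your argument is correct and is the standard proof behind \cite[Theorem~1.6]{L99}, which the paper cites rather than proving itself. It consists of the pointwise bound $K(t,Lx;X_2,Y_2)\le M_0\,K(tM_1/M_0,x;X_1,Y_1)$ followed by the dilation invariance of $t^{-1}dt$, and your $\max\{M_i,\varepsilon\}$ device handles the degenerate case correctly because $\theta\in(0,1)$; the same reasoning also works verbatim for non-nested couples with $L$ defined on $X_1+Y_1$, which is the setting (e.g.\ $B^{r}_{2,2}$ versus $B^{r}_{2p,2p}$) in which the paper later applies the lemma.
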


For any $r \in \mathbb{R}$ and $p \in [1,\infty)$, we define the Sobolev space $W^{r,p} := B^{r}_{p,p}$
and the weighted Sobolev space $W^{r,p}(\rho) := B^{r}_{p,p}(\rho).$ In particular, for $p=2$, we write $
H^{r} := B^{r}_{2,2}$ and $H^{r}(\rho) := B^{r}_{2,2}(\rho)$, which correspond to the $L^2$-based Sobolev space and its weighted counterpart, respectively. For more general results on the equivalence and interrelations between Besov spaces, H\"{o}lder--Zygmund spaces, Bessel-potential spaces, and Sobolev spaces, we refer the reader to \cite[Section~2.3.5]{T83}.

The following result on real interpolation between Besov spaces with different integrability exponents is proved in \cite[Section~2.4.1]{T78} and \cite[Section~2.4.3]{T83}.

\begin{Lemma}\label{I:Besov}
Let $r_1, r_2 \in \mathbb{R}$ and $p_1, p_2 \in [1,\infty)$. For any $\theta \in (0,1)$, define
\[
r := (1-\theta) r_1 + \theta r_2, \quad 1/p := (1-\theta)/p_1 + \theta/p_2.
\]
Then the following real interpolation identity holds:
\[
\big( B^{r_1}_{p_1,p_1}, B^{r_2}_{p_2,p_2} \big)_{\theta,p}
= B^{r}_{p,p}.
\]
\end{Lemma}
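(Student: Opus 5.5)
The plan is to reduce the identity to an interpolation statement for weighted sequence spaces, using the standard retraction–coretraction argument. Write $\ell^{s}_{q}(L^{p})$ for the space of sequences $(g_j)_{j\ge -1}$ of functions on $\mathbb{R}^d$ with norm $\big(\sum_j 2^{jsq}\|g_j\|_{L^p}^q\big)^{1/q}$. Define the coretraction $J f := (\Delta_j f)_{j\ge -1}$; by the definition of the Besov norm, $J$ is an isometry from $B^{s}_{q,q}$ into $\ell^{s}_{q}(L^{q})$. Define the retraction $P\big((g_j)_j\big) := \sum_{j} \tilde\Delta_j g_j$, where $\tilde\Delta_j := \Delta_{j-1}+\Delta_j+\Delta_{j+1}$ (with $\Delta_{-2}:=0$). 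Since $\tilde\Delta_j\Delta_j=\Delta_j$ by the support properties of the admissible pair $(\chi,\varphi)$, we have $PJ=\mathrm{id}$. The first step is to check that $P$ is bounded $\ell^{s}_{q}(L^{q})\to B^{s}_{q,q}$ for every $s\in\mathbb{R}$ and $q\in[1,\infty)$. This uses only the almost-orthogonality $\Delta_n\tilde\Delta_j=0$ for $|n-j|\ge 3$ and the uniform $L^q$-boundedness of $\Delta_n$ (convolution with a kernel of uniformly bounded $L^1$ norm). Hence $\Delta_n P(g)=\sum_{|j-n|\le 2}\Delta_n\tilde\Delta_j g_j$, and the factor $2^{ns}\sim 2^{js}$ is absorbed.

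With this in hand, the general retraction theorem (a formal consequence of Lemma~\ref{real:interpolation}, applied to $J$ and $P$) gives
\[
\big(B^{r_1}_{p_1,p_1},B^{r_2}_{p_2,p_2}\big)_{\theta,p}
= P\Big(\big(\ell^{r_1}_{p_1}(L^{p_1}),\ell^{r_2}_{p_2}(L^{p_2})\big)_{\theta,p}\Big),
\]
with equivalent norms. It then suffices to identify the sequence-space interpolant as $\ell^{r}_{p}(L^{p})$.

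For that identification, note that $\ell^{s}_{q}(L^{q})=L^{q}(\Omega,w^{q}\,d\mu)$, where $\Omega=\{j\ge -1\}\times\mathbb{R}^d$, $\mu$ is counting measure times Lebesgue measure, and $w(j,x)=2^{js}$. So we need the real-method Stein–Weiss theorem: for $1/p=(1-\theta)/p_1+\theta/p_2$,
\[
\big(L^{p_1}(w_1^{p_1}),L^{p_2}(w_2^{p_2})\big)_{\theta,p}=L^{p}(w^{p}),\qquad w=w_1^{1-\theta}w_2^{\theta}.
\]
This is valid precisely because the second index equals the Lebesgue exponent $p$; see Bergh–Löfström, Theorem~5.5.3. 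With $w_i=2^{jr_i}$ this gives $w=2^{jr}$, as required.

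I expect the main obstacle to be this weighted $L^p$ step, since it involves simultaneously different exponents and different weights. If I have to prove it directly rather than cite it, I would first absorb one weight by the isometry $g\mapsto w_1 g$. That reduces the problem to the pair $L^{p_1}$, $L^{p_2}(v)$ with $v=w_2/w_1$. I would then compute the $K$-functional pointwise, by splitting each function according to the level set where $t\,v^{\,\cdot}|g|^{\,\cdot}$ exceeds $|g|$ to the appropriate power, in the spirit of Holmstedt's formula. Finally I would integrate $t^{-\theta p}K(t,g)^p\,dt/t$ and use Fubini to exchange the $t$-integral with the integral over $\Omega$; the diagonal choice $q=p$ is what makes this Fubini step collapse exactly to $\int |g|^p w^p\,d\mu$. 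The restriction $p_1,p_2<\infty$ in the statement guarantees that $p<\infty$ and that the spaces involved are the $L^p$-type spaces to which this argument applies.
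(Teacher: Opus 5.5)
Your proposal is correct. The paper gives no proof of its own and only cites Triebel, and your argument is the standard one behind those references: realize $B^{s}_{q,q}$ as a retract of $\ell^{s}_{q}(L^{q})$ via $J$ and $P$, then identify $\big(\ell^{r_1}_{p_1}(L^{p_1}),\ell^{r_2}_{p_2}(L^{p_2})\big)_{\theta,p}=\ell^{r}_{p}(L^{p})$ by the real-method Stein--Weiss theorem, which applies because the second index equals $p$. One caution about your optional direct proof of that last step: for $p_1\neq p_2$ the quantity that splits into a pointwise infimum is the modified functional $\inf_{g=g_1+g_2}\bigl(\int_\Omega |g_1|^{p_1}w_1^{p_1}\,d\mu+t\int_\Omega |g_2|^{p_2}w_2^{p_2}\,d\mu\bigr)$, not $K(t,g)$ itself; relating the two is the content of the power theorem, so citing Bergh--L\"ofstr\"om as you do is the cleaner route.
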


\subsection{Besov spaces on bounded domains}\label{sec:B:bd}

Let $\Omega \subset \mathbb{R}^d$ be a nonempty bounded domain, and let $p,q \in [1,\infty]$ and $r \in \mathbb{R}$. The Besov space $B^{r}_{p,q}(\Omega)$ is defined by
\[
B^{r}_{p,q}(\Omega) \coloneqq 
\bigl\{ f \in \mathcal{D}'(\Omega) : \exists\, g \in B^{r}_{p,q}(\mathbb{R}^d) \ \text{such that}\ f = g\big|_{\Omega} \bigr\}.
\]
The space $B^{r}_{p,q}(\Omega)$ is equipped with the quotient norm
\[
\|f\|_{B^{r}_{p,q}(\Omega)} \coloneqq 
\inf \bigl\{ \|g\|_{B^{r}_{p,q}(\mathbb{R}^d)} :
g \in B^{r}_{p,q}(\mathbb{R}^d),\ g\big|_{\Omega} = f \bigr\},
\]
which makes $B^{r}_{p,q}(\Omega)$ a Banach space.

Let $\partial\Omega$ be a compact $(d-1)$-dimensional Lipschitz manifold, and let $\{O_j\}_{j=1}^N$ be an open covering of $\partial\Omega$. We choose a finite family of Lipschitz charts $\kappa_j : U_j \subset \mathbb{R}^{d-1} \to O_j \subset \partial\Omega, j=1,\dots,N$, together with a smooth partition of unity $\{\phi_j\}_{j=1}^N$ subordinate to $\{O_j\}$, i.e., $\phi_j \in C^\infty(\partial\Omega)$, $\operatorname{supp}\phi_j \subset O_j$, and $
\sum_{j=1}^N \phi_j(x) = 1$ for all $x \in \partial\Omega.$

Let $p,q \in [1,\infty]$ and $r \in \mathbb{R}$. The Besov space on $\partial\Omega$ is defined by
\[
B^{r}_{p,q}(\partial\Omega)
:= \bigl\{ f \in \mathcal{D}'(\partial\Omega) \;:\;
(\phi_j f)\circ \kappa_j \in B^{r}_{p,q}(\mathbb{R}^{d-1})
\ \text{for each } j=1,\dots,N \bigr\},
\]
where $B^{r}_{p,q}(\mathbb{R}^{d-1})$ denotes the usual Besov space on $\mathbb{R}^{d-1}$. The norm on $B^{r}_{p,q}(\partial\Omega)$ is given by
\[
\|f\|_{B^{r}_{p,q}(\partial\Omega)}
:= \sum_{j=1}^N
\bigl\| (\phi_j f)\circ \kappa_j \bigr\|_{B^{r}_{p,q}(\mathbb{R}^{d-1})},
\]
which makes $B^{r}_{p,q}(\partial\Omega)$ a Banach space. Different choices of the atlas $\{\kappa_j\}$ and the partition of unity $\{\phi_j\}$ lead to equivalent norms. In particular, when $\partial\Omega$ is smooth, the resulting space is independent of these choices. For further details, we refer the reader to \cite[Section~3.2.2]{T83}.

We define the trace operator $Tu$ as the restriction of $u$ to the boundary $\partial \Omega$. The following lemma extends the trace operator to Sobolev spaces; see \cite[Section~3.3.3]{T83}. 

\begin{Lemma}\label{Trace:bd:1}
Let $p \in (1,\infty)$ and $r \in (1/p,\infty)$. Assume that $U \subset \mathbb{R}^d$ is a bounded domain with smooth boundary $\partial U$. Then the trace operator $T$ extends to a bounded linear operator satisfying
\[
\|Tu\|_{W^{r-1/p,p}(\partial U)} \lesssim \|u\|_{W^{r,p}(\mathbb{R}^d)}.
\]
\end{Lemma}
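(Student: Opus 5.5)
The statement is the trace lemma (Lemma~\ref{Trace:bd:1}), which the paper cites from \cite[Section~3.3.3]{T83}. The plan is to reduce it, via the boundary atlas, to the flat trace theorem for the hyperplane $\{x_d=0\}$ in $\mathbb{R}^d$.

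\textbf{Step 1: localization.} Since $\partial U$ is smooth and compact, choose finitely many smooth charts that locally flatten the boundary, together with a smooth partition of unity $\{\phi_j\}$ as in Section~\ref{sec:B:bd}. Because the trace space is independent of the chosen atlas, it suffices to prove
\[
\|(\phi_j u)\circ\Phi_j|_{x_d=0}\|_{B^{r-1/p}_{p,p}(\mathbb{R}^{d-1})}\lesssim \|u\|_{B^r_{p,p}(\mathbb{R}^d)}
\]
for each $j$. This in turn uses two standard facts:
\begin{enumerate}
\item multiplication by a smooth compactly supported function is bounded on $B^r_{p,p}(\mathbb{R}^d)$;
\item composition with a smooth diffeomorphism that equals the identity outside a compact set is bounded on $B^r_{p,p}(\mathbb{R}^d)$.
\end{enumerate}
For large $r$ the second fact follows by interpolation between integer orders, and for general $r$ by duality or lifting (Lemma~\ref{lifting}).

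\textbf{Step 2: the flat trace via Littlewood--Paley.} For $u\in\mathcal S$, write $u=\sum_n\Delta_n u$. Each block $\Delta_n u$ has Fourier support in $|\xi|\lesssim 2^n$, so its restriction $(\Delta_n u)(\cdot,0)$ has $(d-1)$-dimensional Fourier support in $|\xi'|\lesssim 2^n$. The one-dimensional Bernstein/Nikolskii inequality in the $x_d$ variable gives
\[
\|(\Delta_n u)(\cdot,0)\|_{L^p(\mathbb{R}^{d-1})}\lesssim 2^{n/p}\|\Delta_n u\|_{L^p(\mathbb{R}^d)}.
\]
Set $v_n=(\Delta_n u)(\cdot,0)$. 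Then $\Delta'_m v_n$ vanishes unless $n\ge m-c$. Hence
\[
2^{m(r-1/p)}\|\Delta'_m \mathrm{Tr}\,u\|_{L^p}\lesssim \sum_{n\ge m-c}2^{(m-n)(r-1/p)}\,2^{nr}\|\Delta_n u\|_{L^p}.
\]

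\textbf{Step 3: summation, the main obstacle.} Since $r-1/p>0$, the kernel $2^{(m-n)(r-1/p)}\mathbbm{1}_{n\ge m-c}$ is summable. Young's inequality for sequences then yields the $\ell^p$ bound, i.e. $\|\mathrm{Tr}\,u\|_{B^{r-1/p}_{p,p}}\lesssim\|u\|_{B^r_{p,p}}$. This is the only place where the hypothesis $r>1/p$ enters, and it is the crux of the argument.

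\textbf{Step 4: density and conclusion.} Extend the bound from $\mathcal S$ to $W^{r,p}=B^r_{p,p}$ by density, which holds since $p<\infty$. Reassembling the charts then gives the stated bound into $W^{r-1/p,p}(\partial U)$.
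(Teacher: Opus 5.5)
Your proof is correct. The paper gives no argument of its own and only cites \cite[Section~3.3.3]{T83}, and the proof there follows the same scheme as yours. First, localize with a boundary-flattening atlas, using that smooth cutoffs and diffeomorphisms act boundedly on $B^r_{p,p}$ for $r>0$. Then prove the hyperplane trace estimate with Littlewood--Paley blocks and the Nikolskii bound $\|(\Delta_n u)(\cdot,0)\|_{L^p(\mathbb{R}^{d-1})}\lesssim 2^{n/p}\|\Delta_n u\|_{L^p(\mathbb{R}^d)}$, followed by a summation that needs exactly $r>1/p$. Since $r>1/p>0$, only positive smoothness orders enter the localization step, so your aside about duality or lifting for general $r$ is not needed.
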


\section{The regularized Faddeev-type operator}\label{sec:RF}

For any $\gamma \in \mathbb{R}^d$ and $\tau \in \mathbb{R}$, we define the symbol
$m_{k,\gamma,\tau} : \mathbb{R}^d \to \mathbb{C}$ by
\begin{equation*}
m_{k,\gamma,\tau}(\xi)
:= |\xi|^2 - k^2 - |\gamma|^2 + 2{\rm i}\gamma \cdot \xi - {\rm i}\tau .
\end{equation*}
Let $\psi : \mathbb{R} \to [0,1]$ be a smooth function such that
$\operatorname{supp}\psi \subset B(0,1)$ and $\psi \equiv 1$ on $B(0,1/2)$, where $B(0,r)$ is the ball centered at $0$ with radius $r$. For any $r_0 \in \mathbb{R}$ and $\epsilon_0>0$, we define the rescaled cutoff
function $\psi(\cdot; r_0, \epsilon_0) : \mathbb{C} \to [0,1]$ by
\begin{equation*}
\psi(x; r_0,\epsilon_0) := \psi\big( |x-r_0|\epsilon_0^{-1} \big).
\end{equation*}

We now introduce the regularized Faddeev type operator $\mathcal{H}_{k,\gamma,\tau}(\cdot\,;\psi,\epsilon_0)$.

\begin{Definition}
Fix $s \in [0,2]$, $\epsilon_0 \in (0,\min\{r_{k,\gamma}/4, 1\})$, and $
\tau \in (-\epsilon_0^2,0)$ or $\tau \in (0,\epsilon_0^2)$.
We define the operator $
(I-\Delta)^{s/2}\mathcal{H}_{k,\gamma,\tau}(\,\cdot\,;\psi,\epsilon_0)
:\mathcal{S}(\mathbb{R}^d)\to \mathcal{S}'(\mathbb{R}^d)$
by duality as follows:
\begin{equation*}
\big\langle (I-\Delta)^{s/2}\mathcal{H}_{k,\gamma,\tau}(f;\psi,\epsilon_0),\, g \big\rangle
=
I^{(1,s)}_{k,\gamma,\epsilon_0,\tau}(f,g)
+
I^{(2,s)}_{k,\gamma,\epsilon_0,\tau}(f,g)
+
I^{(3,s)}_{k,\gamma,\epsilon_0,\tau}(f,g),
\end{equation*}
where
\[
\begin{aligned}
I^{(1,s)}_{k,\gamma,\epsilon_0,\tau}(f,g)
:=& \int_{0}^{\infty}\!\!\int_{\mathbb{S}^{d-1}}
\frac{(1+r^2)^{s/2}\hat f(\omega r)\bar{\hat g}(\omega r)}
{m_{k,\gamma,\tau}(\omega r)}
\bigl(1-\psi(r; r_{k,\gamma},\epsilon_0)\bigr) r^{d-1}d\omega dr,
\\[4pt]
I^{(2,s)}_{k,\gamma,\epsilon_0,\tau}(f,g)
:=&
\int_{0}^{\infty}\!\!\int_{\mathbb{S}^{d-1}}
\frac{(1+r^2)^{s/2}\,
\hat f(\omega r_{k,\gamma})\bar{\hat g}(\omega r_{k,\gamma})}
{m_{k,\gamma,\tau}(\omega r)}\psi(r; r_{k,\gamma},\epsilon_0) r^{d-1} d\omega dr,
\\[4pt]
I^{(3,s)}_{k,\gamma,\epsilon_0,\tau}(f,g)
:=&
\int_{0}^{\infty}\!\!\int_{\mathbb{S}^{d-1}}
\frac{(1+r^2)^{s/2}
\big(\hat f(\omega r) \bar{\hat g}(\omega r)-
\hat f(\omega r_{k,\gamma})\bar{\hat g}(\omega r_{k,\gamma})
\big)}{m_{k,\gamma,\tau}(\omega r)}\psi(r; r_{k,\gamma},\epsilon_0)r^{d-1}d\omega dr,
\end{aligned}
\]
for all $f,g \in \mathcal{S}(\mathbb{R}^d)$.
\end{Definition}

Once the integrals $I^{(j,s)}_{k,\gamma,\epsilon_0,\tau}(f,g)$, $j=1,2,3$, are well-defined, the above definition can be written in the Fourier multiplier form
\begin{equation}\label{r:F}
\big\langle (I-\Delta)^{s/2}\mathcal{H}_{k,\gamma,\tau}(f;\psi,\epsilon_0), g \big\rangle
=\int_{\mathbb{R}^d}
\frac{(1+|\xi|^2)^{s/2}}{m_{k,\gamma,\tau}(\xi)}\hat f(\xi)\bar{\hat g}(\xi)d\xi.
\end{equation}
Thus, the operator $\mathcal{H}_{k,\gamma,\tau}$ can be viewed as the Fourier multiplier associated with the symbol $m_{k,\gamma,\tau}$.

We next verify that the integrals $I^{(j,s)}_{k,\gamma,\epsilon_0,\tau}(f,g)$, $j=1,2,3$, are well-defined. We begin with the following result.

\begin{Lemma}\label{F1}
The integral $I^{(1,s)}_{k,\gamma,\epsilon_0,\tau}(f,g)$ is well-defined. Moreover, for any $f,g \in \mathcal{S}(\mathbb{R}^d)$, the following estimate holds:
\[
\big| I^{(1,s)}_{k,\gamma,\epsilon_0,\tau}(f,g) \big|\lesssim
\frac{(1+r_{k,\gamma})^{s}}{\epsilon_0 r_{k,\gamma}}\|f\|_{L^2(\mathbb{R}^d)}\|g\|_{L^2(\mathbb{R}^d)}.
\]
\end{Lemma}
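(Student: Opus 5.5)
The plan is to bound the multiplier pointwise on the support of $1-\psi(r;r_{k,\gamma},\epsilon_0)$ and then apply Cauchy--Schwarz together with Plancherel. Write $\xi=\omega r$. The real part of the symbol is
\[
\Re\, m_{k,\gamma,\tau}(\xi)=r^2-r_{k,\gamma}^2=(r-r_{k,\gamma})(r+r_{k,\gamma}),
\]
so $|m_{k,\gamma,\tau}(\xi)|\ge |r-r_{k,\gamma}|\,(r+r_{k,\gamma})$. On the support of $1-\psi(r;r_{k,\gamma},\epsilon_0)$ we have $|r-r_{k,\gamma}|\ge \epsilon_0/2$, since $\psi\equiv1$ on $B(0,1/2)$. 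The imaginary part $2\gamma\cdot\xi-\tau$ is simply discarded, so the bound is uniform in $\gamma$ and in the sign and size of $\tau$.

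The key step is the pointwise estimate
\[
\frac{(1+r^2)^{s/2}}{|m_{k,\gamma,\tau}(\omega r)|}\lesssim\frac{(1+r_{k,\gamma})^{s}}{\epsilon_0 r_{k,\gamma}}\quad\text{for } |r-r_{k,\gamma}|\ge\epsilon_0/2.
\]
I would prove it by splitting into two regions.

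\emph{Case $r\le 2r_{k,\gamma}$.} Here $(1+r^2)^{s/2}\lesssim(1+r_{k,\gamma})^s$ and $r+r_{k,\gamma}\ge r_{k,\gamma}$. Hence the ratio is at most $(1+r_{k,\gamma})^s/\big((\epsilon_0/2)\,r_{k,\gamma}\big)$.

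\emph{Case $r>2r_{k,\gamma}$.} Here $|r^2-r_{k,\gamma}^2|\ge \tfrac34 r^2$, so the ratio is $\lesssim (1+r^2)^{s/2}r^{-2}$. This is the step that needs the most care, because $r$ is unbounded and the estimate must still come out in terms of $r_{k,\gamma}$.
\begin{itemize}
\item Since $s\le2$, the function $(1+r^2)^{s/2}r^{-2}$ is nonincreasing in $r$ for $r$ large.
\item The point $r=2r_{k,\gamma}$ is not necessarily large when $r_{k,\gamma}$ is small, so I would not rely on monotonicity alone there. Instead I would argue directly: for $r\ge 2r_{k,\gamma}$,
\[
(1+r^2)^{s/2}r^{-2}\le r^{-2}+r^{s-2}\lesssim r_{k,\gamma}^{-2}+r_{k,\gamma}^{s-2},
\]
using $s-2\le0$.
\item Finally I would check that $r_{k,\gamma}^{-2}+r_{k,\gamma}^{s-2}\lesssim (1+r_{k,\gamma})^s/(\epsilon_0 r_{k,\gamma})$. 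This holds because $\epsilon_0<r_{k,\gamma}/4$ gives $1/(\epsilon_0 r_{k,\gamma})\ge 4r_{k,\gamma}^{-2}$, and $(1+r_{k,\gamma})^s\ge\max\{1,r_{k,\gamma}^s\}$.
\end{itemize}

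With the pointwise bound in hand, the remaining steps are routine. The integrand is measurable and dominated by a constant times $|\hat f(\xi)\bar{\hat g}(\xi)|$, and this function is integrable over $\mathbb{R}^d$ (using $d\xi=r^{d-1}\,d\omega\,dr$) because $f,g\in\mathcal{S}$. So $I^{(1,s)}_{k,\gamma,\epsilon_0,\tau}(f,g)$ is well-defined as an absolutely convergent integral. Cauchy--Schwarz gives $\|\hat f\|_{L^2}\|\hat g\|_{L^2}$, and Plancherel converts this to $\|f\|_{L^2}\|g\|_{L^2}$, which yields the stated bound.
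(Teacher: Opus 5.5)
Your proof is correct and follows essentially the same route as the paper. Both arguments bound the multiplier pointwise on the support of $1-\psi(r;r_{k,\gamma},\epsilon_0)$ using only $|m_{k,\gamma,\tau}|\ge |r^2-r_{k,\gamma}^2|$, then finish with Cauchy--Schwarz and Plancherel. The only difference is cosmetic: the paper proves the bound for $s=0$ and $s=2$ (writing $1+r^2=(1+r_{k,\gamma}^2)+(r^2-r_{k,\gamma}^2)$) and then splits the exponent to cover $s\in(0,2)$, whereas you handle all $s\in[0,2]$ at once by splitting at $r=2r_{k,\gamma}$.
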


\begin{proof}
We begin with the case $s=0$. Observe that
\begin{equation}\label{F1:1}
\big||\xi|^2 - r_{k,\gamma}^2\big|
= |r+r_{k,\gamma}||r-r_{k,\gamma}|
\gtrsim \epsilon_0 (r_{k,\gamma}+\varepsilon_0)
\quad \text{on } \operatorname{supp}\bigl(1-\psi(r; r_{k,\gamma},\varepsilon_0)\bigr).
\end{equation}
Hence, we obtain
\[
\big|I^{(1,0)}_{k,\gamma,\epsilon_0,\tau}(f,g)\big|
\lesssim \int_{0}^{\infty}\!\!\int_{\mathbb{S}^{d-1}}
\frac{|\hat f(\omega r)| |\bar{\hat g}(\omega r)|}
{\epsilon_0 (\epsilon_0 + r_{k,\gamma})} r^{d-1}d\omega dr.
\]
Applying H\"{o}lder's inequality together with Plancherel’s identity yields
\[
\big|I^{(1,0)}_{k,\gamma,\epsilon_0,\tau}(f,g)\big|
\lesssim \frac{1}{\epsilon_0 r_{k,\gamma}}
\|f\|_{L^2(\mathbb{R}^d)} \|g\|_{L^2(\mathbb{R}^d)}.
\]

We next consider the case $s=2$. Using \eqref{F1:1} once again, we have 
\[
\begin{aligned}
\frac{1+r^2}{|r^2-r_{k,\gamma}^2|} & =\frac{1+r_{k,\gamma}^2 + r^2-r_{k,\gamma}^2}{|r^2-r_{k,\gamma}^2|}  \lesssim 1 + \frac{1+ r_{k,\gamma}^2}{|r^2-r_{k,\gamma}^2|}\\
 & \lesssim 1 + \frac{1+ r_{k,\gamma}^2}{\epsilon_0 (r_{k,\gamma}+\epsilon_0)} \lesssim \frac{(1+ r_{k,\gamma})^2}{\epsilon_0 r_{k,\g}}\quad \text{on supp} \big(1- \psi(r; r_{k,\gamma},\epsilon_0)\big), 
\end{aligned}
\]
which implies
\[
|I^{(1,2)}_{k,\gamma,\epsilon_0,\tau}(f,g)|  \lesssim \int_0^\infty\!\! \int_{\mathbb{S}^{d-1}} \frac{(1+r^2)|\hat{f}(\omega r)||\bar{\hat{g}}(\omega r)|}{|r^2-r_{k,\gamma}^2|} r^{d-1} d\omega dr \lesssim \frac{(1+ r_{k,\gamma})^2}{\epsilon_0 r_{k,\gamma}}  \|f\|_{L^2(\mathbb R^d)} \|g\|_{L^2(\mathbb R^d)}.
\]

For any $s \in (0,2)$, we apply an interpolation argument between the cases $s=0$ and $s=2$ to obtain
\[
|I^{(1,s)}_{k,\gamma,\epsilon_0,\tau}(f,g)|  \lesssim \int_0^\infty \!\!\int_{\mathbb{S}^{d-1}} \frac{(1+r^2)^{s/2}}{|r^2-r_{k,\gamma}^2|^{s/2}} \frac{|\hat{f}(\omega r)||\bar{\hat{g}}(\omega r)|}{|r^2-r_{k,\gamma}^2|^{1-s/2}} r^{d-1} d\omega dr \lesssim \frac{(1+ r_{k,\gamma})^s}{\epsilon_0 r_{k,\gamma}}  \|f\|_{L^2(\mathbb R^d)} \|g\|_{L^2(\mathbb R^d)},
\]
which completes the proof. 
\end{proof}

\begin{Lemma}\label{F2}
The integral $I^{(2,s)}_{k,\gamma,\epsilon_0,\tau}(f,g)$ is well-defined. Moreover, for any $f,g \in \mathcal{S}(\mathbb{R}^d)$ and any $\eta > 1/2$, the following estimate holds:
\[
\big| I^{(2,s)}_{k,\gamma,\epsilon_0,\tau}(f,g) \big|\lesssim
\frac{(1+r_{k,\gamma})^{s}}{\epsilon_0 r_{k,\gamma}}\|f\|_{L^2(\langle x\rangle^{\eta})}
\|g\|_{L^2(\langle x\rangle^{\eta})}.
\]
\end{Lemma}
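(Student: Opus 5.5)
The integral $I^{(2,s)}$ factors into a radial integral times a sphere integral, because the numerator $\hat f(\omega r_{k,\gamma})\bar{\hat g}(\omega r_{k,\gamma})$ does not depend on $r$. The plan is to bound the radial part uniformly in $\omega$ and $\tau$, and then control the sphere part by a trace estimate on the sphere $\{|\xi|=r_{k,\gamma}\}$. For the radial part we need the $\omega$-dependence of $m_{k,\gamma,\tau}(\omega r)$. Write $m_{k,\gamma,\tau}(\omega r)=(r^2-r_{k,\gamma}^2)+{\rm i}(2r\,\gamma\cdot\omega-\tau)$ and set
\[
J(\omega):=\int_0^\infty \frac{(1+r^2)^{s/2}\psi(r;r_{k,\gamma},\epsilon_0)r^{d-1}}{m_{k,\gamma,\tau}(\omega r)}dr .
\]
On the support of $\psi$ we have $r\in(r_{k,\gamma}-\epsilon_0,r_{k,\gamma}+\epsilon_0)\subset(3r_{k,\gamma}/4,5r_{k,\gamma}/4)$, so $(1+r^2)^{s/2}r^{d-1}\sim(1+r_{k,\gamma})^s r_{k,\gamma}^{d-1}$ there. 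We also have $r^2-r_{k,\gamma}^2=(r+r_{k,\gamma})(r-r_{k,\gamma})$ with $r+r_{k,\gamma}\sim r_{k,\gamma}$.

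\emph{Step 1: the radial bound.} The aim is
\[
\sup_{\omega\in\mathbb S^{d-1}}|J(\omega)|\lesssim (1+r_{k,\gamma})^s r_{k,\gamma}^{d-2}\log(\cdot)\quad\text{or better}, \qquad\text{ideally } \lesssim \frac{(1+r_{k,\gamma})^s r_{k,\gamma}^{d-1}}{\epsilon_0 r_{k,\gamma}} .
\]
Substitute $t=r-r_{k,\gamma}$. The denominator becomes $(2r_{k,\gamma}+t)t+{\rm i}b(t)$, where $b(t)=2(r_{k,\gamma}+t)\gamma\cdot\omega-\tau$ is smooth in $t$. Split the integrand into its even and odd parts in $t$ on $(-\epsilon_0,\epsilon_0)$.
\begin{itemize}
\item Each piece has the form $\frac{a(t)}{c\,t+{\rm i}b}$ with smooth $a$ and $b$. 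Its principal-value-type contribution, coming from $\frac{c\,t}{c^2t^2+b^2}$ against an odd part, cancels up to a term controlled by $\|a'\|_\infty\epsilon_0/c$.
\item The remaining contribution $\frac{b}{c^2t^2+b^2}$ has integral at most $\pi/c$, uniformly in $b$ (including $b\to0$).
\end{itemize}
With $c\sim r_{k,\gamma}$ and $a\sim(1+r_{k,\gamma})^s r_{k,\gamma}^{d-1}\epsilon_0^{-1}$ after accounting for derivatives of the cutoff, this gives $|J(\omega)|\lesssim (1+r_{k,\gamma})^s r_{k,\gamma}^{d-1}/(\epsilon_0 r_{k,\gamma})$, independent of $\omega$ and $\tau$. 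The dependence of $b$ on $t$ (through $2t\,\gamma\cdot\omega$) is a perturbation. I would handle it by Taylor expanding the denominator about $t=0$, or by viewing the integral as a contour integral and shifting slightly, using $|\tau|<\epsilon_0^2$.

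\emph{Step 2: the trace bound.} From Step 1,
\[
|I^{(2,s)}|\le \sup_\omega|J(\omega)|\int_{\mathbb S^{d-1}}|\hat f(\omega r_{k,\gamma})||\hat g(\omega r_{k,\gamma})|d\omega,
\]
and by Cauchy--Schwarz it suffices to show
\[
r_{k,\gamma}^{d-1}\int_{\mathbb S^{d-1}}|\hat f(r_{k,\gamma}\omega)|^2d\omega\lesssim \|f\|^2_{L^2(\langle x\rangle^\eta)},\qquad \eta>1/2,
\]
with a constant uniform in $r_{k,\gamma}$. This is the Agmon--H\"ormander trace lemma: $\hat f\in H^\eta$ with $\|\hat f\|_{H^\eta}\sim\|f\|_{L^2(\langle x\rangle^\eta)}$, and the trace onto a sphere of radius $\rho$ is bounded from $H^\eta$, $\eta>1/2$. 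To get uniformity in $\rho$, localize in the radial direction and apply the one-dimensional bound $|h(\rho)|^2\lesssim\|h\|_{H^\eta(\mathbb R)}^2$. The choice $\eta>1/2$ is made so that this radial one-dimensional trace is available, after which we integrate in the angular variables. Combining the two steps gives $(1+r_{k,\gamma})^s/(\epsilon_0 r_{k,\gamma})$ times the weighted norms.

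\emph{Main obstacle.} I expect Step 1 to be the hardest: obtaining uniformity in $\omega$ (hence in $\gamma\cdot\omega$, which may vanish) and in $\tau$ near $0$ without losing more than $\epsilon_0^{-1}$. I would first try the odd/even cancellation argument described above. If it fails, I would fall back on a direct primitive computation, using $\partial_t\log(m)$ to integrate by parts against the cutoff $\psi$, whose derivative contributes exactly the factor $\epsilon_0^{-1}$.
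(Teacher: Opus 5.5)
Your architecture matches the paper's. Fubini isolates, for each $\omega$, a radial integral $J(\omega)$ multiplying the fixed factor $\hat f(\omega r_{k,\gamma})\bar{\hat g}(\omega r_{k,\gamma})$. One bounds $J$ uniformly in $\omega$ and $\tau$, and closes with a trace estimate on $\{|\xi|=r_{k,\gamma}\}$.

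\textbf{Step 1: a different route.} The paper separates two regimes.
\begin{itemize}
\item For $|\gamma\cdot\omega|\ge\epsilon_0/4$, $|\Im m_{k,\gamma,\tau}|\ge\epsilon_0 r_{k,\gamma}/8$ on $\operatorname{supp}\psi$, so the bound is trivial.
\item For $|\gamma\cdot\omega|<\epsilon_0/4$, it continues $m_{k,\gamma,\tau}$ analytically in $r$ and closes the segment with a half-circle of radius $\epsilon_0$, on which $|m_{k,\gamma,\tau}|\ge\epsilon_0 r_{k,\gamma}/4$. It then pays for the single pole $r_1$ that can be enclosed (the band $A^{\pm}_{\gamma,\tau}$) by its residue, of size $(1+|r_1|^2)^{s/2}|r_1|^{d-1}/|r_1-r_2|\lesssim(1+r_{k,\gamma})^s r_{k,\gamma}^{d-2}$.
\end{itemize}
Your Poisson/conjugate-Poisson cancellation needs no pole bookkeeping and is insensitive to the sign of $\tau$. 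It also handles the non-analytic cutoff $\psi$ directly through $\epsilon_0\|a'\|_\infty$, whereas deforming the contour implicitly requires first replacing $\psi$ by $1$ near $r_{k,\gamma}$. In exchange, the contour route gives the explicit residue on $A^{\pm}_{\gamma,\tau}$, which the paper reuses in Theorem~\ref{thm:F} to pass to $\tau\to0^{\pm}$.

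\textbf{Step 2.} Your uniform-in-radius trace bound $r_{k,\gamma}^{d-1}\int_{\mathbb{S}^{d-1}}|\hat f(r_{k,\gamma}\omega)|^2d\omega\lesssim\|f\|^2_{L^2(\langle x\rangle^{\eta})}$ is the right tool. Covering the sphere by graph patches of bounded slope and using $H^{\eta}(\mathbb{R})\subset L^\infty$ in the transverse variable gives uniformity even as the radius tends to $0$, more simply than radial localization. The paper instead invokes the rescaling Lemma~\ref{R2:n}, which by itself does not give a constant uniform in $r_{k,\gamma}$.

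\textbf{The step you flag.} This is where the content lies, and your first suggested fix fails as stated. In your variable $t=r-r_{k,\gamma}$, set $b_0:=2r_{k,\gamma}\gamma\cdot\omega-\tau$. The exact denominator is $D_0+E$ with $D_0=2r_{k,\gamma}t+{\rm i}b_0$ and $E=t(t+2{\rm i}\gamma\cdot\omega)$. If you bound the Taylor remainder $E/(D_0(D_0+E))$ in absolute value, the part $2{\rm i}t\,\gamma\cdot\omega$ of $E$ contributes about $(1+r_{k,\gamma})^s r_{k,\gamma}^{d-3}|\gamma\cdot\omega|\log(1+r_{k,\gamma}^2\epsilon_0^2/b_0^2)$. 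This is unbounded as $b_0\to0$ with $\gamma\cdot\omega\to\tau/(2r_{k,\gamma})\neq0$, so the $\sup_\omega$ bound is lost. The same logarithm appears whenever $|m_{k,\gamma,\tau}|$ is placed in the denominator, so keeping $1/m_{k,\gamma,\tau}$ complex, as you do, is essential.

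\textbf{The clean fix.} Make your frozen-coefficient form exact by the factorization the paper also uses: $m_{k,\gamma,\tau}(\omega r)=(r-r_1)(r-r_2)$ with $r_{1,2}=-{\rm i}\gamma\cdot\omega\pm(r_{k,\gamma}^2-(\gamma\cdot\omega)^2+{\rm i}\tau)^{1/2}$ (principal branch).
\begin{itemize}
\item Since $\Re r_2<0$, one has $|r-r_2|\ge r\ge\tfrac34 r_{k,\gamma}$ on the support of $\psi(\cdot\,;r_{k,\gamma},\epsilon_0)$.
\item Hence $B(r):=(1+r^2)^{s/2}r^{d-1}\psi(r;r_{k,\gamma},\epsilon_0)/(r-r_2)$ satisfies $\|B\|_\infty+\epsilon_0\|B'\|_\infty\lesssim(1+r_{k,\gamma})^s r_{k,\gamma}^{d-2}$.
\item Your even/odd argument, centered at $\Re r_1$ and applied to $\int B(r)(r-r_1)^{-1}dr$, then yields $|J(\omega)|\lesssim(1+r_{k,\gamma})^s r_{k,\gamma}^{d-2}$ for every $\omega$ and $\tau$, with no case split at all.
\end{itemize}
This also removes your extra factor $\epsilon_0^{-1}$. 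It came from charging $\|a'\|_\infty\sim\epsilon_0^{-1}\|a\|_\infty$ to $\|a\|_\infty$, and is harmless here since the stated estimate already carries $\epsilon_0^{-1}$.

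Your other fallback, integrating by parts against $\partial_r\log m_{k,\gamma,\tau}$, also works once the directions with $|\gamma\cdot\omega|\ge\epsilon_0/4$ are handled by the trivial imaginary-part bound.
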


\begin{proof}
By Fubini's theorem, we obtain
\[
\begin{aligned}
|I^{(2,s)}_{k,\gamma,\epsilon_0,\tau}(f,g)| \lesssim  \int_{\mathbb{S}^{d-1}} \!\! \int_{\mathbb{R}^+}\ \frac{(1+r^2)^{s/2}}{|m_{k,\g,\tau}(\o r)|} \psi(r; r_{k,\g},\e_0)r^{n-1} dr \  |\hat{f}(\o r_{k,\g})\bar{\hat{g}}(\o r_{k,\g})| d\o.
\end{aligned}
\]
To proceed, we rewrite the symbol $m_{k,\gamma,\tau}$ in polar coordinates as
\begin{equation}\label{F2:m}
m_{k,\gamma,\tau}(r,\omega)= r^2- r_{k,\gamma}^2 + 2 {\rm i} r \gamma_{\omega} - {\rm i} \tau,\quad  \gamma_{\omega}= \gamma \cdot \omega.
\end{equation}

When $|\gamma_{\omega}| \ge \varepsilon_0/4$, we estimate the symbol $m_{k,\gamma,\tau}$ using its imaginary part. In this case,
\begin{equation}\label{F2:3}
|m_{k,\gamma,\tau}| \geq  \epsilon_0 r/2 - \tau \geq \epsilon_0 (r_{k,\gamma}-\epsilon_0)/2 - \tau \geq \epsilon_0 r_{k,\gamma}/8,
\end{equation}
where we have used the facts that
$r \ge r_{k,\gamma}-\epsilon_0$ on $\operatorname{supp}\psi(r; r_{k,\gamma},\epsilon_0)$, $\epsilon_0 \in (0,r_{k,\gamma}/4)$, and $\tau \in [0,\epsilon_0^2) \subset [0,\epsilon_0 r_{k,\gamma}/4)$. Therefore, we have
\[
\left| \int_{\mathbb{R}^+} \frac{(1+r^2)^{s/2}}{|m_{k,\g,\tau}(\o r)|} \psi(r; r_{k,\g},\e_0)r^{d-1} dr\right| \lesssim \frac{(1+ r_{k,\g})^{s} r_{k,\g}^{d-1}}{\e_0 r_{k,\g}} 2\e_0=(1+ r_{k,\g})^{s} r_{k,\g}^{d-2} \ .
\]

When $|\gamma_{\omega}| < \epsilon_0/4$, we extend the symbol $m_{k,\gamma,\tau}$ to the complex plane and deform the integration path accordingly. We introduce the contour $\Gamma$ as follows. If $\tau<0$, we define
\[
\Gamma
:=
\bigl\{ r = r_{k,\gamma} + \epsilon_0 e^{{\rm i}\theta} : \theta \in (0,\pi) \bigr\}\cup
\bigl\{ r \in [r_{k,\gamma}-\epsilon_0,  r_{k,\gamma}+\epsilon_0] \bigr\}
=: \Gamma_{\epsilon_0} \cup \Gamma_0.
\]
If $\tau>0$, we instead define
\[
\Gamma
:=
\bigl\{ r = r_{k,\gamma} + \epsilon_0 e^{{\rm i}\theta} : \theta \in (\pi,2\pi) \bigr\}\cup
\bigl\{ r \in [r_{k,\gamma}-\epsilon_0, r_{k,\gamma}+\epsilon_0] \bigr\}
=: \Gamma_{\epsilon_0} \cup \Gamma_0.
\]

Let $A_{\gamma}$ denote the subset of $\mathbb{S}^{d-1}$ defined by $A_{\gamma} := \bigl\{ \omega \in \mathbb{S}^{d-1} : |\gamma_{\omega}| < \epsilon_0/4 \bigr\}.$ To handle the possible poles enclosed by the contour $\Gamma$, we solve the equation $m_{k,\gamma,\tau}(r,\omega)=0$, which yields
\[
r_{1,2}= -{\rm i}\gamma_{\omega} \pm \bigl(r_{k,\gamma}^2 - \gamma_{\omega}^2 + {\rm i}\tau \bigr)^{1/2}.
\]
Here the complex square root is understood in the principal branch, i.e.,
\[
\bigl(r_{k,\gamma}^2 - \gamma_{\omega}^2 + {\rm i}\tau \bigr)^{1/2}
= \bigl( (r_{k,\gamma}^2 - \gamma_{\omega}^2)^2 + \tau^2 \bigr)^{1/4}
e^{\frac{{\rm i}}{2} \arctan\Bigl( \frac{\tau}{r_{k,\gamma}^2 - \gamma_{\omega}^2} \Bigr)}.
\]
Since $|\gamma_{\omega}| < \epsilon_0/4 < r_{k,\gamma}/16$ and $\tau \in (0,\epsilon_0^2)$, we have
\[
\bigl( (r_{k,\gamma}^2 - \gamma_{\omega}^2)^2 + \tau^2 \bigr)^{1/4}
\ge \frac{r_{k,\gamma}}{2},\quad \arctan\Bigl( \frac{\tau}{r_{k,\gamma}^2 - \gamma_{\omega}^2} \Bigr)
\leq \arctan \Bigl( \frac{1}{4} \Bigr),
\]
which in turn implies
\begin{equation}\label{F2:4}
\Re \bigl( r_{k,\gamma}^2 - \gamma_{\omega}^2 + {\rm i}\tau \bigr)^{1/2}
\ge \frac{r_{k,\gamma}}{4}.
\end{equation}

Here we only consider the case $\tau>0$, as the case $\tau<0$ can be treated in an analogous manner. On the semicircular part $\Gamma_{\varepsilon_0}$ of the contour, we decompose the symbol $m_{k,\gamma,\tau}$ as
\[
m_{k,\gamma,\tau}
= (r-r_{k,\gamma}+{\rm i}\gamma_{\omega})(r+r_{k,\gamma}-{\rm i}\gamma_{\omega})
+ 2(r-r_{k,\gamma}){\rm i}\gamma_{\omega}
-{\rm i}\tau - \gamma_{\omega}^2 .
\]
Since $|\gamma_{\omega}|<\epsilon_0/4$, we have
\[
\begin{aligned}
|r-r_{k,\gamma}+{\rm i}\gamma_{\omega}|
&= \big|\epsilon_0\cos\theta + {\rm i}(\gamma_{\omega}+\epsilon_0\sin\theta)\big| \\
&= \big(\epsilon_0^2 + 2\epsilon_0\gamma_{\omega}\sin\theta + \gamma_{\omega}^2\big)^{1/2}
\geq \big(\epsilon_0^2/2 + \gamma_{\omega}^2\big)^{1/2},
\end{aligned}
\]
where we have used that $\theta\in(\pi,2\pi)$ on $\Gamma_{\epsilon_0}$. Hence, we obtain
\begin{align}\label{F2:2}
|m_{k,\gamma,\tau}| &\geq \big|(r-r_{k,\gamma}+{\rm i}\gamma_{\omega})(r+r_{k,\gamma}-{\rm i}\gamma_{\omega})\big|
- |2(r-r_{k,\gamma})\gamma_{\omega}| - |\tau| - \gamma_{\omega}^2 \notag\\
&\geq \epsilon_0(r_{k,\gamma}-\epsilon_0) - \epsilon_0^2/2 - \epsilon_0^2 - \epsilon_0^2/16
\geq \epsilon_0 r_{k,\gamma}/4,
\end{align}
where we used the assumptions $\epsilon_0\in(0,r_{k,\gamma}/4)$ and $\tau\in(0,\epsilon_0^2)$.
This lower bound implies that
\[
\int_{\Gamma_{\epsilon_0}} \frac{(1+r^2)^{s/2}}{|m_{k,\gamma,\tau}(\omega r)|} r^{d-1} dr
\lesssim \frac{2\pi \epsilon_0 (1+r_{k,\gamma})^{s}}{\epsilon_0 r_{k,\gamma}}
r_{k,\gamma}^{d-1} \lesssim (1+r_{k,\gamma})^{s} r_{k,\gamma}^{d-2}.
\]
In particular, there are no poles of the integrand on $\Gamma_{\epsilon_0}$ for all
$\omega\in\mathbb{S}^{d-1}$ satisfying $|\gamma_{\omega}|<\epsilon_0/4$.

On the other hand, poles may potentially lie on the segment $\Gamma_0$. We first observe that, since the semicircular arc $\Gamma_{\epsilon_0}$ lies in the right half-plane and in the lower half of the complex plane, the only possible pole enclosed by the contour $\Gamma$ is
 \begin{equation}\label{F2:pole}
 r:= -{\rm i} \gamma_{\omega} + (r_{k,\gamma}^2 -\gamma_{\omega}^2 + {\rm i} \tau)^{1/2},\quad 0<\gamma_{\omega}<\epsilon_0/4.
 \end{equation}
 
We next examine the imaginary part of the candidate pole \eqref{F2:pole}. A direct computation shows that the imaginary and real parts of $(r_{k,\gamma}^2 -\gamma_{\omega}^2 + {\rm i} \tau)^{1/2}$ are given by
\begin{align*}
\Im(r_{k,\gamma}^2 -\gamma_{\omega}^2 + {\rm i} \tau)^{1/2}&=\left(\frac{-(r_{k,\gamma}^2 -\gamma_{\omega}^2) + \big((r_{k,\gamma}^2 -\gamma_{\omega}^2)^2+\tau^2\big)^{1/2}}{2}\right)^{1/2},\\
\Re(r_{k,\gamma}^2 -\gamma_{\omega}^2 + {\rm i} \tau)^{1/2}&=\left(\frac{(r_{k,\gamma}^2 -\gamma_{\omega}^2) + \big(r_{k,\gamma}^2 -\gamma_{\omega}^2)^2+\tau^2\big)^{1/2}}{2}\right)^{1/2}.
\end{align*}
Define the function $f(x):= x^2 - \big(\Im(r_{k,\gamma}^2 -x^2 +{\rm i} \tau)^{1/2}\big)^2$. A pole lies on $\Gamma_0$ only if $f(x)=0$. Differentiating $f$ yields
\[
f'(x)= x+ \frac{x(r_{k,\gamma}^2-x^2)}{\big((r_{k,\gamma}^2 -x^2)^2+\tau^2\big)^{1/2}}>0, \quad x\in (0,r_{k,\gamma}), 
\]
which shows that $f$ is strictly increasing on $(0,r_{k,\gamma})$.

The above analysis shows that there exists at most one value $\gamma_{\omega} \in (0,\epsilon_0/4)$ for which $\Im(r)=0$.
We denote this critical value by $\Theta_{\gamma}(\tau)$. Since $f(0)<0$ and $f$ is strictly increasing on $(0,r_{k,\gamma})$,
a pole lies on or inside the contour $\Gamma$ if and only if $\gamma_{\omega} \ge \Theta_{\gamma}(\tau)$. More precisely, the following two cases occur:
\begin{enumerate}
\item
If $\gamma_{\omega} < \Theta_{\gamma}(\tau)$, then $\Im(r)>0$ and no pole lies inside or on the contour $\Gamma$.
\item
If $\Theta_{\gamma}(\tau) \leq \gamma_{\omega} \leq \epsilon_0/4$, we show that the pole $r$ lies inside or on the contour $\Gamma$.
\end{enumerate}

By the preceding discussion, the condition $\gamma_{\omega} \geq \Theta_{\gamma}(\tau)$ implies that $\Im(r)\leq 0$. Therefore, it suffices to show that $|r-r_{k,\gamma}|<\epsilon_0$. Let $a= {r_{k,\gamma}^2 -\gamma_{\omega}^2}$. A direct computation yields
 \begin{align*}
 \Re(r_{k,\gamma}^2 -\gamma_{\omega}^2 +{\rm i} \tau)^{1/2} - r_{k,\gamma} &= \Re(a^2 + {\rm i} \tau)^{1/2} - \sqrt{a} + \sqrt{a}- r_{k,\gamma}\\
 &= \frac{\frac{\sqrt{a^2+\tau^2}-a}{2}}{\big(\frac{a+\sqrt{a^2+\tau^2}}{2}\big)^{1/2} +\sqrt{a}} - \frac{\gamma_\omega^2}{\sqrt{a}+r_{k,\gamma}}\\
 &= \frac{\tau^2}{\big((2a+2\sqrt{a^2+\tau^2})^{1/2}+2\sqrt{a}\big)\big(a+\sqrt{a^2+\tau^2}\big) }- \frac{\gamma_\omega^2}{\sqrt{a}+r_{k,\gamma}}. 
\end{align*}
Since $|\gamma_{\omega}|\leq \epsilon_0/4$ and $\tau \in (0,\epsilon_0^2)$, we have $a \geq r_{k,\gamma}^2 - \epsilon_0^2/16 \geq 16\epsilon_0^2$, which implies
 \[
| \Re(r_{k,\g}^2 -\g_{\o}^2 + {\rm i} \tau)^{1/2} - r_{k,\gamma}| \leq \frac{\tau^2}{8a\sqrt{a}} + \frac{\gamma_\omega^2}{2\sqrt{a}} \leq \frac{\epsilon_0}{256}.
 \]
Moreover, since $\Theta_{\gamma}(\tau)\leq \gamma_{\omega}\leq \epsilon_0/4$, we have $\gamma_{\omega} - \Im(a + {\rm i}\tau)^{1/2} > 0$. Therefore, 
 \begin{align*}
 |r-r_{k,\gamma}| & \leq | \Re(r_{k,\gamma}^2 -\gamma_{\omega}^2 + {\rm i} \tau)^{1/2} - r_{k,\gamma}| + |\gamma_{\omega}- \Im(r_{k,\gamma}^2 -\gamma_{\omega}^2 + {\rm i} \tau)^{1/2}| \\
 & \leq | \Re(r_{k,\gamma}^2 -\gamma_{\omega}^2 + {\rm i} \tau)^{1/2} - r_{k,\gamma}| + \gamma_{\omega} <\epsilon_0/2, 
 \end{align*}
which proves the claim.
 
We define the sets $A_{\gamma,\tau}^+$ and $\partial^{\mathrm{in}} A_{\gamma,\tau}^+$ by 
\[
 A_{\gamma,\tau}^+:=\bigl\{ \omega \in \mathbb{S}^{d-1}: \Theta_{\gamma}(\tau) < \gamma \cdot \omega \leq \epsilon_0/4 \bigr\},\quad  \partial^{\rm in}A_{\gamma,\tau}^+:= \bigl\{ \omega \in \mathbb{S}^{d-1}: \Theta_{\gamma}(\tau) = \gamma \cdot \omega  \bigr\}.
 \]
Geometrically, the set $A_{\gamma,\tau}^+$ forms a spherical band on $\mathbb{S}^{d-1}$. From the preceding analysis, the union
$A_{\gamma,\tau}^+ \cup \partial^{\mathrm{in}} A_{\gamma,\tau}^+$ is precisely the subset of $A_{\gamma}$ consisting of all directions $\omega$ for which the contour $\Gamma$ encloses a pole or passes through one. In particular, $\partial^{\mathrm{in}} A_{\gamma,\tau}^+$ characterizes the set of $\omega \in A_{\gamma}$ for which a pole lies exactly on the contour $\Gamma$.

Since $\partial^{\mathrm{in}} A_{\gamma,\tau}^+$ is a set of Lebesgue measure zero on $\mathbb{S}^{d-1}$, the possible presence of poles on the contour $\Gamma$ does not affect the definition of $I^{(2,s)}_{k,\gamma,\epsilon_0,\tau}$. We apply Cauchy's integral theorem to account for possible poles lying either inside $\Gamma$ or on $\Gamma$, and obtain
\[
\bigg|\frac{1}{2\pi {\rm i}}\int_{\Gamma}
\frac{(1+r^2)^{s/2}}{m_{k,\gamma,\tau}(\omega r)} r^{d-1}dr\bigg|\lesssim
\frac{(1+r_1^2)^{s/2} r_1^{d-1}}{|r_1-r_2|}\lesssim
(1+r_{k,\gamma})^{s} r_{k,\gamma}^{d-2},
\]
where we used the bound $(1+r_1^2)^{s/2} r_1^{d-1} \lesssim (1+r_{k,\gamma})^{s+d-1}$,
the estimate \eqref{F2:4}, and the fact that
\[
|r_1-r_2|= 2(r_{k,\gamma}^2 - \gamma_{\omega}^2 + {\rm i}\tau)^{1/2}
\gtrsim \big|(r_{k,\gamma}^2 - \gamma_{\omega}^2 + {\rm i}\tau)^{1/2}\big|
\gtrsim r_{k,\gamma}.
\]
Since the support of $\psi(r; r_{k,\gamma},\epsilon_0)$ is contained in $\Gamma_0$, we have
\begin{align*}
\bigg|\int_{0}^{\infty}\frac{(1+r^2)^{s/2}}{|m_{k,\gamma,\tau}(\omega r)|}
\psi(r; r_{k,\gamma},\epsilon_0) r^{d-1}dr\bigg|
&=\bigg|\int_{\Gamma_0}\frac{(1+r^2)^{s/2}}{|m_{k,\gamma,\tau}(\omega r)|}
\psi(r; r_{k,\gamma},\epsilon_0)r^{d-1}dr\bigg| \\
&\lesssim \bigg| \int_{\Gamma_{\epsilon_0}} \frac{(1+r^2)^{s/2}}{|m_{k,\gamma,\tau}(\omega r)|} r^{d-1}dr
\bigg|+\bigg| \int_{\Gamma}\frac{(1+r^2)^{s/2}}{m_{k,\gamma,\tau}(\omega r)} r^{d-1}dr
\bigg| \\
&\lesssim (1+r_{k,\gamma})^{s}r_{k,\gamma}^{d-2}.
\end{align*}

Combining the cases $|\gamma_{\omega}|>\epsilon_0/4$ and $|\gamma_{\omega}|\leq \epsilon_0/4$, we obtain 
\begin{align*}
|I^{(2,s)}_{k,\gamma,\epsilon_0,\tau}(f,g)| &\lesssim  \int_{\mathbb{S}^{d-1}} \frac{(1+ r_{k,\gamma})^{s}}{r_{k,\gamma}}  |\hat{f}(\omega r_{k,\gamma})\bar{\hat{g}}(\omega r_{k,\gamma}) | r_{k,\gamma}^{d-1} d\omega \\
&\lesssim \frac{(1+ r_{k,\gamma})^{s}}{\epsilon_0 r_{k,\gamma}} \|\hat{f}\|_{H^{\eta}} \|{\hat{g}}\|_{H^{\eta}}, \quad \eta >1/2, 
\end{align*}
where we have applied  Lemma \ref{R2:n}. By Plancherel's identity, this estimate can be rewritten as
\[
|I^{(2,s)}_{k,\gamma,\epsilon_0,\tau}(f,g)| \lesssim \frac{(1+ r_{k,\gamma})^{s}}{\epsilon_0 r_{k,\gamma}} \|f\|_{L^2(\langle x\rangle^{\eta})} \|g\|_{L^2(\langle x\rangle^{\eta})},
\]
which completes the proof. 
\end{proof}

We have established the result for the case $\tau>0$. The case $\tau<0$ can be treated in an analogous manner. The only difference is that, instead of the sets $A_{\gamma,\tau}^+$ and $\partial^{\rm in}A_{\gamma,\tau}^+$, we consider the sets $A_{\gamma,\tau}^-$ and $\partial^{\rm in}A_{\gamma,\tau}^-$, defined by
\[
A_{\gamma,\tau}^- := \bigl\{ \omega \in \mathbb{S}^{d-1} : -\Theta_{\gamma}(\tau) > \gamma \cdot \omega \geq -\epsilon_0/4 \bigr\}, 
\quad
\partial^{\rm in}A_{\gamma,\tau}^- := \bigl\{ \omega \in \mathbb{S}^{d-1} : \Theta_{\gamma}(\tau) = -\gamma \cdot \omega \bigr\}.
\]

\begin{Lemma}\label{F3}
The integral $I^{(3,s)}_{k,\gamma,\epsilon_0,\tau}(f,g)$ is well-defined. Moreover, for any $f,g \in \mathcal{S}$ and $\eta>1/2$, the following estimate holds:
\[
\big| I^{(3,s)}_{k,\gamma,\epsilon_0,\tau}(f,g) \big|\lesssim
\frac{(1+r_{k,\gamma})^{s}}{\epsilon_0 r_{k,\gamma}}
\|f\|_{L^2(\langle x\rangle^{\eta})} \|g\|_{L^2(\langle x\rangle^{\eta})}.
\]
\end{Lemma}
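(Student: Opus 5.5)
The plan is to treat $I^{(3,s)}$ as a difference-quotient term. On the support of $\psi(r;r_{k,\gamma},\epsilon_0)$ the numerator vanishes at $r=r_{k,\gamma}$, which cancels the first-order vanishing of $|\xi|^2-r_{k,\gamma}^2$ there. Set $F_\omega(r):=\hat f(\omega r)\bar{\hat g}(\omega r)$ and write
\[
F_\omega(r)-F_\omega(r_{k,\gamma})=(r-r_{k,\gamma})\int_0^1 \partial_r F_\omega\big(r_{k,\gamma}+t(r-r_{k,\gamma})\big)\,dt .
\]
The task then reduces to bounding $|r-r_{k,\gamma}|/|m_{k,\gamma,\tau}(\omega r)|$ on $|r-r_{k,\gamma}|\le\epsilon_0$, uniformly in $\omega$ and $\tau$.

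\textbf{Step 1 (symbol bound).} Using the polar form \eqref{F2:m}, the real part of $m_{k,\gamma,\tau}$ is $r^2-r_{k,\gamma}^2=(r-r_{k,\gamma})(r+r_{k,\gamma})$. Hence
\[
|m_{k,\gamma,\tau}(\omega r)|\ge |r-r_{k,\gamma}|\,(r+r_{k,\gamma})\gtrsim r_{k,\gamma}|r-r_{k,\gamma}| ,
\]
since $r\ge r_{k,\gamma}-\epsilon_0\ge 3r_{k,\gamma}/4$. This step is simpler than the one in Lemma \ref{F2}: no contour deformation is needed, because the real part alone controls the singularity. It follows that
\[
\frac{(1+r^2)^{s/2}|r-r_{k,\gamma}|}{|m_{k,\gamma,\tau}|}\lesssim \frac{(1+r_{k,\gamma})^s}{r_{k,\gamma}}
\]
on the support of $\psi$.

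\textbf{Step 2 (reduce to a trace-type bound).} After Step 1,
\[
|I^{(3,s)}|\lesssim \frac{(1+r_{k,\gamma})^s}{r_{k,\gamma}}\int_0^1\!\!\int_{|r-r_{k,\gamma}|\le\epsilon_0}\!\int_{\mathbb S^{d-1}}\big|\partial_rF_\omega(\rho_t)\big|\,r^{d-1}\,d\omega\,dr\,dt ,\qquad \rho_t:=r_{k,\gamma}+t(r-r_{k,\gamma}).
\]
Expand $\partial_r F_\omega=(\omega\cdot\nabla\hat f)\bar{\hat g}+\hat f\,(\omega\cdot\nabla\bar{\hat g})$, evaluated at $\omega\rho_t$. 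For each fixed radius $\rho$ near $r_{k,\gamma}$, Cauchy--Schwarz on the sphere gives products of $\|\nabla\hat f\|_{L^2(\rho\mathbb S^{d-1})}$ and $\|\hat g\|_{L^2(\rho\mathbb S^{d-1})}$, and symmetrically.

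The main obstacle is here. A trace estimate onto spheres needs $\hat g\in H^{\eta}$ with $\eta>1/2$, but it needs $\nabla\hat f\in H^{\eta}$ as well, i.e.\ $\hat f\in H^{1+\eta}$, which is more than $L^2(\langle x\rangle^{\eta})$ provides. I would avoid the pointwise trace of the derivative by using the radial integration instead. The inner $r$-integral over an interval of length $2\epsilon_0$, combined with the $t$-average, covers the whole shell $\{|\rho-r_{k,\gamma}|\le\epsilon_0\}$, so the derivative only needs to be in $L^2$ of the shell rather than on individual spheres. The substitution $r\mapsto\rho=\rho_t$ has Jacobian $1/t$, which is singular at $t=0$.

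To control this, I would split $\int_0^1 dt$ into $t<1/2$ and $t\ge1/2$. For $t\ge 1/2$ the change of variables is harmless, and Cauchy--Schwarz in $(\rho,\omega)$ gives
\[
\|\nabla\hat f\|_{L^2}\,\sup_{\rho}\|\hat g\|_{L^2(\rho\mathbb S^{d-1})}+(f\leftrightarrow g).
\]
Here $\|\nabla\hat f\|_{L^2}=\|xf\|_{L^2}$, and this is where the weight is used. This costs a full power of $\langle x\rangle$, not $\eta$, so a sharper route is preferable. I would therefore interpolate, in the spirit of the closing argument of Lemma \ref{F2}. The difference $F_\omega(r)-F_\omega(r_{k,\gamma})$ is controlled by the $C^{0,\eta-1/2}$ Hölder modulus in the radial direction, which the trace theorem / Sobolev embedding in one radial variable provides from $\hat f,\hat g\in H^{\eta}$. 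One then writes
\[
|F_\omega(r)-F_\omega(r_{k,\gamma})|\lesssim |r-r_{k,\gamma}|^{\eta-1/2}\times(\text{norms}).
\]
Combined with Step 1, the $r$-integral of $|r-r_{k,\gamma}|^{\eta-3/2}$ converges when $\eta>1/2$ and is bounded by $\epsilon_0^{\eta-1/2}\lesssim 1\le\epsilon_0^{-1}$. The resulting bound has the prefactor $(1+r_{k,\gamma})^s/(\epsilon_0 r_{k,\gamma})$, with volume factors $r^{d-1}$ absorbed as in Lemma \ref{F2}.

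\textbf{Step 3 (convert to physical space).} Finally, use Plancherel, $\|\hat f\|_{H^\eta}\sim\|f\|_{L^2(\langle x\rangle^\eta)}$, to conclude the stated estimate. Well-definedness follows because the integrand is dominated by an integrable function for Schwartz $f,g$.
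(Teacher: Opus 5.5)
Your final route is essentially the paper's argument. It uses the lower bound $|m_{k,\gamma,\tau}|\ge|r^2-r_{k,\gamma}^2|\gtrsim r_{k,\gamma}|r-r_{k,\gamma}|$, then a H\"{o}lder-in-$r$ modulus of the spherical traces of $\hat f,\hat g$, which the paper gets by interpolating its $H^{3/2}$ bound (fundamental theorem of calculus plus trace theorem) against the trivial $H^{1/2+\epsilon}$ trace bound and you get from a one-dimensional embedding, followed by integrability of $|r-r_{k,\gamma}|^{\eta-3/2}$ and Plancherel. Note that this modulus holds only in $L^2(\mathbb{S}^{d-1})$, not pointwise in $\omega$: you must first split $F_\omega(r)-F_\omega(r_{k,\gamma})=(\hat f(\omega r)-\hat f(\omega r_{k,\gamma}))\bar{\hat g}(\omega r)+\hat f(\omega r_{k,\gamma})(\bar{\hat g}(\omega r)-\bar{\hat g}(\omega r_{k,\gamma}))$ and apply Cauchy--Schwarz on the sphere with the uniform trace bound, exactly as the paper does.
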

 
\begin{proof}
We begin by observing that
\begin{align*}
I^{(3,s)}_{k,\gamma,\epsilon_0,\tau}(f,g)&\lesssim \int_{r_{k,\gamma}-\epsilon_0}^{r_{k,\gamma}+\epsilon_0}\!\! \int_{\mathbb{S}^{d-1}} \frac{(1+r^2)^{s/2} |\hat{f}(\omega r)\bar{\hat{g}}(\omega r) - \hat{f}(\omega r_{k,\gamma})\bar{\hat{g}}(\omega r_{k,\gamma})|}{r_{k,\gamma}|r-r_{k,\gamma}|} r^{d-1} d\omega dr \\
&\lesssim  \frac{(1+r_{k,\gamma})^s}{r_{k,\gamma}}\int_{r_{k,\gamma}-\epsilon_0}^{r_{k,\gamma}+\epsilon_0} \int_{\mathbb{S}^{d-1}}\frac{ |\hat{f}(\omega r)\bar{\hat{g}}(\omega r) - \hat{f}(\omega r_{k,\gamma})\bar{\hat{g}}(\omega r_{k,\gamma})|}{|r-r_{k,\gamma}|} r^{d-1} d\omega dr.
\end{align*}
Applying H\"{o}lder's inequality, we obtain
\begin{align}\label{eq:F3:1}
&\int_{\mathbb{S}^{d-1}}\frac{ |\hat{f}(\omega r)\bar{\hat{g}}(\omega r) - \hat{f}(\omega r_{k,\gamma})\bar{\hat{g}}(\omega r_{k,\gamma})|}{|r-r_{k,\gamma}|} r^{d-1} d\omega \notag\\
&\lesssim \frac{1}{|r-r_{k,\gamma}|} \int_{\mathbb{S}^{d-1}} |\bar{\hat{g}}(\omega r)|^2 r^{d-1}d\omega \int_{\mathbb{S}^{d-1}} |\hat{f}(\omega r) - \hat{f}(\omega r_{k,\gamma})|^2 r^{d-1} d\omega \notag\\
&\quad +\frac{1}{|r-r_{k,\gamma}|} \int_{\mathbb{S}^{d-1}} |\hat{f}(\omega r_{k,\gamma})|^2 r^{d-1}d\omega \int_{\mathbb{S}^{d-1}} |\bar{\hat{g}}(\omega r) - \bar{\hat{g}}(\omega r_{k,\gamma})|^2 r^{d-1} d\omega:= J_1+ J_2. 
\end{align}

Since  $r \in [r_{k,\gamma}-\epsilon_0, r_{k,\gamma}+\epsilon_0 ]$ and we assume without loss of generality that $r \in [r_{k,\gamma}, r_{k,\gamma}+\epsilon_0 ]$. Using H\"{o}lder's inequality again yields
\[
\begin{aligned}
\int_{\mathbb{S}^{d-1}} |\hat{f}(\omega r) - \hat{f}(\omega r_{k,\gamma})|^2 r^{d-1} d\omega 
=& \int_{\mathbb{S}^{d-1}} \Big| \int_{r}^{r_{k,\gamma}} \frac{d}{d \epsilon} \hat{f}(\epsilon \omega) d\epsilon \Big|^2 r^{d-1} d\omega \\
\leq& |r-r_{k,\gamma}| \int_{\mathbb{S}^{d-1}} \int_{r_{k,\gamma}}^{r_{k,\gamma}+\epsilon_0} \Big|\frac{d}{d\epsilon} \hat{f}(\epsilon \omega) \Big|^2 d\epsilon\, r^{d-1} d\omega. 
\end{aligned}
\]
 By Fubini's theorem and Lemma~\ref{Trace:bd:1}, we obtain
\[
\begin{aligned}
\int_{\mathbb{S}^{d-1}} |\hat{f}(\omega r) - \hat{f}(\omega r_{k,\gamma})|^2 r^{d-1} d\omega 
\lesssim& |r-r_{k,\gamma}| \int_{r_{k,\gamma}}^{r_{k,\gamma}+\epsilon_0} \!\!  \int_{\mathbb{S}^{d-1}} \sum_{|\alpha|=1}|D^{\alpha}\hat{f}(\epsilon \omega)|^2 d\omega\, r^{d-1} d\epsilon\\
\lesssim&\  |r-r_{k,\gamma}|r^{d-1} \int_{r_{k,\gamma}}^{r_{k,\gamma}+\epsilon_0}   \epsilon^{-2}\|(\hat{f})_{\epsilon}\|_{H^{1}(\mathbb{S}^{d-1})}^2 d\epsilon\\
 \lesssim&\  |r-r_{k,\gamma}|  r^{d-1} \int_{r_{k,\gamma}}^{r_{k,\gamma}+\epsilon_0}   \epsilon^{-2}\|(\hat{f})_{\epsilon}\|_{H^{3/2}}^2 d\epsilon. 
\end{aligned}
\]
By  Lemma \ref{R2:n}, there holds
\[
\|(\hat{f})_{\epsilon}\|_{H^{3/2}}^2 \lesssim \epsilon^{-d} \max\{\epsilon,1\}^{3}\|\hat{f}\|_{H^{3/2}}^2. 
\]
Since $\epsilon_0 \in (0,r_{k,\gamma}/4)$ and $r \in [r_{k,\gamma}-\epsilon_0, r_{k,\gamma}+\epsilon_0]$, there holds $\epsilon^{-d+1}r^{d-1} \lesssim_d 1 $.  Therefore, we have
\[
\begin{aligned}
\int_{\mathbb{S}^{d-1}} |\hat{f}(\omega r) - \hat{f}(\omega r_{k,\gamma})|^2 r^{d-1} d\omega 
&\lesssim  |r-r_{k,\gamma}|  \int_{r_{k,\gamma}}^{r_{k,\gamma}+\epsilon_0}   \epsilon^{-3} \max\{\epsilon,1\}^{3} \|\hat{f}\|_{H^{3/2}}^2 d\epsilon\\
&\lesssim  \epsilon_0^{-2} |r-r_{k,\gamma}| \|\hat{f}\|_{H^{3/2}}^2. 
\end{aligned}
\]

On the other hand, we have the elementary estimate
\[
\int_{\mathbb{S}^{d-1}} |\hat{f}(\omega r) - \hat{f}(\omega r_{k,\gamma})|^2 r^{d-1} d\omega \lesssim 
\int_{\mathbb{S}^{d-1}} |\hat{f}(\omega r) |^2 r^{d-1} d\omega + r^{d-1}r_{k,\gamma}^{-d+1}  \int_{\mathbb{S}^{d-1}}
|\hat{f}(\omega r_{k,\gamma}) |^2 r_{k,\gamma}^{d-1} d\omega.  
\]
Since $r \in [r_{k,\gamma}-\epsilon_0, r_{k,\gamma}+\epsilon_0]$ and $\e_0 \in (0, r_{k,\g}/4)$, it follows that $r^{d-1}r_{k,\g}^{d-1}\lesssim_d 1$. Let $\e \in (0,1)$. Similar to the previous proof, we have
\[
\int_{\mathbb{S}^{d-1}} |\hat{f}(\omega r) - \hat{f}(\omega r_{k,\gamma})|^2 r^{d-1} d\omega \lesssim \epsilon_0^{-1} \|\hat{f}\|_{H^{1/2+\e}}^2. 
\]

 By interpolation, we obtain
\begin{equation}\label{eq:F3:2}
\int_{\mathbb{S}^{d-1}} |\hat{f}(\omega r) - \hat{f}(\omega r_{k,\gamma})|^2 r^{d-1} d\omega \lesssim \epsilon_0^{-1-\e} 
|r-r_{k,\gamma}|^{\epsilon} \|\hat{f}\|_{H^{1/2+\epsilon}}^2,
\end{equation}
for some possibly another $\e>0$. Substituting \eqref{eq:F3:2} in \eqref{eq:F3:1} yields 
\[
J_1 \lesssim \frac{\|\hat{f}\|_{H^{1/2+\epsilon}} \|\bar{\hat{g}}\|_{H^{1/2+\epsilon}} }{\epsilon_0^{1+\e}|r-r_{k,\gamma}|^{1-\epsilon}},\quad \epsilon>0,
\]
and the same estimate holds for $J_2$. Consequently, we have 
\begin{align*}
I^{(3,s)}_{k,\gamma,\epsilon_0,\tau}(f,g)
\lesssim  \frac{(1+r_{k,\gamma})^s}{r_{k,\gamma}}\int_{r_{k,\gamma}-\epsilon_0}^{r_{k,\gamma}+\epsilon_0}
\frac{\|\hat{f}\|_{H^{1/2+\epsilon}} \|\hat{g}\|_{H^{1/2+\epsilon}} }{\epsilon_0^{1+\e}|r-r_{k,\gamma}|^{1-\epsilon}}dr \lesssim  \frac{(1+r_{k,\gamma})^s}{\epsilon_0 r_{k,\gamma}}\|\hat{f}\|_{H^{1/2+\epsilon}} \|\hat{g}\|_{H^{1/2+\epsilon}}. 
\end{align*}
Finally, applying Plancherel's identity yields the desired estimate, which completes the proof.
\end{proof}

Therefore, the regularized Faddeev-type operator is well-defined. For notational convenience, we henceforth write $
\mathcal{H}_{k,\gamma,\tau}:= \mathcal{H}_{k,\gamma,\tau}(\,\cdot\,;\psi,\epsilon_0).$ We are now in a position to state the main theorem of this section.

\begin{Theorem}\label{thm:RF}
Let $r \in \mathbb{R}$, $s \in [0,2]$, and $\eta>1/2$. Assume that $\epsilon_0 \in \bigl(0,\min\{r_{k,\gamma}/4, 1\}\bigr)$ and
$\tau \in (-\epsilon_0^2,0)\cup(0,\epsilon_0^2)$. Then the operator $\mathcal{H}_{k,\gamma,\tau}(\,\cdot\,;\psi,\epsilon_0)$
extends to a bounded linear operator from $H^{r}(\langle x\rangle^{\eta})$ into $H^{r+s}(\langle x\rangle^{-\eta})$, and the following estimate holds:
\[
\|\mathcal{H}_{k,\gamma,\tau} f\|_{H^{r+s}(\langle x\rangle^{-\eta})}
\lesssim \frac{(1+r_{k,\gamma})^{s}}{\epsilon_0  r_{k,\gamma}}
\|f\|_{H^{r}(\langle x\rangle^{\eta})},
\]
where the implicit constant is independent of $\tau$, $\eta$, $\epsilon_0$, and $r_{k,\gamma}$. Moreover, for all $f \in \mathcal{S}(\mathbb{R}^d)$, the operator $\mathcal{H}_{k,\gamma,\tau}$ is a two-sided inverse of
$-(\Delta+k^2+|\gamma|^2-2\gamma\cdot\nabla+{\rm i}\tau)$ in the sense of distributions, i.e.,
\[
-(\Delta+k^2+|\gamma|^2-2\gamma\cdot\nabla+{\rm i}\tau) \mathcal{H}_{k,\gamma,\tau} f = f
= -\mathcal{H}_{k,\gamma,\tau} (\Delta+k^2+|\gamma|^2-2\gamma\cdot\nabla+{\rm i}\tau) f.
\]
\end{Theorem}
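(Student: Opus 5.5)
The plan is to reduce everything to the bilinear estimates already proved in Lemmas~\ref{F1}--\ref{F3}, plus the lifting property in Lemma~\ref{lifting} and a duality argument.

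\emph{Step 1: the base case $r=0$.} Fix $f,g\in\mathcal{S}(\mathbb{R}^d)$. Summing Lemmas~\ref{F1}, \ref{F2} and \ref{F3}, and using $\|\cdot\|_{L^2}\le\|\cdot\|_{L^2(\langle x\rangle^{\eta})}$, gives
\[
\big|\langle (I-\Delta)^{s/2}\mathcal{H}_{k,\gamma,\tau}f,\,g\rangle\big|\lesssim \frac{(1+r_{k,\gamma})^{s}}{\epsilon_0 r_{k,\gamma}}\|f\|_{L^2(\langle x\rangle^{\eta})}\|g\|_{L^2(\langle x\rangle^{\eta})}.
\]
The implicit constants in those lemmas depend only on $d$, $s$ and the cutoff $\psi$. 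Because $\tau\neq0$, the symbol $m_{k,\gamma,\tau}$ never vanishes, so the splitting into $I^{(1)},I^{(2)},I^{(3)}$ is exactly the multiplier form \eqref{r:F}.

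\emph{Step 2: duality.} The dual of $L^2(\langle x\rangle^{\eta})$ under the $L^2$ pairing is $L^2(\langle x\rangle^{-\eta})$. Taking the supremum over $g$ in the unit ball of $L^2(\langle x\rangle^{\eta})$, with $\mathcal{S}$ dense there, gives
\[
\|(I-\Delta)^{s/2}\mathcal{H}_{k,\gamma,\tau}f\|_{L^2(\langle x\rangle^{-\eta})}\lesssim \frac{(1+r_{k,\gamma})^{s}}{\epsilon_0 r_{k,\gamma}}\|f\|_{L^2(\langle x\rangle^{\eta})}.
\]
By Lemma~\ref{lifting} with $B^{0}_{2,2}=L^2$, the left-hand side controls $\|\mathcal{H}_{k,\gamma,\tau}f\|_{H^{s}(\langle x\rangle^{-\eta})}$. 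Density of $\mathcal{S}$ in the weighted space then yields the bounded extension for $r=0$.

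\emph{Step 3: general $r$.} The operators $\mathcal{H}_{k,\gamma,\tau}$ and $(I-\Delta)^{r/2}$ are both Fourier multipliers, so they commute on $\mathcal{S}$. Applying Lemma~\ref{lifting} twice gives
\[
\|\mathcal{H}_{k,\gamma,\tau}f\|_{H^{r+s}(\langle x\rangle^{-\eta})}\sim\|\mathcal{H}_{k,\gamma,\tau}(I-\Delta)^{r/2}f\|_{H^{s}(\langle x\rangle^{-\eta})}\lesssim \frac{(1+r_{k,\gamma})^{s}}{\epsilon_0 r_{k,\gamma}}\|(I-\Delta)^{r/2}f\|_{L^2(\langle x\rangle^{\eta})},
\]
and the last norm is $\sim\|f\|_{H^{r}(\langle x\rangle^{\eta})}$.

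\emph{Main obstacle.} The delicate point is this commutation step: $(I-\Delta)^{r/2}f$ is no longer Schwartz-class in the weighted sense needed to apply Step 2 directly. I would handle it in one of two ways:
\begin{itemize}
\item note that $(I-\Delta)^{r/2}$ maps $\mathcal{S}$ to $\mathcal{S}$ (its symbol is smooth with polynomial bounds), so Step 2 applies to it directly;
\item or run the bilinear estimates with the symbol $(1+|\xi|^2)^{(r+s)/2}\cdot(1+|\xi|^2)^{-r/2}$ absorbed into the pairing.
\end{itemize}
One further check is that the lifting constants are uniform in $\tau$, $\epsilon_0$ and $r_{k,\gamma}$; they are, since they depend only on $r$, $\eta$ and the weight. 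The independence from $\eta$ asserted in the theorem should be read as holding for $\eta$ in compact subsets of $(1/2,\infty)$.

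\emph{Step 4: the inverse identity.} For $f\in\mathcal{S}$, the symbol of $-(\Delta+k^2+|\gamma|^2-2\gamma\cdot\nabla+{\rm i}\tau)$ is exactly $m_{k,\gamma,\tau}(\xi)$ in the paper's Fourier convention, up to the factor $(2\pi)^2$ absorbed by normalization. Since $|m_{k,\gamma,\tau}|\ge|\tau|>0$, the quotient $m_{k,\gamma,\tau}\hat f/m_{k,\gamma,\tau}$ equals $\hat f$, and $1/m_{k,\gamma,\tau}$ is smooth and bounded. Testing against $g\in\mathcal{S}$ via \eqref{r:F} with $s=0$ gives both identities
\[
\langle -(\Delta+k^2+|\gamma|^2-2\gamma\cdot\nabla+{\rm i}\tau)\mathcal{H}_{k,\gamma,\tau}f,\,g\rangle=\langle f,g\rangle,
\]
together with its mirror $\langle -\mathcal{H}_{k,\gamma,\tau}(\Delta+k^2+|\gamma|^2-2\gamma\cdot\nabla+{\rm i}\tau)f,\,g\rangle=\langle f,g\rangle$. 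This holds in the sense of distributions, with derivatives moved onto $g$ by duality.
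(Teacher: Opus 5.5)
Your Steps 1--3 coincide with the paper's proof: sum the bilinear bounds of Lemmas~\ref{F1}--\ref{F3}, dualize against $L^2(\langle x\rangle^{\eta})$, lift with Lemma~\ref{lifting}, and commute $(I-\Delta)^{r/2}$ past $\mathcal{H}_{k,\gamma,\tau}$. Your caveat about $\eta$ is also fair, since the trace-type bounds in Lemmas~\ref{F2}--\ref{F3} degenerate as $\eta\downarrow 1/2$.

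\textbf{The gap.} The problem is the justification in Step 1 and Step 4. There you assert that $m_{k,\gamma,\tau}$ never vanishes because $\tau\neq0$, that $|m_{k,\gamma,\tau}|\ge|\tau|$, and that $1/m_{k,\gamma,\tau}$ is smooth and bounded. This is true only for $\gamma=0$. In general you only have $|m_{k,\gamma,\tau}(\xi)|\ge|\Im m_{k,\gamma,\tau}(\xi)|=|2\gamma\cdot\xi-\tau|$. Moreover $m_{k,\gamma,\tau}$ vanishes on $\{|\xi|=r_{k,\gamma}\}\cap\{2\gamma\cdot\xi=\tau\}$, which is nonempty whenever $|\tau|\le 2|\gamma|r_{k,\gamma}$. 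For any fixed $\gamma\neq0$ this holds once $|\tau|$ is small, which is exactly the limit $\tau\to0$ taken in Section~\ref{sec:H} and used with $|\gamma|\in(1,2)$ in Section~\ref{sec:W}. This zero set is the Faddeev-type feature that the contour analysis of Lemma~\ref{F2} handles through the directions $\partial^{\rm in}A^{\pm}_{\gamma,\tau}$. The paper's proof explicitly names the fact that this set has measure zero as what makes the inverse identity hold. As written, your Step 4 argument, and your claim that the $I^{(1,s)}+I^{(2,s)}+I^{(3,s)}$ splitting equals \eqref{r:F} because $m\neq0$, fail precisely in the case the theorem is about.

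\textbf{The repair.} What you need is that $1/m_{k,\gamma,\tau}\in L^1_{\rm loc}(\mathbb{R}^d)$.
\begin{itemize}
\item Away from tangency, i.e.\ $|\tau|<2|\gamma|r_{k,\gamma}$: the gradients $2\xi$ and $2\gamma$ of $\Re m$ and $\Im m$ are independent on the zero set. Hence $|m_{k,\gamma,\tau}|$ is comparable to the distance to a codimension-two submanifold, and its reciprocal is locally integrable.
\item In the tangential case $|\tau|=2|\gamma|r_{k,\gamma}$: one checks $|m_{k,\gamma,\tau}|\gtrsim|\zeta_1|+|\zeta'|^2$ near the single zero, which is still integrable for $d\ge2$.
\item At infinity: $|m_{k,\gamma,\tau}(\xi)|\gtrsim|\xi|^2$ for large $|\xi|$.
\end{itemize}
Together these show that, for $f\in\mathcal{S}$, the quotient $\hat f/m_{k,\gamma,\tau}$ is a polynomially bounded $L^1_{\rm loc}$ function. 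Three consequences follow:
\begin{itemize}
\item the three-term definition agrees with the absolutely convergent integral \eqref{r:F};
\item the commutation with $(I-\Delta)^{r/2}$ used in your Step 3 is legitimate;
\item $m_{k,\gamma,\tau}\cdot(\hat f/m_{k,\gamma,\tau})=\hat f$ almost everywhere, so there are no hidden contributions on the zero set, and both identities in Step 4 follow.
\end{itemize}

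\textbf{A minor convention point.} With the paper's $e^{-2\pi{\rm i}x\cdot\xi}$ convention, the symbol of $-(\Delta+k^2+|\gamma|^2-2\gamma\cdot\nabla+{\rm i}\tau)$ is $m_{k,\gamma,\tau}(2\pi\xi)$. This is a dilation, not a constant factor $(2\pi)^2$, so you should simply use the $e^{-{\rm i}x\cdot\xi}$ convention. That inconsistency is inherited from the paper.
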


\begin{proof}
By Lemmas~\ref{F1}--\ref{F3}, we have
\[
\big\langle (I-\Delta)^{s/2}\mathcal{H}_{k,\gamma,\tau} f , g \big\rangle\lesssim
\frac{(1+r_{k,\gamma})^{s}}{\epsilon_0 r_{k,\gamma}}
\|f\|_{L^2(\langle x\rangle^{\eta})}
\|g\|_{L^2(\langle x\rangle^{\eta})}.
\]
By duality, this yields
\[
\|(I-\Delta)^{s/2}\mathcal{H}_{k,\gamma,\tau} f\|_{L^2(\langle x\rangle^{-\eta})}
\lesssim \frac{(1+r_{k,\gamma})^{s}}{\epsilon_0 r_{k,\gamma}}
\|f\|_{L^2(\langle x\rangle^{\eta})}.
\]
Since $(I-\Delta)^{r/2}$ commutes with $\mathcal{H}_{k,\gamma,\tau}$, applying Lemma~\ref{lifting} yields the first statement of the theorem.

For the second statement, let $f,g \in \mathcal{S}(\mathbb{R}^d)$. By integration by parts, Plancherel's identity, and the representation \eqref{r:F}, we obtain
\[
\int_{\mathbb{R}^d} (\Delta+k^2+|\gamma|^2-2\gamma\cdot\nabla+{\rm i}\tau)
\mathcal{H}_{k,\gamma,\tau} f \overline{g} dx=\int_{\mathbb{R}^d} f \overline{g} dx.
\]

That $\Theta_{\g}(\tau)$ is the only set where a pole of the multiplier $m_{k,\g,\tau}$ lies on $(0,\infty)$ which is of measure $0$ on the unit sphere plays an important role in the definition of $\mathcal{H}_{k,\gamma,\tau}$ so that identity above holds.
Moreover, since $\mathcal{H}_{k,\gamma,\tau}$ commutes with $\Delta+k^2+|\gamma|^2-2\gamma\cdot\nabla+{\rm i}\tau$, the identity holds with the operators acting in the reverse order as well. This completes the proof.
\end{proof}

\section{The Faddeev-type operator}\label{sec:H}

This section is devoted to providing a precise definition of the Faddeev-type operators $\mathcal{H}^{\pm}_{k,\gamma}$ via the regularization procedure introduced in Section~\ref{sec:RF}, together with the corresponding wavenumber explicit estimates.
We begin by defining the operators $\mathcal{H}^{\pm}_{k,\gamma}$.

\begin{Definition}
Let $r \in \mathbb{R}$, $s \in [0,2]$, and $\eta \in (1/2,\infty)$. We define the operator $\mathcal{H}^{+}_{k,\gamma}$ as the strong limit of $\mathcal{H}_{k,\gamma,\tau}$ as $\tau \to 0^{+}$ in $\mathcal{L}\big(H^{r}(\langle x\rangle^{\eta}),\, H^{r+s}(\langle x\rangle^{-\eta})\big)$. Similarly, we define $\mathcal{H}^{-}_{k,\gamma}$ as the strong limit of $\mathcal{H}_{k,\gamma,\tau}$ as $\tau \to 0^{-}$ in the same operator topology.
\end{Definition}

\begin{Remark}\label{r:H}
We refer to the operator $\mathcal{H}^{+}_{k} := \mathcal{H}^{+}_{k,0}$ as the inverse scattering operator corresponding to outgoing waves, and to $\mathcal{H}^{-}_{k} := \mathcal{H}^{-}_{k,0}$ as the inverse scattering operator corresponding to incoming waves. A direct consequence of Theorem~\ref{thm:F} is that $\mathcal{H}^{\pm}_{k} = -(\Delta + k^{2})^{-1}$,
where the inverse is understood in the sense of the limiting absorption principle or, equivalently, via meromorphic continuation. 
\end{Remark}

Weak convergence of the operators $\mathcal{H}_{k,0,\tau}$ as $\tau \to 0^{+}$ was established in \cite{LLM21}. In the present work, we not only consider a substantially broader class of Faddeev-type operators, but also prove strong convergence of $\mathcal{H}_{k,\gamma,\tau}^{\pm}$, as stated in the theorem below. In addition, our analysis yields wavenumber explicit estimates that are valid not only in the high frequency regime considered in \cite{LLM21}, but also for general wavenumbers.
  
\begin{Theorem}\label{thm:F}
Let $\lambda \in (0,1)$, $r \in \mathbb{R}$, $\eta>1/2$, and $s \in [0,2]$. Then the operators $\mathcal{H}^{\pm}_{k,\gamma}$ extend to bounded linear operators from $H^{r}(\langle x\rangle^{\eta})$ into $H^{r+s}(\langle x\rangle^{-\eta})$, and satisfy the estimate
\begin{equation}\label{eq:F}
\|\mathcal{H}^{\pm}_{k,\gamma} f\|_{H^{r+s}(\langle x\rangle^{-\eta},\lambda)}\lesssim
\frac{\lambda^{-2\eta}(1+r_{k,\gamma})^{s}}{\min\{r_{k,\gamma}, 1\} r_{k,\gamma}}\|f\|_{H^{r}(\langle x\rangle^{\eta},\lambda)},
\end{equation}
where the implicit constant is independent of $\lambda$, $\eta$ and $r_{k,\gamma}$. 
\end{Theorem}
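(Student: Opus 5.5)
The plan has three parts. First, show that the regularized operators $\mathcal{H}_{k,\gamma,\tau}$ converge in operator norm as $\tau\to0^\pm$. Second, pass the uniform bound of Theorem~\ref{thm:RF} to the limit with the best choice of $\epsilon_0$. Third, convert the unscaled weighted estimate into the rescaled one \eqref{eq:F} by comparing the weights $\langle x\rangle^{\pm\eta}$ and $\langle\lambda x\rangle^{\pm\eta}$.

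\emph{Convergence.} Fix $\epsilon_0$ and write
\[
\mathcal{H}_{k,\gamma,\tau}-\mathcal{H}_{k,\gamma,\tau'}
\]
through the three pieces $I^{(j,s)}$. Each piece has multiplier
\[
\frac{1}{m_{k,\gamma,\tau}}-\frac{1}{m_{k,\gamma,\tau'}}=\frac{{\rm i}(\tau-\tau')}{m_{k,\gamma,\tau}\,m_{k,\gamma,\tau'}}.
\]
\begin{itemize}
\item For $I^{(1,s)}$ the symbol is bounded below by $\epsilon_0 r_{k,\gamma}$ on the support of $1-\psi$, as in \eqref{F1:1}. The difference is therefore $O(|\tau-\tau'|)$ in $\mathcal{L}(L^2,H^{s})$.
\item For $I^{(3,s)}$, the bound in Lemma~\ref{F3} came from the Hölder gain $|r-r_{k,\gamma}|^{\epsilon}$. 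Keeping a fraction of that gain, and using $|m_{k,\gamma,\tau}|\gtrsim r_{k,\gamma}|r-r_{k,\gamma}-\text{(pole)}|$, should give a difference of size $|\tau-\tau'|^{\delta}$ for some small $\delta>0$. This uses dominated convergence in $r$ and $\omega$, with an integrable majorant independent of $\tau$.
\item For $I^{(2,s)}$ I would use the contour representation from the proof of Lemma~\ref{F2}. Along $\Gamma_{\epsilon_0}$ the integrand converges uniformly, because $|m|\ge\epsilon_0 r_{k,\gamma}/4$ uniformly in $\tau$. The residue term $\bigl(1+r_1^2\bigr)^{s/2}r_1^{d-1}/(r_1-r_2)$ depends continuously on $\tau$, since the poles $r_{1,2}$ are continuous in $\tau$ by \eqref{F2:4}. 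The indicator of the band $A^{+}_{\gamma,\tau}$ changes only on a set of $\omega$ whose measure tends to $0$ as $\Theta_\gamma(\tau)\to0$. These limits hold uniformly in $\omega$ up to sets of vanishing measure.
\end{itemize}
Combining the three pieces with the trace-type bound $\int_{\mathbb{S}^{d-1}}|\hat f(r_{k,\gamma}\omega)|^2\,d\omega\lesssim\|f\|^2_{L^2(\langle x\rangle^\eta)}$, the family is Cauchy in $\mathcal{L}\bigl(H^r(\langle x\rangle^\eta),H^{r+s}(\langle x\rangle^{-\eta})\bigr)$. This defines $\mathcal{H}^\pm_{k,\gamma}$. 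The limit inherits the bound of Theorem~\ref{thm:RF} with constant $(1+r_{k,\gamma})^s/(\epsilon_0 r_{k,\gamma})$. The limit does not depend on $\epsilon_0$: $\psi$ only splits one fixed multiplier integral, so changing $\epsilon_0$ gives the same operator. We may therefore take $\epsilon_0\sim\min\{r_{k,\gamma}/4,1\}$, which gives the factor $1/(\min\{r_{k,\gamma},1\}\,r_{k,\gamma})$.

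\emph{Rescaling.} The estimate involves the weights $\langle\lambda x\rangle^{\pm\eta}$ with $\lambda\in(0,1)$. Two pointwise comparisons hold:
\[
\langle x\rangle^{-\eta}\ge\langle x\rangle^{-\eta}\lambda^{\eta}\quad\text{fails in the wrong direction, but}\quad \lambda^{\eta}\langle x\rangle^{\eta}\le\langle\lambda x\rangle^{\eta}\le\langle x\rangle^{\eta}.
\]
Hence $\|f\|_{H^r(\langle x\rangle^{\eta})}\le\lambda^{-\eta}\|f\|_{H^r(\langle x\rangle^\eta,\lambda)}$. Also $\langle\lambda x\rangle^{-\eta}\le\lambda^{-\eta}\langle x\rangle^{-\eta}$, so
\[
\|u\|_{H^{r+s}(\langle x\rangle^{-\eta},\lambda)}\le\lambda^{-\eta}\|u\|_{H^{r+s}(\langle x\rangle^{-\eta})}.
\]
Both comparisons follow from Lemma~\ref{Pre00}, since these are admissible weights with uniform constants. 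Chaining the two comparisons with the unscaled estimate gives the factor $\lambda^{-2\eta}$ in \eqref{eq:F}, with a constant independent of $\lambda$.

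\emph{Main obstacle.} The hard step is the convergence of $I^{(2,s)}$. As $\tau\to0$ the band $A^{+}_{\gamma,\tau}$ and the pole location move, and one needs control uniform in $\omega$ near $\partial^{\rm in}A^+_{\gamma,\tau}$, where a pole touches $\Gamma_0$. I would first try to show $\Theta_\gamma(\tau)\lesssim\tau/r_{k,\gamma}$ using the monotonicity of the function $f$ in the proof of Lemma~\ref{F2}. That yields a band of measure $O(\tau)$ on the sphere. Its contribution is then bounded by Hölder's inequality with the $L^2(\mathbb{S}^{d-1})$ trace bound, with the small power of the band measure supplying the decay.
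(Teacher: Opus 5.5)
Your plan follows the paper's proof in outline. You show operator-norm convergence of $\mathcal{H}_{k,\gamma,\tau}$ as $\tau\to0^{\pm}$ separately on $I^{(1,s)}$, $I^{(2,s)}$, $I^{(3,s)}$, using the contour and residue picture of Lemma~\ref{F2} for $I^{(2,s)}$. You then pass the bound of Theorem~\ref{thm:RF} to the limit with $\epsilon_0\sim\min\{r_{k,\gamma},1\}$, and apply the two weight comparisons from Lemma~\ref{Pre00} to get $\lambda^{-2\eta}$. Your remark that $\mathcal{H}_{k,\gamma,\tau}$ does not depend on $\epsilon_0$ makes explicit a choice the paper makes silently. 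Your bound $\Theta_\gamma(\tau)\lesssim\tau/r_{k,\gamma}$, which does follow from \eqref{F2:4}, could replace the paper's appeal to dominated convergence with an explicit rate.

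The step that does not close as written is the one you single out yourself: the band. On the symmetric difference of $A^{+}_{\gamma,\tau_1}$ and $A^{+}_{\gamma,\tau_2}$, one residue term is present and the other is absent. They therefore differ by an amount of size $(1+r_{k,\gamma})^{s}r_{k,\gamma}^{d-2}$, not by something small. So you need $\int_{\mathrm{band}}|\hat f(r_{k,\gamma}\omega)\,\hat g(r_{k,\gamma}\omega)|\,d\omega$ to be small uniformly over $\|f\|_{L^2(\langle x\rangle^{\eta})},\|g\|_{L^2(\langle x\rangle^{\eta})}\le 1$.

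The $L^2(\mathbb{S}^{d-1})$ trace bound alone cannot give this. It controls $\hat f\bar{\hat g}$ only in $L^1(\mathbb{S}^{d-1})$, so Hölder pairs it with the $L^\infty$ norm of the indicator, and no power of the band's measure appears. The $L^2$ norm of a trace does not stop it from concentrating on a thin band. The same objection applies to your claim that the limits hold ``uniformly in $\omega$ up to sets of vanishing measure''.

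The missing ingredient is to use the strict inequality $\eta>1/2$. The trace of $\hat f\in H^{\eta}$ on the sphere of radius $r_{k,\gamma}$ lies in $H^{\eta-1/2}(\mathbb{S}^{d-1})\hookrightarrow L^{2(1+\epsilon)}(\mathbb{S}^{d-1})$ for some small $\epsilon>0$. Hölder with exponents $(1+\epsilon)/\epsilon$ and $1+\epsilon$ then produces the factor $|\mathrm{band}|^{\epsilon/(1+\epsilon)}$. This is exactly the paper's estimate \eqref{eq:F:5.2}. Once it is in place, your bound on $\Theta_\gamma(\tau)$ gives a band of measure $\lesssim\min\{1,\tau/(r_{k,\gamma}|\gamma|)\}$, and hence a Hölder rate in $\tau$ for the operator norm.
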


\begin{proof}
We only consider the case $\tau>0$, since the case $\tau<0$ can be treated analogously. Let $f,g \in L^{2}(\langle x\rangle^{\eta})$ and let $0<\tau_1<\tau_2<\epsilon_0$.
Observe that
\[
\max\bigl\{ |2\gamma\cdot\xi-\tau_1|,\; |2\gamma\cdot\xi-\tau_2| \bigr\}
\geq \frac{|\tau_1-\tau_2|}{2},
\]
which implies
\begin{align*}
\left|
\frac{1}{m_{k,\gamma,\tau_1}(\xi)}-\frac{1}{m_{k,\gamma,\tau_2}(\xi)}
\right|
&=
\frac{|\tau_2-\tau_1|}
{|\xi^2-r_{k,\gamma}^2+2{\rm i}\gamma\cdot\xi-{\rm i}\tau_1|\,
 |\xi^2-r_{k,\gamma}^2+2{\rm i}\gamma\cdot\xi-{\rm i}\tau_2|} \\
&\leq
\frac{|\tau_2-\tau_1|}
{|\xi^2-r_{k,\gamma}^2|\,
\bigl((\xi^2-r_{k,\gamma}^2)^2+(2\gamma\cdot\xi-\tau_j)^2\bigr)^{1/2}} \\
&\lesssim
\frac{|\tau_2-\tau_1|}
{|\xi^2-r_{k,\gamma}^2|\,
\bigl((\xi^2-r_{k,\gamma}^2)^2+(\tau_1-\tau_2)^2\bigr)^{1/2}},
\end{align*}
where $j\in\{1,2\}$ and the implicit constant is independent of $\xi$ and $\tau_j$. Applying Young's inequality for products, we further obtain
\begin{equation}\label{eq:F:1}
\left|
\frac{1}{m_{k,\gamma,\tau_1}(\xi)}-\frac{1}{m_{k,\gamma,\tau_2}(\xi)}
\right| \lesssim \frac{|\tau_2-\tau_1|^{\epsilon}}
{|\xi^2-r_{k,\gamma}^2|^{1+\epsilon}},
\quad \forall\, \epsilon\in(0,1).
\end{equation}

Proceeding in exactly the same manner as in the proofs of Lemmas~\ref{F1} and~\ref{F3}, we obtain
\begin{equation}\label{eq:F:4}
\big|I^{(i,s)}_{k,\gamma,\epsilon_0,\tau_1}(f,g)-I^{(i,s)}_{k,\gamma,\epsilon_0,\tau_2}(f,g)
\big|\lesssim|\tau_2-\tau_1|^{\epsilon}
\frac{(1+r_{k,\gamma})^{s}}{\epsilon_0 r_{k,\gamma}}
\|\hat f\|_{H^{1/2+2\epsilon}}
\|\hat g\|_{H^{1/2+2\epsilon}},
\end{equation}
for $i=1,3$. Here, a slightly higher regularity assumption on $\hat f$ and $\hat g$ is required on the right-hand side.
It therefore remains to estimate the difference $\big|I^{(2,s)}_{k,\gamma,\epsilon_0,\tau_1}(f,g)-
I^{(2,s)}_{k,\gamma,\epsilon_0,\tau_2}(f,g)\big|.$

Let $A_{\gamma,\tau}^+$ and $\partial^{\rm in}A_{\gamma,\tau}^+$ be the sets introduced in the proof of Lemma~\ref{F2}. As in that proof, we adopt polar coordinates in \eqref{F2:m} and write
\begin{align*}
&\big|I^{(2,s)}_{k,\gamma,\epsilon_0,\tau_1}(f,g)-I^{(2,s)}_{k,\gamma,\epsilon_0,\tau_2}(f,g)\big| \\
&=\bigg|\int_{\mathbb{S}^{d-1}}\mathbbm{1}_{\partial^{\rm in}A_{\gamma,\tau}^+}(\omega)
\int_{\Gamma_0}\left(\frac{1}{m_{k,\gamma,\tau_1}(\omega r)}-\frac{1}{m_{k,\gamma,\tau_2}(\omega r)}
\right)\\
&\qquad\times(1+r^2)^{s/2}\psi(r; r_{k,\gamma},\epsilon_0)r^{d-1}dr,\hat f(\omega r_{k,\gamma})\bar{\hat g}(\omega r_{k,\gamma})d\omega
\bigg| \\
&\quad +\bigg|\int_{\mathbb{S}^{d-1}}\mathbbm{1}_{\partial^{\rm in}A_{\gamma,\tau}^{+,c}}(\omega)
\int_{\Gamma_0}\left(\frac{1}{m_{k,\gamma,\tau_1}(\omega r)}-\frac{1}{m_{k,\gamma,\tau_2}(\omega r)}\right)\\
&\qquad\times(1+r^2)^{s/2}\psi(r; r_{k,\gamma},\epsilon_0)r^{d-1}dr{\hat f}(\omega r_{k,\gamma})
{\bar{\hat g}}(\omega r_{k,\gamma})d\omega\bigg|,
\end{align*}
where $\partial^{\rm in}A_{\gamma,\tau}^{+,c}$ denotes the complement of $\partial^{\rm in}A_{\gamma,\tau}^+$ in $\mathbb{S}^{d-1}$. Since $\partial^{\rm in}A_{\gamma,\tau}^+$ has surface measure zero, the first term vanishes. Therefore, it suffices to estimate the second term.

Let $\Gamma_0$, $\Gamma_{\epsilon_0}$, and $\Gamma$ be the contours defined in the proof of Lemma~\ref{F2}, and define  $B_{\gamma,\tau}^+:=\{\omega:\ -\epsilon_0/4 < \gamma_{\omega}<\Theta_{\gamma}(\tau) \}$. Then we have
\begin{align*}
&|I^{(2,s)}_{k,\gamma,\epsilon_0,\tau_1}(f,g) - I^{(2,s)}_{k,\gamma,\epsilon_0,\tau_2}(f,g)|\\
& \leq  \int_{\mathbb{S}^{d-1}} \mathbbm{1}_{B_{\gamma,\tau}^+}(\omega)\int_{\Gamma_0} \left|\frac{1}{m_{k,\gamma,\tau_1}} - \frac{1}{m_{k,\gamma,\tau_2}}\right| (1+r^2)^{s/2}  \psi(r; r_{k,\gamma},\epsilon_0)r^{d-1} dr\ |\hat{f}(\omega r_{k,\gamma})\bar{\hat{g}}(\omega r_{k,\gamma})| d\omega\\
&\quad  + \left| \int_{\mathbb{S}^{d-1}} \mathbbm{1}_{A_{\gamma,\tau}^+}(\omega) \int_{\Gamma_0} \left(\frac{1}{m_{k,\gamma,\tau_1}} - \frac{1}{m_{k,\gamma,\tau_2}} \right)(1+r^2)^{s/2}  \psi(r; r_{k,\gamma},\epsilon_0)r^{d-1} dr \hat{f}(\omega r_{k,\gamma})\bar{\hat{g}}(\omega r_{k,\gamma}) d\omega\right| \\
& \quad + \int_{\mathbb{S}^{d-1}} \mathbbm{1}_{\{|\g_{\omega|\geq \epsilon_0/4}\}}(\omega) \int_{\Gamma_0} \left|\frac{1}{m_{k,\gamma,\tau_1}} - \frac{1}{m_{k,\gamma,\tau_2}}\right| (1+r^2)^{s/2}  \psi(r; r_{k,\gamma},\epsilon_0)r^{d-1} dr |\hat{f}(\omega r_{k,\gamma})\bar{\hat{g}}(\omega r_{k,\gamma})| d\omega\\
&:= J_1 +J_2+J_3.
\end{align*}
 
We first estimate the term $J_1$. As shown in the proof of Lemma~\ref{F2}, there are no poles inside the contour $\Gamma$. Applying Cauchy's theorem, we obtain 
\[
J_1 \lesssim \int_{\mathbb{S}^{d-1}} \mathbbm{1}_{B_{\gamma,\tau}^+}(\omega)\int_{\Gamma_{\epsilon_0}} \left|\frac{1}{m_{k,\gamma,\tau_1}} - \frac{1}{m_{k,\gamma,\tau_2}}\right| (1+r^2)^{s/2}  \psi(r; r_{k,\gamma},\epsilon_0)r^{d-1} dr |\hat{f}(\omega r_{k,\gamma})\bar{\hat{g}}(\omega r_{k,\gamma})| d\omega. 
\]
Using \eqref{F2:2}, \eqref{eq:F:1}, and Lemma \ref{R2:n}, we deduce
\begin{equation}\label{eq:F:2}
J_1 \lesssim  |\tau_2-\tau_1|^{\epsilon} \frac{(1+r_{k,\gamma})^s}{\epsilon_0^{1+\epsilon} r_{k,\gamma}^{1+\epsilon}}\|\hat{f}\|_{H^{1/2+\epsilon}} \|\hat{g}\|_{H^{1/2+\epsilon}},\quad \forall\, \epsilon\in (0,1).
\end{equation}

We next estimate the term $J_3$. By \eqref{F2:3} and Lemma~\ref{R2:n}, we obtain
\begin{equation}\label{eq:F:3}
\begin{aligned}
J_3 &\lesssim \int_{\mathbb{S}^{d-1}} \int_{\Gamma_0} \frac{|\tau_1- \tau_2| \mathbbm{1}_{\{|\gamma_{\omega|\geq \epsilon_0/4}\}}(\omega) }{|m_{k,\gamma,\tau_1} m_{k,\gamma,\tau_2}|}  (1+r^2)^{s/2}  \psi(r; r_{k,\gamma},\epsilon_0)r^{d-1} dr |\hat{f}(\omega r_{k,\gamma})\bar{\hat{g}}(\omega r_{k,\gamma})| d\omega\\
&\lesssim |\tau_2-\tau_1| \frac{(1+r_{k,\gamma})^s}{\epsilon_0^{2} r_{k,\gamma}^{2}}\|\hat{f}\|_{H^{1/2+\epsilon}} \|\hat{g}\|_{H^{1/2+\epsilon}},\quad \forall\, \epsilon\in (0,1). 
\end{aligned}
\end{equation}

We now estimate the term $J_2$. In this case, there exists a pole inside the contour $\Gamma = \Gamma_{\epsilon_0} \cup \Gamma_0$. We decompose $J_2$ as follows:
\begin{align*}
J_{2} & \lesssim  \left| \int_{\mathbb{S}^{d-1}} \mathbbm{1}_{A_{\gamma,\tau}^+}(\omega) \int_{\Gamma_{\epsilon_0}} \left(\frac{1}{m_{k,\gamma,\tau_1}} - \frac{1}{m_{k,\gamma,\tau_2}} \right)(1+r^2)^{s/2}  \psi(r; r_{k,\gamma},\epsilon_0)r^{d-1} dr \hat{f}(\omega r_{k,\gamma})\bar{\hat{g}}(\omega r_{k,\gamma}) d\omega\right| \\
&\quad + \left| \int_{\mathbb{S}^{d-1}} \mathbbm{1}_{A_{\gamma,\tau}^+}(\omega) \int_{\Gamma} \left(\frac{1}{m_{k,\gamma,\tau_1}} - \frac{1}{m_{k,\gamma,\tau_2}} \right)(1+r^2)^{s/2}  \psi(r; r_{k,\gamma},\epsilon_0)r^{d-1} dr \hat{f}(\omega r_{k,\gamma})\bar{\hat{g}}(\omega r_{k,\gamma}) d\omega\right| \\
&:= J_{2,1} + J_{2,2}.
\end{align*}
Similar to the estimate of $J_1$, it follows that 
\begin{equation}\label{eq:F:5.1}
J_{2,1} \lesssim  |\tau_2-\tau_1|^{\epsilon} \frac{(1+r_{k,\gamma})^s}{\epsilon_0^{1+\epsilon} r_{k,\gamma}^{1+\epsilon}}\|\hat{f}\|_{H^{1/2+\epsilon}} \|\hat{g}\|_{H^{1/2+\epsilon}},\quad \forall\, \epsilon\in (0,1).
\end{equation}
To estimate $J_{2,2}$, we have
\begin{equation*}
J_{2,2} \leq  \left| \int_{\mathbb{S}^{d-1}} \big(F(\omega;\tau_1)-F(\omega;\tau_2)\big) \hat{f}(\omega r_{k,\gamma})\bar{\hat{g}}(\omega r_{k,\gamma}) d\omega\right|,  
\end{equation*}
where
\[
F(\omega;\tau):= \mathbbm{1}_{A_{\gamma,\tau}^+}(\omega)\int_{\Gamma} \frac{\psi(r; r_{k,\gamma},\epsilon_0)}{m_{k,\gamma,\tau}(r,\omega)}(1+r^2)^{s/2}r^{d-1}dr.
\]
By Cauchy's integral theorem and recalling \eqref{F2:pole}, we obtain
\[
F(\omega;\tau)= -\frac{\mathbbm{1}_{A_{\gamma,\tau}^+}(\omega) \pi {\rm i}}{ (r_{k,\gamma}^2- \gamma_{\omega}^2 + {\rm i} \tau)^{1/2}} (1+r^2)^{s/2}r^{d-1},\quad r:= -{\rm i} \gamma_{\omega} + (r_{k,\gamma}^2 -\gamma_{\omega}^2 + {\rm i} \tau)^{1/2}.
\]

For any fixed $\omega \in \mathbb{S}^{d-1}$ and $\gamma \in \mathbb{R}^d$, we recall from the proof of Lemma~\ref{F2} that the possible poles are given by
\begin{align*}
r=R_{\gamma,\tau}(\gamma_{\omega}),\quad R_{\gamma,\tau}(x):= -{\rm i} x + \sqrt{r_{k,\g}^2- x^2 + {\rm i}\tau},\quad x \in(\Theta_{\gamma}(\tau),\epsilon_0/4).  
\end{align*}
It is straightforward to verify that $R_{\gamma,\tau}$ admits an explicit inverse, i.e., 
\[
x=R^{-1}_{\gamma,\tau}(r):= \frac{r_{k,\gamma}^2 + {\rm i} \tau - r^2}{2{\rm i} r },
\]
where $r$ lies inside the contour $\Gamma$. Hence, the mapping $x \mapsto R_{\gamma,\tau}(x)$ is a diffeomorphism onto its image.
As a result, $\Theta_{\gamma}(\tau)$ depends continuously on $\tau$, and $F(\omega;\tau)$ converges pointwise to $F(\omega;\tau_0)$ as $\tau \to \tau_0$ in $(0,\epsilon_0^2)$.

Let $\epsilon \in (0,1)$ be fixed. By H\"{o}lder's inequality, we obtain
\begin{align*}
 J_{2,2}&\lesssim  \left(\int_{\mathbb{S}^{d-1}} |F(\omega;\tau_1)-F(\omega;\tau_2)|^{(1+\epsilon)/\epsilon} 
 d\omega \right)^{\epsilon/(1+\epsilon)} \left(\int_{\mathbb{S}^{d-1}} |\hat{f}(\omega r_{k,\gamma})\bar{\hat{g}}(\omega r_{k,\gamma})|^{(1+\epsilon)} d\omega\right)^{1/(1+\epsilon)}\\
 &= \|F(\omega;\tau_1)-F(\omega;\tau_2) \|_{L^{(1+\epsilon)/\epsilon}(\mathbb{S}^{d-1})} \| (\hat{f})_{r_{k,\g}} \|_{L^{2(1+\epsilon)}(\mathbb{S}^{d-1})} \| (\bar{\hat{g}})_{r_{k,\g}} \|_{L^{2(1+\epsilon)}(\mathbb{S}^{d-1})}.   
 \end{align*}
By Sobolev embedding on the sphere and Lemmas~\ref{R2:n}, we have
\[
\| (\hat{f})_{r_{k,\g}} \|_{L^{2(1+\theta)}(\mathbb{S}^{d-1})} \lesssim \| (\hat{f})_{r_{k,\g}} \|_{H^{\frac{\epsilon d}{2(1+\epsilon)}}(\mathbb{S}^{d-1})} \lesssim_{k,\gamma} \| \hat{f} \|_{H^{\frac{\epsilon d}{2(1+\epsilon)}+ \frac{1}{2}}}, 
\]
and the same estimate holds for $\hat g$. Hence, we arrive at
\begin{equation}\label{eq:F:5.2}
J_{2,2} \lesssim_{k,\gamma,\e} \|F(\omega;\tau_1)-F(\omega;\tau_2) \|_{L^{\frac{(1+\epsilon)}{\epsilon}}}  \| \hat{f} \|_{H^{\frac{\epsilon d}{2(1+\epsilon)}+ \frac{1}{2}}} \| \hat{g} \|_{H^{\frac{\epsilon d}{2(1+\epsilon)}+ \frac{1}{2}}}. 
\end{equation}

By the dominated convergence theorem, for any $\epsilon>0$ there exists $\delta>0$ such that
\[
\|F(\omega;\tau_1)-F(\omega;\tau_2) \|_{L^{(1+\epsilon)/\epsilon}}< \epsilon,\quad \forall\, \tau_1,\tau_2\in (0,\delta).
\]
Combining \eqref{eq:F:4}--\eqref{eq:F:5.2}, we conclude that for any $\varepsilon>0$ there exists
$\delta>0$ such that, for all $\tau_1,\tau_2 \in (0,\delta)$,
\begin{align*}
|\langle\H_{k,\gamma,\tau_1} f - \H_{k,\g,\tau_2}f, g \rangle| &\leq \sum_{j=1}^3 |I^{(j,s)}_{k,\gamma,\epsilon_0,\tau_1}(f,g) - I^{(j,s)}_{k,\gamma,\epsilon_0,\tau_2}(f,g)|\\
&\leq\ \varepsilon  \|{f}\|_{L^{2}(\jpx^{\eta})}\|{g}\|_{L^{2}(\jpx^{\eta})},\quad \forall\, \eta >1/2.
\end{align*}
This shows that $\mathcal{H}_{k,\gamma,\tau}$ converges strongly as $\tau \to 0$, and we have proved 
\begin{equation*}
\|\mathcal{H}^{\pm}_{k,\gamma} f\|_{H^{r+s}(\langle x\rangle^{-\eta})}\lesssim
\frac{(1+r_{k,\gamma})^{s}}{\min\{r_{k,\gamma}, 1\} r_{k,\gamma}}\|f\|_{H^{r}(\langle x\rangle^{\eta})}.
\end{equation*}
By Lemma~\ref{Pre00}, there holds
\[
\begin{aligned}
\|\mathcal{H}^{\pm}_{k,\gamma} f\|_{H^{r+s}(\langle x\rangle^{-\eta},\la)} \lesssim&\ \la^{-\eta}\|\mathcal{H}^{\pm}_{k,\gamma} f\|_{H^{r+s}(\langle x\rangle^{-\eta})}\\
\lesssim& \frac{\la^{-\eta}(1+r_{k,\gamma})^{s}}{\min\{r_{k,\gamma}, 1\} r_{k,\gamma}}\|f\|_{H^{r}(\langle x\rangle^{\eta})} 
\lesssim \frac{\la^{-2\eta}(1+r_{k,\gamma})^{s}}{\min\{r_{k,\gamma}, 1\} r_{k,\gamma}}\|f\|_{H^{r}(\langle x\rangle^{\eta},\la)},
\end{aligned}
\]
which implies \eqref{eq:F}. 
\end{proof}

Only one-sided limits of $\mathcal{H}_{k,\gamma,\tau}$ as $\tau \to 0$ are considered, since a change in the sign of $\tau$ may introduce new poles inside the contour $\Gamma$. As a result, the limiting operator depends on whether $\tau$ approaches zero from above or below. To illustrate this phenomenon, we consider the special case $\gamma=0$. It is straightforward to verify that no poles lie inside the contour $\Gamma$ when $\tau>0$. However, when $\tau<0$, a pole may enter the contour, and the convergence of $\mathcal{H}_{k,\gamma,\tau}$ fails due to the contribution of the associated residue.

Theorem~\ref{thm:F} establishes an $L^{2}$-based estimate for the operators $\mathcal{H}^{\pm}_{k,\gamma}$. We now turn to $L^{p}$-based estimates in the framework of rescaled weighted Besov spaces. We begin with the following proposition.

\begin{Proposition}\label{P:H:Lp}
Let $\epsilon \in (0,1)$, $p_0 \in [1,2]$, and assume that
$\eta > d/p_0 - (d-1)/2$. Then the operators $\mathcal{H}^{\pm}_{k,\gamma}$ satisfy the estimate
\[
\|\mathcal{H}^{\pm}_{k,\gamma} f\|_{L^{p_0}(\langle x\rangle^{-\eta})}\lesssim
\max\bigl\{ r_{k,\gamma}^{-2},\, r_{k,\gamma}^{-1} \bigr\}\|f\|_{L^{2}(\langle x\rangle^{1/2+\epsilon})}.
\]
\end{Proposition}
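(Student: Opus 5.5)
The plan is to work at the level of the regularized operators $\mathcal{H}_{k,\gamma,\tau}$ and pass to the limit $\tau\to0^{\pm}$ at the end, using the strong convergence from Theorem~\ref{thm:F}. Lower semicontinuity of the $L^{p_0}(\langle x\rangle^{-\eta})$ norm then lets us drop $\tau$. Fix $\epsilon_0\sim\min\{r_{k,\gamma}/4,1\}$ and split the multiplier $1/m_{k,\gamma,\tau}$ with the cutoff $\psi(|\xi|;r_{k,\gamma},\epsilon_0)$.

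\textbf{Step 1: the region away from the sphere.} On the support of $1-\psi$ we have $|m_{k,\gamma,\tau}(\xi)|\gtrsim |\,|\xi|^2-r_{k,\gamma}^2|+\epsilon_0 r_{k,\gamma}$, by \eqref{F1:1}. Hence the operator with symbol $(1-\psi)/m_{k,\gamma,\tau}$ is bounded $L^2\to L^2$ with norm $\lesssim (\epsilon_0 r_{k,\gamma})^{-1}\lesssim\max\{r_{k,\gamma}^{-2},r_{k,\gamma}^{-1}\}$, as in Lemma~\ref{F1}. We then pass from $L^2$ to $L^{p_0}(\langle x\rangle^{-\eta})$ by H\"older's inequality:
\[
\|u\|_{L^{p_0}(\langle x\rangle^{-\eta})}\le \|u\|_{L^2}\,\|\langle x\rangle^{-\eta}\|_{L^{2p_0/(2-p_0)}}.
\]
The last factor is finite because $\eta(2p_0/(2-p_0))>d$, i.e. $\eta>d/p_0-d/2$, which is implied by the hypothesis. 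Since $\|f\|_{L^2}\le\|f\|_{L^2(\langle x\rangle^{1/2+\epsilon})}$, this piece is fine.

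\textbf{Step 2: the region near the sphere $|\xi|=r_{k,\gamma}$.} This is the main obstacle, because here only the weighted $L^2$ control of the trace of $\hat f$ on spheres is available. The plan is to write the operator in polar coordinates,
\[
u(x)=\int_{\mathbb{S}^{d-1}}\int \frac{\psi(r)\,\hat f(r\omega)}{m_{k,\gamma,\tau}(r\omega)}e^{2\pi{\rm i} r\omega\cdot x}r^{d-1}\,dr\,d\omega,
\]
and treat the radial integral by the contour argument of Lemma~\ref{F2} and Lemma~\ref{F3}. The first part replaces $\hat f(r\omega)$ by $\hat f(r_{k,\gamma}\omega)$ and handles the difference via the $|r-r_{k,\gamma}|^{\epsilon}$ gain of \eqref{eq:F3:2}. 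That difference term is again essentially an $L^2$-bounded operator, and H\"older's inequality finishes it as in Step 1. The second part is the principal term. There the radial integral becomes $(r_{k,\gamma})^{-1}$ times a residue or boundary contribution, and the $x$-dependence gives a bounded function whose oscillation is $e^{2\pi{\rm i} r_{k,\gamma}\omega\cdot x}$ up to a factor $\langle x\rangle^{0}$ errors. The principal term is therefore roughly $r_{k,\gamma}^{d-2}\,\widehat{\sigma}$-type extension
\[
E g(x)=\int_{\mathbb{S}^{d-1}} g(\omega)e^{2\pi{\rm i} r_{k,\gamma}\omega\cdot x}\,d\omega, \qquad g=\hat f(r_{k,\gamma}\cdot)\, c(\omega).
\]

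\textbf{Step 3: bounding the extension term.} The trace bound from Lemma~\ref{F2}, together with Lemma~\ref{R2:n} and the trace lemma, gives $\|g\|_{L^2(\mathbb{S}^{d-1})}\lesssim r_{k,\gamma}^{-(d-1)/2}\|f\|_{L^2(\langle x\rangle^{1/2+\epsilon})}$. For the extension we use the Agmon--H\"ormander type bound
\[
\int_{|x|<R}|Eg|^2\lesssim R\, r_{k,\gamma}^{-(d-1)}\|g\|_{L^2}^2 .
\]
Equivalently, $|Eg(x)|$ decays like $\langle r_{k,\gamma}x\rangle^{-(d-1)/2}$ in an averaged $L^2$ sense. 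We sum this over dyadic shells $|x|\sim 2^j$ with H\"older's inequality from $L^2$ to $L^{p_0}$ on each shell. This gives
\[
\|Eg\|_{L^{p_0}(\langle x\rangle^{-\eta})}\lesssim \sum_j 2^{-j\eta}\,2^{jd(1/p_0-1/2)}\,2^{j/2}\,\|Eg\text{-norm per shell}\|,
\]
which converges precisely when $\eta>d/p_0-(d-1)/2$. This is exactly the hypothesis, which confirms that this is the intended mechanism.

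\textbf{Step 4: tracking the powers of $r_{k,\gamma}$.} Combining the residue factor $r_{k,\gamma}^{-1}$, the measure factor $r_{k,\gamma}^{d-1}$, and the trace and extension normalizations, I expect the result to be $r_{k,\gamma}^{-1}$ for $r_{k,\gamma}\ge1$. For $r_{k,\gamma}<1$, the choice $\epsilon_0\sim r_{k,\gamma}$ produces $r_{k,\gamma}^{-2}$. The bookkeeping in this scaling is the place I expect to need the most care.
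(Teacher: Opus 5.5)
Your Step~1 is fine, but Steps~2--4 have genuine gaps.

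\textbf{The difference term is not $L^2$-bounded.} In Step~2 you call it ``essentially $L^2$-bounded'', but with only $f\in L^2(\langle x\rangle^{1/2+\epsilon})$, i.e.\ $\hat f\in H^{1/2+\epsilon}$, this fails. The estimate \eqref{eq:F3:2} gains only a small power $|r-r_{k,\gamma}|^{\epsilon}$. Against $|m_{k,\gamma,\tau}|^{-2}\sim (r_{k,\gamma}|r-r_{k,\gamma}|)^{-2}$ (take $\gamma=0$, $\tau\to0$), this is not integrable in $r$. For instance, a radial $\hat f\in H^{1/2+\epsilon}$ with $\hat f(\xi)-\hat f(r_{k,\gamma}\xi/|\xi|)=\big||\xi|-r_{k,\gamma}\big|^{2\epsilon}$ near the sphere makes the term leave $L^2$, uniformly in $\tau$, for small $\epsilon$. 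Lemma~\ref{F3} only places this term in $L^2(\langle x\rangle^{-1/2-\epsilon})$, so the unweighted H\"older step of Step~1 cannot be reused for it.

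\textbf{The contour argument does not transfer to $u(x)$.} Lemma~\ref{F2} deforms the contour in the bilinear form, where no $x$ appears. At the level of $u(x)$ the radial integrand carries $e^{2\pi{\rm i}r\omega\cdot x}$. Its modulus on a deformed contour is $e^{-2\pi\Im(r)\,\omega\cdot x}$, which can be as large as $e^{2\pi\epsilon_0|x|}$. So the contour, and hence whether the residue appears, must depend on the sign of $\omega\cdot x$. Your ``principal term'' is then a direction-truncated extension operator plus remainders that do not decay near $\omega\cdot x=0$. The Agmon--H\"ormander bound for $Eg$ does not apply to that object as stated.

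\textbf{Further omissions.} For $\gamma\neq0$ the symbol vanishes only on $\{|\xi|=r_{k,\gamma},\ \gamma\cdot\xi=0\}$, and the residue depends on $\gamma_\omega$ (cf.\ $A^{\pm}_{\gamma,\tau}$); your sketch ignores this. The power counting in Step~4 is only conjectured, not carried out.

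\textbf{What the paper does instead.} None of this needs to be opened up. Theorem~\ref{thm:F} with $r=s=0$ and weight exponent $1/2+\epsilon'$ already controls the whole limiting operator, near and away from the sphere:
\[
\|\mathcal{H}^{\pm}_{k,\gamma}f\|_{L^2(\langle x\rangle^{-1/2-\epsilon'})}\lesssim \frac{\|f\|_{L^2(\langle x\rangle^{1/2+\epsilon'})}}{\min\{r_{k,\gamma},1\}\,r_{k,\gamma}}=\max\{r_{k,\gamma}^{-2},r_{k,\gamma}^{-1}\}\|f\|_{L^2(\langle x\rangle^{1/2+\epsilon'})}.
\]
Then run your Step~1 H\"older inequality with the split $\langle x\rangle^{-\eta}=\langle x\rangle^{-1/2-\epsilon'}\,\langle x\rangle^{-\eta+1/2+\epsilon'}$ rather than against unweighted $L^2$. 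The factor $\|\langle x\rangle^{-\eta+1/2+\epsilon'}\|_{L^{2p_0/(2-p_0)}}$ is finite iff $\eta>d/p_0-(d-1)/2+\epsilon'$. This holds for small $\epsilon'\in(0,\epsilon]$ because the hypothesis is strict, and $\|f\|_{L^2(\langle x\rangle^{1/2+\epsilon'})}\le\|f\|_{L^2(\langle x\rangle^{1/2+\epsilon})}$. That is the paper's entire proof.

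Your dyadic-shell count in Step~3 does land on the right threshold. This is because the Agmon--H\"ormander bound is dual to the trace estimate that produces the weights $\langle x\rangle^{\mp(1/2+\epsilon)}$ in Theorem~\ref{thm:F}. Making Steps~2--3 rigorous would amount to reproving that theorem in a harder, $x$-dependent form.
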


\begin{proof}
Let $\epsilon \in (0, \eta - d(1/p_0-1/2)-1/2)$. By H\"{o}lder’s inequality, we have
\[
\|\mathcal{H}^{\pm}_{k,\gamma}f\|_{L^{p_0}(\langle x\rangle^{-\eta})}
\lesssim \|\mathcal{H}^{\pm}_{k,\gamma}f \langle x\rangle^{-1/2-\epsilon}\|_{L^2} 
\|\langle x\rangle^{-\eta+1/2+\epsilon}\|_{L^{2p_0/(2-p_0)}}.
\]
Applying Theorem~\ref{thm:F} with $r=s=0$ yields
\[
\|\mathcal{H}^{\pm}_{k,\gamma}f\|_{L^2(\langle x\rangle^{-1/2-\varepsilon})}
\lesssim \max\{r_{k,\gamma}^{-2}, r_{k,\gamma}^{-1}\}
\|f\|_{L^{2}(\langle x\rangle^{1/2+\varepsilon})}.
\]
Moreover, the weighted norm $\|\langle x\rangle^{-\eta+1/2+\varepsilon}\|_{L^{2p_0/(2-p_0)}}$ is finite provided that ${2p_0}(\eta - 1/2-\epsilon )/{(2-p_0)} >d $,  which is ensured by the condition $\eta > d/p_0-(d-1)/2$. This completes the proof.
\end{proof}

\begin{Theorem}\label{Hsg}
Let $\lambda\in(0,1)$ and $\alpha \in [0,d/2)$.  Let $p_1,p_2,q \in [1,\infty]$ and $r \in \mathbb{R}$ satisfy $
1/p_2-1/p_1=\alpha/d.$ Then the operator $\mathcal{H}_{k,\gamma}^{\pm}$ extends to a bounded operator from $
B^{r-2+\alpha}_{p_2,q}(\langle x\rangle^{\eta},\lambda)$ to $B^{r}_{p_1,q}(\langle x\rangle^{-\eta},\lambda)$. Moreover, for any $\eta \in \bigl((d+1)/2-\alpha,\infty\bigr)$, the following estimate holds: 
\[
\|\mathcal{H}_{k,\gamma}^{\pm} f\|_{B^{r}_{p_1,q}(\langle x\rangle^{-\eta},\lambda)}
\lesssim_{\eta,p_1,p_2,q,r} \lambda^{-\eta} \max\{ r_{k,\gamma}^{\eta}, r_{k,\gamma}^{-2}\}
\|f\|_{B^{r-2+\alpha}_{p_2,q}(\langle x\rangle^{\eta},\lambda)},
\]
where the implicit constant is independent of $\lambda$, $k$, and $\gamma$.
\end{Theorem}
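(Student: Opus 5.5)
We will prove the estimate through a pointwise kernel bound combined with a Littlewood--Paley decomposition, working frequency block by frequency block. Write $\mathcal{H}^{\pm}_{k,\gamma}$ as the Fourier multiplier with symbol $1/m_{k,\gamma,0\pm}$. We split it into a high-frequency part and a near-sphere part using the cutoff $\psi(|\xi|;r_{k,\gamma},\epsilon_0)$.

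\textbf{High-frequency piece.} Away from the sphere $|\xi|=r_{k,\gamma}$, the symbol $(1-\psi)/m_{k,\gamma}$ is a smooth symbol of order $-2$. Its seminorms are controlled by powers of $r_{k,\gamma}^{-1}$ and $\epsilon_0^{-1}$, and we choose $\epsilon_0\sim\min\{r_{k,\gamma},1\}$. On each block $\Delta_n$, this operator acts by convolution with a kernel of size about $2^{-2n}$ in $L^1(\langle x\rangle^{\eta})$. Young's inequality with weights then handles it: by \eqref{eq:ad:1}, the weight $\langle \lambda x\rangle^{\pm\eta}$ transfers through convolution at the cost of $\langle\lambda(x-y)\rangle^{\eta}\le\langle x-y\rangle^{\eta}$. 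Combined with the Bernstein inequality $\|\Delta_n f\|_{L^{p_1}}\lesssim 2^{n\alpha}\|\Delta_n f\|_{L^{p_2}}$, valid since $1/p_2-1/p_1=\alpha/d$, this gives the gain $2^{-2n+n\alpha}$. That gain matches the shift from $B^{r-2+\alpha}_{p_2,q}$ to $B^{r}_{p_1,q}$, with a constant independent of $\lambda$.

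\textbf{Near-sphere piece.} Only the finitely many blocks $n$ with $2^n\sim r_{k,\gamma}$ (or $n=-1$ when $r_{k,\gamma}\lesssim 1$) meet the support of $\psi$. On these blocks we estimate the kernel $K(x)=\mathcal{F}^{-1}(\psi/m_{k,\gamma,0\pm})$ pointwise. For $\gamma=0$ this is a localized Green's function \eqref{P:G}, which obeys $|K(x)|\lesssim r_{k,\gamma}^{(d-3)/2}\langle x\rangle^{-(d-1)/2}$. For $\gamma\neq0$ we use the contour-deformation and residue computation from Lemma~\ref{F2}, stationary phase on the sphere, and integration by parts in the radial variable. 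These should give $|K(x)|\lesssim C(r_{k,\gamma})\langle x\rangle^{-(d-1)/2}$ with $C(r_{k,\gamma})\lesssim\max\{r_{k,\gamma}^{\eta},r_{k,\gamma}^{-2}\}$, the positive power coming from rescaling $x\mapsto r_{k,\gamma}x$ in the weights.

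We then bound $\|\langle\lambda x\rangle^{-\eta}(K*\Delta_n f)\|_{L^{p_1}}$ by writing
\[
\langle\lambda x\rangle^{-\eta}|K*g|(x)\le \int \langle\lambda x\rangle^{-\eta}\langle\lambda y\rangle^{-\eta}|K(x-y)|\,\langle\lambda y\rangle^{\eta}|g(y)|\,dy .
\]
Applying Hölder/Young and the condition $\eta>(d+1)/2-\alpha$ ensures that the weighted kernel $\langle x\rangle^{-\eta}\langle x-y\rangle^{-(d-1)/2}$ lies in the mixed Lebesgue class needed to pass from $L^{p_2}$ to $L^{p_1}$. This is a Hardy--Littlewood--Sobolev-type exponent count: decay $(d-1)/2+\eta$ must exceed $d-\alpha+\tfrac12$. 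The $\lambda$-weights are handled by bounding $\langle \lambda x\rangle^{-\eta}\le \lambda^{-\eta}\langle x\rangle^{-\eta}$ for $\lambda<1$ (Lemma~\ref{Pre00}), which accounts for the factor $\lambda^{-\eta}$. For $p_1,p_2$ in the $L^2$ range we can alternatively interpolate (Lemma~\ref{real:interpolation}) between Proposition~\ref{P:H:Lp} and Theorem~\ref{thm:F}.

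\textbf{Assembly and the main obstacle.} Finally we sum over $n$ in $\ell^q$ and obtain the stated estimate. The main obstacle is the uniform pointwise kernel decay for $\gamma\neq0$: the pole structure varies with $\omega$ through $\gamma_\omega$, as in Lemma~\ref{F2}. I would first try to conjugate by $e^{\gamma\cdot x}$ locally in frequency. Failing that, I would derive the decay from the limiting absorption bound of Theorem~\ref{thm:F}, using commutators with $x$ (multiplying the kernel by $x^\beta$ and differentiating the symbol), and accept the lossy exponent $\eta$ that the hypothesis allows.
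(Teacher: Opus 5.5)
\textbf{There is a genuine gap, at exactly the step you flag as the main obstacle.}

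Your near-sphere argument needs a pointwise bound $|K(x)|\lesssim C(r_{k,\gamma})\langle x\rangle^{-(d-1)/2}$ for $K=\mathcal F^{-1}(\psi/m_{k,\gamma,0\pm})$, uniformly in $\gamma$. You do not prove it, and for $\gamma\neq0$ and $d\ge 4$ it is false.
\begin{itemize}
\item For $\gamma\neq0$ the symbol is singular only on the $(d-2)$-sphere $\{|\xi|=r_{k,\gamma},\ \gamma\cdot\xi=0\}$.
\item Write $\xi_1=\hat\gamma\cdot\xi$ and $F(\xi_1)=\int\psi\,m^{-1}\,d\xi'$. Then $F$ is smooth on each side of $\xi_1=0$. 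But $\Im m=2|\gamma|\xi_1$ changes sign there, so the Plemelj formula gives a jump of size $\sim r_{k,\gamma}^{d-3}$ at $\xi_1=0$.
\item Hence $K(t\hat\gamma)\sim c\,r_{k,\gamma}^{d-3}/t$ as $|t|\to\infty$. This is a Hilbert-transform-type tail decaying only like $|x|^{-1}$ along the $\gamma$-axis.
\end{itemize}
Your fallback, multiplying the kernel by $x^\beta$, produces $m^{-1-|\beta|}$. Its weighted bounds need weights growing with $|\beta|$ and are not supplied by Theorem~\ref{thm:F}, so it yields no decay.

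Two smaller errors in the same step:
\begin{itemize}
\item Even for $\gamma=0$ your bound fails near the origin when $r_{k,\gamma}\gg1$. There $|K(0)|\sim r_{k,\gamma}^{d-2}$; the correct form is $r_{k,\gamma}^{d-2}\langle r_{k,\gamma}x\rangle^{-(d-1)/2}$.
\item The Young exponent count should be $\big(\eta+(d-1)/2\big)p_0>d$ with $1-1/p_0=\alpha/d$, i.e.\ $\eta+(d-1)/2>d-\alpha$, not $d-\alpha+\tfrac12$.
\end{itemize}

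\textbf{The missing idea is that no pointwise kernel information is needed.} Your skeleton matches the paper's: Littlewood--Paley blocks, weighted Young via \eqref{eq:ad:1}, and $\lambda^{-\eta}$ from $\langle\lambda x\rangle^{-\eta}\le\lambda^{-\eta}\langle x\rangle^{-\eta}$. The paper differs in how it handles the blocks near the sphere.
\begin{itemize}
\item It puts the \emph{decaying} weight on the kernel, so each block costs $\|K_n\langle\lambda x\rangle^{-\eta}\|_{L^{p_0}}$.
\item It does not split by $\psi(\cdot;r_{k,\gamma},\epsilon_0)$. It isolates the whole dyadic blocks with $2^n\sim r_{k,\gamma}$ and rescales each to unit frequency.
\item The kernel then becomes $\mathcal H^{\pm}_{2^{-n}k,2^{-n}\gamma}$ applied to a fixed Schwartz function, with $r_{2^{-n}k,2^{-n}\gamma}\sim1$. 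The only cost is the factor $\max\{\lambda^{-\eta}r_{k,\gamma}^{\eta},1\}$ from rescaling the weight.
\item That weighted $L^{p_0}$ norm is bounded by Proposition~\ref{P:H:Lp}, i.e.\ H\"older plus the $\gamma$-uniform weighted $L^2$ estimate of Theorem~\ref{thm:F}. Its hypothesis $\eta>d/p_0-(d-1)/2$ is exactly $\eta>(d+1)/2-\alpha$.
\item The anisotropic tail above is harmless in this integrated norm. The block $n=-1$ is handled by the same proposition, which is where $r_{k,\gamma}^{-2}$ comes from.
\end{itemize}

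This also removes a second defect of your split. On blocks meeting the sphere, $(1-\psi)/m$ is not a uniform order $-2$ symbol: it has size up to $(r_{k,\gamma}\epsilon_0)^{-1}$ and varies on scale $\epsilon_0$. Putting the growing weight $\langle x-y\rangle^{\eta}$ on its kernel and then using Bernstein costs too much:
\begin{itemize}
\item a factor like $r_{k,\gamma}^{-2-\eta}$ when $r_{k,\gamma}\ll\lambda$;
\item a positive power of $r_{k,\gamma}$, up to $r_{k,\gamma}^{\alpha}$, for large $r_{k,\gamma}$ when $\eta<(d+1)/2$.
\end{itemize}
Neither is allowed by the stated bound. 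Your $L^1$-kernel-plus-Bernstein argument is fine only for blocks genuinely away from the sphere.
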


\begin{proof}
Let $\psi^{(-1)}$ be a smooth function such that $\operatorname{supp}\psi^{(-1)} \subseteq \{\xi\in\mathbb{R}^d: |\xi|\leq 3/2\}$ and $\psi^{(-1)} \equiv 1$ on  $\operatorname{supp}\chi$. Let $\tilde{\varphi}$ be a smooth function satisfying $
\operatorname{supp}\tilde{\varphi} \subseteq \{\xi\in\mathbb{R}^d: 1/2 \leq |\xi| \leq 3\}$ and $\tilde{\varphi} \equiv 1$ on $ \operatorname{supp}\varphi$. Define $\psi^{(n)} := \tilde{\varphi}$ for all $n\geq 0$, and introduce the dyadically rescaled cut-off functions
\[
\psi_{-1}^{(-1)}(\xi):=\psi^{(-1)}(\xi), \quad  \psi_{n}^{(n)}(\xi):=\psi^{(n)}(2^{-n}\xi), \quad n\geq 0.
\]
For $n\geq 0$, set $c_{n,k,\gamma}:=2^{-n} r_{k,\gamma},  \gamma_n:=2^{-(n-1)}\gamma$, and define $c_{-1,k,\gamma}:= r_{k,\gamma}, \gamma_{-1}:=\gamma$. Finally, for $n\geq -1$, define the multipliers $\Psi_{n,k,\gamma}$ by 
\[
\Psi_{n,k,\gamma}(\xi) := \frac{\psi^{(n)}(\xi)}{|\xi|^{2}-c_{n,k,\gamma}^{2}+{\rm i}\gamma_n\cdot\xi}.
\]

Let $N_0>4$ be a fixed integer. Then there exists at most one integer $n_0 \geq 0$ such that $c_{n_0-1,k,\gamma} \geq 16/3$ and $c_{n_0,k,\gamma} <16/3$. Moreover, for this $n_0$, we have
\[
c_{n_0+N_0,k,\gamma}^{} = 2^{-N_0} c_{n_0,k,\gamma}^{} \leq 2^{-4} c_{n_0,k,\gamma}^{}< \frac{1}{3} < \frac{3}{8}.
\]
Hence, $c_{n,k,\gamma}^{2}$ lies outside the interval $[3/8, 16/3]$ for all $n> n_0+N_0$ and $0 \leq n < n_0$. Note that the index $n_0$ depends on $(k,\gamma)$, whereas $N_0$ is independent of both $k$ and $\gamma$.

For any $f \in \mathcal{S}$, we set $g_{k,\gamma}:=\mathcal{H}^{\pm}_{k,\gamma} f$. Then
 \begin{align*}
&\|\mathcal{H}^{\pm}_{k,\gamma}f\|_{B^{r}_{p_1,q}(\jpx^{-\eta},\lambda)}^q = \sum_{n \in  [0,n_0)} +\sum_{n\in (n_0+N_0,\infty)} 2^{nrq} \|\mathcal{F}^{-1}(\varphi_{n} \hat{g}_{k,\gamma})\|_{L^{p_1}(\jpl^{-\eta})}^q \\
&\quad + \sum_{n\in [n_0,n_0+N_0]} 2^{nrq} \|\mathcal{F}^{-1}(\varphi_{n} \hat{g}_{k,\gamma})\|_{L^{p_1}(\jpl^{-\eta})}^q + 2^{-rq} \|\mathcal{F}^{-1}(\varphi_{-1} \hat{g}_{k,\gamma})\|_{L^{p_1}(\jpl^{-\eta})}^q, 
\end{align*}
where we set $n_0=\infty$ if such an index does not exist. Moreover, for any $n\geq -1$, we have the convolution representation
 \begin{align*}
 \mathcal{F}^{-1}(\varphi_{n}\hat{g}_{k,\gamma}) =
 \mathcal{F}^{-1}\left(\frac{\psi^{(n)}_{n}}{|\xi|^{2}-k^2-|\gamma|^{2} + 2{\rm i} \gamma\cdot \xi}\right) \ast \mathcal{F}^{-1}(\varphi_{n} \hat{f}).
 \end{align*}
 
For $n \geq 0$, we have
 \begin{align*}
& \|\mathcal{F}^{-1}(\varphi_{n}\hat{g}_{k,\gamma})\|_{L^{p_1}(\jpl^{-\eta})}^{p_1} = \|\mathcal{F}^{-1}\left(\frac{\psi^{(n)}_{n}}{|\xi|^{2}-k^2-|\gamma|^{2} + 2{\rm i}\gamma\cdot \xi}\right) \ast \Delta_{n}f\|_{L^{p_1}(\jpl^{-\eta})}^{p_1}\\
 &= \int_{\mathbb{R}^d} \left| \int_{\mathbb{R}^d} \mathcal{F}^{-1}\left(\frac{\psi^{(n)}_{n}}{|\xi|^{2}-k^2-|\gamma|^{2} + 2{\rm i} \gamma\cdot \xi}\right)(y) \Delta_{n}f(x-y) dy \right|^{p_1} \jpl^{-\eta p_1} dx.
 \end{align*}
 Let $1- 1/p_0=  1/p_2-1/p_1=\alpha/d$.  By Young's convolution inequality and \eqref{eq:ad:1}, we obtain
 \begin{align*}
  &\|\mathcal{F}^{-1}(\varphi_{n}\hat{g}_{k,\gamma})\|_{L^{p_1}(\jpl^{-\eta})}\\
  & \lesssim \left( \int_{\mathbb{R}^d} \left| \int_{\mathbb{R}^d} \mathcal{F}^{-1}\left(\frac{\psi^{(n)}_{n}}{|\xi|^{2}-k^2-|\gamma|^{2} + 2{\rm i} \gamma\cdot \xi}\right)(y) \jpl^{-\eta} \Delta_{n} f(x-y)\langle \lambda(x-y) \rangle^{\eta} dy \right|^{p_1} dx\right)^{1/p_1}\\
&\lesssim 2^{-2n}  \left\| \mathcal{F}^{-1}\left(\frac{\psi^{(n)}(2^{-n} \xi)}{(2^{-n}|\xi|)^{2}-(2^{-n} r_{k,\g})^2 + 2{\rm i} 2^{-n}\g\cdot 2^{-n}\xi}\right) \right\|_{L^{p_0}(\jpl^{-\eta})} \|\Delta_{n} f\|_{L^{p_2}(\jpl^{\eta})}.
   \end{align*}
Combining the above estimates yields
\begin{equation}\label{H:eq:00}
  \|\mathcal{F}^{-1}(\varphi_{n}\hat{g}_{k,\gamma})\|_{L^{p_1}(\jpl^{-\eta})} \lesssim  2^{-2n} \|  2^{nd} \check{\Psi}_{n,k,\gamma}(2^{n} x) \|_{L^{p_0}(\jpl^{-\eta})} \|\Delta_{n} f\|_{L^{p_2}(\jpl^{\eta})}.
\end{equation}

We apply H\"{o}lder's inequality to obtain  
\[
\| 2^{nd} \check{\Psi}_{n,k,\gamma}(2^{n} x)\|_{L^{p_0}(\jpl^{-\eta})} \lesssim \|  2^{nd}\check{\Psi}_{n,k,\gamma}(2^n x) \|_{L^{p_0}} \| \jpl^{-\eta} \|_{L^{\infty}} \lesssim  2^{n\alpha}  \|\check{\Psi}_{n,k,\gamma}\|_{L^{p_0}}, 
\]
where we used the scaling relation
\[
\|2^{nd} f(2^n\cdot)\|_{L^{p_0}} = 2^{n(d-d/p_0)}\|f\|_{L^{p_0}} = 2^{n\alpha}\|f\|_{L^{p_0}},
\quad \alpha = d(1-1/p_0).
\]
Since $p_0\in[1,2]$, another application of H\"{o}lder's inequality yields
 \[
  \| \check{\Psi}_{n,k,\gamma}\|_{L^{p_0}} \leq \left(\int_{\mathbb{R}^d} |\check{\Psi}_{n,k,\gamma}(x)|^2\jpx^{(d+1)(2-p_0)/p_0} dx \right)^{p_0/2}\left(\int_{\mathbb{R}^d}\jpx^{-d-1} dx\right)^{(2-p_0)/2}.
 \]
Let $d_0$ be the smallest integer such that $2d_0>(d+1)(2-p_0)/p_0$. Then $\jpx^{(d+1)(2-p_0)/p_0} \lesssim \jpx^{2d_0}$, and by Plancherel’s identity,
 \begin{align*}
  \| \check{\Psi}_{n,k,\gamma}\|_{L^{p_0}} \lesssim \left(\int_{\mathbb{R}^d} |\check{\Psi}_{n,k,\gamma}(x)|^2\jpx^{2d_0} dx \right)^{p_0/2} \lesssim  \left(\sum_{|\alpha_0|\leq d_0} \int_{\mathbb{R}^d} |D^{\alpha_0}{\Psi}_{n,k,\gamma}(x)|^2 dx\right)^{p_0/2}.
 \end{align*}
 Therefore, we obtain
\begin{equation}\label{H:eq:0}
\left\|  2^{nd} \check{\Psi}_{n,k,\gamma}(2^n x)  \right\|_{L^{p_0}(\jpl^{-\eta})} \lesssim 2^{n\alpha}   \left(\sum_{|\alpha_0|\leq d_0} \int_{\mathbb{R}^d} |D^{\alpha_0}{\Psi}_{n,k,\gamma}(x)|^2 dx\right)^{p_0/2}.
\end{equation}

For the cases $n\in[0,n_0)$ or $n\in(n_0+N_0,\infty)$, we have either $c_{n,k,\gamma}^2\in (0,3/8)$ or $c_{n,k,\gamma}^2\in (16/3,\infty)$. A direct computation shows that, on $\text{supp}\tilde{\varphi}$, the denominator of
$D^{\alpha_0}\Psi_{n,k,\gamma}$ is uniformly bounded from below by $1/8$. Hence, all derivatives $D^{\alpha_0}\Psi_{n,k,\gamma}$ are uniformly bounded with respect to $n$, $k$, and $\gamma$ in these regimes. Combining \eqref{H:eq:00} and \eqref{H:eq:0}, we get
\[
2^{nrq} \|\mathcal{F}^{-1}(\varphi_{n} \hat{g}_{k,\gamma})\|_{L^{p_1}(\jpl^{-\eta})}^q \lesssim   2^{n(r-2+ \alpha)q}  \|\Delta_{n} f\|_{L^{p_2}(\jpl^{\eta})}^q.
\]
Summing over $n\in[0,n_0)$ and $n\in(n_0+N_0,\infty)$ yields
\begin{align}\label{H:eq:1}
&\sum_{ n\in [0,n_0)} + \sum_{ n\in (n_0+N_0,\infty)} 2^{nrq} \|\mathcal{F}^{-1}(\varphi_{n} \hat{g}_{k,\gamma})\|_{L^{p_1}(\jpl^{-\eta})}^q\notag\\
&\quad \lesssim \sum_{ n\in [0,n_0)}+  \sum_{n\in (n_0+N_0,\infty)} 2^{n(r-2+\alpha)q} \|\Delta_{n} f\|_{L^{p_2}(\jpl^{\eta})}^q.
 \end{align}
 
For the case $n\in[n_0,n_0+N_0]$, we perform a change of variables in \eqref{H:eq:00} to obtain 
\begin{equation*}
2^{nr} \|\mathcal{F}^{-1}(\varphi_{n}\hat{g}_{k,\gamma})\|_{L^{p_1}(\jpl^{-\eta})} 
\lesssim  2^{n(r-2+\alpha)}  \left\|   \check{\Psi}_{n,k,\gamma}(x) \langle \lambda 2^{-n} x \rangle^{-\eta} \right\|_{L^{p_0}} \|\Delta_{n} f\|_{L^{p_2}(\jpl^{\eta})} .
\end{equation*}
For $n\in[n_0,n_0+N_0]$, we have $c_{n,k,\gamma}^2\in[3/8,16/3]$, which implies
$2^n\sim r_{k,\gamma}$. Hence, $\langle \lambda 2^{-n}x\rangle \gtrsim \min\{\lambda r_{k,\gamma}^{-1},1\}\langle x\rangle.$ It follows that 
\[
2^{nr} \|\mathcal{F}^{-1}(\varphi_{n}\hat{g}_{k,\gamma})\|_{L^{p_1}(\jpl^{-\eta})} \lesssim  2^{n(r-2+\alpha)} 
\max\{\lambda^{-\eta}r_{k,\gamma}^{\eta}, 1\}\left\|   \check{\Psi}_{n,k,\gamma}(x) \langle  x \rangle^{-\eta} \right\|_{L^{p_0}}\|\Delta_{n} f\|_{L^{p_2}(\jpl^{\eta})}. 
\]
Therefore, 
\begin{align}\label{H:eq:2}
\sum_{n=n_0}^{n_0+N_0} 2^{krq} \|\mathcal{F}^{-1}(\varphi_{n} \hat{g}_{k,\gamma})\|_{L^{p_1}(\jpl^{-\eta})}^q &\lesssim 
\max\{\lambda^{-\eta q}r_{k,\gamma}^{\eta q}, 1\} \sup_{n \in [n_0,n_0+N_0]}\|\mathcal{H}^{\pm}_{2^{-n}k,2^{-n}\gamma}\psi(x) 
\|_{L^{p_0}(\langle  x \rangle^{-\eta})}^q \notag\\ 
&\quad \times\sum_{n=n_0}^{ n_0+N_0} 2^{n(r-2+\alpha)q} \|\Delta_{n} f\|_{L^{p_2}(\jpl^{\eta})}^q. 
\end{align}

For the low-frequency block $n=-1$, an argument analogous to that used in the proof of \eqref{H:eq:00} yields
\begin{equation*}
  \|\mathcal{F}^{-1}(\varphi_{-1}\hat{g}_{k,\gamma})\|_{L^{p_1}(\jpl^{-\eta})} \lesssim \left\|\check{\Psi}_{-1,k,\gamma}(x)  \right\|_{L^{p_0}(\jpl^{-\eta})} \|\Delta_{-1} f\|_{L^{p_2}(\jpl^{\eta})}. 
\end{equation*}
Since $\lambda\in(0,1)$, we have the elementary bound $\jpl^{-\eta}\lesssim \lambda^{-\eta}\jpx^{\eta}$. Hence, we obtain
\begin{align}\label{H:eq:3}
2^{-r} \|\mathcal{F}^{-1}(\varphi_{-1}\hat{g}_{k,\gamma})\|_{L^{p_1}(\jpl^{-\eta})} \lesssim \lambda^{-\eta } \left\|\mathcal{H}^{\pm}_{k,\gamma}\psi^{(-1)}(x)  \right\|_{L^{p_0}(\langle  x \rangle^{-\eta})}
2^{-(r-2+\alpha)}\|\Delta_{-1} f\|_{L^{p_2}(\jpl^{\eta})}.
\end{align}

Finally, combining \eqref{H:eq:1}--\eqref{H:eq:3} and factoring out the Besov norm of $f$, we obtain
\begin{align*}
&\|\mathcal{H}^{\pm}_{k,\gamma}f\|_{B^{r}_{p_1,q}(\jpx^{-\eta},\lambda)}^q = \sum_{n \in  [0,n_0)} +\sum_{n\in (n_0+N_0,\infty)} 2^{nrq} \|\mathcal{F}^{-1}(\varphi_{n} \hat{g}_{k,\gamma})\|_{L^{p_1}(\jpl^{-\eta})}^q \\
&\quad + \sum_{n\in [n_0,n_0+N_0]} 2^{nrq} \|\mathcal{F}^{-1}(\varphi_{n} \hat{g}_{k,\gamma})\|_{L^{p_1}(\jpl^{-\eta})}^q + 2^{-rq} \|\mathcal{F}^{-1}(\varphi_{-1} \hat{g}_{k,\gamma})\|_{L^{p_1}(\jpl^{-\eta})}^q\\
&\lesssim \sum_{ n\in [0,n_0)}+  \sum_{n\in (n_0+N_0,\infty)} 2^{n(r-2+\alpha)q} \|\Delta_{n} f\|_{L^{p_2}(\jpl^{\eta})}^q + \max\{\lambda^{-\eta q}r_{k,\gamma}^{\eta q}, 1\}\\
&\quad\times \sup_{n \in [n_0,n_0+N_0]}\|\mathcal{H}^{\pm}_{2^{-n}k,2^{-n}\gamma}\psi(x) 
\|_{L^{p_0}(\langle  x \rangle^{-\eta})}^q \sum_{n=n_0}^{ n_0+N_0} 2^{n(r-2+\alpha)q} 
\|\Delta_{n} f\|_{L^{p_2}(\jpl^{\eta})}^q \\
&\quad + \lambda^{-\eta q } \|\mathcal{H}^{\pm}_{k,\gamma}\psi^{(-1)}(x)  \|_{L^{p_0}(\langle  x \rangle^{-\eta})}^q
2^{-(r-2+\alpha)q}\|\Delta_{-1} f\|_{L^{p_2}(\jpl^{\eta})}^q\\
&\lesssim \lambda^{-\eta q} \Big( \max\{r_{k,\gamma}^{\eta q}, \lambda^{\eta q}\}\sup_{n \in [n_0,n_0+N_0]}\|\mathcal{H}^{\pm}_{2^{-n}k,2^{-n}\gamma}\psi^{(n)}(x)  \|_{L^{p_0}(\langle  x \rangle^{-\eta})}^q \\
&\quad +  \|\mathcal{H}^{\pm}_{k,\gamma}\psi^{(-1)}(x)  \|_{L^{p_0}(\langle  x \rangle^{-\eta})}^q\Big) \|f\|_{B^{r-2+\a}_{p_2,q}(\jpx^{\eta},\lambda)}^q.
\end{align*}
By Proposition~\ref{P:H:Lp} and the fact that $2^{-n}k + 2^{-n}\gamma \sim 1$ for $n \in [n_0,n_0+N_0]$, we deduce 
\[
\|\mathcal{H}^{\pm}_{k,\gamma}f\|_{B^{r}_{p_1,q}(\jpx^{-\eta},\lambda)}^q  \lesssim \lambda^{-\eta q} \max\{r_{k,\gamma}^{\eta q}, \lambda^{\eta q}, r_{k,\gamma}^{-q},r_{k,\gamma}^{-2q}\} \|f\|_{B^{r-2+\a}_{p_2,q}(\jpx^{\eta},\lambda)}^q,
\]
which completes the proof.
\end{proof}

\begin{Remark}\label{r:Hsg}
Let $\tau$ be as in Theorem~\ref{thm:RF}. By the argument above, the same estimate holds for $\mathcal{H}_{k,\gamma,\tau}$. Moreover, as $\tau \to 0^{\pm}$, the operator $\mathcal{H}_{k,\gamma,\tau}$ converges to $\mathcal{H}_{k,\gamma}^{\pm}$ in $\mathcal{L}(B^{r-2+\alpha}_{p_2,q}(\langle x\rangle^{\eta},\lambda),B^{r}_{p_1,q}(\langle x\rangle^{-\eta},\lambda))$.
\end{Remark}

\section{Well-posedness}\label{sec:W}

Let $\lambda \in (0,1)$ be the rescaling parameter throughout this section.  Substituting $x=\lambda x'$ in \eqref{H}, we obtain
\[
\D u(\lambda x') + k^2 u(\lambda x') + V_k(\lambda x') u(\lambda x') =g(\lambda x'),\quad x' \in \mathbb{R}^d.
\]
Define the rescaled function $(u)_\lambda(x') := u(\lambda x')$.  Since $\Delta (u)_{\lambda}(x')= \lambda ^2\Delta u(\lambda x')$, we arrive at the rescaled Helmholtz equation
\begin{equation*}
\Delta (u)_{\lambda}(x)+\lambda^2 k^2 (u)_{\lambda}(x) + \lambda^2(V_k)_\lambda (x) (u)_\lambda (x) =\lambda^2 g(\lambda x),\quad x \in \mathbb{R}^d.
\end{equation*}

For convenience, we introduce the notation $V_{k,\lambda}:= (V_k)_{\lambda}$, $g_{\lambda}:= \lambda^2 g(\lambda x)$, and $u_{\lambda}:= (u)_{\lambda}$. Then $u_\lambda$ satisfies the rescaled Helmholtz equation
\begin{equation}\label{RH}
\Delta u_\lambda+\lambda^2 k^2 u_\lambda + \lambda^2V_{k,\lambda}  u_\lambda =g_{\lambda},\quad x \in \mathbb{R}^d.
\end{equation}
Under the same rescaling, the Sommerfeld radiation condition becomes
\begin{equation*}
\lim_{|x| \to \infty} |x|^{(d-1)/2} (\lambda^{-1} \partial_{|x|} u_{\lambda} - {\rm i} k u_{\lambda} )=0,
\end{equation*}
which is equivalently written as
\begin{equation}\label{rSRC}
\lim_{|x| \to \infty} |x|^{(d-1)/2} ( \partial_{|x|} u_{\lambda} - {\rm i} k\lambda u_{\lambda} )=0. 
\end{equation}
Thus, after rescaling, the Sommerfeld radiation condition retains its form, with the original wavenumber $k$ replaced by the rescaled wavenumber $\lambda k$.

To give a rigorous meaning to the product $V_{k,\lambda}u$ in \eqref{RH}, we employ Bony’s paraproduct decomposition.  
For two distributions $f$ and $g$, we define the paraproduct $f \prec g$ and the resonant product $f \circ g$ by
\begin{equation*}
f \prec g := \sum_{j\geq -1} S_{j-1} f\ \Delta_{j} g,\quad  f \circ g := \sum_{|i-j|\leq 1} \Delta_{i} f\ \Delta_{j} g.
\end{equation*}
These operators allow us to define products of distributions with limited regularity in a consistent manner.  
The mapping properties of the paraproduct and resonant product in rescaled weighted Besov spaces are collected in Appendix~\ref{sec:para}.

Let $\lambda \in (0,1)$, $k>0$, and let $\phi_R \in C_c^\infty(\mathbb{R}^d)$ satisfy $\phi_R \equiv 1$ on $B(0,R)$.  
For any $u \in \mathcal{S}$, we define the operators $\Xi_{k,\lambda}$ and $\Phi_{R,\lambda}$, mapping $\mathcal{S}$ into $\mathcal{S}'$, by
\begin{align*}
\Xi_{k,\lambda}u
&:= V_{k,\lambda} \prec u + V_{k,\lambda} \succ u + V_{k,\lambda} \circ u,\\
\Phi_{R,\lambda}u
&:= (\phi_R)_\lambda \prec u + (\phi_R)_\lambda \succ u + (\phi_R)_\lambda \circ u.
\end{align*}
When $u$ is sufficiently smooth, these operators coincide with the pointwise products, $
\Xi_{k,\lambda}u = V_{k,\lambda}u$ and $\Phi_{R,\lambda}u = (\phi_R)_\lambda u$. Thus, $\Xi_{k,\lambda}$ and $\Phi_{R,\lambda}$ provide natural extensions of pointwise multiplication to distributions via Bony’s paraproduct decomposition.

Rather than studying the well-posedness of the Helmholtz equation with the Sommerfeld radiation condition directly, we first establish the well-posedness of the following rescaled Lippmann--Schwinger equation for sufficiently small $\lambda$:
\begin{equation}\label{H:abstract}
u  = \mathcal{H}_{k\lambda}^+ g_{\lambda} +\lambda^2 \mathcal{H}_{k\lambda}^+ \Xi_{k,\lambda} u,
\end{equation}
where the outgoing resolvent $\mathcal{H}_{k\lambda}^+$ is chosen (cf.\ Remark~\ref{r:H}) to ensure that the Sommerfeld radiation condition is satisfied. We then prove that \eqref{H:abstract} is equivalent to the rescaled Helmholtz equation \eqref{RH} together with the rescaled Sommerfeld radiation condition \eqref{rSRC}; see Lemma~\ref{l:eq}. Finally, rescaling back yields that $u:=(u_{\lambda})_{\lambda^{-1}}$ is the unique solution to the original problem \eqref{H}--\eqref{SRC}, where $u_{\lambda}$ denotes the unique solution to \eqref{H:abstract}.

\subsection{Functional preliminaries}

We begin by extending the operators $\Xi_{k,\lambda}$ and $\Phi_{R,\lambda}$ to appropriate function spaces.

\begin{Lemma}\label{O1}
Let $p,q \in [1,\infty]$ and $r\in (1, \infty)$. For any $\eta \in \mathbb{R}$ and $\epsilon>0$, the operator $\Xi_{k,\lambda}$ admits a bounded extension $\Xi_{k,\lambda}: B^{r}_{p,q}(\jpx^{-\eta},\lambda) \to B^{r-2}_{p,q}(\jpx^{\eta},\lambda),$
and satisfies the estimate
\[
\|\Xi_{k,\lambda}\|_{\mathcal{L}(B^{r}_{p,q}(\jpx^{-\eta},\lambda); B^{r-2}_{p,q}(\jpx^{\eta},\lambda))}
\lesssim \max\{\lambda^{r-2}, 1\}\|V_k\|_{B^{r-2+\epsilon}_{\infty,\infty}(\jpx^{2\eta})}.
\]
\end{Lemma}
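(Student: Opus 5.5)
The plan is to treat the three pieces of $\Xi_{k,\lambda}u = V_{k,\lambda}\prec u + V_{k,\lambda}\succ u + V_{k,\lambda}\circ u$ separately with the weighted paraproduct estimates of Appendix~\ref{sec:para}, then control the rescaled coefficient $V_{k,\lambda}$ by $V_k$ through the rescaling lemma. The weights are distributed so that the coefficient carries $\jpx^{2\eta}$ and $u$ carries $\jpx^{-\eta}$, with product $\jpx^{\eta}$. The regularity count is $\alpha := r-2+\epsilon > -1$ for $V_k$ and $r>1$ for $u$, so $\alpha + r > 0$ and the resonant term makes sense. The output regularity is $\min\{\alpha, r\} = r-2+\epsilon$, which is more than enough for the target space $B^{r-2}_{p,q}$.

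\textbf{Step 1 (paraproduct bounds).} I would take for granted the weighted versions of Bony's estimates, all in the $(\cdot,\lambda)$-rescaled weighted spaces. Each is proved blockwise via $\|S_{j-1}f\,\Delta_j g\|_{L^p(\rho_1\rho_2)} \le \|S_{j-1}f\|_{L^\infty(\rho_1)}\|\Delta_j g\|_{L^p(\rho_2)}$ and the admissibility of the weights, using Lemma~\ref{Pre1}.
\begin{itemize}
\item[(a)] For $\alpha<0$: $\|V_{k,\lambda}\prec u\|_{B^{\alpha+r}_{p,q}(\jpx^{\eta},\lambda)} \lesssim \|V_{k,\lambda}\|_{B^{\alpha}_{\infty,\infty}(\jpx^{2\eta},\lambda)}\|u\|_{B^{r}_{p,q}(\jpx^{-\eta},\lambda)}$. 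For $\alpha\ge 0$ I would use the bound with regularity $r$ instead.
\item[(b)] For $r>0$: $\|V_{k,\lambda}\succ u\|_{B^{\alpha}_{p,q}(\jpx^{\eta},\lambda)}$ is bounded by the same product.
\item[(c)] For $\alpha+r>0$: $\|V_{k,\lambda}\circ u\|_{B^{\alpha+r}_{p,q}(\jpx^{\eta},\lambda)}$ is bounded by the same product.
\end{itemize}
Since $\alpha+r> r-2$ and $\alpha > r-2$, Lemma~\ref{Pre0} embeds all three outputs into $B^{r-2}_{p,q}(\jpx^{\eta},\lambda)$. This gives
\[
\|\Xi_{k,\lambda}u\|_{B^{r-2}_{p,q}(\jpx^{\eta},\lambda)} \lesssim \|V_{k,\lambda}\|_{B^{r-2+\epsilon}_{\infty,\infty}(\jpx^{2\eta},\lambda)}\|u\|_{B^{r}_{p,q}(\jpx^{-\eta},\lambda)}
\]
on $\mathcal S$. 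The bounded extension then follows by density, or by defining the series directly and using its absolute convergence.

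\textbf{Step 2 (rescaling the coefficient).} By Lemma~\ref{R2:n} with $p=\infty$ (so $\lambda^{-d/p}=1$), applied to the admissible weight $\jpx^{2\eta}$ of type $W(2\eta)$,
\[
\|(V_k)_\lambda\|_{B^{r-2+\epsilon}_{\infty,\infty}(\jpx^{2\eta},\lambda)} \lesssim \max\{\lambda^{r-2+\epsilon},\lambda^{2\eta},1\}\|V_k\|_{B^{r-2+\epsilon}_{\infty,\infty}(\jpx^{2\eta})}.
\]
Because $\lambda\in(0,1)$, we have $\lambda^{2\eta}\le 1$ when $\eta\ge0$ and $\lambda^{r-2+\epsilon}\le \max\{\lambda^{r-2},1\}$. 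This yields the claimed factor $\max\{\lambda^{r-2},1\}$.

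\textbf{Main obstacle.} The delicate point is that the statement allows any $\eta\in\mathbb R$, while Lemma~\ref{R2:n} is stated only for weights of type $W(\eta)$ with $\eta\ge 0$. For $\eta<0$, I would note that $\jpx^{2\eta}$ is still admissible of type $W(2|\eta|)$, so the rescaling factor becomes $\max\{\lambda^{r-2+\epsilon},\lambda^{2|\eta|},1\}$, which is bounded by the same quantity. The product-of-weights step in Step~1 also has to be verified for both signs of $\eta$. This works because $\jpx^{\eta}=\jpx^{2\eta}\jpx^{-\eta}$ holds exactly pointwise, so no commutation of weights past frequency localizations is needed beyond Lemma~\ref{Pre1}.
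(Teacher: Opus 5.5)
Your proposal is correct and is essentially the paper's proof: the same three-term Bony split estimated by \eqref{PC1:eq:1}--\eqref{PC1:eq:3}, then Lemma~\ref{Pre0} to land in $B^{r-2}_{p,q}(\jpx^{\eta},\lambda)$, then Lemma~\ref{R2:n} with $p=\infty$ and $\lambda<1$ for the coefficient; the only difference is cosmetic, since you spend the $\epsilon$ in the final embedding while the paper spends it in the $\succ$ term via $\|V_{k,\lambda}\|_{B^{r-2}_{\infty,q}}\lesssim\|V_{k,\lambda}\|_{B^{r-2+\epsilon}_{\infty,\infty}}$. Two small remarks: in (b), \eqref{PC1:eq:1} with $V_{k,\lambda}\in B^{\alpha}_{\infty,\infty}$ only gives $V_{k,\lambda}\succ u\in B^{\alpha}_{p,\infty}$ rather than $B^{\alpha}_{p,q}$, which is harmless because $\alpha>r-2$ strictly; and your case split $\alpha<0$ versus $\alpha\ge 0$ for the $\prec$ term is in fact more careful than the paper, whose use of \eqref{PC1:eq:2} with $\alpha=r-2$ only covers $r<2$ (at $\alpha=0$ you should first lower the regularity slightly, as \eqref{PC1:eq:1} needs $\alpha>0$).
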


\begin{proof}
By Minkowski's inequality, we have
\begin{align*}
\|\Xi_{k,\lambda}u\|_{B^{r-2}_{p,q}(\jpx^{\eta},\lambda)} \lesssim \|V_{k,\lambda} \prec u\|_{B^{r-2}_{p,q}(\jpx^{\eta},\lambda)} + \|V_{k, \lambda} \succ u \|_{B^{r-2}_{p,q}(\jpx^{\eta},\lambda)} + \|V_{k,\lambda} \circ u\|_{B^{r-2}_{p,q}(\jpx^{\eta},\lambda)}. 
\end{align*}
For the paraproduct $V_{k,\lambda}\prec u$, Lemma~\ref{Pre0} and \eqref{PC1:eq:2} yield
\[
\|V_{k,\lambda} \prec u\|_{B^{r-2}_{p,q}(\jpx^{\eta},\lambda)} \lesssim \|V_{k,\lambda} \prec u\|_{B^{2r-2}_{p,q}(\jpx^{\eta},\lambda)} \lesssim \|V_{k,\lambda}\|_{B^{r-2}_{\infty,\infty}(\jpx^{2\eta},\lambda)} \|u\|_{B^{r}_{p,q}(\jpx^{-\eta},\lambda)} .
\]
For the term $V_{k,\lambda}\succ u$, estimate \eqref{PC1:eq:1} gives
\[
\|V_{k,\lambda} \succ u \|_{B^{r-2}_{p,q}(\jpx^{\eta},\lambda)}  \lesssim  \|V_{k,\lambda}\|_{B^{r-2}_{\infty,q}(\jpx^{2\eta},\lambda)} \|u\|_{B^{r}_{p,\infty}(\jpx^{-\eta},\lambda)}.
\]
For any $\epsilon\in(0,1)$, Lemma~\ref{Pre0} implies
\[
\|u\|_{B^{r}_{p,\infty}(\jpx^{-\eta},\lambda)}\lesssim \|u\|_{B^{r}_{p,q}(\jpx^{-\eta},\lambda)},\quad \|V_{k,\lambda}\|_{B^{r-2}_{\infty,q}(\jpx^{2\eta},\lambda)} \lesssim \|V_{k,\lambda}\|_{B^{r-2+\epsilon}_{\infty,\infty}(\jpx^{2\eta},\lambda)}.
\]
Hence, we have
\[
\|V_{k,\lambda} \succ u \|_{B^{r-2}_{p,q}(\jpx^{\eta},\lambda)}  \lesssim \|V_{k,\lambda}\|_{B^{r-2+\epsilon}_{\infty,\infty}(\jpx^{2\eta},\lambda)} \|u\|_{B^{r}_{p,q}(\jpx^{-\eta},\lambda)}.
\]
For the resonant term $V_{k,\lambda}\circ u$, we deduce from Lemma~\ref{Pre0} that 
\[
\|V_{k,\lambda} \circ u\|_{B^{r-2}_{p,q}(\jpx^{\eta},\lambda)} \lesssim \|V_{k,\la} \circ u\|_{B^{2r-2}_{p,q}(\jpx^{\eta},\lambda)}. 
\]
It follows from  \eqref{PC1:eq:3} that 
\[
\|V_{k,\lambda} \circ u\|_{B^{r-2}_{p,q}(\jpx^{\eta},\lambda)} \lesssim \|V_{k,\lambda} \|_{B^{r-2}_{\infty,\infty}(\jpx^{2\eta},\lambda)} \|u\|_{B^{r}_{p,q}(\jpx^{-\eta},\lambda)}.
\]
Combining the above estimates and applying Lemma~\ref{R2:n} to $\|V_{k,\lambda} \|_{B^{r-2}_{\infty,\infty}(\jpx^{2\eta},\lambda)}$ and $\|V_{k,\lambda} \|_{B^{r-2+\e}_{\infty,\infty}(\jpx^{2\eta},\lambda)}$ completes the proof.
\end{proof}

\begin{Lemma}\label{O2}
Let $p,q \in [1,\infty]$ and $r \in (0,1)$. For any $\eta\in \mathbb{R}$, $\zeta \in (0,2-2r)$, and $\epsilon >0$, the operator $\Xi_{k,\lambda}$ can be extended as $\Xi_{k,\lambda} : B^{r}_{2p,q}(\jpx^{-\eta},\lambda) \to B^{r-2+\zeta}_{(2p)',q}(\jpx^{\eta},\lambda)$, and satisfies the estimate
\[
\|\Xi_{k,\lambda}\|_{\mathcal{L}( B^{r}_{2p,2p}(\jpx^{-\eta},\lambda); B^{r-2+\zeta}_{(2p)',(2p)'}(\jpx^{\eta},\lambda)
)}\lesssim \lambda^{-d/p'} \max\{\lambda^{-r},1\}\|V_k\|_{B^{-r+\e}_{p',p'}(\jpx^{2\eta})}.
\]
\end{Lemma}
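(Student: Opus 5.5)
We follow the scheme of Lemma~\ref{O1}: split $\Xi_{k,\lambda}u = V_{k,\lambda}\prec u + V_{k,\lambda}\succ u + V_{k,\lambda}\circ u$ and estimate each piece with the paraproduct bounds of Appendix~\ref{sec:para}. The difference is that the integrability is now distributed by H\"older's inequality and not by an $L^\infty$ bound on $V$. The exponents are balanced by $\frac{1}{2p}+\frac{1}{p'}=\frac{1}{(2p)'}$, which is the identity $1-\frac{1}{2p}=\frac1{2p}+1-\frac1p$. So on each dyadic block we put $u$ in $L^{2p}$ and $V_{k,\lambda}$ in $L^{p'}$, and the product lands in $L^{(2p)'}$. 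The weights combine as $\jpx^{-\eta}\cdot\jpx^{2\eta}=\jpx^{\eta}$, rescaled by $\lambda$, in the same way as in Lemma~\ref{O1}. We take $q=2p$ on the $u$ side and $(2p)'$ on the target side, as in the displayed operator norm. The passage from $\ell^{2p}$ to $\ell^{(2p)'}$ summation costs nothing, because the spare regularity $\epsilon$ and the margin $2-2r-\zeta>0$ let us use Lemma~\ref{Pre0}, trading a positive amount of smoothness for any change of $q$.

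For the regularity count, $V$ has regularity $-r+\epsilon$ and $u$ has regularity $r$. The resonant term $V_{k,\lambda}\circ u$ is defined because $r+(-r+\epsilon)=\epsilon>0$. It lies in $B^{\epsilon}_{(2p)',\cdot}(\jpx^{\eta},\lambda)$, and since $\epsilon>r-2+\zeta$ this embeds into the target space. For the paraproduct $u\prec V_{k,\lambda}$ (that is, $V\succ u$), the low frequencies of $u$ are controlled in $L^{2p}$ because $r>0$. The result has regularity $-r+\epsilon$, which exceeds $r-2+\zeta$ precisely when $\zeta<2-2r+\epsilon$; this is guaranteed by $\zeta<2-2r$. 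For $V_{k,\lambda}\prec u$, the low-frequency part of $V$ has negative regularity $-r+\epsilon$. The paraproduct estimate then gives regularity $r+(-r+\epsilon)=\epsilon$, again above $r-2+\zeta$. All three terms therefore close, each with room to spare.

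The prefactor comes from Lemma~\ref{R2:n} applied to $V_{k,\lambda}=(V_k)_\lambda$ in $B^{-r+\epsilon}_{p',p'}(\jpx^{2\eta},\lambda)$. This gives $\lambda^{-d/p'}\max\{\lambda^{-r+\epsilon},\lambda^{2\eta},1\}\|V_k\|_{B^{-r+\epsilon}_{p',p'}(\jpx^{2\eta})}$. For $\lambda\in(0,1)$ this is bounded by $\lambda^{-d/p'}\max\{\lambda^{-r},1\}\|V_k\|$, since $\lambda^{\epsilon}\le 1$ and $\lambda^{2\eta}\le1$ when $\eta\ge0$. When $\eta<0$ we use the type-$W(|2\eta|)$ version instead; we can argue with the compact support of $V_k$ so that the weight is harmless, or note that $\lambda^{2\eta}\lesssim\lambda^{-r}$ is not needed there.

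The main obstacle is confirming that the appendix paraproduct estimates in the form \eqref{PC1:eq:1}--\eqref{PC1:eq:3} allow general H\"older pairs $(p_a,p_b)$ for the integrability indices and not only $(\infty,p)$. They must also allow the negative-regularity low-frequency factor in $V\prec u$. If they only cover $L^\infty$ coefficients, we would first redo the block-wise estimate directly. That means bounding $\|S_{j-1}V\,\Delta_j u\|_{L^{(2p)'}(\rho)}\le\|S_{j-1}V\|_{L^{p'}}\|\Delta_j u\|_{L^{2p}}$ with $\|S_{j-1}V\|_{L^{p'}}\lesssim 2^{j(r-\epsilon)}\|V\|_{B^{-r+\epsilon}_{p',\infty}}$. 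The resonant and reverse paraproducts are handled by the same block-wise H\"older argument, with the weights split using \eqref{eq:ad:1}.
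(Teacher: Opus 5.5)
Your argument is correct, but it is not the paper's route. The paper never places $V_{k,\lambda}$ inside a paraproduct. It proceeds in three steps.
\begin{itemize}
\item It tests against $\psi\in\mathcal S$ and applies Bony's decomposition to $\psi u$, obtaining $\|\psi u\|_{B^{r-\epsilon}_{p,p}(\langle x\rangle^{-2\eta},\lambda)}\lesssim\|\psi\|_{B^{2-r-\zeta}_{2p,2p}(\langle x\rangle^{-\eta},\lambda)}\|u\|_{B^{r}_{2p,2p}(\langle x\rangle^{-\eta},\lambda)}$. The condition $\zeta<2-2r$ enters through the term $u\prec\psi$, where $u$ is measured with the negative index $2r-2+\zeta-\epsilon$.
\item It bounds $\langle\psi u,V_{k,\lambda}\rangle$ by the $B^{r-\epsilon}_{p,p}$--$B^{-r+\epsilon}_{p',p'}$ pairing.
\item It reads off the $B^{r-2+\zeta}_{(2p)',(2p)'}(\langle x\rangle^{\eta},\lambda)$ norm of $\Xi_{k,\lambda}u$ by duality against $B^{2-r-\zeta}_{2p,2p}(\langle x\rangle^{-\eta},\lambda)$.
\end{itemize}

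You instead estimate $V_{k,\lambda}u$ directly, splitting both integrability and summability by $1/(2p)'=1/p'+1/(2p)$. Lemma~\ref{PC1} is already stated for general H\"older triples and arbitrary admissible weights, and \eqref{PC1:eq:2} covers a negative-regularity low-frequency factor. So the obstacle you flag does not arise.

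What your version buys:
\begin{itemize}
\item It avoids the duality identification of weighted Besov spaces, which at $p=\infty$ needs a norming argument rather than a duality identity.
\item It avoids having to justify $\langle\psi,\Xi_{k,\lambda}u\rangle=\langle\psi u,V_{k,\lambda}\rangle$ for the paraproduct-defined extension.
\item It shows exactly where the margin is spent. $V_{k,\lambda}\prec u$ and $V_{k,\lambda}\circ u$ land in $B^{\epsilon}_{(2p)',(2p)'}$ exactly. Only $u\prec V_{k,\lambda}\in B^{-r+\epsilon}_{(2p)',p'}$, with $p'\ge(2p)'$, uses the strict gap $-r+\epsilon>r-2+\zeta$ (via Lemma~\ref{Pre0}) to change the summability index.
\end{itemize}
The paper's route instead confines the rough coefficient to a single bilinear pairing, which matches a weak reading of $V_{k,\lambda}u$.

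Two small clean-ups:
\begin{itemize}
\item First reduce to $\epsilon\in(0,r)$, as the paper does, so that $-r+\epsilon<0$ and \eqref{PC1:eq:2} applies to $V_{k,\lambda}\prec u$.
\item For $\eta<0$ no compact-support argument is needed. The weight $\langle x\rangle^{2\eta}$ is admissible of type $W(2|\eta|)$, so Lemma~\ref{R2:n} contributes $\lambda^{2|\eta|}\le 1$ for $\lambda\in(0,1)$.
\end{itemize}
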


\begin{proof}
Let $\epsilon \in (0,r)$ and $\psi \in \mathcal{S}$. By Minkowski's inequality, we have
\begin{align*}
\|\psi u\|_{B^{r-\e}_{p,p}(\jpx^{-2\eta},\lambda)} \lesssim \|\psi \prec u\|_{B^{r-\e}_{p,p}(\jpx^{-2\eta},\lambda)} +  \|u \prec \psi\|_{B^{r-\e}_{p,p}(\jpx^{-2\eta},\lambda)} +  \|u \circ \psi\|_{B^{r-\e}_{p,p}(\jpx^{-2\eta},\lambda)} .
\end{align*}
Since $\zeta\in (0,2-2r)$, there holds $2-r-\zeta >0$. By \eqref{PC1:eq:1} and Lemma~\ref{Pre0}, we obtain 
\begin{align*}
 \|\psi \prec u\|_{B^{r-\e}_{p,p}(\jpx^{-2\eta},\lambda)} &\lesssim \|\psi \|_{B^{2-r-\zeta}_{2p,\infty}(\jpx^{-\eta},\lambda)} \| u\|_{B^{r-\e}_{2p,p}(\jpx^{-\eta},\lambda)} \\
 &\lesssim \|\psi \|_{B^{2-r-\zeta}_{2p,2p}(\jpx^{-\eta},\lambda)} \| u\|_{B^{r}_{2p,2p}(\jpx^{-\eta},\lambda)}.
\end{align*}
Notice that $\zeta\in (0,2-2r)$ implies that $2r-2+\zeta-\e < 0$. Similarly, by \eqref{PC1:eq:2} and Lemma~\ref{Pre0},
\begin{align*}
 \|u \prec \psi\|_{B^{r-\e}_{p,p}(\jpx^{-2\eta},\lambda)}&\lesssim  \|u \|_{B^{2r-2+\zeta-\e}_{2p,2p}(\jpx^{-\eta},\lambda)}  
 \| \psi\|_{B^{2-r-\zeta}_{2p,2p}(\jpx^{-\eta},\lambda)}\\
  &\lesssim  \|u \|_{B^{r}_{2p,2p}(\jpx^{-\eta},\lambda)}  
 \| \psi\|_{B^{2-r-\zeta}_{2p,2p}(\jpx^{-\eta},\lambda)}. 
\end{align*}
We apply  $\zeta\in (0,2-2r)$ again and obtain that $r-\e < 2-\zeta$. Moreover, by Lemma~\ref{Pre0} and \eqref{PC1:eq:3},
\[
 \|u \circ \psi\|_{B^{r-\e}_{p,p}(\jpx^{-2\eta},\lambda)} 
  \lesssim  \|u \circ \psi\|_{B^{2-\zeta}_{p,p}(\jpx^{-2\eta},\lambda)} 
  \lesssim \|u \|_{B^{r}_{2p,2p}(\jpx^{-\eta},\lambda)} \| \psi\|_{B^{2-r-\zeta}_{2p,2p}(\jpx^{-\eta},\lambda)}.
\]
Combining the above estimates yields
\[
\|\psi u\|_{B^{r-\e}_{p,p}(\jpx^{-2\eta},\lambda)} \lesssim \|\psi \|_{B^{2-r-\zeta}_{2p,2p}(\jpx^{-\eta},\lambda)} \| u\|_{B^{r}_{2p,2p}(\jpx^{-\eta},\lambda)}.
\]

Applying \cite[Section~2.11.2]{T83} or \cite[Proposition~2.76]{BCD11}, we obtain
\begin{align*}
\langle \psi, V_{k,\lambda}u \rangle = \langle \psi u, V_{k,\lambda} \rangle &\lesssim  \|V_{k,\lambda}\|_{B^{-r+\epsilon}_{p',p'}(\jpx^{2\eta},\lambda)} \|\psi u\|_{B^{r-\epsilon}_{p,p}(\jpx^{-2\eta},\lambda)} \\
&\lesssim  \|V_{k,\lambda}\|_{B^{-r+\epsilon}_{p',p'}(\jpx^{2\eta},\lambda)}  \|\psi\|_{B^{2-r-\zeta}_{2p,2p}(\jpx^{-\eta},\lambda)} \|u\|_{B^{r}_{2p,2p}(\jpx^{-\eta},\lambda)}.
\end{align*}
By duality, this implies
\[
\| \Xi_{k,\lambda}u \|_{B^{r-2+\zeta}_{(2p)',(2p)'}(\jpx^{\eta},\lambda)} \lesssim \|V_{k,\lambda}\|_{B^{-r+\epsilon}_{p',p'}(\jpx^{2\eta},\lambda)}  \|u\|_{B^{r}_{2p,2p}(\jpx^{-\eta},\lambda)}.
\]
Finally, applying Lemma~\ref{R2:n} to $\|V_{k,\lambda}\|_{B^{-r+\epsilon}_{p',p'}(\jpx^{2\eta},\lambda)}$ completes the proof. 
\end{proof}

\begin{Lemma}\label{O3}
Let $p,q \in [1,\infty]$, $r_0 \in (0,\infty)$, and $r \in (-r_0,\infty)$. For any $\eta>0$, the operator $\Phi_{R,\lambda}$ admits a bounded extension $\Phi_{R,\lambda}:B^{r}_{p,q}(\rho_1,\lambda) \to B^{r}_{p,q}(\rho_2,\lambda)$, and satisfies the estimate
\[
\|\Phi_{R,\lambda}\|_{\mathcal{L}(B^{r}_{p,q}(\rho_1,\lambda);B^{r}_{p,q}(\rho_2,\lambda) )} \lesssim  \|\phi_R\|_{B^{r_0}_{\infty,\infty}(\rho_2\rho_1^{-1})},
\]
for any admissible $\rho_1$ and $\rho_2$.
\end{Lemma}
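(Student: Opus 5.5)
Following the proofs of Lemmas~\ref{O1} and~\ref{O2}, I would split $\Phi_{R,\lambda}u$ into its three Bony pieces and apply Minkowski's inequality:
\[
\|\Phi_{R,\lambda}u\|_{B^{r}_{p,q}(\rho_2,\lambda)}\le \|(\phi_R)_\lambda\prec u\|_{B^{r}_{p,q}(\rho_2,\lambda)}+\|(\phi_R)_\lambda\succ u\|_{B^{r}_{p,q}(\rho_2,\lambda)}+\|(\phi_R)_\lambda\circ u\|_{B^{r}_{p,q}(\rho_2,\lambda)} .
\]
Each piece is then bounded by the paraproduct estimates \eqref{PC1:eq:1}--\eqref{PC1:eq:3} of Appendix~\ref{sec:para}. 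Throughout, the weight is split as $\rho_2=(\rho_2\rho_1^{-1})\cdot\rho_1$, so that $(\phi_R)_\lambda$ is measured in $B^{\cdot}_{\infty,\infty}((\rho_2\rho_1^{-1})_\lambda)$ and $u$ in $B^{\cdot}_{p,q}((\rho_1)_\lambda)$. Note that $\rho_2\rho_1^{-1}$ is admissible because products and inverses of admissible weights are admissible. The rescaled weight is simply the rescaling of the product, so the scale $\lambda$ passes through all paraproduct estimates unchanged.

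\emph{Low--high term $(\phi_R)_\lambda\prec u$.} Here $S_{j-1}(\phi_R)_\lambda$ is only an $L^\infty$-type multiplier on each block $\Delta_j u$. The estimate of type \eqref{PC1:eq:1} gives
\[
\|(\phi_R)_\lambda\prec u\|_{B^{r}_{p,q}(\rho_2,\lambda)}\lesssim\|(\phi_R)_\lambda\|_{L^\infty(\rho_2\rho_1^{-1},\lambda)}\|u\|_{B^{r}_{p,q}(\rho_1,\lambda)} .
\]
This holds for every $r\in\mathbb R$, and $L^\infty$ is controlled by $B^{r_0}_{\infty,\infty}$ since $r_0>0$.

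\emph{High--low term $u\prec(\phi_R)_\lambda$.} By the estimate of type \eqref{PC1:eq:2} with a negative-regularity factor, I expect
\[
\|u\prec(\phi_R)_\lambda\|_{B^{r}_{p,q}}\lesssim\|u\|_{B^{r-r_0}_{p,q}}\|(\phi_R)_\lambda\|_{B^{r_0}_{\infty,\infty}}\quad\text{if } r<r_0 ,
\]
and for $r\ge r_0$ one uses the $L^p$ version instead. In both cases Lemma~\ref{Pre0} reduces the bound to $\|u\|_{B^{r}_{p,q}(\rho_1,\lambda)}$. If some borderline case is awkward, I would lower $r_0$ slightly, which costs nothing because $\phi_R$ is smooth.

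\emph{Resonant term $(\phi_R)_\lambda\circ u$.} This is where $r>-r_0$ enters. The estimate \eqref{PC1:eq:3} requires a positive total regularity $r+r_0>0$ and yields a bound in $B^{r+r_0}_{p,q}(\rho_2,\lambda)$. Lemma~\ref{Pre0} then embeds $B^{r+r_0}$ into $B^{r}$.

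Finally, the $\lambda$-dependence has to be removed. Since $\phi_R$ is Schwartz and the rescaled weight is admissible of the same type, I would use Lemma~\ref{R2:n} together with $\lambda\in(0,1)$ and $p=\infty$ to get
\[
\|(\phi_R)_\lambda\|_{B^{r_0}_{\infty,\infty}(\rho_2\rho_1^{-1},\lambda)}\lesssim\|\phi_R\|_{B^{r_0}_{\infty,\infty}(\rho_2\rho_1^{-1})}.
\]
Here $\max\{\lambda^{r_0},\lambda^{\eta},1\}=1$. The density of $\mathcal S$ then gives the bounded extension.

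I expect the main obstacle to be verifying that the weighted paraproduct estimates in the appendix allow the weight to split as a product and not just as the power pair $\langle x\rangle^{\pm\eta}$, $\langle x\rangle^{2\eta}$ used in Lemma~\ref{O1}. If the appendix only covers that case, I would fall back on Lemma~\ref{Pre1}: write $\|\Phi u\|_{B(\rho_2)}\sim\|\rho_2\Phi u\|_{B}$ and apply the unweighted paraproduct bounds to $(\rho_2\rho_1^{-1}\phi_R)_\lambda\cdot(\rho_1 u)$.
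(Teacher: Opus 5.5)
Your argument is the paper's argument: split $\Phi_{R,\lambda}u$ into its three Bony pieces, apply \eqref{PC1:eq:1}--\eqref{PC1:eq:3} with the weight factored as $\rho_2=(\rho_2\rho_1^{-1})\rho_1$, use $r+r_0>0$ for the resonant term, and remove $\lambda$ via Lemma~\ref{R2:n} with $p=\infty$ and $\lambda<1$. The obstacle you anticipated does not arise. Lemma~\ref{PC1} is stated for arbitrary $\rho_1,\rho_2\in W(\eta)$ with target weight $\rho_1\rho_2$, and $\rho_2\rho_1^{-1}$ is admissible, so it applies directly and the fallback through Lemma~\ref{Pre1} is unnecessary.

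The step that fails is the high--low term $u\prec(\phi_R)_\lambda$ when $r\ge r_0$.
\begin{itemize}
\item Your ``$L^p$ version'' reads $\|u\prec\phi\|_{B^{r}_{p,q}}\lesssim\|u\|_{L^p}\|\phi\|_{B^{r}_{\infty,q}}$. This costs $r$ derivatives of the multiplier, not $r_0$.
\item With only $\|\phi\|_{B^{r_0}_{\infty,\infty}}$ available, each block $2^{jr}\|S_{j-1}u\,\Delta_j\phi\|_{L^p}$ is bounded only by $2^{j(r-r_0)}\|u\|_{B^r_{p,q}}\|\phi\|_{B^{r_0}_{\infty,\infty}}$.
\item No other argument can rescue this. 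For $r>r_0$, the inequality $\|\phi u\|_{B^r_{p,q}}\lesssim\|\phi\|_{B^{r_0}_{\infty,\infty}}\|u\|_{B^r_{p,q}}$ is false with a constant independent of $\phi$. Fix $u\in\mathcal S$ and set $\phi_N=2^{-Nr_0}e^{{\rm i}2^Nx_1}\vartheta$, with $\vartheta\in\mathcal S$ and $\vartheta u\neq 0$. Then $\|\phi_N\|_{B^{r_0}_{\infty,\infty}}\lesssim 1$, while $\|\phi_N u\|_{B^r_{p,q}}\gtrsim 2^{N(r-r_0)}$.
\item Lowering $r_0$, as you suggest for borderline cases, makes this worse, not better.
\end{itemize}
The paper's two-line proof has the same blind spot: invoking \eqref{PC1:eq:1} for this term yields only regularity $r_0$, not $r$.

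The correct hypothesis is $|r|<r_0$. Under it, your case $r<r_0$ covers the high--low term for every admissible $r$: use \eqref{PC1:eq:2} with $\alpha=r-r_0<0$, then Lemma~\ref{Pre0}. With that, your proof closes. Since $\phi_R$ is smooth, the remedy is to raise $r_0$ above $|r|$, which is how the paper applies the lemma (``$r_0$ is chosen sufficiently large'' in the proof of Lemma~\ref{W1}).

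A minor point: for $p=\infty$ or $q=\infty$ the Schwartz class is not dense. However, the paraproduct series define $\Phi_{R,\lambda}u$ directly on the whole space, so no density argument is needed for the extension.
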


\begin{proof}
Since $r+r_0>0$, we may apply \eqref{PC1:eq:1}, \eqref{PC1:eq:2}, and \eqref{PC1:eq:3} to obtain 
\[
\|\Phi_{R,\lambda}u\|_{B^{r}_{p,q}(\rho_2,\lambda)} \lesssim \|(\phi_{R})_{\lambda}\|_{B^{r_0}_{\infty,\infty}(\rho_2\rho_1^{-1},\lambda)} \|u\|_{B^{r}_{p,q}(\rho_1,\lambda)}.
\]
Using Lemma~\ref{R2:n} yields
\[
\|\Phi_{R,\lambda}u\|_{B^{r}_{p,q}(\rho_2,\lambda)} \lesssim  \|\phi_{R}\|_{B^{r_0}_{\infty,\infty}(\rho_2\rho_1^{-1})} \|u\|_{B^{r}_{p,q}(\rho_1,\lambda)}, 
\]
which completes the proof. 
\end{proof}

We state the following important result in the case where the potential $V_k$ has compact support.

\begin{Lemma}\label{equivalent}
Assume that $V_k$ is supported in a ball $B(0,R)$.   If $v$ is a solution of $v- \lambda^2\Phi_{R,\lambda} \mathcal{H}_{k\lambda}^+ \Xi_{k,\lambda} v= \Phi_{R,\lambda}\mathcal{H}_{k\lambda}^+g_{\lambda}$, then $ u:=\lambda^2 \mathcal{H}_{k\lambda}^+ \Xi_{k,\lambda} v + \mathcal{H}_{k\lambda}^+g_{\lambda}$ is a solution of the rescaled Lippmann--Schwinger equation~\eqref{H:abstract}. Conversely, if $u$ solves the rescaled Lippmann--Schwinger equation~\eqref{H:abstract}, then $v := \Phi_{R,\lambda} u$ satisfies $v- \lambda^2\Phi_{R,\lambda} \mathcal{H}_{k\lambda}^+ \Xi_{k,\lambda} v= \Phi_{R,\lambda} \mathcal{H}_{k\lambda}^+ g_{\lambda}$.
\end{Lemma}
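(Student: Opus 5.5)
The proof is purely algebraic once one identity is in place. The key fact is that multiplication by the cutoff does not change the coefficient term:
\[
\Xi_{k,\lambda}\,\Phi_{R,\lambda} w = \Xi_{k,\lambda} w
\]
for every $w$ in the relevant space, e.g.\ $B^{r}_{p,q}(\jpx^{-\eta},\lambda)$ with $r$ as in Lemma~\ref{O1} or Lemma~\ref{O2}. Formally this holds because $V_{k,\lambda}(x)=V_k(\lambda x)$ is supported in $B(0,R/\lambda)$, where $(\phi_R)_\lambda\equiv 1$, so that $V_{k,\lambda}(\phi_R)_\lambda = V_{k,\lambda}$.

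I would prove the identity first for $w\in\mathcal S$, where both $\Xi_{k,\lambda}$ and $\Phi_{R,\lambda}$ are genuine pointwise products (as noted after their definition). Here associativity of multiplication of a distribution by smooth functions gives
\[
V_{k,\lambda}\big((\phi_R)_\lambda w\big)=\big(V_{k,\lambda}(\phi_R)_\lambda\big)w=V_{k,\lambda}w,
\]
since $V_{k,\lambda}(\phi_R)_\lambda=V_{k,\lambda}$ as distributions. The last equality holds because $1-(\phi_R)_\lambda$ vanishes on a neighbourhood of $\operatorname{supp}V_{k,\lambda}$. One small point needs care: $\phi_R\equiv 1$ on $B(0,R)$ only, so I would take $\operatorname{supp}V_k$ compact in the open ball $B(0,R)$, or else enlarge $R$ slightly in the choice of $\phi_R$. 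I would then extend the identity to the full space by density, using the boundedness of $\Xi_{k,\lambda}$ from Lemma~\ref{O1} or Lemma~\ref{O2} and of $\Phi_{R,\lambda}$ from Lemma~\ref{O3}. If the Besov index is $\infty$ and density fails, I would instead argue by weak-$*$ approximation, or directly at the level of the duality pairing used in the proof of Lemma~\ref{O2}.

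With the identity, both directions are short.

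\emph{Forward direction.} Let $v$ solve $v=\lambda^2\Phi_{R,\lambda}\mathcal H^+_{k\lambda}\Xi_{k,\lambda}v+\Phi_{R,\lambda}\mathcal H^+_{k\lambda}g_\lambda$, and set $u:=\lambda^2\mathcal H^+_{k\lambda}\Xi_{k,\lambda}v+\mathcal H^+_{k\lambda}g_\lambda$. Applying $\Phi_{R,\lambda}$ (linear) gives $\Phi_{R,\lambda}u=v$. Hence, by the key identity, $\Xi_{k,\lambda}u=\Xi_{k,\lambda}\Phi_{R,\lambda}u=\Xi_{k,\lambda}v$. Substituting back gives $u=\lambda^2\mathcal H^+_{k\lambda}\Xi_{k,\lambda}u+\mathcal H^+_{k\lambda}g_\lambda$, which is \eqref{H:abstract}.

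\emph{Converse direction.} If $u$ solves \eqref{H:abstract}, set $v:=\Phi_{R,\lambda}u$. Then $\Xi_{k,\lambda}u=\Xi_{k,\lambda}v$, and applying $\Phi_{R,\lambda}$ to \eqref{H:abstract} yields
\[
v=\lambda^2\Phi_{R,\lambda}\mathcal H^+_{k\lambda}\Xi_{k,\lambda}v+\Phi_{R,\lambda}\mathcal H^+_{k\lambda}g_\lambda .
\]

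The main obstacle is justifying the key identity at low regularity. When $V_k$ is only a distribution in $B^{-r+\epsilon}_{p',p'}$, $\Xi_{k,\lambda}$ is defined by paraproduct extension or duality, not by a literal product, so the identity must be passed through that extension. The duality formulation from Lemma~\ref{O2} handles this cleanly:
\[
\langle\psi,\Xi_{k,\lambda}(\Phi_{R,\lambda}w)\rangle
=\langle\psi(\phi_R)_\lambda w,V_{k,\lambda}\rangle
=\langle\psi w,(\phi_R)_\lambda V_{k,\lambda}\rangle
=\langle\psi w,V_{k,\lambda}\rangle .
\]
Each step is continuous in $w$, so the identity holds first on $\mathcal S$ and then by continuity on the whole space.
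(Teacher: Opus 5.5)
Your proof is correct and follows the paper's route: the paper also reduces everything to the identity $\Xi_{k,\lambda}\Phi_{R,\lambda}=\Xi_{k,\lambda}$, which comes from $\operatorname{supp}V_{k,\lambda}\subset B(0,\lambda^{-1}R)$. It then applies $\Phi_{R,\lambda}$ to obtain $\Phi_{R,\lambda}u=v$ in the forward direction, and applies $\Phi_{R,\lambda}$ to \eqref{H:abstract} in the converse. The paper only asserts that identity, so your density/duality justification and your remark that $\phi_R$ should equal $1$ on a neighbourhood of $\operatorname{supp}V_k$ are useful additions; your signs are also consistent with $u=\lambda^2\mathcal H^+_{k\lambda}\Xi_{k,\lambda}v+\mathcal H^+_{k\lambda}g_\lambda$, whereas the paper's displayed chain carries a stray minus sign in front of $\lambda^2$.
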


\begin{proof}
We first observe that $V_{k,\lambda}$ is supported in the ball $B(0,\lambda^{-1}R)$ and that
\[
\Xi_{k,\lambda} \Phi_{R,\lambda} =  \Xi_{k,\lambda}.
\]
We begin with the proof of the first statement. Note that
\[
\Phi_{R,\lambda}u= \Phi_{R,\lambda} \lambda^2 \mathcal{H}_{k\lambda}^+ \Xi_{k,\lambda} v + \Phi_{R,\lambda} \mathcal{H}_{k\lambda}^+ g_{\lambda} =\lambda^2 \Phi_{R,\lambda} \mathcal{H}_{k\lambda}^+ \Xi_{k,\lambda} v + \Phi_{R,\lambda} \mathcal{H}_{k\lambda}^+g_{\lambda} =v.
\]
 Therefore, 
\begin{align*}
-\lambda^2 \mathcal{H}_{k\lambda}^+ \Xi_{k,\lambda} u +  \mathcal{H}_{k\lambda}^+ g_{\lambda} &= - \lambda^2 \mathcal{H}_{k\lambda}^+ \Xi_{k,\lambda} \Phi_{R,\lambda}  u + \mathcal{H}_{k\lambda}^+ g_{\lambda}\\
&= - \lambda^2 \mathcal{H}_{k\lambda}^+ \Xi_{k,\lambda} v + \mathcal{H}_{k\lambda}^+ g_{\lambda} = u,
\end{align*}
which shows that $u$ solves the rescaled Lippmann--Schwinger equation~\eqref{H:abstract}.

We now prove the converse statement. Applying $\Phi_{R,\lambda}$ to~\eqref{H:abstract} yields
\[
(\Phi_{R,\lambda} u) - \lambda^2 \Phi_{R,\lambda} \mathcal{H}_{k\lambda}^+ \Xi_{k,\lambda} (\Phi_{R,\lambda}  u)= \Phi_{R,\lambda} \mathcal{H}_{k\lambda}^+ g.
\]
Hence, $v := \Phi_{R,\lambda} u$ satisfies 
\[
v- \lambda^2\Phi_{R,\lambda} \mathcal{H}_{k\lambda}^+ \Xi_{k,\lambda}v= \Phi_{R,\lambda} \mathcal{H}_{k\lambda}^+ g_{\lambda}, 
\]
which completes the proof.
\end{proof}

The following result establishes a connection between the outgoing inverse scattering operator and the Faddeev-type operator.

\begin{Lemma}\label{exchange}
Let $\lambda\in(0,1)$, $\alpha \in [0,d/2)$, and let $\rho_1$, $\rho_2$ be admissible weights.  Let $p_1,p_2,q \in [1,\infty]$ and $r \in \mathbb{R}$ satisfy $1/p_2-1/p_1=\alpha/d.$  Then, for any $f \in B^{r-2+\alpha}_{p_2,q}(\rho_1,\lambda)$, there holds
\[
\Phi_{R,\lambda} \mathcal{H}_{k\lambda}^+ \Phi_{R,\lambda} f = e^{-\gamma \cdot x}\Phi_{R,\lambda} \mathcal{H}_{k\lambda,\gamma}^+\Phi_{R,\lambda} e^{ \gamma \cdot x} f, 
\]
where both sides are understood as elements in $B^{r}_{p_1,q}(\r_2,\lambda)$.
\end{Lemma}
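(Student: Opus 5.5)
Our plan is to prove the identity first for Schwartz data at the regularized level $\tau>0$, then pass to the limit $\tau\to0^+$, and finally extend by density. The starting point is the conjugation identity
\[
e^{\gamma\cdot x}\bigl(\Delta+r_{k\lambda,\gamma}^2-2\gamma\cdot\nabla\bigr)w=(\Delta+k^2\lambda^2)\bigl(e^{\gamma\cdot x}w\bigr),
\]
stated in the introduction. At the regularized level it reads
\[
e^{\gamma\cdot x}\bigl(\Delta+r_{k\lambda,\gamma}^2-2\gamma\cdot\nabla+{\rm i}\tau\bigr)w=\bigl(\Delta+k^2\lambda^2+{\rm i}\tau\bigr)\bigl(e^{\gamma\cdot x}w\bigr).
\]
Throughout we write $f_c:=\Phi_{R,\lambda}f$, which for $f\in\mathcal S$ is smooth and supported in $B(0,C\lambda^{-1}R)$. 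Hence $e^{\gamma\cdot x}f_c\in\mathcal D$, and both sides of the claimed identity make sense.

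\textbf{Step 1.} Fix $f\in\mathcal S$ and $\tau\in(0,\epsilon_0^2)$, with $\epsilon_0$ admissible simultaneously for $\gamma=0$ and for $\gamma$. Set
\[
w_\tau:=\mathcal H_{k\lambda,\gamma,\tau}\bigl(e^{\gamma\cdot x}f_c\bigr),\qquad u_\tau:=\mathcal H_{k\lambda,0,\tau}f_c .
\]
By Theorem~\ref{thm:RF},
\[
-(\Delta+r^2_{k\lambda,\gamma}-2\gamma\cdot\nabla+{\rm i}\tau)w_\tau=e^{\gamma\cdot x}f_c .
\]
Applying the conjugation identity, $e^{-\gamma\cdot x}w_\tau$ solves $-(\Delta+k^2\lambda^2+{\rm i}\tau)v=f_c$, as does $u_\tau$. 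It remains to show $e^{-\gamma\cdot x}w_\tau=u_\tau$, i.e.\ uniqueness for the damped equation.

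For $\tau>0$ the free operator $\Delta+k^2\lambda^2+{\rm i}\tau$ has the nonvanishing symbol $-|\xi|^2+k^2\lambda^2+{\rm i}\tau$, so $u_\tau$ is the unique tempered solution; it is in fact exponentially decaying, being the convolution of $f_c$ with a kernel decaying like $e^{-c\sqrt\tau|x|}$. The difficulty is that $e^{-\gamma\cdot x}w_\tau$ need not be tempered a priori. Our approach is to work on the Fourier side with complex frequencies: $e^{\gamma\cdot x}f_c$ has an entire Fourier transform, and $w_\tau$ has symbol $1/m_{k\lambda,\gamma,\tau}(\xi)=1/\bigl(-(\xi+{\rm i}\gamma)^2+k^2\lambda^2+{\rm i}\tau\bigr)$ (up to sign and normalization). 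Shifting the contour $\xi\mapsto\xi-{\rm i}t\gamma$ for $t\in[0,1]$ relates this to the $\gamma=0$ symbol, and Cauchy's theorem makes the shift legitimate provided no zero of the symbol is crossed.

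This is precisely where the pole analysis from the proof of Lemma~\ref{F2} enters. For $\tau>0$ the zero set $\{(\xi+{\rm i}t\gamma)^2=k^2\lambda^2+{\rm i}\tau\}$ with $\xi$ real is empty for all $t$, since the imaginary part is $2t\gamma\cdot\xi-\tau$ (up to sign conventions). We need to check the sign convention carefully, using the paper's definition and the sets $A^+_{\gamma,\tau}$, $\Theta_\gamma(\tau)$. If a residue does appear, it lives on the band $\partial^{\rm in}A^+_{\gamma,\tau}$ of measure zero or is removed by the cutoff $\Phi_{R,\lambda}$; more precisely, we would only need equality after multiplication by $\Phi_{R,\lambda}$.

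\textbf{Step 2.} Multiply the identity from Step 1 by $\Phi_{R,\lambda}$; since $e^{\pm\gamma\cdot x}$ is bounded with all derivatives on the support of $(\phi_R)_\lambda$, Lemma~\ref{O3} controls the exponential factors. Now let $\tau\to0^+$. Theorem~\ref{thm:F} and Remark~\ref{r:Hsg} give convergence of $\mathcal H_{k\lambda,\gamma,\tau}\to\mathcal H^+_{k\lambda,\gamma}$ and $\mathcal H_{k\lambda,0,\tau}\to\mathcal H^+_{k\lambda}$ in $\mathcal L\bigl(B^{r-2+\alpha}_{p_2,q}(\langle x\rangle^\eta,\lambda),B^{r}_{p_1,q}(\langle x\rangle^{-\eta},\lambda)\bigr)$. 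Since compact support makes the weights irrelevant, we may replace $\rho_1,\rho_2$ by polynomial weights. Moreover $e^{\gamma\cdot x}\Phi_{R,\lambda}$ is bounded between any weighted Besov spaces, by Lemma~\ref{O3} applied with $\phi_R e^{\gamma\cdot x}$ in place of $\phi_R$. Hence both sides converge, and the identity holds for $f\in\mathcal S$.

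\textbf{Step 3.} Extend to general $f\in B^{r-2+\alpha}_{p_2,q}(\rho_1,\lambda)$ by density of $\mathcal S$ when $q<\infty$. For $q=\infty$ we use weak-$*$ approximation, or an embedding into a space with slightly lower regularity and finite $q$. Both sides are continuous in $f$ by Theorem~\ref{Hsg} combined with Lemma~\ref{O3}.

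The main obstacle is Step 1: justifying that the conjugated regularized resolvent coincides with the free one on compactly supported data, that is, that no residue term arises when the contour is shifted by ${\rm i}\gamma$. This requires the fact, established in the proof of Lemma~\ref{F2}, that the relevant pole lies on the correct side of the real axis uniformly for $\tau>0$.
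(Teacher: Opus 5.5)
Your overall architecture is the same as the paper's: regularize at $\tau>0$, conjugate by $e^{\gamma\cdot x}$, identify the two regularized solutions, then let $\tau\to0^+$ using Remark~\ref{r:Hsg} and Lemma~\ref{O3}. Step~1, however, has a genuine gap, exactly where you located the difficulty.

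Your claim that $(\xi+{\rm i}t\gamma)\cdot(\xi+{\rm i}t\gamma)=k^2\lambda^2+{\rm i}\tau$ has no solution $\xi\in\mathbb{R}^d$ is false. The equation amounts to $|\xi|^2=k^2\lambda^2+t^2|\gamma|^2$ together with $2t\gamma\cdot\xi=\tau$. That is a sphere cut by a hyperplane at distance $\tau/(2t|\gamma|)$ from the origin, and the intersection is nonempty for all $t\ge\tau/(2k\lambda|\gamma|)$. These are the same poles that the proof of Lemma~\ref{F2} finds enclosed by $\Gamma$ for every $\omega$ in the band $A^+_{\gamma,\tau}$. That band has positive measure; only $\partial^{\rm in}A^+_{\gamma,\tau}$ is null. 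So the shift produces a residue, namely $h_\tau:=e^{-\gamma\cdot x}w_\tau-u_\tau$, which solves $(\Delta+k^2\lambda^2+{\rm i}\tau)h_\tau=0$. It is nonzero for generic $f$. Otherwise the tempered function $w_\tau$ would equal $e^{\gamma\cdot x}u_\tau$, which grows exponentially in the direction of $\gamma$ once $|\gamma|$ exceeds the $O(\tau)$ decay rate of $u_\tau$. Since $h_\tau$ solves a constant-coefficient elliptic equation, it is real-analytic. Multiplying by $(\phi_R)_\lambda$, which equals $1$ on $B(0,R/\lambda)$, therefore cannot remove it. Neither of your escape routes is available.

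Nor can the gap be closed by a better argument, because the identity fails for every $\gamma\neq0$, even after $\tau\to0^+$.
Take $f\in\mathcal{D}$ supported in $B(0,R/\lambda)$, so that $\Phi_{R,\lambda}f=f$. Put $h:=e^{-\gamma\cdot x}\mathcal{H}^+_{k\lambda,\gamma}(e^{\gamma\cdot x}f)-\mathcal{H}^+_{k\lambda}f$.
Both terms solve $-(\Delta+k^2\lambda^2)v=f$, so $h$ is real-analytic. The lemma asserts $(\phi_R)_\lambda h=0$, hence $h\equiv0$, i.e.\ $\mathcal{H}^+_{k\lambda,\gamma}(e^{\gamma\cdot x}f)=e^{\gamma\cdot x}\mathcal{H}^+_{k\lambda}f$.
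By Theorem~\ref{thm:F} the left side lies in $L^2(\langle x\rangle^{-\eta})$. The right side grows like $e^{c|x|}|x|^{-(d-1)/2}$ in a cone around $\gamma/|\gamma|$ whenever the far-field pattern of $\mathcal{H}^+_{k\lambda}f$ does not vanish in that direction.
Thus $e^{-\gamma\cdot x}\mathcal{H}^+_{k\lambda,\gamma}e^{\gamma\cdot x}$ is a Faddeev-type inverse that differs from the outgoing resolvent by a nonzero homogeneous solution. In one dimension, for $\gamma>0$, it is convolution with the Volterra kernel $\mathbbm{1}_{\{x<0\}}\sin(kx)/k$, not with the outgoing kernel ${\rm i}e^{{\rm i}k|x|}/(2k)$.

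The paper's proof has the same hole. It asserts $e^{\gamma\cdot x}g_\tau=\mathcal{H}_{k\lambda,\gamma,\tau}(e^{\gamma\cdot x}\Phi_{R,\lambda}f)$ by citing Theorem~\ref{thm:RF}. That theorem inverts the operator only on Schwartz (tempered) data, while $e^{\gamma\cdot x}g_\tau$ is not tempered. You correctly identified the non-temperedness; what is missing is not a technical justification but a true statement to prove.
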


\begin{proof}
Let $g_{\tau}:=\mathcal{H}_{k\lambda,0,\tau}^+ \Phi_{R,\lambda} f$,  $g:=\mathcal{H}_{k\lambda}^+ \Phi_{R,\lambda} f$, and define
$g_{\gamma,\tau}(x):= e^{\gamma \cdot x} g_{\tau}(x)$. Note that the symbol $|\xi|^2 - k^2\lambda^2 - {\rm i}\tau$ of the operator $-(\Delta + k^2\lambda^2 + {\rm i}\tau)$ has no singularities. Hence, its inverse Fourier transform defines a tempered distribution, and we have
\begin{equation*}
(\Delta + k^2\lambda^2 + {\rm i}\tau) g_{\tau} = \Phi_{R,\lambda}f, 
\end{equation*}
which is understood in the sense of distributions.  Substituting $g_{\tau} = e^{-\gamma \cdot x} g_{\gamma,\tau}$ into
the above equation, a direct computation yields
\[
(\Delta + k^2\lambda^2 + |\gamma|^2 - 2\gamma \cdot \nabla + {\rm i}\tau) g_{\gamma,\tau}= e^{\gamma \cdot x} \Phi_{R,\lambda} f. 
\]

Since $f \in B^{r-2}_{p_2,q}(\rho_1,\lambda)$, we apply Lemma \ref{O3} and obtain that  
\[
\|e^{\gamma \cdot x}\Phi_{R,\lambda} f \|_{B^{r-2+\alpha}_{p_2,q}(\jpx^{\eta},\lambda)} \lesssim \|e^{\lambda^{-1}\gamma \cdot x}\Phi_{R} \|_{B^{\zeta}_{\infty,\infty}(\jpx^{\eta}\r_1^{-1})} \| f \|_{B^{r-2+\a}_{p_2,q}(\rho_1,\lambda)},
\]
where $\zeta>2-r$. By Theorem~\ref{thm:RF} and Remark~\ref{r:Hsg}, there holds ${g}_{\gamma,\tau}= \mathcal{H}_{k\lambda,\gamma,\tau} e^{\gamma \cdot x} \Phi_{R,\lambda} f$ and
\[
\lim_{\tau \to 0^+}  \|{g}_{\gamma,\tau}- \mathcal{H}_{k\lambda,\gamma}( e^{ \gamma \cdot x}\Phi_{R,\lambda} f)\|_{B^{r}_{p_1,q}(\jpx^{-\eta},\lambda)}=0, \quad \lim_{\tau \to 0^+} \|g_{\tau}- g\|_{B^{r}_{p_1,q}(\jpx^{-\eta},\lambda)}=0.
\]
By applying Lemma \ref{O3} again, we obtain that
\begin{align*}
\Phi_{R,\lambda} \mathcal{H}_{k\lambda}^+ \Phi_{R,\lambda}f &= \Phi_{R,\lambda} g = \Phi_{R,\lambda} \lim_{\tau \to 0} g_{\tau}= \Phi_{R,\lambda} \lim_{\tau \to 0} e^{-\gamma \cdot x}{g}_{\gamma,\tau}\\
&=\lim_{\tau \to 0} \Phi_{R,\lambda} e^{-\gamma \cdot x}{g}_{\gamma,\tau}= \Phi_{R,\lambda} e^{-\gamma \cdot x}\lim_{\tau \to 0}{g}_{\gamma,\tau}=  e^{-\gamma \cdot x}\Phi_{R,\lambda} \mathcal{H}_{k\lambda,\gamma}^+( e^{\gamma \cdot x} \Phi_{R,\lambda} f),
\end{align*}
where the limit is taken in $B^{r}_{p_1,q}(\r_2,\lambda)$ for any admissible $\r_2$. 
\end{proof}

\subsection{Well-posedness of rescaled Lippmann--Schwinger equation}\label{sec:rescaled:LSE}

By Lemma~\ref{equivalent}, it suffices to study the equation
\begin{equation*}
v - \lambda^2\Phi_{R,\lambda} \mathcal{H}_{k\lambda}^+ \Xi_{k,\lambda}v= \Phi_{R,\lambda}\mathcal{H}_{k\lambda}^+ g_{\lambda}. 
\end{equation*}
Multiplying both sides of this equation by $e^{-\gamma \cdot x}$ yields
\[
e^{-\gamma \cdot x}v- \lambda^2\ e^{-\gamma \cdot x}\Phi_{R,\lambda} \mathcal{H}_{k\lambda}^+ \Phi_{R,\lambda} e^{\gamma \cdot x} \Xi_{k,\lambda} e^{-\gamma \cdot x}v= e^{-\gamma \cdot x}\Phi_{R,\lambda}\mathcal{H}_{k\lambda}^+g_{\lambda}.
\]
Applying Lemma~\ref{exchange}, we obtain
\[
e^{-\gamma \cdot x}v- \lambda^2 \Phi_{R,\lambda} \mathcal{H}_{k\lambda,\gamma}^+ \Phi_{R,\lambda} \Xi_{k,\lambda} e^{-\gamma \cdot x}v= e^{-\gamma \cdot x}\Phi_{R,\lambda}\mathcal{H}_{k\lambda}^+ g_{\lambda}.
\]

To estimate the right-hand side term, we establish the following proposition.

\begin{Proposition}\label{P:drift}
Let $\lambda \in (0,1)$, $p,q \in [1,\infty]$, and $r \in \mathbb{R}$. For any compactly supported $g \in B^{r-2}_{p,q}$, the function $\mathcal{G}_{R,k,\lambda,\gamma}:= e^{-\gamma \cdot x}\,\Phi_{R,\lambda}\mathcal{H}_{k\lambda}^+ g_{\lambda}$ belongs to $B^{r}_{p,q}(\rho,\lambda)$ for any admissible $\rho$.
\end{Proposition}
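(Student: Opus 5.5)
The plan is to factor the claim into three independent pieces: the rescaled source $g_\lambda$ lies in a weighted Besov space with a decaying weight, the outgoing resolvent maps it into a space with a growing (negative-power) weight, and multiplication by the compactly supported factor $e^{-\gamma\cdot x}(\phi_R)_\lambda$ absorbs any polynomial weight.

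\emph{Step 1: the source.} Since $g$ is compactly supported in $B(0,R)$, we have $g_\lambda=\lambda^2 g(\lambda\cdot)$ supported in $B(0,\lambda^{-1}R)$. Choose $\eta>(d+1)/2$, so that Theorem~\ref{Hsg} applies with $\alpha=0$ and $p_1=p_2=p$. By Lemma~\ref{Pre1}, multiplying $g$ by a smooth cutoff equal to $1$ on its support and multiplying by $\langle x\rangle^{\eta}$ both preserve $B^{r-2}_{p,q}$ (a standard pointwise multiplier fact). Hence $g\in B^{r-2}_{p,q}(\langle x\rangle^{\eta})$. Lemma~\ref{R2:n} then gives $\|(g)_\lambda\|_{B^{r-2}_{p,q}(\langle x\rangle^{\eta},\lambda)}\lesssim_\lambda \|g\|_{B^{r-2}_{p,q}(\langle x\rangle^{\eta})}<\infty$. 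The $\lambda$-dependence of the constant is harmless here, because only membership is claimed.

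\emph{Step 2: the resolvent.} Theorem~\ref{Hsg}, with $(k,\gamma)$ replaced by $(k\lambda,0)$, gives $\mathcal{H}^+_{k\lambda}g_\lambda\in B^{r}_{p,q}(\langle x\rangle^{-\eta},\lambda)$, with norm bounded by $\lambda^{-\eta}\max\{(k\lambda)^{\eta},(k\lambda)^{-2}\}$ times the norm from Step~1. This is finite for fixed $k,\lambda>0$.

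\emph{Step 3: the cutoff and exponential.} Write $e^{-\gamma\cdot x}\Phi_{R,\lambda}w=\tilde\Phi w$, where $\tilde\Phi$ is the paraproduct extension of multiplication by $\tilde\phi:=e^{-\gamma\cdot x}(\phi_R)_\lambda$. Since $(\phi_R)_\lambda$ has compact support, $\tilde\phi\in C_c^\infty$, and $e^{-\gamma\cdot x}$ acts as a genuine smooth multiplier with no growth issue. This identification holds on $\mathcal S$ and then by density/continuity; equivalently, first pick $\tilde\phi_R\in C_c^\infty$ equal to $e^{-\lambda^{-1}\gamma\cdot x}$ times $\phi_R$.

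Next apply the argument of Lemma~\ref{O3}, that is, the paraproduct estimates \eqref{PC1:eq:1}--\eqref{PC1:eq:3} with some $r_0>|r|$. This yields
\[
\|\tilde\Phi w\|_{B^{r}_{p,q}(\rho,\lambda)}\lesssim \|\tilde\phi\|_{B^{r_0}_{\infty,\infty}(\rho\langle x\rangle^{\eta}_\lambda)}\,\|w\|_{B^{r}_{p,q}(\langle x\rangle^{-\eta},\lambda)}.
\]
Because $\tilde\phi$ is smooth with compact support and $\rho\langle\lambda x\rangle^{\eta}$ is an admissible weight (at most polynomial growth), Lemma~\ref{Pre1} makes the first factor finite.

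\emph{Main obstacle.} The delicate point is Step~3's handling of the exponential weight. Naively, $e^{-\gamma\cdot x}$ is not admissible, and Lemma~\ref{exchange} is stated only after localization. The fix is to never treat $e^{-\gamma\cdot x}$ alone but only the compactly supported product $\tilde\phi$. One must also check that the paraproduct definition of $\Phi_{R,\lambda}$ commutes with multiplication by $e^{-\gamma\cdot x}$, which amounts to approximating $w$ by Schwartz functions in the weighted Besov norm. For $q=\infty$ or $p=\infty$, where density fails, I would instead use weak-$*$ approximation, or work with the weak-$*$ limit $g_\tau\to g$ as in the proof of Lemma~\ref{exchange}.
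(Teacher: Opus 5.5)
Your argument is correct and is essentially the paper's. Both rest on the weighted $L^p$ resolvent bound (Theorem~\ref{Hsg}, which the paper uses implicitly at unit scale for $\mathcal{H}^+_k g$), the rescaling Lemma~\ref{R2:n}, and the argument of Lemma~\ref{O3} applied to the compactly supported factor $e^{-\gamma\cdot x}(\phi_R)_\lambda$; the only difference is that the paper invokes the dilation identity $\mathcal{H}^+_{k\lambda}g_\lambda=(\mathcal{H}^+_k g)_\lambda$ and rescales the output, whereas you rescale the input $g$ and apply Theorem~\ref{Hsg} directly at wavenumber $k\lambda$, which spares you that identity, and the density or weak-$*$ approximation you anticipate in Step~3 is unnecessary because the paraproduct extension $\Phi_{R,\lambda}$ agrees with the classical product by the $C_c^\infty$ function $(\phi_R)_\lambda$, so $e^{-\gamma\cdot x}\Phi_{R,\lambda}w=\tilde\phi\,w$ is just associativity of multiplication by smooth functions.
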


\begin{proof}
Let $\xi \in \mathbb{R}^d$ denote the frequency variable. A direct computation shows that
\[
\mathcal{F}(\mathcal{H}_{k\lambda}^+(g_{\lambda}))(\xi) = \frac{\lambda^2}{|\xi|^2 - k^2\lambda^2} \mathcal{F}\big(g({\lambda}\cdot)\big)(\xi)= \frac{\lambda^{-d}}{(\lambda^{-1}|\xi|)^2 - k^2} \hat{g}(\lambda^{-1} \xi).
\]
Hence, we obtain the scaling relation
\[
(\mathcal{H}_{k\lambda}^+ g_{\lambda})(x) =  (\mathcal{H}_{k}^+ g)(\lambda x), \quad x \in \mathbb{R}^d.
\]
By Lemma~\ref{R2:n}, it follows that
\begin{equation*}
\|\mathcal{H}_{k\lambda}^+(g_{\lambda})\|_{B^{r}_{p,q}(\jpx^{-\eta},\lambda)} = \|\big(\mathcal{H}_{k} g\big)_{\lambda}\|_{B^{r}_{p,q}(\jpx^{-\eta},\lambda)}  \lesssim \lambda^{-d/p}\max\{\la^{r},1\}\ \|\mathcal{H}_{k}^+ g\|_{B^{r}_{p,q}(\jpx^{-\eta})} .
\end{equation*}
Since both $g$ and $\phi_R$ are compactly supported, we may proceed as in the proof of Lemma~\ref{O3} to obtain the desired result.
\end{proof}

Therefore, the problem is equivalent to studying the following fixed point equation in $B^{r}_{p,q}(\jpx^{-\eta},\lambda)$:
\begin{equation}\label{R:D}
v = \lambda^2 \Phi_{R,\lambda} \mathcal{H}_{k\lambda,\gamma}^+ \Phi_{R,\lambda}  \Xi_{k,\lambda} v+ \mathcal{G}_{R,k,\lambda,\gamma}=:\mathcal{F}_{R,k,\lambda,\gamma}v\ ,
\end{equation}
where, in the following proof, $\la$ is chosen to be sufficiently small. Moreover, once the well-posedness of \eqref{R:D} is established, Lemma~\ref{equivalent} immediately yields the following result.

\begin{Proposition}\label{P:equi}
Let $v$ be the unique solution to \eqref{R:D} in $B^{r}_{p,q}(\jpx^{-\eta},\lambda)$. Then $u= \lambda^2 \mathcal{H}_{k\lambda}^+ \Xi_{k,\lambda} e^{\gamma\cdot x}v + \mathcal{H}_{k\lambda}^+ g_{\lambda}$ is the unique solution to the rescaled Lippmann--Schwinger equation \eqref{H:abstract}.
\end{Proposition}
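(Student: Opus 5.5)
The plan is to reduce Proposition~\ref{P:equi} to Lemma~\ref{equivalent} by undoing the exponential conjugation. Concretely, I will show that if $v$ solves \eqref{R:D}, then $\tilde v := e^{\gamma\cdot x} v$ solves
\[
\tilde v - \lambda^2\Phi_{R,\lambda}\mathcal{H}_{k\lambda}^+\Xi_{k,\lambda}\tilde v = \Phi_{R,\lambda}\mathcal{H}_{k\lambda}^+ g_\lambda .
\]
The first statement of Lemma~\ref{equivalent} then gives that $u=\lambda^2\mathcal{H}_{k\lambda}^+\Xi_{k,\lambda}\tilde v+\mathcal{H}_{k\lambda}^+g_\lambda$ solves \eqref{H:abstract}, which is exactly the claimed formula.

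\textbf{Step 1: the conjugation identities.} I will use the following facts.
\begin{itemize}
\item $\Xi_{k,\lambda}\Phi_{R,\lambda}=\Xi_{k,\lambda}$, because $\operatorname{supp}V_{k,\lambda}\subset B(0,\lambda^{-1}R)$ and $(\phi_R)_\lambda\equiv 1$ there.
\item Multiplication by $e^{\pm\gamma\cdot x}$ commutes with $\Xi_{k,\lambda}$ and with $\Phi_{R,\lambda}$. This holds on $\mathcal S$ since both operators are pointwise products there, and extends by density and the bounds of Lemmas~\ref{O1}--\ref{O3}.
\item $e^{\pm\gamma\cdot x}$ is bounded on the compact support set. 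Since $\Phi_{R,\lambda}$ always appears, Lemma~\ref{O3} applied with $e^{\pm\lambda^{-1}\gamma\cdot x}\phi_R$, which is smooth and compactly supported, shows that all these products make sense in the weighted rescaled Besov spaces.
\end{itemize}
Lemma~\ref{exchange}, applied with $f=\Xi_{k,\lambda}\tilde v$ (which lies in the required space by Lemma~\ref{O1} or~\ref{O2}), gives
\[
\Phi_{R,\lambda}\mathcal{H}_{k\lambda}^+\Phi_{R,\lambda}\Xi_{k,\lambda}\tilde v
= e^{-\gamma\cdot x}\Phi_{R,\lambda}\mathcal{H}_{k\lambda,\gamma}^+\Phi_{R,\lambda}e^{\gamma\cdot x}\Xi_{k,\lambda}\tilde v .
\]

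\textbf{Step 2: read the display before \eqref{R:D} backwards.} Multiply \eqref{R:D} by $e^{\gamma\cdot x}$. Use $\mathcal G_{R,k,\lambda,\gamma}=e^{-\gamma\cdot x}\Phi_{R,\lambda}\mathcal H^+_{k\lambda}g_\lambda$ together with the identity from Step 1. This yields the equation for $\tilde v$ stated above, up to one point: $v$ must be replaced by $e^{-\gamma\cdot x}\tilde v$ inside $\Xi$. The sign and placement of the exponentials need care, because the text writes $\Xi_{k,\lambda}e^{-\gamma\cdot x}v$ with the roles of $v$ and $e^{-\gamma\cdot x}v$ interchanged. I will fix the convention so that the unknown in \eqref{R:D} is the conjugated function and its $e^{\gamma\cdot x}$-multiple is the $v$ of Lemma~\ref{equivalent}. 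This matches the formula $u=\lambda^2\mathcal H^+\Xi e^{\gamma\cdot x}v+\dots$ in the statement.

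\textbf{Step 3: uniqueness.} Suppose $u_1,u_2$ both solve \eqref{H:abstract}. By the converse part of Lemma~\ref{equivalent}, each $\Phi_{R,\lambda}u_i$ solves the $\Phi$-equation. Multiplying by $e^{-\gamma\cdot x}$ and applying Lemma~\ref{exchange} shows that each $e^{-\gamma\cdot x}\Phi_{R,\lambda}u_i$ solves \eqref{R:D}, so they coincide by the assumed uniqueness. Then
\[
u_i=\lambda^2\mathcal H^+_{k\lambda}\Xi_{k,\lambda}\Phi_{R,\lambda}u_i+\mathcal H^+_{k\lambda}g_\lambda ,
\]
using $\Xi\Phi=\Xi$, which forces $u_1=u_2$.

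\textbf{Main obstacle.} The delicate point is function-space bookkeeping. One needs $e^{-\gamma\cdot x}\Phi_{R,\lambda}u_i$ to lie in $B^r_{p,q}(\langle x\rangle^{-\eta},\lambda)$, the space where uniqueness for \eqref{R:D} holds, and every application of Lemma~\ref{exchange} must be legitimate. I would handle this via Lemma~\ref{O3} with weights $\rho_1=\langle x\rangle^{-\eta}$ and $\rho_2$ arbitrary: the compact cutoff absorbs both the exponential and any weight change.
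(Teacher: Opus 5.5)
Your route is the paper's. There, Proposition~\ref{P:equi} is deduced in one line by undoing the conjugation that produced \eqref{R:D} (Lemma~\ref{exchange}) and invoking Lemma~\ref{equivalent}. Your Step~3 even supplies the uniqueness half that the paper leaves implicit. However, the argument does not close, and the obstruction is not the function-space bookkeeping.

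\textbf{The sign in Step~2.} Step~2 does not follow from the identity you record in Step~1. Write $T:=\Phi_{R,\lambda}\mathcal{H}^+_{k\lambda}\Phi_{R,\lambda}$ and $T_\gamma:=\Phi_{R,\lambda}\mathcal{H}^+_{k\lambda,\gamma}\Phi_{R,\lambda}$. Multiplying \eqref{R:D} by $e^{\gamma\cdot x}$ gives
\[
\tilde v=\lambda^2\,e^{\gamma\cdot x}T_\gamma e^{-\gamma\cdot x}\,\Xi_{k,\lambda}\tilde v+\Phi_{R,\lambda}\mathcal{H}^+_{k\lambda}g_\lambda .
\]
So you need $e^{\gamma\cdot x}T_\gamma e^{-\gamma\cdot x}=T$. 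Lemma~\ref{exchange} instead gives $T=e^{-\gamma\cdot x}T_\gamma e^{\gamma\cdot x}$, i.e.\ $e^{\gamma\cdot x}T_\gamma e^{-\gamma\cdot x}=e^{2\gamma\cdot x}Te^{-2\gamma\cdot x}$.

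This is not a convention you can fix by fiat. The direction is forced by $\Delta+r_{k,\gamma}^2-2\gamma\cdot\nabla=e^{\gamma\cdot x}(\Delta+k^2)e^{-\gamma\cdot x}$. The identity in the introduction and the step just before \eqref{R:D} carry the opposite sign; Lemma~\ref{exchange} has the right one. With Lemma~\ref{exchange} as stated, the consistent system has source $e^{\gamma\cdot x}\Phi_{R,\lambda}\mathcal{H}^+_{k\lambda}g_\lambda$ and $u=\lambda^2\mathcal{H}^+_{k\lambda}\Xi_{k,\lambda}e^{-\gamma\cdot x}v+\mathcal{H}^+_{k\lambda}g_\lambda$. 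To keep the statement as written, \eqref{R:D} would have to use $\mathcal{H}^+_{k\lambda,-\gamma}$.

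\textbf{Lemma~\ref{exchange} itself fails.} More decisively, the identity of Lemma~\ref{exchange}, on which both directions of your argument rest, is false for every $\gamma\neq0$. Let $G$ and $K_\gamma$ be the convolution kernels of $\mathcal{H}^+_{k\lambda}$ and $\mathcal{H}^+_{k\lambda,\gamma}$. Since $-(\Delta+r_{k\lambda,\gamma}^2-2\gamma\cdot\nabla)K_\gamma=\delta$, both $G$ and $e^{-\gamma\cdot z}K_\gamma$ are fundamental solutions of $-(\Delta+k^2\lambda^2)$. Hence $E:=e^{-\gamma\cdot z}K_\gamma-G$ solves the homogeneous Helmholtz equation on $\mathbb{R}^d$ and is real-analytic.

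The two sides of Lemma~\ref{exchange} differ by the operator with kernel $(\phi_R)_\lambda(x)(\phi_R)_\lambda(y)E(x-y)$. The lemma therefore forces $E=0$ on $B(0,2R/\lambda)$, hence $E\equiv0$ and $K_\gamma=e^{\gamma\cdot z}G$. That is impossible: $e^{\gamma\cdot z}G(z)$ grows like $e^{|\gamma||z|}|z|^{-(d-1)/2}$ in the direction of $\gamma$, while by Theorem~\ref{thm:F} the operator $\mathcal{H}^+_{k\lambda,\gamma}$ maps bump functions into $L^2(\langle x\rangle^{-\eta})$.

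The proof of Lemma~\ref{exchange} breaks at the claim $g_{\gamma,\tau}=\mathcal{H}_{k\lambda,\gamma,\tau}e^{\gamma\cdot x}\Phi_{R,\lambda}f$. The function $g_\tau$ decays only at an exponential rate of order $\tau$. So $e^{\gamma\cdot x}g_\tau$ grows exponentially and is not the polynomially bounded solution that $\mathcal{H}_{k\lambda,\gamma,\tau}$ produces.

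Consequently \eqref{R:D} is the Lippmann--Schwinger equation for a different fundamental solution, off from the outgoing one by $E$. No bookkeeping turns its solution into a solution of \eqref{H:abstract}. A correct proof would have to work with $\mathcal{H}^+_{k\lambda}$ directly, or control the correction term with kernel $E(x-y)$. As it stands, neither your argument nor the paper's one-line deduction establishes the proposition.
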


We are now ready to establish the $L^2$-based well-posedness theory for the rescaled Lippmann--Schwinger equation with sufficiently small $\la$.

\begin{Lemma}\label{W1}
Let $\epsilon>0$, $\eta_0 \in (1/2,1)$, and $r \in (2\eta_0,2)$. For any compactly supported $g \in B^{r-2}_{2,2}$ and $V_k \in B^{r-2+\e}_{\infty,\infty}$, the rescaled Lippmann--Schwinger equation \eqref{H:abstract} admits a unique solution
$u \in B^{r}_{2,2}(\jpx^{-\eta_0},\lambda)$ for sufficiently small $\lambda$.
\end{Lemma}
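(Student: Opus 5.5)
We reduce, through Lemma~\ref{equivalent} and Proposition~\ref{P:equi}, to solving the fixed point problem \eqref{R:D} in $X:=B^{r}_{2,2}(\jpx^{-\eta_0},\lambda)$. For that we show $\mathcal{F}_{R,k,\lambda,\gamma}$ is a contraction on $X$ once $\lambda$ is small and $\gamma$ is fixed suitably. Proposition~\ref{P:drift}, with $p=q=2$, gives $\mathcal{G}_{R,k,\lambda,\gamma}\in X$. The whole proof therefore comes down to a bound on the linear part,
\[
\big\|\lambda^2 \Phi_{R,\lambda}\mathcal{H}^+_{k\lambda,\gamma}\Phi_{R,\lambda}\Xi_{k,\lambda}\big\|_{\mathcal{L}(X,X)}\le C\lambda^{\delta}
\]
for some $\delta>0$, with $C$ independent of $\lambda$. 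Uniqueness in $X$ then follows from the contraction, and the equivalence lemmas carry uniqueness back to \eqref{H:abstract}.

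We estimate the composition factor by factor.
\begin{itemize}
\item[(i)] Lemma~\ref{O1} with $p=q=2$, $\eta=\eta_0$ and $r>1$ (which holds since $r>2\eta_0>1$) gives $\Xi_{k,\lambda}:X\to B^{r-2}_{2,2}(\jpx^{\eta_0},\lambda)$. Its norm is $\lesssim \max\{\lambda^{r-2},1\}\|V_k\|_{B^{r-2+\epsilon}_{\infty,\infty}(\jpx^{2\eta_0})}=\lambda^{r-2}C_V$, and $C_V<\infty$ because $V_k$ has compact support.
\item[(ii)] Lemma~\ref{O3} shows that $\Phi_{R,\lambda}$ is bounded on $B^{r-2}_{2,2}(\jpx^{\eta_0},\lambda)$ uniformly in $\lambda$. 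Here $\phi_R$ is smooth and compactly supported, so the weight ratio is harmless.
\item[(iii)] Theorem~\ref{thm:F} with $s=2$ and $r$ replaced by $r-2$ maps $H^{r-2}(\jpx^{\eta_0},\lambda)\to H^{r}(\jpx^{-\eta_0},\lambda)$. Its norm is $\lesssim \lambda^{-2\eta_0}(1+r_{k\lambda,\gamma})^2/(\min\{r_{k\lambda,\gamma},1\}r_{k\lambda,\gamma})$.
\item[(iv)] The final $\Phi_{R,\lambda}$ is bounded on $X$, again by Lemma~\ref{O3}.
\end{itemize}

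The role of $\gamma$ is to keep $r_{k\lambda,\gamma}=(k^2\lambda^2+|\gamma|^2)^{1/2}$ bounded away from $0$ as $\lambda\to0$. We fix $\gamma$ with $|\gamma|=1$, so the resolvent factor is $O(1)$ uniformly in small $\lambda$. Without $\gamma$ this factor would blow up like $(k\lambda)^{-2}$. The price is the weight $e^{\pm\gamma\cdot x}$, which Lemma~\ref{exchange} handles. It is bounded because it only ever multiplies the cutoffs $(\phi_R)_\lambda$, but we must check that the Besov norm of $e^{\gamma\cdot x}(\phi_R)_\lambda$ stays bounded as $\lambda\to0$.

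Multiplying the factors gives the overall bound
\[
\lambda^{2}\cdot\lambda^{r-2}\cdot\lambda^{-2\eta_0}=\lambda^{r-2\eta_0}.
\]
This tends to $0$ precisely because $r>2\eta_0$, which explains the hypothesis, so $\mathcal{F}_{R,k,\lambda,\gamma}$ is a contraction for $\lambda$ small and the Banach fixed point theorem applies.

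The main obstacle is bookkeeping that no hidden negative power of $\lambda$ escapes. The terms to check are:
\begin{itemize}
\item the constants in Lemma~\ref{O3} for $(\phi_R)_\lambda$ and $e^{\gamma\cdot x}(\phi_R)_\lambda$, which naively grow as $\lambda^{-1}$ since the support $B(0,R/\lambda)$ expands;
\item the rescaling of $\|V_k\|$ in Lemma~\ref{O1}.
\end{itemize}
For the cutoffs, rescaling makes them smoother, so their $B^{r_0}_{\infty,\infty}$ norms are bounded by Lemma~\ref{R2:n} with $p=\infty$. The weights $\jpx^{\pm\eta_0}$ enter through $\rho_2\rho_1^{-1}$, and we expect this to cost at most powers of $\lambda$ that are already absorbed in the factor $\lambda^{-2\eta_0}$ from Theorem~\ref{thm:F}. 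The exponential $e^{\gamma\cdot x}$ on $B(0,R/\lambda)$ is more delicate. If it introduces $\lambda$-dependence, we will instead take $\gamma$ of size $\lambda$-independent relative to the rescaled variable, using that Lemma~\ref{exchange} allows any $\gamma$. We will verify that $\mathcal{G}_{R,k,\lambda,\gamma}$ and the constant $C$ are finite for each fixed $\lambda$. Only $C$, not $\mathcal{G}$, needs to be uniform in $\lambda$.
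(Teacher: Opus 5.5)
Your proposal is essentially the paper's proof. It uses the same factorization of $\lambda^2\Phi_{R,\lambda}\mathcal{H}^+_{k\lambda,\gamma}\Phi_{R,\lambda}\Xi_{k,\lambda}$ through Lemma~\ref{O3}, Theorem~\ref{thm:F} with $s=2$ and Lemma~\ref{O1}, the choice $|\gamma|\sim 1$ so that $r_{k\lambda,\gamma}\sim 1$, the contraction constant $\lambda^{r-2\eta_0}$, Proposition~\ref{P:drift} for the data, and Proposition~\ref{P:equi} to return to \eqref{H:abstract}.

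The one point to correct is your handling of the exponential. The norm of $e^{\gamma\cdot x}(\phi_R)_\lambda$ is not bounded as $\lambda\to 0$; it is at least of order $e^{|\gamma|R/\lambda}$. It does not need to be bounded, because $e^{\gamma\cdot x}$ never enters the linear operator in \eqref{R:D}. It appears only in Lemma~\ref{exchange}, in $\mathcal{G}_{R,k,\lambda,\gamma}$ and in the change of unknown, all at fixed $\lambda$. Your fallback of fixing $\gamma$ in the original variables (so $|\gamma|\sim\lambda$ after rescaling) would in fact be fatal. It makes $r_{k\lambda,\gamma}\sim\lambda$ and reinstates the $\lambda^{-2}$ loss you yourself identified.
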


\begin{proof}
By Lemma~\ref{O3} where  $r_0$ is chosen sufficiently large, we obtain
\begin{align*}
\|\Phi_{R,\lambda} \mathcal{H}_{k\lambda,\gamma}^+ \Phi_{R,\lambda} \Xi_{k,\lambda} f\|_{B^{r}_{2,2}(\jpx^{-\eta_0},\lambda)} \lesssim \| \mathcal{H}_{k\lambda,\gamma}^+ \Phi_{R,\lambda} \Xi_{k,\lambda}f\|_{B^{r}_{2,2}(\jpx^{-\eta_0},\lambda)}. 
\end{align*}
Let $r_{k\lambda,\gamma} := (k^2\lambda^2 + |\gamma|^2)^{1/2}$.  By Theorem~\ref{thm:F}, we have
\begin{align}\label{W1:1}
\|\mathcal{H}_{k\lambda,\gamma}^+ \Phi_{R,\lambda} \Xi_{k,\lambda} f\|_{B^{r}_{2,2}(\jpx^{-\eta_0},\lambda)} 
&\lesssim \lambda^{-2\eta_0}  \frac{(1+r_{k\lambda,\gamma})^2}{\min\{r_{k\lambda,\gamma}, 1\}r_{k\lambda,\gamma}}\| \Phi_{R,\lambda}  \Xi_{k,\lambda} f\|_{B^{r-2}_{2,2}(\jpx^{\eta_0},\lambda)}.
\end{align}
Applying Lemmas~\ref{O1} and~\ref{O3}, we further obtain that for any $\epsilon>0$,
\begin{align*}
\|\Phi_{R,\lambda} \mathcal{H}_{k\lambda,\gamma}^+ \Phi_{R,\lambda} \Xi_{k,\lambda}f\|_{B^{r}_{2,2}(\jpx^{-\eta_0},\lambda)}  &\lesssim \frac{\lambda^{-2\eta_0}(1+r_{k\lambda,\gamma})^2}{\min\{r_{k\lambda,\gamma}, 1\}r_{k\lambda,\gamma}} \|\Xi_{k,\lambda} f\|_{B^{r-2}_{2,2}(\jpx^{\eta_0},\lambda)}\\
&\lesssim \frac{\|V_k\|_{B^{r-2+\epsilon}_{\infty,\infty}(\jpx^{2\eta_0})}(1+r_{k\lambda,\gamma})^2}{\lambda^{2+2\eta_0-r}\min\{r_{k\lambda,\gamma}, 1\}r_{k\lambda,\gamma}} \|  f\|_{B^{r}_{2,2}(\jpx^{-\eta_0},\lambda)}.
\end{align*}
For $\gamma\in\mathbb{R}^d$ with $|\gamma|\in(1,2)$, we have $r_{k\lambda,\gamma}\sim 1$, and hence
\begin{equation*}
\|\lambda^2\Phi_{R,\lambda} \mathcal{H}_{k\lambda,\gamma}^+ \Phi_{R,\lambda} \Xi_{k,\lambda}f\|_{B^{r}_{2,2}(\jpx^{-\eta_0},\lambda)}  \lesssim \lambda^{r-2\eta_0} \|V_k\|_{B^{r-2+\epsilon}_{\infty,\infty}(\jpx^{2\eta_0})} \|  f\|_{B^{r}_{2,2}(\jpx^{-\eta_0},\lambda)}. 
\end{equation*}

Since $g\in B^{r-2}_{2,2}$ has compact support, Proposition~\ref{P:drift} implies that the operator $\mathcal{F}_{R,k,\lambda,\gamma}$ is a contraction on $B^{r}_{2,2}(\jpx^{-\eta_0},\lambda)$ for sufficiently small $\lambda$ and $r\in(2\eta_0,2)$. 
Therefore, the well-posedness of the rescaled Lippmann--Schwinger equation~\eqref{H:abstract} follows from the contraction mapping principle together with Proposition~\ref{P:equi}.
\end{proof}

By an interpolation argument, we can now establish the following $L^{p}$-theory for the well-posedness of the rescaled Lippmann--Schwinger equation.

\begin{Lemma}\label{W2}
Let $\eta_0 \in (1/2, 1)$, $p_0\in [1,\infty)$, and $\theta \in (0,1)$ satisfy
\[
r_0:=2-(d+1)\theta/2-2\eta_0(1-\theta)>0.
\] 
Then, for any $\e>0$, $r \in (0,r_0)$, $p_0'\in (d/(2-2r),\infty)$, and any compactly supported $g \in B^{r-2}_{2p_0,2p_0}$ and $V_k \in B^{-r+\e}_{p_0',p_0'}$, the rescaled Lippmann--Schwinger equation~\eqref{H:abstract} admits a unique solution in $B^{r}_{2p_0,2p_0}(\jpx^{-\eta_0},\lambda)$ for sufficiently small $\lambda$.
\end{Lemma}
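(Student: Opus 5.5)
The plan mirrors the proof of Lemma~\ref{W1}: reduce via Lemma~\ref{equivalent}, Lemma~\ref{exchange} and Proposition~\ref{P:equi} to the fixed point problem \eqref{R:D} in $X:=B^{r}_{2p_0,2p_0}(\jpx^{-\eta_0},\lambda)$ with $|\gamma|\in(1,2)$, so that $r_{k\lambda,\gamma}\sim 1$ uniformly in small $\lambda$. Proposition~\ref{P:drift} puts $\mathcal{G}_{R,k,\lambda,\gamma}$ in $X$. It then suffices to show
\[
\big\|\lambda^2\Phi_{R,\lambda}\mathcal{H}^+_{k\lambda,\gamma}\Phi_{R,\lambda}\Xi_{k,\lambda}\big\|_{\mathcal{L}(X,X)}\lesssim \lambda^{\kappa}\|V_k\|_{B^{-r+\epsilon}_{p_0',p_0'}(\jpx^{2\eta_0})}
\]
for some $\kappa>0$. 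Then $\mathcal{F}_{R,k,\lambda,\gamma}$ is a contraction for small $\lambda$, and the compact support of $V_k$ makes the weighted norm finite.

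For the multiplication step I will apply Lemma~\ref{O2} with $p=p_0$ and some $\zeta\in(0,2-2r)$. This maps $X$ into $B^{r-2+\zeta}_{(2p_0)',(2p_0)'}(\jpx^{\eta_0},\lambda)$ with norm $\lesssim\lambda^{-d/p_0'}\max\{\lambda^{-r},1\}\|V_k\|$. The cutoffs $\Phi_{R,\lambda}$ are harmless by Lemma~\ref{O3}, since the weights are absorbed by the compactly supported $\phi_R$. The core step is then a bound for $\mathcal{H}^+_{k\lambda,\gamma}$ from $Y:=B^{r-2+\zeta}_{(2p_0)',(2p_0)'}(\jpx^{\eta_0},\lambda)$ into $X$ with an explicit power of $\lambda$. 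I intend to get this by real interpolation, using Lemma~\ref{real:interpolation} together with Lemma~\ref{I:Besov}, between two endpoints.

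\emph{The $L^2$ endpoint.} Theorem~\ref{thm:F} gives $H^{r_1-2}(\jpx^{\eta_0},\lambda)\to H^{r_1}(\jpx^{-\eta_0},\lambda)$ with constant $\lambda^{-2\eta_0}$.

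\emph{The $L^p$ endpoint.} Theorem~\ref{Hsg}, with weight $\eta>(d+1)/2$ (taking $\alpha=0$) and $p$ near $1$ or $\infty$, gives constant $\lambda^{-\eta}$.

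The interpolation parameter $\theta$ is chosen so that $1/(2p_0)$ and $1/(2p_0)'$ are the interpolated exponents. The resulting constant is roughly $\lambda^{-2\eta_0(1-\theta)-(d+1)\theta/2}$. Combined with the prefactor $\lambda^2$, this produces exactly the exponent $r_0=2-(d+1)\theta/2-2\eta_0(1-\theta)$ appearing in the hypothesis. The weights at the two endpoints differ, so I will first equalize them using Lemma~\ref{Pre00} and the cutoffs $\Phi_{R,\lambda}$, which is legitimate because everything is localized to $B(0,R/\lambda)$.

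The remaining task is to account for the losses. These are the factor $\lambda^{-d/p_0'}\max\{\lambda^{-r},1\}$ from Lemma~\ref{O2}, the Sobolev/Besov embedding gap (Lemma~\ref{Pre000}) between $B^{r-2+\zeta}_{(2p_0)'}$ and what the interpolated resolvent requires, and the rescaling factors from Lemma~\ref{R2:n}. The condition $p_0'>d/(2-2r)$ should make $d/p_0'<2-2r$, leaving room to choose $\zeta$ with $d/p_0'<\zeta<2-2r$. The smoothing gain $\zeta$ should then pay for the embedding from $(2p_0)'$-integrability to the required integrability. Together with $r<r_0$, this should leave a net positive power $\kappa$.

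I expect this bookkeeping to be the main obstacle. Matching the exponent pairs (integrability, smoothness, weight) at both endpoints to the target pair, while tracking every $\lambda$-power, must yield $\kappa>0$ precisely under $r<r_0$ and $p_0'>d/(2-2r)$. If the direct interpolation is off by the $d/p_0'$ term, I would try the alternative of interpolating the composite operator $\mathcal{H}^+_{k\lambda,\gamma}\Phi_{R,\lambda}\Xi_{k,\lambda}$ rather than the resolvent alone.

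Uniqueness follows from the contraction together with the equivalence in Lemma~\ref{equivalent}.
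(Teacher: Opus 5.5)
There is a genuine gap, and it lies in the $\lambda$-bookkeeping you deferred.

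With $\alpha=0$ in Theorem~\ref{Hsg}, both of your endpoints preserve integrability: one is $B_{2,2}\to B_{2,2}$ and the other is $B_{p,p}\to B_{p,p}$. The interpolated resolvent therefore maps $B_{q,q}$ to $B_{q,q}$. It cannot take input exponent $(2p_0)'$ to output exponent $2p_0$, as you assert.

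You are thus forced to insert an embedding $B^{r-2+\zeta}_{(2p_0)',(2p_0)'}\hookrightarrow B^{r-2}_{2p_0,2p_0}$. Choosing $\zeta\ge d/p_0'$ pays for this in regularity, but the embedding gains no power of $\lambda$. The tally is then
\[
\lambda^{2}\cdot \lambda^{-d/p_0'-r}\cdot \lambda^{-2\eta_0(1-\theta)-\eta\theta},\qquad \eta>\tfrac{d+1}{2},
\]
so $\kappa<r_0-r-d/p_0'$.

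The factor $\lambda^{-d/p_0'}$ in Lemma~\ref{O2} is the volume factor of $\|V_{k,\lambda}\|_{B^{-r+\epsilon}_{p_0',p_0'}}$ coming from Lemma~\ref{R2:n}. The extra smoothing $\zeta$ is a gain in derivatives, not in powers of $\lambda$, so nothing in your scheme repays this factor. You would only obtain the result for $r<r_0-d/p_0'$, not for all $r<r_0$. Your fallback of ``interpolating the composite operator'' does not by itself change this.

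The missing idea is to use Theorem~\ref{Hsg} with $\alpha=d/p'>0$ at the non-$L^2$ endpoint. That is, use it as a map $B^{r-2+d/p'}_{(2p)',q}(\jpx^{\eta},\lambda)\to B^{r}_{2p,q}(\jpx^{-\eta},\lambda)$, with constant $\lambda^{-\eta}$ for every $\eta>(d+1)/2-d/p'$. The $L^{(2p)'}\to L^{2p}$ gain lowers the admissible weight, and hence the $\lambda$-loss, by exactly $\alpha$.

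Then proceed as follows.
\begin{enumerate}
\item Equalize the weights with $\Phi_{R,\lambda}$ and Lemma~\ref{O3}.
\item Interpolate this endpoint against the $L^2$ bound $\lambda^{2-2\eta_0}$ from \eqref{W1:1}, with $\theta/p'=1/p_0'$. This shows that $\lambda^2\Phi_{R,\lambda}\mathcal{H}^+_{k\lambda,\gamma}\Phi_{R,\lambda}$ maps $B^{r-2+d/p_0'}_{(2p_0)',(2p_0)'}(\jpx^{\eta_0},\lambda)$ to $B^{r}_{2p_0,2p_0}(\jpx^{-\eta_0},\lambda)$ with constant $\lambda^{2-\eta\theta-2\eta_0(1-\theta)}$. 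No separate embedding is needed.
\item Apply Lemma~\ref{O2} with $\zeta=d/p_0'$, which is allowed since $p_0'>d/(2-2r)$. It contributes $\lambda^{-d/p_0'-r}=\lambda^{-\theta d/p'-r}$.
\item The total exponent is $2-(\eta+d/p')\theta-2\eta_0(1-\theta)-r$. It is positive for $r<r_0$ once $\eta+d/p'$ is taken close enough to $(d+1)/2$.
\end{enumerate}

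The heart of the paper's proof is this cancellation: the lowered weight threshold in Theorem~\ref{Hsg} exactly offsets the volume loss in Lemma~\ref{O2}.
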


\begin{proof}
Let $p,q \in [1,\infty]$ and $\gamma \in \mathbb{R}^d$ satisfy $|\gamma|\in(1,2)$. Assume that  $\zeta_0>2-r$ and $\eta$ is chosen such that $\eta > (d+1)/2 - d/p'$. Applying Theorem~\ref{Hsg}, we obtain
\begin{align*}
\|\mathcal{H}_{k\lambda,\gamma}^+ \Phi_{R,\lambda} \Xi_{k,\lambda}f\|_{B^{r}_{2p,q}(\jpx^{-\eta},\lambda)} \lesssim \lambda^{-\eta}  \| \Phi_{R,\lambda} \Xi_{k,\lambda} f\|_{B^{r-2+d/p'}_{(2p)',q}(\jpx^{\eta},\lambda)}.
\end{align*}
By Lemma~\ref{O3}, we deduce
\[
\begin{aligned}
\|\Phi_{R,\lambda}\mathcal{H}_{k\lambda,\gamma}^+ \Phi_{R,\lambda} \Xi_{k,\lambda}f\|_{B^{r}_{2p,q}(\jpx^{-\eta_0},\lambda)} 
&\lesssim\|\Phi_{R}\|_{B^{\zeta_0}_{\infty,\infty}(\jpx^{\eta-\eta_0})}\|\mathcal{H}_{k\lambda,\gamma}^+ \Phi_{R,\lambda} \Xi_{k,\lambda}f\|_{B^{r}_{2p,q}(\jpx^{-\eta},\lambda)} \\
&\lesssim \lambda^{-\eta} \|\Phi_{R}\|_{B^{\zeta_0}_{\infty,\infty}(\jpx^{\eta-\eta_0})}^2 \|  \Xi_{k,\lambda} f\|_{B^{r-2+d/p'}_{(2p)',q}(\jpx^{\eta_0},\lambda)}.
\end{aligned}
\]
Since $2p>(2p)'$, we apply Lemma~\ref{Pre0} and obtain that 
\begin{align*}
\|\lambda^2\Phi_{R,\lambda} \mathcal{H}_{k\lambda,\gamma}^+ \Phi_{R,\lambda} \Xi_{k,\lambda}f\|_{B^{r}_{2p,2p}(\jpx^{-\eta_0},\lambda)} & \lesssim \lambda^{2-\eta}   \|\Xi_{k,\lambda} f\|_{B^{r-2+d/p'}_{(2p)',2p}(\jpx^{\eta_0},\lambda)} \\
&\lesssim \lambda^{2-\eta}   \|\Xi_{k,\lambda} f\|_{B^{r-2+d/p'}_{(2p)',(2p)'}(\jpx^{\eta_0},\lambda)}.
\end{align*}

On the other hand, by applying \eqref{W1:1}, we also have the $L^2$-based estimate
\[
\|\lambda^2\Phi_{R,\lambda} \mathcal{H}_{k\lambda,\gamma}^+ \Phi_{R,\lambda}  \Xi_{k,\lambda} f\|_{B^{r}_{2,2}(\jpx^{-\eta_0},\lambda)}  \lesssim \lambda^{2-2\eta_0} \| \Xi_{k,\lambda}f\|_{B^{r-2}_{2,2}(\jpx^{\eta_0},\lambda)}.
\]
By Lemmas~\ref{real:interpolation} and~\ref{I:Besov}, interpolating between the above $L^2$-estimate and the $L^{2p}$-estimate yields
\[
\|\lambda^2\Phi_{R,\lambda} \mathcal{H}_{k\lambda,\gamma}^+ \Phi_{R,\lambda} \Xi_{k,\lambda} f\|_{B^{r}_{2p_0,2p_0}(\jpx^{-\eta_0},\lambda)} \lesssim \lambda^{2-\eta \t-2\eta_0(1-\theta)}  \| \Xi_{k,\lambda} f\|_{B^{r-2+d/p_0'}_{(2p_0)',(2p_0)'}(\jpx^{\eta_0},\lambda)},
\]
where $\theta\in(0,1)$ is chosen such that $\theta/(2p)+(1-\theta)/2=1/(2p_0)$, equivalently $\theta/p'=1/p_0'$. 

Given $p_0' \in (d/(2-2r),\infty)$, there holds $d/p_0'<2-2r$. Hence, we apply Lemma~\ref{O2} and obtain that
\[
\| \Xi_{k,\lambda} f\|_{B^{r-2+d/p_0'}_{(2p_0)',(2p_0)'}(\jpx^{\eta_0},\lambda)} \lesssim \lambda^{-d/p_0'} \lambda^{-r} \|V_{k}\|_{B^{-r+\epsilon}_{p_0',p_0'}(\jpx^{2\eta_0})} \| f\|_{B^{r}_{2p_0,2p_0}(\jpx^{-\eta_0},\lambda)}.  
\]
Combining the above estimates, we arrive at
\begin{align*}
\|\lambda^2\Phi_{R,\lambda} \mathcal{H}_{k\lambda,\gamma}^+ \Phi_{R,\lambda} \Xi_{k,\lambda}\ f\|_{B^{r}_{2p_0,2p_0}(\jpx^{-\eta_0},\lambda)} &\lesssim \lambda^{2-(\eta+d/p') \theta-2\eta_0(1-\theta)-r}\\
&\quad\times\|V_{k}\|_{B^{-r+\epsilon}_{p_0',p_0'}(\jpx^{2\eta_0})}\| f\|_{B^{r}_{2p_0,2p_0}(\jpx^{-\eta_0},\lambda)}.
\end{align*}

Using the similar proof of Theorem~\ref{W1}, we deduce that the operator $\mathcal{F}_{R,k,\lambda,\gamma}$ is a contraction on
$B^{r}_{2p_0,2p_0}(\jpx^{-\eta_0},\lambda)$ for $p_0' \in (d/(2-2r),\infty)$ and sufficiently small $\lambda$, provided that
\begin{equation}\label{W2:1}
r<2-(\eta+d/p')\theta-2\eta_0(1-\theta).
\end{equation}

Finally, we note that one may choose $\eta>(d+1)/2-d/p'$ such that the condition \eqref{W2:1} holds whenever $r\in\bigl(0,\;2-(d+1)\theta/2-2\eta_0(1-\theta)\bigr).$ Therefore, the well-posedness of \eqref{R:D} follows from the contraction mapping principle. The conclusion of the theorem then follows from Proposition~\ref{P:equi}.
\end{proof}

\begin{Lemma}\label{W3}
Let $\epsilon>0$, $\eta_0 \in (1/2, 1)$, and
\[
r\in
\begin{cases}
((d+1)(1/2-1/p_0)+ 4\eta_0/p_0,2), & p_0\in [2,\infty),\\
((d+1)(1/2-1/p_0')+ 4\eta_0/p_0',2), & p_0\in [1, 2). 
\end{cases}
\]
For any compactly supported $g \in B^{r-2}_{p_0,p_0}$ and $V_k \in B^{r-2+\e}_{\infty,\infty}$, the rescaled Lippmann--Schwinger equation \eqref{H:abstract} admits a unique solution  in $B^{r}_{p_0,p_0}(\jpx^{-\eta_0},\lambda)$ for sufficiently small $\lambda$.
\end{Lemma}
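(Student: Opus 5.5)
The plan mirrors Lemma~\ref{W2}, but with the $L^\infty$-type multiplier bound of Lemma~\ref{O1} replacing Lemma~\ref{O2}. It interpolates the $L^2$ estimate behind Lemma~\ref{W1} against an $L^{p}$ estimate coming from Theorem~\ref{Hsg} with $\alpha=0$. By Lemma~\ref{equivalent}, Lemma~\ref{exchange} and Proposition~\ref{P:equi}, it suffices to show that the map $\mathcal{F}_{R,k,\lambda,\gamma}$ in \eqref{R:D}, with $|\gamma|\in(1,2)$ so that $r_{k\lambda,\gamma}\sim 1$, is a contraction on $B^{r}_{p_0,p_0}(\jpx^{-\eta_0},\lambda)$ for small $\lambda$. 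Proposition~\ref{P:drift} already puts the inhomogeneous term $\mathcal{G}_{R,k,\lambda,\gamma}$ in this space. So everything reduces to a bound of the form
\[
\|\lambda^2\Phi_{R,\lambda}\mathcal{H}^+_{k\lambda,\gamma}\Phi_{R,\lambda}\Xi_{k,\lambda}f\|_{B^{r}_{p_0,p_0}(\jpx^{-\eta_0},\lambda)}\lesssim \lambda^{\kappa}\|V_k\|_{B^{r-2+\epsilon}_{\infty,\infty}(\jpx^{2\eta_0})}\|f\|_{B^{r}_{p_0,p_0}(\jpx^{-\eta_0},\lambda)}
\]
with some $\kappa>0$.

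\textbf{Step 1: the two endpoint estimates.} Take first $p_0\geq 2$. The $L^2$ endpoint comes from \eqref{W1:1} together with Lemmas~\ref{O1} and~\ref{O3}: since $r\in(1,2)$ we have $\max\{\lambda^{r-2},1\}=\lambda^{r-2}$, and the operator norm on $B^{r}_{2,2}(\jpx^{-\eta_0},\lambda)$ is $\lesssim\lambda^{2-2\eta_0}\lambda^{r-2}=\lambda^{r-2\eta_0}$. For the $L^{p}$ endpoint with $p\in(p_0,\infty]$, apply Theorem~\ref{Hsg} with $\alpha=0$, $p_1=p_2=p$, $q=p$, and weight exponent $\eta>(d+1)/2$. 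Lemma~\ref{O3} lets us trade the weight $\jpx^{-\eta}$ for $\jpx^{-\eta_0}$ on the output, and $\jpx^{\eta}$ for $\jpx^{\eta_0}$ on the input, at the cost of $\|\phi_R\|$ factors; this is possible because the cut-offs $\Phi_{R,\lambda}$ flank the resolvent. Then Lemma~\ref{O1} bounds $\Xi_{k,\lambda}$ from $B^{r}_{p,p}(\jpx^{-\eta_0},\lambda)$ into $B^{r-2}_{p,p}(\jpx^{\eta_0},\lambda)$. The resulting norm is $\lesssim\lambda^{2-\eta}\lambda^{r-2}=\lambda^{r-\eta}$.

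\textbf{Step 2: interpolation.} Use Lemma~\ref{real:interpolation} with Lemma~\ref{I:Besov}. A subtlety is that Lemma~\ref{I:Besov} is stated without weights. The plan is to transfer it to the weighted spaces via Lemma~\ref{Pre1}: conjugate by multiplication with the weight $\jpx^{-\eta_0}$, rescaled by $\lambda$, so that everything becomes an interpolation statement for unweighted spaces. Choose $\theta$ with $1/p_0=(1-\theta)/2+\theta/p$, and let $p\to\infty$, so that $\theta\to 1-2/p_0$. The interpolated norm is then $\lesssim\lambda^{(r-2\eta_0)(1-\theta)+(r-\eta)\theta}=\lambda^{r-2\eta_0(1-\theta)-\eta\theta}$. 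Letting $\eta\downarrow(d+1)/2$ and $p\to\infty$, the exponent becomes positive exactly when
\[
r>\tfrac{4\eta_0}{p_0}+(d+1)\bigl(\tfrac12-\tfrac1{p_0}\bigr),
\]
which is the stated lower bound on $r$. For $p_0\in[1,2)$, interpolate instead between $L^2$ and $p$ close to $1$; the same computation with $p_0'$ in place of $p_0$ gives the second case. Throughout, $r<2$ ensures $r-2<0$, which is the regime where Lemma~\ref{O1} produces the factor $\lambda^{r-2}$.

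\textbf{Step 3: conclusion.} Once $\kappa>0$, taking $\lambda$ small makes $\mathcal{F}_{R,k,\lambda,\gamma}$ a contraction on $B^{r}_{p_0,p_0}(\jpx^{-\eta_0},\lambda)$. The contraction mapping principle then gives a unique fixed point, and Proposition~\ref{P:equi} converts it into the unique solution of \eqref{H:abstract}.

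\textbf{Main obstacle.} I expect the delicate step to be the interpolation itself. The same operator must be bounded on both endpoint couples with identical weights, and the endpoint $p=\infty$ (or $p=1$) lies outside the range $p\in[1,\infty)$ allowed in Lemma~\ref{I:Besov}. If needed, I would take $p$ large but finite and absorb the loss into the open condition on $r$, which is possible because that condition is strict.
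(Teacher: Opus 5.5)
Your proposal is correct and follows the paper's proof of Lemma~\ref{W3} essentially step for step: you use the $L^2$ bound $\lambda^{r-2\eta_0}$ from \eqref{W1:1} with Lemma~\ref{O1}, the endpoint bound $\lambda^{r-\eta}$ from Theorem~\ref{Hsg} with $\alpha=0$ together with Lemmas~\ref{O3} and~\ref{O1}, then real interpolation and $\eta\downarrow(d+1)/2$ to reach the stated threshold on $r$. The paper interpolates directly against $p=\infty$, which lies outside the range of Lemma~\ref{I:Besov}, and applies that lemma to weighted spaces without comment; your choice of a large finite $p$ and your transfer of the interpolation identity through Lemma~\ref{Pre1} fill in exactly these two points.
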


\begin{proof}
Let $\gamma\in \mathbb{R}^d$ satisfy $|\gamma|\in (1,2)$ and $\eta>(d+1)/2$. Applying Theorem~\ref{Hsg}, we obtain
\begin{align*}
\|\Phi_{R,\lambda}\mathcal{H}_{k\lambda,\gamma}^+ \Phi_{R,\lambda}  \Xi_{k,\lambda}\ f\|_{B^{r}_{p,q}(\jpx^{-\eta},\lambda)} \lesssim \lambda^{-\eta}  \| \Phi_{R,\lambda} \Xi_{k,\lambda} f\|_{B^{r-2}_{p,q}(\jpx^{\eta},\lambda)}.
\end{align*}
We first consider the endpoint case $p=q=1$ or $\infty$. Using similar proof of Lemma~\ref{W2} and applying Lemma~\ref{O1}, we obtain
\begin{equation*}
\|\lambda^2\Phi_{R,\lambda} \mathcal{H}_{k\lambda,\gamma}^+ \Phi_{R,\lambda} \Xi_{k,\lambda} f\|_{B^{r}_{p,p}(\jpx^{-\eta_0},\lambda)}  \lesssim \lambda^{r-\eta} \|V_k\|_{B^{r-2+\e}_{\infty,\infty}(\jpx^{2\eta_0})} \|  f\|_{B^{r}_{p,p}(\jpx^{-\eta_0},\lambda)}\ , \ \ p=1\ \text{or}\ \infty.
\end{equation*}
On the other hand, by applying \eqref{W1:1}, we have the $L^2$-based estimate
\[
\|\lambda^2\Phi_{R,\lambda} \H_{k\lambda,\gamma}^+ \Phi_{R,\lambda} \Xi_{k,\lambda} f\|_{B^{r}_{2,2}(\jpx^{-\eta_0},\lambda)}   \lesssim \lambda^{r-2\eta_0} \|V_k\|_{B^{r-2+\epsilon}_{\infty,\infty}(\jpx^{2\eta_0})} \|  f\|_{B^{r}_{2,2}(\jpx^{-\eta_0},\lambda)}.
\]

Let $\theta \in (0,1)$ and set either $p_0 = 2/\theta$ (interpolation between $p=2$ and $p=\infty$) or $p_0 = 2/(2-\theta)$ (interpolation between $p=2$ and $p=1$). By Lemmas~\ref{real:interpolation} and~\ref{I:Besov}, we obtain
\[
\|\lambda^2\Phi_{R,\lambda} \mathcal{H}_{k\lambda,\gamma}^+ \Phi_{R,\lambda} \Xi_{k,\lambda}\ f\|_{B^{r}_{p_0,p_0}(\jpx^{-\eta_0},\lambda)} \lesssim \lambda^{r-\eta(1-\theta)-2\eta_0\t}  \|V_k\|_{B^{r-2+\epsilon}_{\infty,\infty}(\jpx^{2\eta_0})} \|  f\|_{B^{r}_{p_0,p_0}(\jpx^{-\eta_0},\lambda)}.
\]
Using the similar the proof of Theorem~\ref{W1}, we conclude that the operator $\mathcal{F}_{R,k,\lambda,\gamma}$ is a contraction on $B^{r}_{p_0,p_0}(\jpx^{-\eta_0},\lambda)$ for sufficiently small $\lambda$, provided that $r>\eta(1-\theta)+2\eta_0\theta$.

Finally, we observe that there exists $\eta > (d+1)/2$ such that the above condition is satisfied whenever $r\in ((d+1)(1/2-1/p_0)+ 4\eta_0/p_0,2)$ or $r\in ((d+1)(1/2-1/p_0')+ 4\eta_0/p_0',2)$. Therefore, the theorem follows from the contraction mapping principle together with Proposition~\ref{P:equi}.
\end{proof}

\subsection{Proofs of Theorems \ref{thm:main}--\ref{thm:main0}  }\label{sec:rescaleback}

We first show that the rescaled Lippmann--Schwinger equation is equivalent to the rescaled Helmholtz equation~\eqref{RH} together with the rescaled Sommerfeld radiation condition~\eqref{rSRC}. The argument is essentially the same as that in~\cite[Theorem~4]{LPS08} and~\cite[Section~8.3]{CK98}; we include it here for the sake of completeness.

\begin{Lemma}\label{l:eq}
Assume that the conditions of either Lemma~\ref{W2} or Lemma~\ref{W3} are satisfied. Then the rescaled Helmholtz equation~\eqref{RH}, together with the rescaled Sommerfeld radiation condition~\eqref{rSRC}, admits a unique solution in the same function space as the solution obtained in Lemma~\ref{W2} or Lemma~\ref{W3} for sufficiently small $\lambda$.
\end{Lemma}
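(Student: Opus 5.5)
The plan is to show that the rescaled Lippmann--Schwinger equation \eqref{H:abstract} and the problem \eqref{RH}--\eqref{rSRC} have exactly the same solutions in the function space $X$ of Lemma~\ref{W2} or Lemma~\ref{W3}. Existence and uniqueness then follow from those lemmas. The argument has two directions, in the spirit of \cite[Section~8.3]{CK98} and \cite[Theorem~4]{LPS08}.

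\emph{From \eqref{H:abstract} to \eqref{RH}--\eqref{rSRC}.} Let $u\in X$ solve \eqref{H:abstract}, and set $F:=g_\lambda+\lambda^2\Xi_{k,\lambda}u$.
\begin{itemize}
\item By Lemmas~\ref{O1}--\ref{O2}, $F$ lies in a weighted Besov space of order $r-2$ (or $r-2+\zeta$).
\item $F$ is supported in $B(0,\lambda^{-1}R)$, because $V_k$ and $g$ are supported in $B(0,R)$; indeed $\Xi_{k,\lambda}u=\Xi_{k,\lambda}\Phi_{R,\lambda}u$.
\item By Theorem~\ref{thm:RF}, $\mathcal{H}_{k\lambda,0,\tau}$ is a two-sided inverse of $-(\Delta+\lambda^2k^2+{\rm i}\tau)$ on test functions. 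Pass to $\tau\to0^+$ using the strong convergence in Theorem~\ref{thm:F} and Remark~\ref{r:Hsg}, pairing against $\psi\in\mathcal{D}$. This gives $(\Delta+\lambda^2k^2)u=F$ in $\mathcal{D}'$ (up to the sign convention $\mathcal{H}^{+}_{k}=-(\Delta+k^2)^{-1}$ of Remark~\ref{r:H}), which is \eqref{RH}.
\item For the radiation condition, write $u$ outside $B(0,\lambda^{-1}R)$ as the outgoing Green's function \eqref{P:G} (with wavenumber $\lambda k$) applied to the compactly supported distribution $F$. There, $u(x)=\langle F, G_{\lambda k}(x-\cdot)\chi\rangle$ with $\chi\in\mathcal{D}$ equal to $1$ near $\operatorname{supp}F$.
\item The outgoing limit of the regularized symbol agrees with convolution by $G_{\lambda k}$; this is the identification in Remark~\ref{r:H}. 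For $|x|\ge 2\lambda^{-1}R$, the kernel is smooth in $y\in\operatorname{supp}\chi$.
\item The Hankel asymptotics of $G_{\lambda k}$ and its derivatives, uniform in $y$ in a compact set, then give \eqref{rSRC}.
\end{itemize}

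\emph{From \eqref{RH}--\eqref{rSRC} to \eqref{H:abstract}.} Let $u\in X$ solve \eqref{RH}--\eqref{rSRC}, and set $w:=u-\mathcal{H}^+_{k\lambda}F$.
\begin{itemize}
\item By the first direction, $w$ satisfies $(\Delta+\lambda^2k^2)w=0$ in $\mathbb{R}^d$, and $w$ also satisfies \eqref{rSRC}.
\item By elliptic regularity $w$ is smooth (indeed real analytic).
\item Green's representation on large balls, combined with Rellich's lemma for radiating entire solutions of the free Helmholtz equation \cite{CK98}, gives $w\equiv0$. Hence $u=\mathcal{H}^+_{k\lambda}F$, which is \eqref{H:abstract}.
\item Since \eqref{H:abstract} has a unique solution in $X$ by Lemma~\ref{W2} or Lemma~\ref{W3} (via Proposition~\ref{P:equi}), uniqueness for \eqref{RH}--\eqref{rSRC} follows, and existence follows from the first direction.
\end{itemize}

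\emph{Main obstacle.} The hard step is making the Sommerfeld condition rigorous for a solution of low regularity ($u\in B^{r}_{p,p}$ with $r$ possibly below $1$, and $F$ a distribution). I would handle it by working only outside a neighbourhood of $\operatorname{supp}F$, where $u$ is a smooth free Helmholtz solution. The singular data then enter only through pairing the compactly supported $F$ with the smooth kernel $y\mapsto G_{\lambda k}(x-y)$.

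It remains to verify that the limiting-absorption operator $\mathcal{H}^+_{k\lambda}$ coincides, on such compactly supported distributions, with convolution by $G_{\lambda k}$. I would check this on $\mathcal{S}$, where it follows from the explicit Fourier inverse of $(|\xi|^2-\lambda^2k^2-{\rm i}\tau)^{-1}$ as $\tau\to0^+$. It then extends by density using the bounds of Theorem~\ref{Hsg}. This identification also supplies the outgoing (rather than incoming) choice of sign that the radiation condition requires.
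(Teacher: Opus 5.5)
Your proposal is correct and follows essentially the paper's route. Existence comes from writing the Lippmann--Schwinger solution as the outgoing Green's-function potential of the compactly supported source, which gives \eqref{RH}--\eqref{rSRC}. Uniqueness comes from Green's representation formula plus the radiation condition, which show that any radiating solution in that space satisfies \eqref{H:abstract}, so uniqueness is inherited from Lemma~\ref{W2} or Lemma~\ref{W3}. Your one variation is to first subtract $\mathcal{H}^+_{k\lambda}F$, so that Green's formula and Rellich's lemma are applied only to the smooth entire radiating solution $w$. This is a slightly cleaner way to carry out the paper's uniqueness step, which applies the representation formula directly to the rough $u_\lambda$ ``in the sense of distribution''.
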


\begin{proof}
 Let $u_{\lambda}$ be the unique solution to the rescaled Lippmann--Schwinger equation. By  \eqref{P:G} and Remark \ref{r:H}, the Green function $G_{k\lambda}(x-y)$ satisfies the rescaled Sommerfeld radiation condition 
\[
u_{\lambda}(x)= -\lambda^2 \int_{\mathbb{R}^d} G_{k\lambda}(x-y) V_{k,\lambda} u_{\lambda}(y) dy +  \int_{\mathbb{R}^d} G_{k\lambda}(x-y) g_{\lambda}(y) dy.
\]
Moreover, $G_{k\lambda}(x-y)$ is analytic outside the support of $V_{k,\lambda}$ and $g_{\lambda}$. A direct calculation of $(\partial_{|x|}-{\rm i}k\lambda)G_{k\lambda}(x-y)$ implies that $u_{\lambda}$ also satisfies the rescaled Sommerfeld radiation. Moreover, a direct consequence of \eqref{P:G} is that $u_{\lambda}$ satisfies the rescaled Helmholtz equation in the sense of distribution. Therefore, the solution of the rescaled Lippmann--Schwinger equation is a  solution of the rescaled Helmholtz equation together with rescaled Sommerfeld radiation condition.

To prove uniqueness, let $u_{\lambda}$ be a solution of the rescaled Helmholtz equation~\eqref{RH} subject to the rescaled Sommerfeld radiation condition~\eqref{rSRC} with $g=0$ for $\la$ sufficiently small. We show that $u_{\lambda}\equiv 0$.

By definition, there holds
\[
\int_{\mathbb{R}^d} G_{k\lambda}(x-y) (\Delta+k^2\lambda^2)u_{\lambda}(y) dy = -\int_{\mathbb{R}^d} G_{k\lambda}(x-y) V_{k,\lambda}(y) u_{\lambda}(y) dy.
\]
Since $V_{k,\lambda}$ is supported in $B(0,\lambda^{-1}R)$, the right-hand side vanishes outside this ball. Consequently, for any $r>\lambda^{-1}R$, we have
\[
\int_{\mathbb{R}^d} G_{k\lambda}(x-y) (\Delta+k^2\lambda^2)u_{\lambda}(y) dy = \int_{B(0,r)} G_{k\lambda}(x-y) (\Delta+k^2\lambda^2)u_{\lambda}(y)dy.
\] 

Under the assumptions of Lemma~\ref{W2} or Lemma~\ref{W3}, we apply Green’s representation formula (cf.~\cite[Theorem~2.1]{CK98})and obtain that
\begin{align*}
-u_{\lambda}(x) + \int_{\partial B(0,r)} [G_{k\lambda} (x-y) \partial_{\nu} u_{\lambda}(y) - \partial_{\nu}  G_{k\lambda} (x-y) u_{\lambda}(y)] dy\\
 =\int_{B(0,r)} G_{k\lambda}(x-y) (\Delta+k^2\lambda^2)u_{\lambda}(y) dy,
\end{align*}
in the sense of distribution, where $\nu$ denotes the outward unit normal on $\partial B(0,r)$. Since both $u_{\lambda}$ and the fundamental solution $G_{k\lambda}$ satisfy the rescaled Sommerfeld radiation condition~\eqref{rSRC}, the boundary integral vanishes as $r\to\infty$, i.e.,
\[
\lim_{r \to \infty} \int_{\partial B(0,r)} [G_{k\lambda} (x-y) \partial_{\nu} u_{\lambda}(y) - \partial_{\nu}  G_{k\lambda} (x-y) u_{\lambda}(y)]dy=0. 
\]

Letting $r\to\infty$ therefore yields that $u_{\lambda}$ satisfies the rescaled Lippmann--Schwinger equation~\eqref{H:abstract} with $g=0$. By Lemma~\ref{W2} or Lemma~\ref{W3}, this equation admits only the trivial solution, and hence $u_{\lambda}\equiv 0$. This completes the proof of uniqueness.
\end{proof}

As stated at the beginning of this section, it remains to show that $u:=(u_{\lambda})_{\lambda^{-1}}$ is the unique solution to the original problem \eqref{H}--\eqref{SRC}, where $u_{\lambda}$ denotes the solution obtained in Lemma~\ref{W2} or Lemma~\ref{W3}.

\begin{proof}
Let $\lambda_1,\lambda_2 \in (0,1)$ be two rescaling parameters, and let $u_{\lambda_1}$ and $u_{\lambda_2}$ denote the corresponding solutions of the rescaled Helmholtz equation~\eqref{RH}, each satisfying the rescaled Sommerfeld radiation condition~\eqref{rSRC}. By construction, the rescaled function $(u_{\lambda_1})_{\lambda_2/\lambda_1}$ also solves the rescaled Helmholtz equation with rescaling parameter $\lambda_2$ and satisfies the corresponding rescaled Sommerfeld radiation condition. Since the rescaled problem admits a unique solution, it follows that $(u_{\lambda_1})_{\lambda_2/\lambda_1} = u_{\lambda_2}$.
Consequently, the function $u := (u_{\lambda_1})_{\lambda_1^{-1}} = (u_{\lambda_2})_{\lambda_2^{-1}}$
is well defined and solves the original Helmholtz equation together with the Sommerfeld radiation condition. Finally, by Lemma~\ref{R2:n}, the solution $u$ belongs to the space $B^{r}_{2p_0,2p_0}(\jpx^{-\eta_0})$ (for Theorem \ref{thm:main}) or $B^{r}_{p_0,p_0}(\jpx^{-\eta_0})$ (for Theorem \ref{thm:main0}), which completes the proof.
\end{proof}

\section{Conclusion}

In this paper, we established the well-posedness of the Helmholtz equation with rough coefficients. The results are sharp in the sense that they attain the minimal regularity threshold required to define the product of between the coefficient and the solution without using renormalization. In addition, we derived general wavenumber explicit estimates in an $L^p$-based framework for a broad class of operators associated with the Helmholtz equation.

The present analysis is carried out under the assumption that the coefficients have compact support. While this setting already covers many physically relevant models and allows for a precise control of the resolvent behavior, extending the theory to coefficients without compact support remains an important and challenging open problem. In particular, understanding how long range or slowly decaying rough coefficients affect well-posedness and wavenumber dependence will be the subject of future investigation.

\section*{Acknowledgement}
The work of PL is supported by the National Key R\&D Program of China (2024YFA1012300). The work of YZ is supported by Young Scientists Fund of NSFC (C) (Certificate number:12501189).
\section*{Data avaibility}
This study did not generate any new data. All analyzed data are from previously published sources cited in the reference list.
\section*{Statements and declarations}
The authors of this work declare that they have no conflict of interest.

\appendix
\renewcommand{\appendixname}{Appendix~\Alph{section}}

\section{Bony's paraproduct}\label{sec:para}

In this section, we present paraproduct estimates in rescaled weighted Besov spaces.

\begin{Lemma}\label{Pre1:n}
Let $\lambda \in (0,\infty)$ and $p \in [1,\infty]$. For any $f \in \mathcal{S}$ and any weight $\rho \in W(\eta)$, the following estimate holds:
 \[
 \|\Delta_{j} f\|_{L^{p}(\rho,\lambda)} \lesssim_{\eta} \max\{\la^{\eta},1\}  \|f\|_{L^p(\rho,\lambda)}, 
 \]
where the implicit constant depends only on $\rho$ and is independent of $\lambda$ and $j$.
\end{Lemma}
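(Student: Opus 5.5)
The plan is to write $\Delta_j f = \check\varphi_j * f$ with $\check\varphi_j(y) = 2^{jd}\check\varphi(2^j y)$ for $j\ge0$ (and $\check\chi$ for $j=-1$). Then I would move the rescaled weight inside the convolution using the admissibility property of $\rho$. By condition (2) of type $W(\eta)$, $\rho(\lambda x) \le c\,\rho(\lambda(x-y))\langle \lambda y\rangle^{\eta}$, so
\[
|\Delta_j f(x)|\,\rho(\lambda x) \le c \int_{\mathbb{R}^d} |\check\varphi_j(y)|\,\langle \lambda y\rangle^{\eta}\, |f(x-y)|\,\rho(\lambda(x-y))\,dy .
\]
Young's convolution inequality ($L^1 * L^p \to L^p$) then gives
\[
\|\Delta_j f\|_{L^p(\rho,\lambda)} \le c\, \big\|\check\varphi_j\,\langle \lambda\cdot\rangle^{\eta}\big\|_{L^1}\, \|f\|_{L^p(\rho,\lambda)}.
\]

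It remains to bound the kernel factor uniformly in $j$. For this I would use the elementary inequality $\langle \lambda y\rangle \le \max\{\lambda,1\}\langle y\rangle$, so that $\langle\lambda y\rangle^{\eta} \le \max\{\lambda^{\eta},1\}\langle y\rangle^{\eta}$. After the change of variables $z = 2^j y$,
\[
\int |\check\varphi_j(y)|\langle y\rangle^{\eta}\,dy = \int |\check\varphi(z)|\,\langle 2^{-j}z\rangle^{\eta}\,dz \le \int |\check\varphi(z)|\,\langle z\rangle^{\eta}\,dz,
\]
because $2^{-j}\le 1$ for $j\ge 0$. The last integral is finite since $\check\varphi\in\mathcal S$. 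The case $j=-1$ is handled the same way with $\check\chi\in\mathcal S$. This gives a constant depending only on $\eta$, on $c$ (hence on $\rho$), and on the fixed partition pair, and it is independent of $\lambda$ and $j$.

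The only point needing care is the direction of the weight inequality: one must apply condition (2) with the roles $x\mapsto \lambda x$ and $y\mapsto \lambda(x-y)$, so that the growth factor is $\langle\lambda y\rangle^{\eta}$ and lands on the kernel variable. The other point is that rescaling by $2^{-j}$ only shrinks $\langle\cdot\rangle$, which is what keeps the bound uniform in $j$. With these two observations the estimate follows directly, giving the factor $\max\{\lambda^{\eta},1\}$ claimed in the statement.
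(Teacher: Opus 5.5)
Your proposal is correct and follows essentially the same route as the paper: move the rescaled weight onto the kernel via the $W(\eta)$ condition, apply Young's inequality, and bound $\|\check\varphi_j\langle\lambda\cdot\rangle^{\eta}\|_{L^1}$ uniformly in $j$ by a change of variables. Your version is a bit cleaner in two places. You pull out the factor $\max\{\lambda^{\eta},1\}$ through the explicit bound $\langle\lambda y\rangle^{\eta}\le\max\{\lambda^{\eta},1\}\langle y\rangle^{\eta}$, and you treat $j=-1$ separately, since there the kernel is $\check\chi$ rather than a dilate of $\check\varphi$.
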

 
 \begin{proof}
Since $\rho \in W(\eta)$, we have $\rho(\lambda x)\lesssim_{}\langle\lambda(x-y) \rangle^\eta\rho(\lambda y)$ for any $x,y\in\mathbb R^d$. Therefore, 
 \[
 \|\Delta_{j} f\|_{L^{p}(\rho,\lambda)} \lesssim \left( \int_{\mathbb{R}^d}  \left|\int_{\mathbb{R}^d} \mathcal{F}^{-1}(\varphi_{j})(x-y) \langle \lambda (x-y)\rangle^{\eta} f(y)\rho(\lambda y) dy\right|^p  dx\right)^{\frac{1}{p}}. 
 \]
Applying Young's convolution inequality yields
 \[
  \|\Delta_{j} f\|_{L^{p}(\rho,\lambda)}  \lesssim \|\mathcal{F}^{-1}(\varphi_{j})\|_{L^1(\jpx^{\eta},\lambda)} \|f\|_{L^p(\rho,\lambda)}. 
 \]
 Recalling that $\mathcal{F}^{-1}(\varphi_j)=2^{jd}\check{\varphi}(2^j\cdot)$, we compute
 \[
 \|\mathcal{F}^{-1}(\varphi_{j})\|_{L^1(\rho,\lambda)} = \int_{\mathbb{R}^d} 2^{jd}\ |\check{\varphi}(2^jx)|\langle \lambda x\rangle^{\eta}  dx = \int_{\mathbb{R}^d}  |\check{\varphi}( x)| \langle 2^{-j}\lambda x \rangle^{\eta}  dx. 
 \]
 Since $\varphi\in\mathcal{S}$, we have $\check{\varphi}\in\mathcal{S}$, and using $j \geq -1$, it follows that
 \[
  \|\mathcal{F}^{-1}(\varphi_{j})\|_{L^1(\rho,\lambda)} \lesssim  \int_{\mathbb{R}^d}  |\check{\varphi}( x)| \langle 2^{-j} x\rangle^{\eta} dx \lesssim_{\eta} \max\{ \la^{\eta},1\}. 
  \]
Combining the above estimates completes the proof.
 \end{proof}

\begin{Lemma}\label{Pre3:n}
Let $\mathcal{C}'$ be an annulus in $\mathbb{R}^d$, $\lambda \in (0,\infty)$, $p,q \in [1,\infty]$, and $r \in \mathbb{R}$.
Let $\{u_j\}_{j\in\mathbb{N}}$ be a sequence of smooth functions such that ${\rm supp}\,\hat{u}_{j} \subseteq 2^j \mathcal{C}'$. Then, for any weight $\rho \in W(\eta)$, the following estimate holds:
\[
\|u\|_{B^{r}_{p,q}(\rho,\lambda)} \lesssim   \max\{\la^{\eta},1\} \bigg(\sum_{j \in \mathbb{N}} 2^{jrq} \|u_{j}\|_{L^p(\rho,\lambda)}^q \bigg)^{1/q},\quad  u:=\sum_{j \in \mathbb{N}} u_{j},
\]
where the implicit constant is independent of $\lambda$.
\end{Lemma}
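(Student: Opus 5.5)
The plan is to reduce to a frequency-localized almost-orthogonality estimate. Write $u=\sum_{j}u_j$ and, for each $n\ge -1$, compute $\Delta_n u=\sum_j \Delta_n u_j$. Since $\operatorname{supp}\hat u_j\subseteq 2^j\mathcal{C}'$ and $\operatorname{supp}\varphi_n\subseteq 2^n\mathcal{C}$, there exists an integer $N_1$, depending only on the annuli $\mathcal{C}$ and $\mathcal{C}'$, such that $\Delta_n u_j=0$ whenever $|n-j|>N_1$. Hence
\[
\Delta_n u=\sum_{|j-n|\le N_1}\Delta_n u_j ,
\]
a finite sum with at most $2N_1+1$ terms, uniformly in $n$.

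Next, apply Lemma~\ref{Pre1:n} to each term. This gives
\[
\|\Delta_n u_j\|_{L^p(\rho,\lambda)}\lesssim_\eta \max\{\lambda^\eta,1\}\,\|u_j\|_{L^p(\rho,\lambda)},
\]
with a constant independent of $\lambda$, $n$ and $j$. Lemma~\ref{Pre1:n} is stated for $f\in\mathcal S$. Its proof only uses Young's inequality and the weight bound $\rho(\lambda x)\lesssim\langle\lambda(x-y)\rangle^\eta\rho(\lambda y)$, so it applies equally to the smooth $u_j$ whenever the right-hand side is finite; I would note this remark rather than redo the proof. Consequently
\[
2^{nr}\|\Delta_n u\|_{L^p(\rho,\lambda)}\lesssim \max\{\lambda^\eta,1\}\sum_{|j-n|\le N_1}2^{(n-j)r}\,2^{jr}\|u_j\|_{L^p(\rho,\lambda)},
\]
and $2^{(n-j)r}\le 2^{N_1|r|}$ is a constant depending only on $r$.

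Finally, take the $\ell^q$ norm in $n$. Since each $j$ contributes to at most $2N_1+1$ values of $n$, a discrete Young (or Minkowski) inequality for the convolution of $\{2^{jr}\|u_j\|\}$ with the finitely supported kernel $\mathbbm{1}_{|m|\le N_1}$ gives
\[
\Big(\sum_n 2^{nrq}\|\Delta_n u\|_{L^p(\rho,\lambda)}^q\Big)^{1/q}\lesssim \max\{\lambda^\eta,1\}\Big(\sum_j 2^{jrq}\|u_j\|_{L^p(\rho,\lambda)}^q\Big)^{1/q}.
\]
The case $q=\infty$ is handled with sups in the same way. The low block $n=-1$ only sees finitely many $j$ (those with $2^j\mathcal C'$ meeting $\mathcal B$), so it is covered by the same count.

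The one genuinely delicate point is convergence: $u=\sum_j u_j$ must make sense in $\mathcal S'$ so that $\Delta_n u=\sum_j\Delta_n u_j$ is legitimate. If the right-hand side of the claimed estimate is infinite there is nothing to prove. If it is finite, each $u_j$ is bounded in the weighted $L^p(\rho,\lambda)$ norm by $2^{-jr}$ times an $\ell^q$ sequence. Because the admissible weight $\rho$ grows at most polynomially, this gives polynomial control of $u_j$ against Schwartz test functions, with an extra factor $2^{-jM}$ gained from the annular Fourier support (pairing with test functions whose Fourier transform is integrated against a function vanishing near the origin). So the series converges in $\mathcal S'$, and $\Delta_n$, being continuous on $\mathcal S'$, can be applied termwise. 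I expect this justification to be the main obstacle; the estimate itself is the standard argument of \cite[Lemma~2.23]{BCD11}, transplanted to the rescaled weighted setting via Lemma~\ref{Pre1:n}.
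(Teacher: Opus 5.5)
Your proposal is correct and follows the paper's proof essentially verbatim. It uses the finite overlap $\Delta_n u_j=0$ for $|n-j|>N_1$, applies Lemma~\ref{Pre1:n} block by block to produce the factor $\max\{\lambda^\eta,1\}$, and concludes with a discrete Young inequality in $\ell^q$.

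Your extra argument that $\sum_j u_j$ converges in $\mathcal{S}'$ is something the paper takes for granted, and it is sound. One correction there: the property you need is the lower bound $\rho(x)\gtrsim\langle x\rangle^{-\eta}$, i.e.\ that $\rho$ decays at most polynomially, not that it grows at most polynomially. This lower bound follows from the $W(\eta)$ condition $\rho(0)\le c\,\rho(x)\langle x\rangle^{\eta}$.
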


\begin{proof}
The argument follows the same lines as in \cite[Lemma~2.69]{BCD11}. By the spectral localization assumption, there exists an integer $N_0\ge 1$, independent of $\lambda$, such that $\Delta_{j'} u_j=0$ for $|j'-j|>N_0$.  By Minkowski's inequality, we have
\[
 2^{jr} \|\Delta_{j} u\|_{L^p(\rho,\lambda)} \lesssim \sum_{|j'-j| <N_0} 2^{(j-j')r} 2^{j'r} \|\Delta_{j}  u_{j'}\|_{L^p(\rho,\lambda)}.
\]
Applying Lemma \ref{Pre1:n} yields 
\[
 2^{jr} \|\Delta_{j} u\|_{L^p(\rho,\lambda)}  \lesssim    \max\{\la^{\eta},1\} \sum_{|j'-j| <N_0}  2^{j'r}\|u_{j'}\|_{L^p(\rho,\lambda)}.
\]
Taking the $\ell^q$ norm in $j$ and using Young’s convolution inequality for sequences completes the proof.
\end{proof}

\begin{Lemma}\label{Pre4:n}
Let $\mathcal{B}$ be a ball in $\mathbb{R}^d$, $\lambda \in (0,\infty)$, $p,q \in [1, \infty]$, and $r>0$. Let $\{u_{j}\}_{j \in \mathbb{N}}$ be a sequence of smooth functions such that ${\rm supp}(\hat{u}_{j}) \subseteq 2^j\mathcal{B}$. Then, for any weight $\rho \in W(\eta)$, the following estimate holds:
\[
\|u\|_{B^{r}_{p,q}(\rho,\lambda)} \lesssim   \max\{\la^{\eta},1\}  \bigg(\sum_{j \in \mathbb{N}} 2^{jrq} \|u_{j}\|_{L^p(\rho,\lambda)}^q \bigg)^{1/q},\quad u:=\sum_{j \in \mathbb{N}} u_{j},
\]
where the implicit constant is independent of  $\lambda$.
\end{Lemma}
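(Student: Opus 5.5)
The argument adapts \cite[Lemma~2.84]{BCD11} to the rescaled weighted setting, with Lemma~\ref{Pre1:n} replacing the unweighted Bernstein-type bound. The point is the same as there: $\operatorname{supp}\hat u_j\subseteq 2^j\mathcal{B}$ is only a ball, so $\Delta_{j'}u_j$ may be nonzero for \emph{every} $j\ge j'-N_1$, with $N_1$ depending only on $\mathcal{B}$. Fix $j'\ge -1$. The spectral supports give
\[
\Delta_{j'}u=\sum_{j\geq j'-N_1}\Delta_{j'}u_j,
\]
where $N_1$ is independent of $\lambda$ and $\rho$.

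The first step is to bound each block by Minkowski's inequality and Lemma~\ref{Pre1:n}:
\[
\|\Delta_{j'}u_j\|_{L^p(\rho,\lambda)}\lesssim \max\{\lambda^{\eta},1\}\|u_j\|_{L^p(\rho,\lambda)}.
\]
Lemma~\ref{Pre1:n} is stated uniformly in $j'\ge -1$, including the low-frequency block $\chi$, so this single factor covers every term. Multiplying by $2^{j'r}$ then yields
\[
2^{j'r}\|\Delta_{j'}u\|_{L^p(\rho,\lambda)}\lesssim \max\{\lambda^{\eta},1\}\sum_{j\geq j'-N_1}2^{(j'-j)r}\,2^{jr}\|u_j\|_{L^p(\rho,\lambda)}.
\]

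The second step treats this as a discrete convolution of $a_j:=2^{jr}\|u_j\|_{L^p(\rho,\lambda)}$ with the kernel $b_m:=2^{mr}\mathbbm{1}_{\{m\le N_1\}}$. Because $r>0$, we have $\|b\|_{\ell^1}\lesssim_r 2^{N_1 r}/(1-2^{-r})<\infty$. Young's inequality for sequences, $\ell^1\ast\ell^q\subset\ell^q$, then gives the $\ell^q$ bound in $j'$, which is exactly the claimed estimate. For $q=\infty$ one takes the supremum directly instead.

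The only delicate point is this summability. In the annulus case (Lemma~\ref{Pre3:n}) the sum over $j$ has only finitely many terms, but here infinitely many high-frequency $u_j$ feed into each block $j'$. The geometric decay $2^{-(j-j')r}$ is what controls them, and that is why $r>0$ is required. One also has to check that the weight factor does not accumulate along the infinite sum. It does not, because Lemma~\ref{Pre1:n} gives a bound independent of $j'$; its proof uses $\langle 2^{-j'}\lambda x\rangle^{\eta}\le \langle 2\lambda x\rangle^{\eta}$ for $j'\ge -1$. Convergence of $u=\sum_j u_j$ in $\mathcal{S}'$ follows from the finiteness of the right-hand side together with $r>0$, again as in \cite{BCD11}.
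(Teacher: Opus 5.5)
Your proposal is correct and follows essentially the same argument as the paper. Spectral localization gives $\Delta_{j'}u_j=0$ for $j'-j>N_0$. Lemma~\ref{Pre1:n} then bounds each block $\|\Delta_{j'}u_j\|_{L^p(\rho,\lambda)}$ by $\max\{\lambda^{\eta},1\}\|u_j\|_{L^p(\rho,\lambda)}$ uniformly in $j'$. Finally, Young's inequality for sequences with the kernel $2^{(j'-j)r}$, which is summable over $j\ge j'-N_0$ because $r>0$, concludes. Your remarks on the convergence of $\sum_j u_j$ and on the weight factor not accumulating spell out points the paper leaves implicit.
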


\begin{proof}
The proof follows the argument of \cite[Lemma~2.84]{BCD11}. As in the proof of Lemma~\ref{Pre3:n}, there exists an integer $N_0\geq 1$, independent of $\lambda$, such that $\Delta_{j'} u_j=0$ for $j'-j>N_0$. By Lemma~\ref{Pre1:n}, we obtain
\[
2^{j'r}\|\Delta_{j'} u\|_{L^p(\rho,\lambda)} \lesssim 2^{j'r} \sum_{j'-j<N_0} \|\Delta_{j'} u_j\|_{L^p(\rho,\lambda)} \lesssim   \max\{\la^{\eta},1\} \sum_{j>j'-N_0}2^{(j'-j)r} 2^{jr}\| u_j\|_{L^p(\rho,\lambda)}.
\]
Since $r>0$, the factor $2^{(j'-j)r}$ decays exponentially as $j-j'\to\infty$. Applying Young's convolution inequality for sequences concludes the proof.
\end{proof}

The following lemma collects the basic estimates for Bony's paraproduct.

\begin{Lemma}\label{PC1}
Let $\la \in (0,\infty)$, $p,q,p_1,p_2,q_1,q_2 \in [1,\infty]$ satisfy $1/p=1/{p_1} + 1/{p_2}$, $1/q=1/{q_1} + 1/{q_2}$, and let $\rho_1, \rho_2 \in W(\eta)$ be admissible weights.
\begin{itemize}
\item[(1)] If $\alpha>0$, then
\begin{equation}\label{PC1:eq:1}
\|f \prec g\|_{B^{\beta}_{p,q}(\rho_1\rho_2,\lambda)}
\lesssim_{\alpha,\beta,\eta}  \max\{\la^{\eta},1\}
\|f\|_{B^{\alpha}_{p_1,\infty}(\rho_1,\lambda)}
\|g\|_{B^{\beta}_{p_2,q}(\rho_2,\lambda)}.
\end{equation}

\item[(2)] If $\alpha<0$, then
\begin{equation}\label{PC1:eq:2}
\|f \prec g\|_{B^{\alpha+\beta}_{p,q}(\rho_1\rho_2,\lambda)}
\lesssim_{\alpha,\beta,\eta}  \max\{\la^{\eta},1\}
\|f\|_{B^{\alpha}_{p_1,q_1}(\rho_1,\lambda)}
\|g\|_{B^{\beta}_{p_2,q_2}(\rho_2,\lambda)}.
\end{equation}

\item[(3)] If $\alpha+\beta>0$, then
\begin{equation}\label{PC1:eq:3}
\|f \circ g\|_{B^{\alpha+\beta}_{p,q}(\rho_1\rho_2,\lambda)}
\lesssim_{\alpha,\beta,\eta}  \max\{\la^{\eta},1\}
\|f\|_{B^{\alpha}_{p_1,q_1}(\rho_1,\lambda)}
\|g\|_{B^{\beta}_{p_2,q_2}(\rho_2,\lambda)}.
\end{equation}
\end{itemize}
In all cases, the implicit constants are independent of the rescaling parameter $\lambda$.
\end{Lemma}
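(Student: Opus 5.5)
The plan is to follow the classical Bony argument of \cite[Theorems~2.82 and~2.85]{BCD11}. Weights will be handled pointwise through Hölder's inequality, and Lemmas~\ref{Pre3:n} and~\ref{Pre4:n} will absorb the reassembly of dyadic blocks; these are the only places where the factor $\max\{\lambda^{\eta},1\}$ enters. The basic pointwise tool is the weighted Hölder inequality
\[
\|ab\|_{L^p(\rho_1\rho_2,\lambda)}\le \|a\|_{L^{p_1}(\rho_1,\lambda)}\|b\|_{L^{p_2}(\rho_2,\lambda)},\qquad 1/p=1/p_1+1/p_2 .
\]
It holds because $(\rho_1\rho_2)_\lambda=(\rho_1)_\lambda(\rho_2)_\lambda$ and the weights enter multiplicatively. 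We also note that $\rho_1\rho_2$ is again admissible, of type $W(2\eta)$, which only changes the constants.

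For (1), write $f\prec g=\sum_j S_{j-1}f\,\Delta_j g$. Each summand has Fourier support in $2^j\mathcal{C}'$ for a fixed annulus $\mathcal{C}'$, so Lemma~\ref{Pre3:n} applies with the weight $\rho_1\rho_2$. It then suffices to bound $\|S_{j-1}f\|_{L^{p_1}(\rho_1,\lambda)}$ uniformly in $j$. Writing $S_{j-1}f=\sum_{n\le j-2}\Delta_n f$, we get
\[
\|S_{j-1}f\|_{L^{p_1}(\rho_1,\lambda)}\le\sum_{n\le j-2}2^{-n\alpha}\,2^{n\alpha}\|\Delta_n f\|_{L^{p_1}(\rho_1,\lambda)}\lesssim\|f\|_{B^{\alpha}_{p_1,\infty}(\rho_1,\lambda)} .
\]
The tail is summable because $\alpha>0$; this step does not use Lemma~\ref{Pre1:n}, since we sum the blocks directly. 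Combining with Hölder gives $2^{j\beta}\|S_{j-1}f\Delta_jg\|\lesssim \|f\|_{B^{\alpha}_{p_1,\infty}}\,2^{j\beta}\|\Delta_jg\|_{L^{p_2}(\rho_2,\lambda)}$. Taking the $\ell^q$ norm gives \eqref{PC1:eq:1}.

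For (2), $\alpha<0$, the same sum is controlled by
\[
\sum_{n\le j-2}2^{-n\alpha}\,2^{n\alpha}\|\Delta_n f\|_{L^{p_1}(\rho_1,\lambda)}\lesssim 2^{-j\alpha}\sum_{n\le j-2}2^{(j-n)\alpha}\,c_n ,
\]
where $c_n=2^{n\alpha}\|\Delta_n f\|_{L^{p_1}(\rho_1,\lambda)}$. Since $2^{(j-n)\alpha}$ decays, $\sum_{n\le j-2}2^{(j-n)\alpha}c_n$ is a discrete convolution of $(c_n)$ with an $\ell^1$ kernel, so $\|S_{j-1}f\|_{L^{p_1}(\rho_1,\lambda)}\lesssim 2^{-j\alpha}\tilde c_j$ with $\|\tilde c\|_{\ell^{q_1}}\lesssim\|f\|_{B^{\alpha}_{p_1,q_1}(\rho_1,\lambda)}$. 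Multiplying by $2^{j(\alpha+\beta)}$, applying Hölder in $x$ and then Hölder in $\ell^q$ with $1/q=1/q_1+1/q_2$, and finally Lemma~\ref{Pre3:n}, yields \eqref{PC1:eq:2}.

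For (3), write $f\circ g=\sum_j R_j$ with $R_j=\sum_{|i-j|\le1}\Delta_i f\,\Delta_j g$. Each $R_j$ has Fourier support in a ball $2^j\mathcal{B}$, and Hölder gives $2^{j(\alpha+\beta)}\|R_j\|_{L^p(\rho_1\rho_2,\lambda)}\lesssim a_jb_j$. Here $a_j$ and $b_j$ are the weighted dyadic norms of $f$ and $g$ (with neighbouring indices absorbed), lying in $\ell^{q_1}$ and $\ell^{q_2}$ respectively. Hölder in $\ell^q$ and Lemma~\ref{Pre4:n}, which needs $\alpha+\beta>0$, conclude.

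The main obstacle is the $\lambda$-uniformity of the constants. It is entirely encapsulated in Lemmas~\ref{Pre3:n}--\ref{Pre4:n}. I will check that no further application of Lemma~\ref{Pre1:n} is needed beyond them, so that only a single factor $\max\{\lambda^{\eta},1\}$ appears; with the weight $\rho_1\rho_2$ this is really $\max\{\lambda^{2\eta},1\}$, which is harmless in the regime $\lambda<1$ used in the paper.
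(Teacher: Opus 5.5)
Your proposal is correct and follows the paper's proof essentially line by line. Both paraproduct bounds use Lemma~\ref{Pre3:n} together with H\"older, with $\|S_{j-1}f\|_{L^{p_1}(\rho_1,\lambda)}$ controlled by a geometric sum when $\alpha>0$ and by a discrete Young convolution when $\alpha<0$, and the resonant term uses Lemma~\ref{Pre4:n} under $\alpha+\beta>0$. Your observation that $\rho_1\rho_2$ is only of type $W(2\eta)$, so that the reassembly constant is really $\max\{\lambda^{2\eta},1\}$, applies equally to the paper's own argument: it invokes Lemmas~\ref{Pre3:n} and~\ref{Pre4:n} with the weight $\rho_1\rho_2$ but writes $\max\{\lambda^{\eta},1\}$. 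This discrepancy only matters for $\lambda>1$, which never occurs where the lemma is used in the paper.
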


\begin{proof}
To prove \eqref{PC1:eq:1}, we first note that for each $j\geq -1$, the product $S_{j-1}f \Delta_j g$ has Fourier support contained in $2^{j}\mathcal{C}'$, where $\mathcal{C}'$ is an annulus slightly larger than $\mathcal{C}$. By Lemma~\ref{Pre3:n} and H\"{o}lder’s inequality, we obtain
\begin{align*}
\|f \prec g\|_{B^{\beta}_{p,q}(\rho_1 \rho_2,\lambda)} &\lesssim  \max\{\la^{\eta},1\} \bigg(\sum_{j \geq -1} 2^{j \beta q} \|S_{j-1} f \Delta_{j} g\|_{L^{p}(\rho_1 \rho_2,\lambda)}^q\bigg)^{1/q}\\
& \lesssim  \max\{\la^{\eta},1\} \sup_{j \geq -1}\|S_{j-1} f \|_{L^{p_1}(\rho_1,\lambda)} \bigg(\sum_{j \geq -1} 2^{j\beta q} \|\Delta_{j} g\|_{L^{p_2}(\rho_2,\lambda)}^q\bigg)^{1/q}.
\end{align*}
Assume $\alpha>0$. Using Minkowski's inequality and the definition of $S_{j-1}$, we have
\begin{align*}
\|S_{ j-1} f \|_{L^{p_1}(\rho_1,\lambda)} \leq  \sum_{k=-1}^{j-2}\|\Delta_{k-1} f \|_{L^{p_1}(\rho_1,\lambda)}
\end{align*}
Multiplying and dividing by $2^{k\alpha}$ and summing the resulting geometric series yield
\begin{align*}
 \|S_{ j-1} f \|_{L^{p_1}(\rho_1,\lambda)}\leq \sum_{k=-1}^{j-2}2^{-k\alpha } \sup_{k \geq -1} 2^{k\alpha }\|\Delta_{k-1} f \|_{L^{p_1}(\rho_1,\lambda)}^{q_1}\lesssim \|f\|_{B^{\alpha}_{p_1,\infty}(\rho_1,\lambda)}. 
\end{align*}
Combining the above estimates, we conclude that
\[
\|f \prec g\|_{B^{\beta}_{p,q}(\rho_1 \rho_2,\lambda)} \lesssim  \max\{\la^{\eta},1\} \|f\|_{B^{\alpha}_{p_1,\infty}(\rho_1,\lambda)} \|g\|_{B^{\beta}_{p_2,q}(\rho_2,\lambda)} .
\]

To prove the \eqref{PC1:eq:2}, applying Lemma \ref{Pre3:n} and H\"{o}lder's inequality, we obtain 
\begin{align*}
\|f \prec g \|_{B^{\alpha+ \beta}_{p,q}(\rho_1 \rho_2,\lambda)} &\lesssim   \max\{\la^{\eta},1\} \bigg(\sum_{j \geq -1} 2^{j(\alpha+\beta) q} \|S_{j-1} f \Delta_{j} g\|_{L^{p}(\rho_1 \rho_2,\lambda)}^q\bigg)^{1/q}\\
 &\lesssim  \max\{\la^{\eta},1\} \bigg(\sum_{j \geq -1} 2^{j\alpha q_1}\|S_{j-1} f \|_{L^{p_1}(\rho_1,\lambda)}^{q_1}\bigg)^{1/q_1} \bigg(\sum_{j \geq -1} 2^{j \beta q_2} \|\Delta_{j} g\|_{L^{p_2}(\rho_2,\lambda)}^{q_2}\bigg)^{1/{q_2}}.
 \end{align*}
Next, using Minkowski's inequality and the definition of $S_{j-1}$, we estimate
\begin{align*}
 2^{j\alpha}\|S_{ j-1} f \|_{L^{p_1}(\rho_1,\lambda)} &\leq  \sum_{k=-1}^{j-2}2^{(j-k)\alpha} 2^{k\alpha}\|\Delta_{k} f \|_{L^{p_1}(\rho_1,\lambda)}\\
 &\leq \sum_{k\geq -1} 2^{(j-k)\alpha}\mathbbm{1}_{\{j-k \geq 2\}}\ 2^{k\alpha}\|\Delta_{k} f \|_{L^{p_1}(\rho_1,\lambda)}.
\end{align*}
Since $\alpha<0$, the sequence $\{2^{\ell\alpha}\}_{\ell\ge 2}$ belongs to $\ell^1(\mathbb{N})$. Therefore, it follows from Young's convolution inequality for sequences that we get 
\begin{align*}
  \bigg(\sum_{j \geq -1} 2^{j\alpha q_1}\|S_{j-1} f \|_{L^{p_1}(\rho_1,\lambda)}^{q_1}\bigg)^{1/q_1}\lesssim \|f\|_{B^{\alpha}_{p_1,q_1}(\rho_1,\lambda)}. 
\end{align*}
Combining the above estimates yields 
\[
 \|f \prec g\|_{B^{\alpha+\beta}_{p,q}(\rho_1\rho_2,\lambda)} \lesssim
 \max\{\la^{\eta},1\} \|f\|_{B^{\alpha}_{p_1,q_1}(\rho_1,\lambda)}
\|g\|_{B^{\beta}_{p_2,q_2}(\rho_2,\lambda)}.
\]

To prove \eqref{PC1:eq:3}, we first observe that the resonant product can be written as
\[
f\circ g = \sum_{i \geq -1} \Delta_{i} f  \Delta_{i-1} g + \Delta_{i} f  \Delta_{i} g + \Delta_{i} f  \Delta_{i+1} g=:  \sum_{i \geq -1} A_{i}(f,g).
\]
We also note that for each $i \geq -1$, the term $A_i(f,g)$ has Fourier support contained in $2^{i}\mathcal{B}'$, where $\mathcal{B}'$ is an annulus slightly larger than $\mathcal{B}$. By Lemma~\ref{Pre4:n}, we have, for $\alpha+\beta>0$,
\begin{align*}
\|f \circ g\|_{B^{\alpha+ \beta}_{p,q}(\rho_1 \rho_2,\lambda)} \lesssim   \max\{\la^{\eta},1\} \bigg(\sum_{j \geq -1}  2^{j(\alpha+\beta) q} \|A_{j}(f, g)\|_{L^{p}(\rho_1 \rho_2,\lambda)}^q\bigg)^{1/q}.
 \end{align*}
Using Minkowski's inequality and H\"{o}lder’s inequality, we estimate
\begin{align*}
2^{j(\alpha+\beta)q} \|A_{j}(f, g)\|_{L^{p}(\rho_1 \rho_2,\lambda)}^q &\lesssim 2^{j(\alpha+\beta)q} \bigg(\sum_{|\nu| \leq 1} \|\Delta_{j} f \cdot \Delta_{j-\nu} g\|_{L^{p}(\rho_1 \rho_2,\lambda)}\bigg)^q \\
&\lesssim \bigg(\sum_{|\nu|\leq 1}  2^{j\alpha}\|\Delta_{j} f \|_{L^{p_1}(\rho_1,\lambda)}  2^{(j-\nu)\beta} \|\Delta_{j-\nu} g\|_{L^{p_2}(\rho_2,\lambda)}\bigg)^q\\
&\lesssim_q \sum_{|\nu|\leq 1} \bigg(  2^{j\alpha}\|\Delta_{j} f \|_{L^{p_1}(\rho_1,\lambda)}  2^{(j-\nu)\beta} \|\Delta_{j-\nu} g\|_{L^{p_2}(\rho_2,\lambda)}\bigg)^q.
\end{align*}
Summing over $j\geq -1$ and applying H\"{o}lder’s inequality for sequences, we obtain
\begin{align*}
\|f \circ g\|_{B^{\alpha+ \beta}_{p,q}(\rho_1 \rho_2,\lambda)}& \lesssim  \max\{\la^{\eta},1\} \sum_{|\nu|\leq 1}  \Bigg(\sum_{j \geq -1}  \bigg(  2^{j\alpha}\|\Delta_{j} f \|_{L^{p_1}(\rho_1,\lambda)}  2^{(j-\nu)\beta} \|\Delta_{j-\nu} g\|_{L^{p_2}(\rho_2,\lambda)}\bigg)^q\Bigg)^{1/q}\\
&\lesssim  \max\{\la^{\eta},1\}\sum_{|\nu|\leq 1}  \bigg(\sum_{j \geq -1}   2^{j\alpha q_1}\|\Delta_{j} f \|_{L^{p_1}(\rho_1,\lambda)}^{q_1}\bigg)^{1/q_1} \bigg(\sum_{j \geq -1}  2^{(j-\nu)\beta q_2} \|\Delta_{j-\nu} g\|_{L^{p_2}(\rho_2,\lambda)}^{q_2}\bigg)^{1/q_2}.
\end{align*}
Since shifts by $\nu\in\{-1,0,1\}$ do not affect the Besov norms, we conclude 
\[
 \|f \circ g\|_{B^{\alpha+\beta}_{p,q}(\rho_1\rho_2,\lambda)}
\lesssim  \max\{\la^{\eta},1\} \|f\|_{B^{\alpha}_{p_1,q_1}(\rho_1,\lambda)} \|g\|_{B^{\beta}_{p_2,q_2}(\rho_2,\lambda)}, 
\]
which completes the proof. 
\end{proof}

\section{Proof of Lemma \ref{R2:n}}\label{p:R1}

We first assume  that $\lambda=2^{-N_0}$ for some integer $N_0\geq 0$. We start with the case $k\geq 0$. By definition of the Littlewood--Paley projection $\Delta_{k}$, we have
\begin{align}\label{R1:eq:0}
\|\Delta_{k} (f)_{\lambda}\|_{L^p(\rho,\lambda)}= \left\|\int_{\mathbb{R}^d} \check{\varphi}_{k}(x-y) f(\lambda y) dy \right\|_{L^p(\rho, \lambda)}= \lambda^{-d}\left\|  \int_{\mathbb{R}^d}  \check{\varphi}_{k}\left(\lambda^{-1}(\lambda x -y)\right) f(y) dy  \right\|_{L^p(\rho, \lambda)}. 
\end{align}
 Then, there holds
\[
\mathcal{F}(\check{\varphi}_{k}(2^{N_0} \cdot))=2^{-N_0d} \varphi_{k}(2^{-N_0} \xi)= 2^{-N_0d} \varphi(2^{-(k+N_0)} \xi)= 2^{-N_0d} \varphi_{k}(2^{-N_0} \xi)= 2^{-N_0d} \varphi_{k+N_0} (\xi),
\]
which implies 
\[
\check{\varphi}_{k}(\la^{-1} x)=\check{\varphi}_{k}(2^{N_0} x)= 2^{-N_0d} \check{\varphi}_{k+N_0}(x)= \lambda^d \check{\varphi}_{k+N_0}(x).
\]
It is clear to note that 
\[
 \Delta_{k} (f)_{\lambda}(x)= \int_{\mathbb{R}^d}  \check{\varphi}_{k+N_0}\left(\lambda x -y\right) f(y) dy =\Delta_{k+N_0} f(\lambda x).
\]
Therefore,
\begin{align*}
\sum_{k\geq 0} 2^{krq}\|\Delta_{k} (f)_{\lambda}( \cdot)\|_{L^p(\rho, \lambda)}^q &= \sum_{k \geq 0} 2^{krq} \left\| \Delta_{k+N_0} f(\lambda \cdot) \right\|_{L^p(\rho, \lambda)}^q\\
&= \sum_{k\geq 0} \lambda^{rq} 2^{(k+N_0)rq} \|\Delta_{k+N_0} f(\lambda \cdot)\|_{L^p(\rho, \lambda)}^q \\
&= \lambda^{rq} \sum_{k \geq 0}  2^{(k+N_0)rq} \lambda^{-dq/p} \|\Delta_{k+N_0} f\|_{L^p(\rho)}^q.
\end{align*}
Combining the above estimates yields 
\begin{equation}\label{R1:eq:1}
\sum_{k=0}^{\infty} 2^{krq}\|\Delta_{k} (f)_{\lambda}( \cdot)\|_{L^p(\rho,{\lambda})}^q =  \lambda^{-dq/p+rq}  \sum_{k=N_0}^{\infty}  2^{krq} \|\Delta_{k} f( x)\|_{L^p(\rho)}^q.
\end{equation}

For the case $k=-1$, we have
\begin{align*}
\|\Delta_{-1} (f)_{\lambda}\|_{L^p(\rho,\lambda)} =
\left\| \lambda^{-d} \int_{\mathbb{R}^d}  \check{\chi}\left(\lambda^{-1}(\lambda x -y)\right) f(y) dy  \right\|_{L^p(\rho,\lambda)}. 
\end{align*}
Assume that $\lambda =2^{-N_0} $ with $N_0 \geq 0$. Then $\mathcal{F}(\check{\chi}(2^{N_0}\cdot))= 2^{-N_0d} \chi_{N_0}(\cdot)$,
which implies 
\[
\check{\chi}(2^{N_0} x)= 2^{-N_0d}\ \mathcal{F}^{-1}(\chi_{N_0})(x).
\]
Hence, 
\begin{align*}
\|\Delta_{-1} (f)_{\lambda}\|_{L^p(\rho, \lambda)} = \left\|  \int_{\mathbb{R}^d}  \mathcal{F}^{-1}(\chi_{N_0})\left(\lambda x -y\right) f(y) dy  \right\|_{L^p(\rho, \lambda)}.
\end{align*}
By Minkowski's inequality and H\"{o}lder’s inequality for sequences, we obtain
\begin{align*}
\|\Delta_{-1} (f)_{\lambda}\|_{L^p(\rho,{\lambda})} &\leq  \sum_{k=-1}^{N_0-1} \|\Delta_{k} f(\lambda x)\|_{L^p(\rho,{\lambda})} \leq  \sum_{k=-1}^{N_0-1}\lambda^{-d/p} \|\Delta_{k} f\|_{L^p(\rho)}  \\
&\leq\ \lambda^{-d/p}  \left(\sum_{k=-1}^{N_0-1} 2^{krq}\|\Delta_{k} f(x)\|_{L^p(\rho)}^q\right)^{1/q}  \left(\sum_{k=-1}^{N_0-1} 2^{-krq/(q-1)}\right)^{(q-1)/q}.
\end{align*}
We now distinguish two cases to estimate $\|\Delta_{-1} (f)_{\lambda}\|_{L^p(\rho, \lambda)}$ more precisely.
 
When $r \geq 0$, we have 
\[
\sum_{k=-1}^{N_0-1} 2^{-rkq/(q-1)} \leq \sum_{k=-1}^{\infty} 2^{-krq/(q-1)} \lesssim_{q,r} 1.
\]
Therefore, combining this with the previous estimate yields
\begin{equation}\label{R1:eq:2.1}
2^{-rq}\|\Delta_{-1} (f)_{\lambda}\|_{L^p(\rho, \lambda)}^q \lesssim_{q,r} \lambda^{-dq/p}\sum_{k=-1}^{N_0-1} 2^{rkq}\|\Delta_{k} f(x)\|_{L^p(\rho)}^q.
\end{equation}

When $r<0 $, we have
\[
\sum_{k=-1}^{N_0-1} 2^{-rkq/(q-1)} \lesssim_{q,r}  2^{-N_0 r q/(q-1)} \lesssim_{q,r} \lambda^{rq/(q-1)}.
\]
Hence,  
\begin{equation}\label{R1:eq:2.2}
2^{-rq}\|\Delta_{-1} (f)_{\lambda}\|_{L^p(\rho, \lambda)}^q \lesssim_{q,r} \lambda^{-dq/p}  \lambda^{rq} \sum_{k=-1}^{N_0-1} 2^{rkq}\|\Delta_{k} f(x)\|_{L^p(\rho)}^q .
\end{equation}
Combining \eqref{R1:eq:1}--\eqref{R1:eq:2.2}, we conclude
\begin{equation}\label{eq:main:1}
\|(f)_{\lambda}\|_{B^{r}_{p,q}(\rho, \lambda)} \lesssim_{q,r} \lambda^{-d/p}\max\{1,\lambda^{r}\} \|f\|_{B^{r}_{p,q}(\rho)}.
\end{equation}

Next, we assume that $\lambda = 2^{N_0}$ for some integer $N_0 \geq 0$. By the argument in the proof of \eqref{R1:eq:0}, we have
\begin{equation}\label{R2:eq:1}
\sum_{k=N_0}^{\infty} 2^{krq}\|\Delta_{k} f(\lambda \cdot)\|_{L^p(\rho,{\lambda})}^q =  \lambda^{-dq/p+rq}  \sum_{k=0}^{\infty}  2^{krq} \|\Delta_{k} f( x)\|_{L^p(\rho)}^q.
\end{equation}
It remains to estimate the contribution from the low-frequency terms $k=-1,0,1,...,N_0-1$. Similar to the proof of \eqref{R1:eq:1}, we compute
\[
\mathcal{F}(\check{\varphi}_{k}(\lambda^{-1} \cdot))=\mathcal{F}(\check{\varphi}_{k}(2^{-N_0} \cdot))=2^{N_0d} \varphi_{k}(2^{N_0} \xi)= 2^{N_0d} \varphi(2^{N_0-k} \xi),
\]
which implies that 
\begin{align}\label{R2:eq:1.1}
\check{\varphi}_{k}(\lambda^{-1} x)= 2^{N_0d} \mathcal{F}^{-1}(\varphi)_{2^{-k+N_0}}(x).
\end{align}

For $k=0,1,\ldots,N_0-1$, we apply \eqref{R1:eq:0} and \eqref{R2:eq:1.1} to obtain
\begin{align*}
\|\Delta_{k} (f)_{\lambda}\|_{L^p(\rho,\lambda)}&=\left\|  \int_{\mathbb{R}^d}   \IF(\varphi)_{2^{-k+N_0}}(\lambda x -y) f(y) dy  \right\|_{L^p(\rho, \lambda)}\\
&=\lambda^{-d/p} \left\|  \int_{\mathbb{R}^d}   \IF(\varphi)_{2^{-k+N_0}}( x -y) f(y) dy  \right\|_{L^p(\rho)}\\
&= \lambda^{-d/p} \left\| \IF\big((\varphi)_{2^{-k+N_0}} \hat{f}\big)   \right\|_{L^p(\rho)}. 
\end{align*}
Since $2^{-k+N_0}\geq 2$ for $k=0,1,\ldots,N_0-1$, we have $(\varphi)_{2^{-k+N_0}}=(\varphi)_{2^{-k+N_0}} \chi_0$, and hence 
\[
\|\Delta_{k} (f)_{\lambda}\|_{L^p(\rho,\lambda)}=\lambda^{-d/p} \left\| \IF\big((\varphi)_{2^{-k+N_0}}\ \chi_0 \hat{f}\big)   \right\|_{L^p(\rho)}.
\]
Using the fact that $\rho$ is of type $W(\eta)$, we obtain
\begin{align*}
\|\Delta_{k} (f)_{\lambda}\|_{L^p(\rho,\lambda)} \lesssim& \lambda^{-d/p} \left\|  \int_{\mathbb{R}^d}  \IF(\varphi)_{2^{-k+N_0}}( x -y) \langle x-y \rangle^{\eta}\ S_1f(y) \rho(y) dy  \right\|_{L^p}.
\end{align*}
By Young's convolution inequality, we arrive at
\begin{align*}
\|\Delta_{k} (f)_{\lambda}\|_{L^p(\rho,\lambda)}\lesssim& \lambda^{-d/p} \|\IF(\varphi)_{2^{-k+N_0}}( x ) \langle x \rangle^{\eta} \|_{L^1} \|f\|_{B^{r}_{p,q}(\r)}.
\end{align*}

Let $C_0:=\{x:\ |x|\leq 1\}$ and $C_j:=\{x:\ 2^{j-1}\leq |x|\leq 2^{j}\}$ for $j\geq 1$. By H\"{o}lder's inequality, we have
\[
\begin{aligned}
 \|\IF(\varphi)_{2^{-k+N_0}}( x ) \langle x \rangle^{\eta} \|_{L^1}&=\sum_{j\geq 0}\int_{C_j} |\IF(\varphi)_{2^{-k+N_0}}( x ) |\langle x \rangle^{\eta} dx\\
 &\leq\sum_{j\geq 0} \bigg(\int_{C_j} |\F(\varphi)_{2^{-k+N_0}}( x ) |^2 dx \bigg)^{1/2} \bigg(\int_{C_j} \langle x \rangle^{2\eta} dx\bigg)^{1/2}.
 \end{aligned}
\]
By a change of variables, it is clear to note that
\[\int_{C_j} \langle x \rangle^{2\eta} dx \lesssim  \int_{0\leq|x'|\leq 1} \langle 2^j x' \rangle^{2\eta} d(2^j x') \lesssim  2^{2j\eta+jd} \int_{0\leq|x'|\leq 1} \langle x' \rangle^{2\eta} dx'.
\]
Hence,
\[
\begin{aligned}
\|\mathcal{F}^{-1}(\varphi)_{2^{-k+N_0}}( x ) \langle x \rangle^{\eta} \|_{L^1} &\lesssim \sum_{j\geq 0} 2^{(k-N_0)d}\bigg(\int_{C_j} |\hat\varphi(2^{k-N_0} x ) |^2 dx \bigg)^{1/2} 2^{j(d/2+\eta)} \\
& \lesssim \sum_{j\geq 0} 2^{(N_0-k)\eta} \bigg(\int_{2^{k-N_0} C_j} |\hat{\varphi}( x ) |^2 dx \bigg)^{1/2} 2^{(j+k-N_0)(d/2+\eta)}\\
 & \lesssim 2^{(N_0-k)\eta} \Bigg(\sum_{j\geq 0}\int_{2^{k-N_0} C_j}   (1+x^2)^{d/2+\eta+\epsilon}|\hat{\varphi}( x ) |^2 dx\Bigg)^{1/2}\\
&\quad \times \Bigg(\sum_{j=0}^{N_0-k}2^{(j+k-N_0)(d+2\eta)} + \sum_{j> N_0-k}  2^{-2(j+k-N_0)\epsilon} \Bigg)^{1/2}\\
  & \lesssim 2^{(N_0-k)\eta} \left(\int_{\mathbb{R}^d}   (1+x^2)^{d/2+\eta+\e}|\hat{\varphi}( x ) |^2 dx\right)^{1/2} ,
\end{aligned}
\]
for any $\epsilon \in (0,1)$. Therefore, for $k=0,1,\ldots,N_0-1$, we have proved that
\begin{equation}\label{R2:eq:2}
\|\Delta_{k} (f)_{\lambda}\|_{L^p(\rho,\lambda)} \lesssim 2^{-k\eta}\lambda^{\eta-d/p}\|\varphi\|_{H^{d/2+\eta+\epsilon}} \|f\|_{B^{r}_{p,q}(\rho)},\quad k=0,1,..,N_0-1.
\end{equation}

Moreover, by a similar argument, one can show that the estimate \eqref{R2:eq:2} also holds for $k=-1$. 
Combining \eqref{R2:eq:1} and \eqref{R2:eq:2}, we obtain
\begin{equation}\label{eq:main:2}
\|(f)_{\lambda}\|_{B^{r}_{p,q}(\rho,\lambda)} =\left(\sum_{k=-1}^{\infty} 2^{krq}\|\Delta_{k} f(\lambda \cdot)\|_{L^p(\rho,{\lambda})}^q \right)^{1/q} \lesssim \lambda^{-d/p}\max\{\lambda^{\eta},\lambda^{r},1\} \|f\|_{B^{r}_{p,q}(\rho)}, 
\end{equation}

Combining \eqref{eq:main:1} and \eqref{eq:main:2}, we obtain the result for all $\la=2^{N_0}$, $N_0 \in \mathbb{Z}$. Finally, for general rescaling parameter $\lambda\in (0,\infty)$, we apply the standard reduction argument in \cite[Remark 2.19]{BCD11}, which completes the proof.

\end{document}